\DeclareMathOperator{\diam}{diam}
\DeclareMathOperator{\dist}{dist}
\title{Coarse and pointwise tangent fields}
\author{Guy C. David}
\address{Department of Mathematical Sciences, Ball State University, Muncie, IN 47306}
\email{gcdavid@bsu.edu}
\author{Sylvester Eriksson-Bique}
\address{Department of Mathematics and Statistics
P.O. Box 35
FI-40014 University of Jyväskylä}
\email{sylvester.d.eriksson-bique@jyu.fi}
\author{Raanan Schul}
\address{Department of Mathematics\\ Stony Brook University\\ Stony Brook, NY 11794}
\email{raanan.schul@stonybrook.edu}
\thanks{G. C. David was partially supported by the National Science Foundation under Grant No. DMS-2054004. S. Eriksson-Bique was partially supported by the Research Council of Finland Grant No. 354241. R. Schul was partially supported by the National Science Foundation under Grant No. DMS-2154613 }
\date{\today}
\subjclass[2020]{28A75, 30L05, 51F30}
\keywords{null set, porosity, tangent field, Analyst's Traveling Salesman}
\DeclareMathOperator{\minUnion}{minUnion}
\begin{document}

\maketitle

\theoremstyle{plain}
\newtheorem{theorem}{Theorem}
\newtheorem{exercise}{Exercise}
\newtheorem{corollary}[theorem]{Corollary}
\newtheorem{scholium}[theorem]{Scholium}
\newtheorem{claim}[theorem]{Claim}
\newtheorem{observation}[theorem]{Observation}
\newtheorem{lemma}[theorem]{Lemma}
\newtheorem{sublemma}[theorem]{Lemma}
\newtheorem{proposition}[theorem]{Proposition}
\newtheorem{conjecture}[theorem]{Conjecture}
\newtheorem{maintheorem}{Theorem}
\newtheorem{maincor}[maintheorem]{Corollary}
\renewcommand{\themaintheorem}{\Alph{maintheorem}}

\theoremstyle{definition}
\newtheorem{fact}[theorem]{Fact}
\newtheorem{example}[theorem]{Example}
\newtheorem{definition}[theorem]{Definition}
\newtheorem{remark}[theorem]{Remark}
\newtheorem{question}[theorem]{Question}
\newtheorem{assumption}[theorem]{Assumption}

\numberwithin{equation}{section}
\numberwithin{theorem}{section}

\newcommand{\obar}[1]{\overline{#1}}
\newcommand{\haus}[1]{\mathcal{H}^n(#1)}
\newcommand{\prob}{\mathbb{P}}
\newcommand{\Tan}{\text{Tan}}
\newcommand{\WTan}{\text{WTan}}
\newcommand{\CTan}{\text{CTan}}
\newcommand{\CWTan}{\text{CWTan}}
\newcommand{\LIP}{\text{LIP}}
\newcommand{\RR}{\mathbb{R}}
\newcommand{\R}{\mathbb{R}}
\newcommand{\HH}{\mathcal{H}}
\newcommand{\cH}{\mathcal{H}}
\newcommand{\B}{\mathcal{B}}
\newcommand{\ZZ}{\mathbb{Z}}
\newcommand{\bH}{\mathbb{H}}
\newcommand{\bX}{\mathbb{X}}
\newcommand{\G}{\mathbb{G}}
\newcommand{\cL}{\mathcal{L}}
\newcommand{\cC}{\mathcal{C}}
\newcommand{\cD}{\mathcal{D}}
\newcommand{\cS}{\mathcal{S}}
\newcommand{\cB}{\mathcal{B}}
\newcommand{\cG}{\mathcal{G}}
\newcommand{\cF}{\mathcal{F}}
\newcommand{\hDelta}{\Delta}
\newcommand{\ball}{\tilde{B}}
\newcommand{\GuyLambda}{\lambda}

\begin{abstract}
Alberti, Cs\"ornyei and Preiss introduced a notion of a ``pointwise (weak) tangent field'' for a subset of Euclidean space -- a field that contains almost every tangent line of every curve passing through the set -- and showed that all area-zero  
sets in the plane admit one-dimensional tangent fields. We extend their results in two distinct directions. First, a special case of our pointwise result shows that each doubling subset of Hilbert space admits a pointwise tangent field in this sense, with dimension bounded by the Nagata (or Assouad) dimension of the set.

Second, inspired by the Analyst's Traveling Salesman Theorem of Jones, we introduce new, ``coarse'' notions of tangent field for subsets of Hilbert space, which take into account both large and small scale structure. We show that doubling subsets of Hilbert space admit such coarse tangent fields, again with dimension bounded by the Nagata (or Assouad) dimension of the set. For porous sets in the plane, this result can be viewed as a quantitative version of the Alberti--Cs\"ornyei--Preiss result, though our results hold in all (even infinite) dimensions. 

\end{abstract}

\tableofcontents

\section{Introduction}

If $M$ is a smooth submanifold in $\mathbb{R}^n$, the tangent space $T_p M$ of $M$ at a point $p$ can be viewed as the space of possible derivatives of smooth curves in $M$ that pass through $p$. In this paper, we try to assign a similar notion of tangent space to an essentially arbitrary subset of Hilbert space, not necessarily a manifold. Inspired by the Analyst's Traveling Salesman Theorem of Jones \cite{Jones} and the work of Albert--Cs\"ornyei--Preiss \cite{ACP}, we build certain ``coarse'' and ``pointwise'' tangent fields and control their dimension. 

{ 
Let us give a few more details, starting in the planar case. Suppose that $E$ is a (Borel) set in the plane and $\tau$ is a (Borel) assignment of a line in $\mathbb{R}^2$ to each point of $E$. Call $\tau$ a {\it pointwise weak tangent line field}\footnote{Alberti--Cs\"ornyei--Preiss refer to their notion as a ``weak tangent field'', but to distinguish their notion from others in this paper, we refer to it as a ``pointwise weak tangent field''.} for $E$ if it has the following property: For every rectifiable curve $\Gamma$ in the plane,  
\begin{equation}\label{eq:tangentlinefield}
	\mathcal{H}^1(\{x\in \Gamma \cap E: \tau_\Gamma(x)\neq \tau(x)\})=0,
\end{equation} 
where $\tau_\Gamma(x)$ is the tangent line to $\Gamma$ at $x$ (defined for $\mathcal{H}^1$ a.e. $x\in\Gamma$). It follows from classical geometric measure theory that if $E$ is a $1$-rectifiable set, then $E$ has a pointwise weak tangent line field. At the other end of the spectrum, if $E$ is purely $1$-unrectifiable, then an arbitrary assignment $\tau$ will work, since already $\mathcal{H}^1(\Gamma \cap E)=0$ for every rectifiable curve $\Gamma$. On the other hand, if $E$ is an open set in the plane, then clearly it does not have a pointwise weak tangent line field in this sense.

Alberti--Cs\"ornyei--Priess proved the following astounding result, which says that having null {\it area}, i.e. two-dimensional Hausdorff measure, is sufficient \cite{ACP}.

\begin{theorem}[Alberti-Cs\"ornyei-Preiss]\label{thm:ACP}
If a Borel set $E\subset \RR^2$ has $\mathcal{H}^2(E)=0$, then $E$ has a pointwise weak tangent line field.
\end{theorem}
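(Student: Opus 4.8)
\emph{Plan of proof.} The plan is to deduce Theorem~\ref{thm:ACP} from the Alberti--Cs\"ornyei--Preiss structure theorem for planar Lebesgue-null sets, via two soft reductions. First, it suffices to construct, for every $\epsilon>0$, a Borel assignment $\tau_\epsilon$ of a line to each point of $E$ such that for every rectifiable curve $\Gamma$
\[
\mathcal{H}^1\bigl(\{x\in\Gamma\cap E:\ \angle(\tau_\Gamma(x),\tau_\epsilon(x))>\epsilon\}\bigr)=0,
\]
where $\angle$ is the angle between two lines. Indeed, take $\tau_{1/k}$ for all $k$; for a fixed rectifiable $\Gamma$ the sets $A_k:=\{x\in\Gamma\cap E:\angle(\tau_\Gamma(x),\tau_{1/k}(x))>1/k\}$ are $\mathcal{H}^1$-null, so $\mathcal{H}^1(\limsup_k A_k)=0$, and on $(\Gamma\cap E)\setminus\limsup_kA_k$ the lines $\tau_{1/k}(x)$ converge to $\tau_\Gamma(x)$ in the compact metric space of lines through the origin. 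Hence, setting $\tau(x):=\lim_k\tau_{1/k}(x)$ on the Borel set where this limit exists and letting $\tau(x)$ be any fixed line elsewhere, $\tau$ is Borel and $\mathcal{H}^1(\{x\in\Gamma\cap E:\tau_\Gamma(x)\neq\tau(x)\})=0$ for every rectifiable $\Gamma$; that is, $\tau$ is a pointwise weak tangent line field for $E$.

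\emph{Reduction to the structure theorem.} Fix $\epsilon>0$, choose lines $v_1,\dots,v_N$ forming an $\tfrac\epsilon2$-net in the space of directions, and let $\mathcal{C}_i$ be the open cone of directions within angle $\tfrac\epsilon2$ of $v_i$, so that $\mathcal{C}_1\cup\dots\cup\mathcal{C}_N$ covers all directions. The key input is the Alberti--Cs\"ornyei--Preiss structure theorem: since $\mathcal{H}^2(E)=0$, one can write $E=F_1\sqcup\dots\sqcup F_N$ with each $F_i$ a Borel set that is \emph{null for all curves transverse to $\mathcal{C}_i$}, i.e.\ such that for every rectifiable $\Gamma$
\[
\mathcal{H}^1\bigl(\{x\in\Gamma\cap F_i:\ \tau_\Gamma(x)\notin\mathcal{C}_i\}\bigr)=0.
\]
Granting this, put $\tau_\epsilon(x):=v_i$ for $x\in F_i$, which is Borel. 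Reparametrizing any rectifiable $\Gamma$ by arclength as a $1$-Lipschitz curve, it is $\mathcal{H}^1$-a.e.\ differentiable, so $\tau_\Gamma(x)$ is defined for $\mathcal{H}^1$-a.e.\ $x\in\Gamma$, and outside an $\mathcal{H}^1$-null subset of $\Gamma\cap E$ we have $\tau_\Gamma(x)\in\mathcal{C}_i$ whenever $x\in F_i$, hence $\angle(\tau_\Gamma(x),\tau_\epsilon(x))\le\tfrac\epsilon2<\epsilon$. This is exactly the property required by the first reduction.

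\emph{The crux: the structure theorem itself.} All the difficulty lies here; this is one of the central results of \cite{ACP}. It may be reformulated as the assertion that the $\sigma$-ideal $\mathcal{N}$ generated by the families ``sets null for curves transverse to $\mathcal{C}_i$'' ($i=1,\dots,N$) contains every Lebesgue-null set---the reverse inclusion $\mathcal{N}\subseteq\{\mathcal{L}^2\text{-null}\}$ being an elementary Fubini argument, since a positive-area piece null for curves transverse to $\mathcal{C}_i$ would meet many straight lines transverse to $v_i$ in positive $\mathcal{H}^1$. I would prove the hard inclusion by contradiction: if a Borel null set $E$ is not in $\mathcal{N}$, an exhaustion/maximality argument extracts a nonempty ``residual'' subset that is null for curves transverse to $\mathcal{C}_i$ for \emph{no} index $i$; one then builds, for each direction $v_i$, a rich family of nearly-$v_i$-directed Lipschitz curves---equivalently, a Lipschitz function whose level sets foliate a large region with gradient pointing roughly in the $v_i^\perp$ direction---meeting the residual set in positive $\mathcal{H}^1$ along its curves, and, assembling these $N$ families (which together cover all directions) and running a coarea/Fubini estimate, forces the residual set to carry positive $\mathcal{H}^2$-measure, contradicting $\mathcal{H}^2(E)=0$. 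Repackaging the resulting curve families into the desired Borel partition $E=\bigsqcup_i F_i$ and checking measurability of the pieces then finishes the argument. The honest obstacle is entirely in constructing these curve families---especially when the cones $\mathcal{C}_i$ are narrow (so $N$ is large), which is exactly the regime needed here and is the genuine innovation of \cite{ACP}; everything in the two reductions above is routine by comparison.
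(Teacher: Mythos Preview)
The paper does not prove Theorem~\ref{thm:ACP}; it is quoted as an external result of Alberti--Cs\"ornyei--Preiss \cite{ACP}, so there is no proof here to compare your proposal against.

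That said, your outline is the standard derivation route: the existence of a pointwise weak tangent line field is obtained in \cite{ACP} as a corollary of their decomposition (``structure'') theorem for planar null sets, and your two soft reductions from that result are correct. In particular, the first reduction---defining $\tau(x)=\lim_k\tau_{1/k}(x)$ on the Borel set where the limit exists---is a clean way to pass from approximate fields to an exact one, and the second reduction is exactly how one extracts $\tau_\epsilon$ from the decomposition $E=F_1\sqcup\dots\sqcup F_N$.

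You correctly locate the entire difficulty in the structure theorem itself. Be aware, though, that the sketch you give for it (extract a residual set outside the $\sigma$-ideal, build foliations by nearly-$v_i$-directed curves, run coarea to force positive area) is not how \cite{ACP} proceeds and would not, as written, go through: the hard direction is not a Fubini/coarea argument but a constructive width-decomposition of an arbitrary null set into pieces with controlled ``widths'' in prescribed cones, carried out via a delicate combinatorial covering. So your plan is accurate at the level of ``reduce to the structure theorem and cite \cite{ACP}'', but the final paragraph should be read as motivation rather than as a proof strategy.
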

An analog of Theorem \ref{thm:ACP} in $\mathbb{R}^n$ would ask that an $\mathcal{H}^n$-null set in $\mathbb{R}^{n-1}$ have a pointwise weak $(n-1)$-dimensional plane field. Such a result is currently not known to hold if $n\geq 3$ (though see the announced results of Cs\"ornyei-Jones \cite{Csornyei-Jones}); we will discuss this further below. Even the known planar case has had a significant impact on the study of Lipschitz mappings and differentiability; see, e.g., the original paper of Alberti-Cs\"ornyei-Preiss  \cite{ACP}, as well as the work of Bate \cite{Bate} and Maleva--Preiss \cite{MalevaPreiss2019}.

{ Our goal in the present paper is twofold. First, by a different approach than that of Albert--Cs\"ornyei--Preiss, we extend their notion of pointwise tangent field to many subsets of higher-dimensional (or even infinite-dimensional) Hilbert space (though not arbitrary null sets). Second, we provide a ``coarse'' or ``quantitative'' analog of Theorem \ref{thm:ACP}, one that also applies in an arbitrary ambient Hilbert space. This coarse result is modeled on the Analyst's Traveling Salesman Theorem of Jones, and its proof occupies the majority of the paper.


We now describe the setup for our main results in more detail.
}

}

\subsection{Fitting \texorpdfstring{$d$}{d}-planes and dimension theory}
Our goal is to find conditions under which a subset of Hilbert space admits certain coarse or pointwise tangent fields that we will define below. To this end, we first make the following definition:

\begin{definition}\label{def:fit}
Suppose that $E$ is a subset of a Hilbert space $\bX$ and fix $d\in\mathbb{N}$. We say that $E$ \emph{badly fits $d$-planes} if the following holds:

There is a constant $\epsilon_0>0$ such that, if $V$ is a $d$-plane and $B$ is a ball in $\bX$, then
$$ V\cap B \not\subset N_{\epsilon_0 \diam(V \cap B)}(E)$$

\end{definition}
In other words, $E$ badly fits $d$-planes if $d$-dimensional planes are never able to pass too close to $E$, regardless of location or scale. This definition is inspired by the notion of ``line-fitting'' for metric spaces introduced by Laakso in the Appendix of \cite{TysonWu}. Essentially, a subset of Hilbert space is line-fitting in Laakso's sense if and only if it does \textbf{not} badly fit $1$-planes, in our sense.

We will see below in subsection \ref{subsec:dimension} that this notion is related to well-studied notions of dimension. In particular, if $E$ has \emph{Nagata dimension} or \emph{Assouad dimension} strictly less than $d$, then it badly fits $d$-planes.

\subsection{Main results on pointwise weak tangent fields}
Earlier, we explained a notion of ``pointwise weak tangent field'' due to Alberti--Cs\"ornyei--Preiss. While we do not generalize their result, we are able to extend it in some new directions.

Recall that if $\Gamma$ is a rectifiable curve in a Hilbert space, then it admits a tangent line $\tau_\Gamma(p)$, in the usual sense, at $\mathcal{H}^1$-a.e. point $p\in \Gamma$. For instance, $\tau_\Gamma(p)$ can be defined $\mathcal{H}^1$-a.e. by fixing an arc-length parametrization $\gamma$ of $\Gamma$ and choosing $\tau_S(\gamma(t))$ to be the span of $\gamma'(t)$.

Extending the notion given in \cite{ACP}, we define the following.

\begin{definition}
Let $E$ be a Borel subset of a Hilbert space $\bX$ and $k\geq 1$. Let $\tau\colon E \rightarrow \bigcup_{j=0}^k\mathcal{L}_j$ be a Borel assignment of a linear subspace of dimension at most $k$ to each point of $E$. 

We say that $\tau$ is a \emph{$k$-dimensional pointwise weak tangent field} to $E$ if the following holds: For every rectifiable curve $\Gamma$ in $\bX$,
$$ \tau_\Gamma(p) \subseteq \tau(p) $$
for $\mathcal{H}^1$-a.e. $p\in \Gamma\cap E$.\footnote{Actually, Alberti--Cs\"ornyei--Preiss require this to hold only when $\Gamma$ is a $C^1$ curve, but the two definitions give equivalent notions by standard geometric measure theory arguments.}
\end{definition}

Alberti, Cs\"ornyei, and Preiss show in \cite[Theorem 4.3]{ACP}, stated as Theorem \ref{thm:ACP} above, that every Borel set in $\mathbb{R}^2$ of Lebesgue measure zero admits a $1$-dimensional pointwise weak tangent field. They note in \cite[Section 8c]{ACP} that the analogous statement in higher dimensions (whether null sets in $\mathbb{R}^n$ admit $(n-1)$-dimensional pointwise weak tangent fields) is unknown. Despite much work on related questions and an announcement in the positive direction by Cs\"ornyei and Jones \cite{Csornyei-Jones}, this question remains open to the best of our knowledge. 

We are able to obtain a version of their theorem in arbitrary dimension, but not for arbitrary null sets:
\begin{theorem}\label{thm:pointwise}
Let $\bX$ be a (finite or infinite dimensional) Hilbert space. Let $E\subseteq \bX$ be a countable union of sets $E_i$ that each badly fits $d$-planes.

Then $E$ admits a $(d-1)$-dimensional pointwise weak tangent field.
\end{theorem}
This result is strictly weaker than Theorem \ref{thm:ACP} when $\bX=\mathbb{R}^2$: A subset of $\mathbb{R}^2$ that badly fits $2$-planes must be porous, but there are null sets in $\mathbb{R}^2$ that are not $\sigma$-porous. (We recall that a subset of a Hilbert space $\bX$ is \emph{porous} if every ball in $\bX$ contains a ball of comparable radius disjoint from this subset; in some areas, this is called ``lower porous''.)

However, Theorem \ref{thm:ACP} is not known to hold in $\mathbb{R}^n$ for $n\geq 2$, much less in infinite-dimensional Hilbert space, so Theorem \ref{thm:pointwise} may serve as a partial substitute in those cases. In particular, Theorem \ref{thm:pointwise} implies that every $\sigma$-porous set in $\mathbb{R}^n$ admits an $(n-1)$-dimensional pointwise weak tangent field. More generally, Theorem \ref{thm:pointwise} (combined with Lemma \ref{lem:notgood} below) immediately implies the following:
\begin{corollary}\label{cor:pointwisenagata}
Let $\bX$ be a (finite or infinite dimensional) Hilbert space and $n\in\mathbb{N}$. Let $E\subseteq \bX$ be a countable union of sets $E_i$ that each have Nagata dimension at most $n$.

Then $E$ admits an $n$-dimensional pointwise weak tangent field.
\end{corollary}

To contrast with Theorem \ref{thm:pointwise}, we observe that the results of Alberti--Cs\"ornyei--Preiss \cite{ACP} imply with a little work something a bit stronger than Theorem \ref{thm:ACP}. 
\begin{proposition}
If $\bX$ is a Hilbert space and $E\subset \mathbb{X}$ has $\cH^2(E)=0$, then $E$ admits a one dimensional pointwise weak tangent field. In particular, if $E$ has Hausdorff dimension $< 2$, then $E$ admits a pointwise weak tangent field of dimension at most its Hausdorff dimension.
\end{proposition}
\begin{proof}
We sketch the argument. Without loss of generality $\mathbb{X}$ is separable. Consider a countable dense collection of rank-two bounded linear maps $\Pi=\{\pi:\mathbb{X}\to \R^2\}$, and consider $E_\pi = \pi(E)$. (In fact, it suffices to consider orthogonal projections onto some dense collection of two-dimensional subspaces.) Then $\cH^2(E_\pi)=0$ for all $\pi\in \Pi$, and \cite{ACP} implies that each $E_\pi$ has a one dimensional pointwise weak tangent-field $\tau_\pi$. Now, define a pointwise weak tangent field for $x\in E$ as the maximal subspace $\tau(x)\subset \mathbb{X}$ such that $\pi(\tau(x))\subset \tau_\pi(\pi(x))$ for each $\pi\in \Pi$. This subspace must be at most one-dimensional, since if $\tau(x)$ contained two linearly independent directions, there would be a projection $\pi\in \Pi$ s.t. $\pi(\tau(x))$ would be two dimensional, contradicting the fact that $\tau_\pi(\pi(x)))$ is one dimensional. Finally, one needs to show that this $\tau(x)$ is a pointwise weak tangent field. If $\gamma:I\to \mathbb{X}$ is any rectifiable curve, then since $\Pi$ is countable, for a.e. $t\in I$ we have $\pi(\gamma'(t))=(\pi \circ \gamma)'(t)\in \tau_{\pi}(\pi(\gamma(t)))$. Since $\tau(\gamma(t))$ is a maximal subspace whose projection is contained in $\tau_{\pi}(\pi(\gamma(t)))$, we must have $\langle \gamma'(t) \rangle \subset \tau(\gamma(t))$. 

The Hausdorff dimension claim follows once we observe that if ${\rm dim}_H(E)<1$, then $E$ has a zero dimensional pointwise weak tangent field. But, in this case $\cH^1(\gamma\cap E)\leq \cH^1(E)=0$ for all rectifiable curves $\gamma$, and thus $\tau(x)=\{0\}$ is a pointwise weak tangent field. 
\end{proof}

\begin{remark}
Conditional on the announced result of Cs\"ornyei and Jones, the previous argument would similarly imply that any subset $E\subset \mathbb{X}$ admits a weak tangent field of dimension at most its Hausdorff dimension. In contrast, Theorem \ref{thm:pointwise} implies -- see Corollary \ref{cor:pointwisenagata} -- that every set $E\subset\bX$ has a pointwise weak tangent field of dimension at most its Nagata dimension. In some cases, this bound is better (smaller) than the Hausdorff dimension, since in general the Hausdorff and Nagata dimensions are not comparable.
\end{remark}

\begin{remark}
The proof of Theorem \ref{thm:ACP} relies at heart on a combinatorial fact about covering finite planar sets by Lipschitz graphs (see \cite[Theorem 2.1]{ACP}). This fact does not generalize easily to ambient dimensions greater than $2$ (see \cite[Section 8b]{ACP}), which is why Theorem \ref{thm:ACP} is not known to hold in this setting. Our proof of Theorem \ref{thm:pointwise} does not use any such covering lemma.
\end{remark}

\subsection{Linear approximation and the Analyst's Traveling Salesman Theorem}
We now wish to describe our other set of main results, which concern a new notion of ``coarse tangent field''. To do so, we first give some background in quantitative geometric measure theory.

Suppose that $\bX$ is a (finite or infinite dimensional) Hilbert space and $F$ is a subset of $\bX$. Given a ball $B$ in $\bX$, we may ask: how close is $F\cap B$ to a line $L$? There are two ways to measure this: a ``unilateral'' version,
$$ \beta_F(B) = \frac{1}{\diam(B)} \inf_L \sup_{x\in F \cap B} \dist(x,L),$$
and a ``bilateral'' version
$$ \theta_F(B) = \frac{1}{\diam(B)} \inf_L \left( \sup_{x\in F \cap B} \dist(x,L) + \sup_{y\in L \cap B} \dist(x,F)\right).$$
In both cases we take the infimum over all affine lines $L$ in $\bX$ (and both quantities are set to be $0$ if $F\cap B=\emptyset$). If $\beta_F(B)$ is small then $F\cap B$ lies in a thin tube, and if $\theta_F(B)$ is small then $F\cap B$ looks very much like a line segment (or a null set) at the scale of $B$. Of course, we always have $0 \leq \beta_F \leq \theta_F \lesssim 1.$ 

In 1990, Peter Jones \cite{Jones} introduced the $\beta$-numbers and showed in his ``Analyst's Traveling Salesman Theorem'' that a multiscale sum of these quantities can be used to measure lengths of rectifiable curves in the plane; his results were later extended to $\mathbb{R}^n$ and Hilbert space by Okikiolu \cite{Okikiolu} and the third author \cite{Schul}. 

Let $\bX$ be a Hilbert space and $F\subseteq X$. Let $\{N_k\}_{k\in\mathbb{Z}}=\{\{z^k_\alpha\}_{\alpha\in I_k}\}_{k\in\mathbb{Z}}$ be a nested sequence of $2^{-k}$-nets in in $F$ (i.e.,  $N_k\subset N_{k+1}$). The collection of balls
$$ \{ B(z^k_\alpha, 2\cdot 2^{-k}) : k\in\mathbb{Z}, \alpha\in I_k\}$$
is called a \emph{multiresolution family for $F$}. If $B$ is a ball in a Hilbert space and $A>0$, we write $AB$ for the ball with the same center and $A$ times the radius. We than have:

\begin{theorem}[Jones, Okikiolu, Schul]\label{thm:tst}
Let $F$ be a subset of a Hilbert space $\bX$ with multiresolution family $\cF$, and $A$ a sufficiently large inflation parameter. Then:
\begin{enumerate}[(i)]
\item There is a connected set $\Gamma$ containing $F$ with
$$ \ell(\Gamma) \lesssim \diam(F) + \sum_{B\in\cF} \beta_F(A B)^2 \diam(B).$$
\item If $F$ is already connected, then
\begin{equation}\label{eq:tstsum}
\sum_{B\in\cF} \beta_F(A B)^2 \diam(B) \lesssim \ell(F).
\end{equation}
\end{enumerate}
The implied constants depend only on $A$. Here $\ell$ is the one-dimensional Hausdorff measure $\mathcal{H}^1$.
\end{theorem}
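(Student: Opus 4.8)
The plan is to prove the two parts by essentially independent multiscale arguments, following Jones \cite{Jones} in the plane, Okikiolu \cite{Okikiolu} in $\RR^n$, and the third author \cite{Schul} in Hilbert space; since the statement is quoted verbatim from those references, I only sketch the structure. Throughout, the single phenomenon driving both directions is that a curve which deviates from a line by height $h$ over a base of length $\approx r$ is longer than that base by $\approx h^2/r$, not $\approx h$ — the quadratic (Pythagorean) gain that matches the $\beta^2$ in \eqref{eq:tstsum}.

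For part (i), I would build a nested sequence of connected polygonal sets $\Gamma_k$ with $N_k\subseteq\Gamma_k$, refining one scale at a time, and take $\Gamma$ to be a subsequential limit (via an Arzel\`a--Ascoli argument on arclength parametrizations). To pass from $\Gamma_{k-1}$ to $\Gamma_k$, for each ball $B=B(z^{k-1}_\alpha,2\cdot 2^{-(k-1)})$ I replace the portion of $\Gamma_{k-1}$ near $z^{k-1}_\alpha$ by a curve that, in addition, threads the net points of $N_k$ lying in $B$. The essential point is the quadratic gain: those new points lie within $\beta_F(AB)\diam(B)$ of a single line, so they can be visited in the order that line induces by a curve whose length exceeds the arc it replaces by at most $C\beta_F(AB)^2\diam(B)$. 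Summing these increments over $B\in\cF$ gives $\ell(\Gamma)\lesssim\diam(F)+\sum_{B\in\cF}\beta_F(AB)^2\diam(B)$, \emph{provided} one controls the multiplicity with which the balls of a fixed scale are used; establishing this multiplicity bound is where the argument genuinely uses the structure of $F$, and (in the nontrivial case where the right-hand side is finite) where $A$ being large enough matters, so that the line witnessing $\beta_F(AB)$ is seen by all the points being inserted.

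For part (ii), assume $F=\Gamma$ is connected with $\ell(\Gamma)<\infty$, so $\Gamma$ is a rectifiable curve, parametrized by arclength. The heart of the matter is a local estimate of the form: there are $c>0$ and an inflation factor $A'$ such that for every $B\in\cF$ with $\beta_F(AB)$ not too small,
\[
\ell(\Gamma\cap A'B)\ \geq\ \diam(B)\bigl(1+c\,\beta_F(AB)^2\bigr),
\]
again Pythagorean in nature: if $\Gamma\cap AB$ fails to lie in a thin tube, then within a controlled dilate it must deviate from every line by a definite amount and hence exceed a diameter in length by a square amount. One then sums these local gains over $\cF$, organized by scale, with bounded multiplicity, which yields the desired bound \eqref{eq:tstsum}.

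The main obstacle, and the reason this is a theorem rather than an exercise, is twofold. First, in part (ii) the scale-by-scale length excesses must be upgraded into a single globally summable quantity: the trivial ``$+\diam(B)$'' contributions, summed over all of $\cF$, diverge, so it is exactly the $\beta^2$-excesses that must be shown to add up — via an almost-orthogonality (Jones-type square-function) estimate, or equivalently a corona/stopping-time decomposition that isolates the large-$\beta$ balls and charges them to honest excess length of $\Gamma$ with bounded overlap, while absorbing the small-$\beta$ regions into a single length budget. Second, in an infinite-dimensional Hilbert space $\bX$ the ambient space is not doubling, so the Besicovitch-type packing bounds that control multiplicities in $\RR^n$ are unavailable; following \cite{Schul}, the remedy is to run all of the geometry intrinsically on the net structure of $F$, where the needed bounded overlap does hold because (in the nontrivial case) $F$ has a connected hull of finite length. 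A subsidiary bookkeeping issue is that the construction in (i) and the local length estimate in (ii) each require their own inflation constants larger than the $A$ of the statement, and one must check these can all be absorbed so that the final implied constants depend on $A$ alone.
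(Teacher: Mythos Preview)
The paper does not give its own proof of this statement: Theorem~\ref{thm:tst} is quoted as a known result, attributed to Jones~\cite{Jones}, Okikiolu~\cite{Okikiolu}, and Schul~\cite{Schul}, and used as a black box. Your sketch is a reasonable outline of the arguments in those references, so there is nothing to compare against in the present paper.
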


Part (ii) of Theorem \ref{thm:tst} says, loosely speaking,
that every finite-length curve in Hilbert space has a good approximating line at ``most'' locations and scales along the curve. A corollary of our main result (Corollary \ref{cor:Nagata-Assuad}) says roughly that if one restricts the sum in \eqref{eq:tstsum} to the balls in $\cF$ that lie close to a given set $E\subseteq \bX$ of (Assouad or Nagata) dimension $<d$, then one can essentially predetermine (i.e. irrespective of $F$) 
a $(d-1)$-plane that contains the approximating line for the definition of $\beta_F$. For example, if one takes $E$ to be a Sierpinski carpet in the plane, then its dimension is less than $2$ and one can obtain a quantitative analogue of Theorem \ref{thm:ACP} in this setting.

\textbf{Our main results (Theorems \ref{thm:main} and \ref{thm:onedim}) are in fact more general than this}: they apply for sets $E$ in an infinite dimensional Hilbert space that satisfy a condition weaker than having dimension $<d$.

\subsection{Coarse tangent fields}
Fix a (finite or infinite dimensional) Hilbert space $\bX$. We write $\mathcal{L}_k$ for the collection of $k$-dimensional linear subspaces of $\bX$ and $\mathcal{A}$ for the collection of affine lines in $\bX$. If $V\in\mathcal{L}_k$ and $L\in \mathcal{A}$, we write $L\parallel V$ if $L=L_0 + w$ where $L_0\subseteq V$ and $w\in\bX$.

Let $\cF$ be a countable collection of balls in $\bX$. Given $k\in\mathbb{N}$, a \emph{$k$-dimensional coarse plane field on $\cF$} is an assignment $\tau\colon \cF \rightarrow \bigcup_{j=0}^k \mathcal{L}_j$. In other words, $\tau$ assigns to each ball in $\cF$ a linear subspace with dimension $\leq k$.
 
Now let $\Gamma$ be a subset of $\bX$ (which in this paper will usually be a rectifiable curve). If $\tau$ is a coarse plane field on $\cF$ and $B$ is a ball in $\cF$, we may define a ``restricted'' version of $\beta$ and $\theta$ by
$$ \beta^{\tau}_{\Gamma}(C) = \frac{1}{\diam(C)}\inf\left\{ \sup_{x\in \Gamma \cap C} \dist(x, L): L\in\mathcal{A}, L\parallel \tau(C) \right\}$$
and
$$ \theta^{\tau}_{\Gamma}(C) = \frac{1}{\diam(C)}\inf\left\{ \sup_{x\in \Gamma \cap C} \dist(x, L) + \sup_{y\in L \cap C} \dist(y, \Gamma): L\in\mathcal{A}, L\parallel \tau(C) \right\}.$$
The key point here is that \emph{when computing $\beta^\tau_\Gamma(C)$ and $\theta^\tau_\Gamma(C)$, we permit ourselves only to consider approximating lines that are parallel to the pre-assigned plane $\tau(C)$.} In particular, we have
$$ 0 \leq \theta_\Gamma \leq \theta^\tau_\Gamma \lesssim 1$$
and similarly for $\beta_\Gamma$ vs. $\beta_\Gamma^\tau$.

If $\cF$ is a collection of balls in $\bX$ and $A\geq 1$, we set
$$ \mathcal{F}_A = \{AB : B\in \cF\}.$$

\begin{definition}\label{def:coarsetangent}
Let $\cF$ be a family of balls in a Hilbert space $\bX$ and $A\geq 1$. Suppose that $\tau$ is a $k$-dimensional coarse plane field on $\cF_A$ and $\epsilon>0$.

We call $\tau$ a \emph{$k$-dimensional coarse $\epsilon$-tangent field for $\cF$ with inflation $A$} if the following holds: There is a constant $C=C(\cF, \tau,\epsilon, A)$ such that 
\begin{equation}\label{eq:epstangentdef}
 \sum_{\substack{B\in\cF\\ \Gamma \cap B \neq \emptyset \\ \diam(B) \leq \diam(\Gamma)\\ \theta^\tau_\Gamma(AB)\geq \epsilon }} \diam(B) \leq  C\ell(\Gamma),
\end{equation}
whenever $\Gamma\subseteq \bX$ is a rectifiable curve. 

It may be that a fixed coarse plane field $\tau$ serves as a coarse $\epsilon$-tangent field for $\cF$ with inflation $A$ simultaneously for \textbf{every} $\epsilon>0$ (with the constant in \eqref{eq:epstangentdef} depending on $\epsilon$). In that case, we we say that $\tau$ is a \emph{coarse tangent field for $\cF$ with inflation $A$}.
\end{definition}

{The idea behind a coarse tangent field can be summarized as follows.} If $\cF$ is a family of balls and and $\tau$ is a coarse plane field on $\cF$, then we may ``guess'' that for each ball $B\in\cF$, every curve $\Gamma$ that passes near $B$ must be well-approximated by a line parallel to $\tau(B)$. This guess may sometimes be wrong. However, if $\tau$ is a coarse $\epsilon$-tangent field for $\cF$, then, for every curve $\Gamma$, this guess is \emph{almost always accurate to within $\epsilon$}: the collection of balls where our pre-chosen linear approximation was incorrect by more than error $\epsilon$ is well controlled.

In practice, we will consider only the case where $\cF$ is a multiresolution family of balls associated to a fixed set $E$. In this case, one might consider a coarse tangent field for $\cF$ as a coarse tangent field associated to the set $E$.

\subsection{Main results on coarse tangent fields}

The main ``coarse'' result of the paper is Theorem \ref{thm:main}. Given an arbitrary doubling subset of Hilbert space, we construct coarse $\epsilon$-tangent fields associated to it, with dimension controlled by the dimension $d$ for which $E$ badly fits $d$-planes.

\begin{theorem}\label{thm:main}
Let $\bX$ be a (finite or infinite dimensional) Hilbert space. Let $E\subseteq \bX$ be a doubling subset that badly fits $d$-planes for some $d\geq 1$. Suppose $\mathcal{F}$ is a multiresolution family of balls for $E$ and fix an inflation parameter $A\geq 1$. 

Then for each $\epsilon>0$, $\mathcal{F}$ admits a $(d-1)$-dimensional coarse $\epsilon$-tangent field $\tau_\epsilon$ with inflation $A$. For each $\epsilon$, the constant $C$ in \eqref{eq:epstangentdef} depends only on $A, \epsilon, d$, the constant $\epsilon_0$ from Definition \ref{def:fit}, and the doubling constant of $E$.
\end{theorem}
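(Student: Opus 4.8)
The plan is to construct, for each $\epsilon>0$, the coarse plane field $\tau_\epsilon$ directly by choosing, for each ball $C\in\mathcal F_A$, a best-fitting $(d-1)$-dimensional subspace to $E\cap C$, and then to show using the bad fit hypothesis and the Traveling Salesman theorem that this choice controls the sum in \eqref{eq:epstangentdef}. First I would fix notation: for a ball $C$, among all affine $(d-1)$-planes $P$ I would pick one (call it $P_C$) that comes as close as possible to covering $E\cap C$ in the unilateral sense $\frac{1}{\diam C}\sup_{x\in E\cap C}\dist(x,P)$, and then let $\tau_\epsilon(C)$ be the $(d-1)$-dimensional linear subspace parallel to $P_C$ (if $E\cap C$ spans fewer dimensions, take $\tau_\epsilon(C)$ to be any $(d-1)$-subspace containing the affine hull; the field is allowed to have dimension $\le d-1$). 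Crucially, this choice is \emph{independent of any curve $\Gamma$}, as required by the definition of coarse tangent field. Note $\tau_\epsilon$ will in fact not depend on $\epsilon$ either, so we will really get a genuine coarse tangent field in the sense of Definition \ref{def:coarsetangent}; but the bookkeeping in the estimate will depend on $\epsilon$.

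Next I would set up the main estimate. Fix a rectifiable curve $\Gamma$ and consider a ball $B\in\mathcal F$ contributing to the sum, i.e. $\Gamma\cap B\ne\emptyset$, $\diam B\le\diam\Gamma$, and $\theta^{\tau_\epsilon}_\Gamma(AB)\ge\epsilon$. The key claim to prove is that for such balls the \emph{unrestricted} bilateral number satisfies $\theta_\Gamma(A'B)\gtrsim_\epsilon 1$ for a slightly larger inflation $A'$, or more precisely that the ordinary Jones $\beta$-number $\beta_\Gamma(A'B)\gtrsim_{\epsilon,d,\epsilon_0}1$. Once that is established, part (ii) of Theorem \ref{thm:tst} (applied with inflation $A'$, after passing from $\beta^2$ to $\beta$ via the lower bound, and using that only $\gtrsim 1$ terms survive) bounds $\sum \diam(B)$ over the bad balls by $\lesssim \ell(\Gamma)$, with the implied constant absorbing the $\epsilon$-dependent factor; a standard bounded-overlap argument handles the replacement of $\mathcal F$ by $\mathcal F_{A'}$ and of $\Gamma$-meeting balls centered in $E$ by the multiresolution family. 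This reduces everything to the dichotomy in the key claim.

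The main obstacle — and the heart of the argument — is the key claim: if a curve $\Gamma$ passes through $B$ but \emph{cannot} be $\epsilon$-approximated (bilaterally) by any line parallel to $\tau_\epsilon(AB)=$ (the direction of) the best $(d-1)$-plane $P_{AB}$ for $E$, then $\Gamma$ must be genuinely two-dimensional at scale $B$, i.e. $\beta_\Gamma(A'B)\gtrsim 1$. I expect to prove this by contradiction: suppose $\beta_\Gamma(A'B)$ is tiny, so $\Gamma\cap A'B$ lies in a thin tube around some line $L$. I would then argue that $L$ must be ``far'' from the direction $\tau_\epsilon(AB)$ — otherwise $L$ itself, or a slight translate, would be an admissible competitor showing $\theta^{\tau_\epsilon}_\Gamma(AB)<\epsilon$, contradicting that $B$ is a bad ball. (Here one needs to be careful about the \emph{bilateral} condition: $\theta^\tau$ also requires $L\cap AB$ to be close to $\Gamma$, but since $\Gamma\ni$ a point of $B$ and $\Gamma$ is nearly a segment along $L$ of length comparable to $\diam(AB)$, the line $L$ genuinely ``fills'' $AB$, so the reverse inclusion is automatic up to constants — this point will need a short lemma, perhaps already available from connectedness of $\Gamma$ and a continuity/mean-value argument on the curve.) So $L$ is transverse to $\tau_\epsilon(AB)$ by a definite angle. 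Now I use bad fit: on one hand, $E\cap AB$ lies within $\epsilon_0\diam(AB)$ of the $(d-1)$-plane $P_{AB}$; on the other hand, I want to produce a $d$-plane — roughly $\mathrm{span}(P_{AB},L)$, translated appropriately — that lies within $o_{\epsilon_0}(1)\cdot\diam$ of $E$ on a ball of comparable size, contradicting Definition \ref{def:fit}. The subtlety is that bad fit forbids a $d$-plane from \emph{covering} (being contained in a neighborhood of) $E$, i.e. $V\cap B\subset N_{\epsilon_0\diam}(E)$, so I need $E$ to be \emph{dense enough} along this $d$-plane, not just near it; this is where the point $\Gamma\cap B\ne\emptyset$ and the net structure of the multiresolution family come in, together with possibly choosing the curve-meeting ball carefully and using the doubling property to find many net points of $E$ near $\Gamma$ inside $B$. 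Getting the quantitative relationship between the angle transversality, the $\beta$-threshold, $\epsilon$, $\epsilon_0$, and $d$ exactly right — and in particular ruling out degenerate configurations where $E\cap AB$ is essentially lower-dimensional so that $P_{AB}$ is not well-defined — is the part I expect to require the most care.
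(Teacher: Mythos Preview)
Your central reduction---that $\theta^{\tau_\epsilon}_\Gamma(AB)\ge\epsilon$ forces $\beta_\Gamma(A'B)\gtrsim_{\epsilon,d,\epsilon_0}1$---is false, and the sketch you give for it cannot be completed. For a clean failure take $E$ a Cantor set on a horizontal line in $\mathbb{R}^2$ (porous, so it badly fits $2$-planes) and $\Gamma$ a short vertical segment through a point of $E$. Your best-fit line $P_{AB}$ is horizontal for every ball, so $\theta^{\tau}_\Gamma(AB)\approx 1$ on every relevant $B$, yet $\beta_\Gamma(A'B)=0$. The deeper problem is the bad-fit contradiction you aim for: you want $\mathrm{span}(P_{AB},L)\cap B'\subset N_{\epsilon_0\diam}(E)$, but nothing in your setup produces points of $E$ along this $d$-plane. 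That $E\cap AB$ lies near $P_{AB}$ is the wrong direction of containment (and need not even hold with small constant), and the curve $\Gamma$ carries \emph{no} information about $E$ beyond meeting one ball of $\mathcal F$---so ``many net points of $E$ near $\Gamma$'' is simply not available. You correctly flag this as the crux, but offer no mechanism to resolve it; there isn't one at this level of generality.

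The paper's argument is structurally different. The field $\tau(Q)$ is not a single-scale best fit to $E$: one recursively selects $\tau_k(Q)\in\mathcal L_k$ to maximize the number of \emph{ancestor} cubes $Q^{\uparrow n}$ for which some affine translate remains ``good'' (i.e.\ stays inside $N_{\epsilon_2^{(k)}\diam}(E)$). Independently, a coronization (Proposition~\ref{prop:str}) decomposes the $\Gamma$-touching cubes into stopping-time regions where $\Gamma$ is simultaneously flat, \emph{close to $E$}, and slowly turning; only with closeness to $E$ can one legitimately argue that $\langle\tau_\Gamma,\tau_k(Q)\rangle$ is good and invoke bad fit. The bad cubes inside each region are then bounded by a hole/shadow counting scheme (Claims~\ref{claim:holes>2}--\ref{claim:badd>2}) rather than by the Traveling Salesman sum directly. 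Finally, note that your $\tau$ is $\epsilon$-independent; had the argument gone through it would have resolved the paper's open Question~\ref{q:independence} for all $d$, whereas the paper achieves this only for $d=2$ via a separate aggregation step (Theorem~\ref{thm:coarse}).
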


\begin{remark} For every doubling subset $E\subset \bX$ and multiresolution family $\mathcal{F}$ there exists a ``trivial'' coarse $\epsilon$-tangent field $\tau_\epsilon$, obtained by setting $\tau_\epsilon(B)$ to be parallel to the affine span of a $\sim \epsilon \diam(B)$-net in $2AB$. However, the dimension of such a tangent field depends on $\epsilon$ and in Hilbert space (typically) blows up as $\epsilon$ approaches $0$. The key difficulty in our theorem is that the dimension of the field is bounded in a sharp way by quantities independent of $\epsilon$.

\end{remark}

We note that every doubling subset of Hilbert space has some finite Nagata dimension $n$ and hence, as we show below, it badly fits $d$-planes for $d=n+1$. Thus, Theorem \ref{thm:main} applies (with appropriate $d$) to every doubling subset of Hilbert space. (The dimension of the tangent field can also be bounded by the Assouad dimension of the set $E$, since this is an upper bound for the Nagata dimension \cite{LeDonneRajala}. Alternatively, one can use the Assouad dimension directly to deduce the badly fitting property.)

In Theorem \ref{thm:main}, the choice of coarse $\epsilon$-tangent field is allowed to depend on $\epsilon$ (as well as on $E$ , $\mathcal{F}$, and the various other parameters in the statement). When $d=2$ and the theorem constructs coarse line fields, we are able to show that the coarse tangent field can be chosen independent of $\epsilon$:

\begin{theorem}\label{thm:onedim}
Let $\bX$ be a (finite or infinite dimensional) Hilbert space.  Let $E\subseteq \bX$ be a doubling subset that badly fits $2$-planes. Suppose $\mathcal{F}$ is a multiresolution family of balls for $E$ and fix an inflation parameter $A\geq 1$.

Then $\mathcal{F}$ admits a $1$-dimensional coarse tangent field $\tau$ with inflation $A$. For each $\epsilon$, the constant $C$ in \eqref{eq:epstangentdef} depends only on $A, \epsilon, d$, the constant $\epsilon_0$ from Definition \ref{def:fit}, and the doubling constant of $E$.
\end{theorem}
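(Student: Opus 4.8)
The plan is to deduce this from Theorem \ref{thm:main} applied with $d=2$, together with an extra argument showing that the $\epsilon$-dependence can be removed when the target dimension is $1$. For each $\epsilon = 2^{-m}$, $m \in \mathbb{N}$, Theorem \ref{thm:main} produces a $1$-dimensional coarse $\epsilon$-tangent field $\tau_{\epsilon}$ on $\mathcal{F}_A$, i.e.\ an assignment of a line (through the origin) or the point $\{0\}$ to each ball. The difficulty is that a priori these lines need not be consistent across different values of $m$, and naively we cannot take a ``limit'' of $1$-dimensional subspaces and stay $1$-dimensional. So the real content is a stability/selection argument: for a fixed ball $B \in \mathcal{F}_A$, the various candidate lines $\tau_{2^{-m}}(B)$ must cluster (in the Grassmannian $\mathcal{L}_1$, with the natural metric coming from the operator norm on orthogonal projections) around a single line, and we want to pick that cluster line as $\tau(B)$.

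First I would quantify how a single coarse $\epsilon$-tangent field controls curves at a \emph{coarser} error scale $\epsilon' > \epsilon$: trivially, if $\theta^\tau_\Gamma(AB) \geq \epsilon'$ then also $\geq \epsilon$, so $\tau_\epsilon$ is automatically a coarse $\epsilon'$-tangent field with the same constant. The issue is the reverse direction — replacing $\tau_\epsilon$ by a \emph{different} line and asking the bound to survive at scale $\epsilon'$. The key geometric lemma I would prove is: if $L, L'$ are affine lines with $L \parallel V$, $L' \parallel V'$ for $V, V' \in \mathcal{L}_1$ with $\dist_{\mathcal{L}_1}(V, V') < \delta$, and $\Gamma \cap C$ lies in the $\epsilon\diam(C)$-neighborhood of $L$, then $\Gamma \cap C$ lies in the $(\epsilon + C_0\delta)\diam(C)$-neighborhood of some line parallel to $V'$ (obtained by rotating $L$ to be parallel to $V'$ about a well-chosen point of $\Gamma \cap C$), and similarly for the bilateral term. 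Hence $\beta^{\tau}_\Gamma(C)$ and $\theta^\tau_\Gamma(C)$ are Lipschitz in $\tau(C) \in \mathcal{L}_1$ at each ball, with a universal constant.

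With that lemma in hand, here is the selection. Fix $B \in \mathcal{F}_A$. If $\tau_{2^{-m}}(B) = \{0\}$ for infinitely many $m$, set $\tau(B) = \{0\}$. Otherwise, for all large $m$ we have lines $V_m := \tau_{2^{-m}}(B) \in \mathcal{L}_1$; I claim these form a Cauchy-like sequence in the sense that for each fixed large $m_0$, all $V_m$ with $m \geq m_0$ lie within $O(2^{-m_0})$ of each other. Suppose not: then there are $m < m'$, both large, with $\dist_{\mathcal{L}_1}(V_m, V_{m'}) \geq \eta$ for some fixed $\eta$ along a subsequence. Now build a single rectifiable curve $\Gamma$ (a short segment, or a suitable net-curve in $E \cap 2AB$, using that $E$ badly fits $2$-planes only to know such $B$ are ``interesting'') whose direction inside $C = AB$ is a line parallel to a \emph{third} direction $W$ chosen so that $\Gamma \cap C$ is $(2^{-m}/2)\diam(C)$-close to a line parallel to $V_m$ but, by the Lipschitz lemma, is then at distance $\geq \eta/(2C_0) \gg 2^{-m'}$ from every line parallel to $V_{m'}$; this single ball $B$ then violates \eqref{eq:epstangentdef} for $\tau_{2^{-m'}}$ at $\epsilon = 2^{-m'}$ unless $\diam(B) > \diam(\Gamma)$ — which we avoid by scaling $\Gamma$ to have diameter comparable to $\diam(B)$ — giving a contradiction once we also observe one ball contributing $\diam(B)$ while $\ell(\Gamma) \approx \diam(B)$ forces the constant $C(\mathcal{F},\tau_{2^{-m'}}, 2^{-m'}, A)$ to be bounded below independently of $m'$, contradicting that it is finite for each fixed $m'$. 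Hmm — this last step needs care, so the honest version is: iterate to get infinitely many balls (dyadic descendants of $B$) with large $\theta^{\tau_{2^{-m'}}}$, and sum their diameters against a single curve through all of them. Let $\tau(B) := \lim_m V_m$ (exists by the clustering).

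Finally I would verify $\tau$ is a genuine coarse tangent field: given $\epsilon > 0$, pick $m$ with $2^{-m} < \epsilon/2$ and $2C_0 \cdot O(2^{-m}) < \epsilon/2$; then for every ball $B$ with $\theta^\tau_\Gamma(AB) \geq \epsilon$, the Lipschitz lemma gives $\theta^{\tau_{2^{-m}}}_\Gamma(AB) \geq \epsilon - C_0\dist_{\mathcal{L}_1}(\tau(B), V_m) \geq \epsilon - \epsilon/2 \geq 2^{-m}$, so $B$ is counted by \eqref{eq:epstangentdef} for $\tau_{2^{-m}}$ at error $2^{-m}$, and we inherit the bound $C(\mathcal{F}, \tau_{2^{-m}}, 2^{-m}, A)\,\ell(\Gamma)$. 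The main obstacle is the clustering/contradiction step in the previous paragraph — controlling that the finitely-many-for-each-$\epsilon$ constants do not degenerate, and producing the family of witness balls and a single curve through them — everything else is bookkeeping and the elementary Grassmannian estimate. I also need the measurability (Borel-ness) of $\tau$, which follows since each $\tau_{2^{-m}}$ can be taken Borel and pointwise limits of Borel maps into the (separable, complete) metric space $\mathcal{L}_1 \cup \{0\}$ are Borel.
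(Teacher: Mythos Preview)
Your overall plan---apply Theorem \ref{thm:main} with $d=2$ to obtain fields $\tau_{2^{-m}}$ and then aggregate them into a single $\tau$---matches the paper's, and your Lipschitz lemma for $\theta^\tau_\Gamma$ together with the final verification step are essentially right. The genuine gap is in your selection step.

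Your clustering claim is false: the sequence $V_m = \tau_{2^{-m}}(B)$ need not be Cauchy, converge, or cluster at all. The paper gives exactly this counterexample just before its proof: take $E = S^1 \subset \mathbb{R}^2$, fix $B \in \mathcal{F}$, and observe that for $m$ beyond some threshold $k(B)$ (depending on the curvature of $S^1$ relative to the scale of $B$) \emph{no} curve $\Gamma$ through $B$ can satisfy $\theta_\Gamma(AB) < 2^{-m}$, so a fortiori $\theta^{\tau}_\Gamma(AB) \geq 2^{-m}$ regardless of the choice of line. Thus $\tau_{2^{-m}}(B)$ may be chosen \emph{arbitrarily} for all large $m$ while remaining a perfectly valid coarse $2^{-m}$-tangent field. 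Your contradiction argument fails because a single offending ball contributes only $\diam(B) \approx \ell(\Gamma)$ to the sum, which is consistent with \eqref{eq:epstangentdef} for any constant $C \geq 1$; and your proposed fix (iterate to dyadic descendants of $B$) fails because the non-clustering at $B$ tells you nothing about descendants of $B$---the fields $\tau_{2^{-m}}$ at distinct balls are constructed independently.

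The paper sidesteps convergence entirely with a stopping-time construction. For each $B$, let $K(B)$ be the largest $K$ for which $\bigcap_{k=1}^{K} \{v : \|v\|=1,\ \angle(\langle v\rangle, \tau_{2^{-k}}(B)) \leq 2^{-k}\} \neq \emptyset$, and choose $\tau(B)$ inside this intersection. Given $\epsilon > 0$, set $k_0 \approx \log_2(1/\epsilon)$. First use Proposition \ref{p:TST-betas-bounded} to dispose of balls with $\theta_\Gamma(AB) \geq \epsilon/10$; then for each $k \leq k_0$ use that $\tau_{2^{-k}}$ is a coarse $2^{-k}$-tangent field to dispose of balls where $\angle(\tau_\Gamma(B),\tau_{2^{-k}}(B)) > 2^{-k}$. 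On the surviving balls, $\tau_\Gamma(B)$ lies in the $k_0$-fold cone intersection, so that intersection is nonempty, hence $K(B) \geq k_0$, hence $\tau(B)$ lies in it as well; this forces $\angle(\tau(B),\tau_\Gamma(B)) \leq 2^{1-k_0} < \epsilon/10$, and such balls do not appear in the sum \eqref{eq:epstangentdef}. The stopping index $K(B)$ is what replaces your (nonexistent) limit.
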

In particular, Theorem \ref{thm:onedim} applies to every porous set in $\mathbb{R}^2$, for instance the Sierpi\'nski carpet, as well as examples like the Menger sponge in $\mathbb{R}^3$. 

As noted above, dimension theory gives a simple sufficient condition for a space to badly fit $d$-planes: a space with Nagata or Assouad dimension strictly less than $d$ must badly fit $d$-planes. Therefore, we have the following 

\begin{corollary}\label{cor:Nagata-Assuad}
Let $\bX$ be a (finite or infinite dimensional) Hilbert space. Let $E\subseteq \bX$ be a doubling subset with Nagata dimension at most $n$ (or Assouad dimension strictly less than $n+1$). Suppose $\mathcal{F}$ is a multiresolution family of balls for $E$ and fix an inflation parameter $A\geq 1$.

Then for each $\epsilon>0$, $\mathcal{F}$ admits a $n$-dimensional coarse $\epsilon$-tangent field with inflation $A$. For each $\epsilon$, the constant $C$ in \eqref{eq:epstangentdef} depends only on $A, \epsilon, n$ and the Nagata dimension and doubling constants of $E$.

Moreover, if $n=1$, then $\mathcal{F}$ admits a $1$-dimensional coarse tangent field with inflation $A$.
\end{corollary}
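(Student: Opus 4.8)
The plan is to derive the corollary essentially for free from Theorems \ref{thm:main} and \ref{thm:onedim}, once we translate the dimension hypotheses into the ``badly fits $d$-planes'' language. The first step is the elementary remark that Nagata dimension is integer-valued, so the hypothesis that $E$ has Nagata dimension at most $n$ is literally the same as saying it has Nagata dimension strictly less than $d:=n+1$; the Assouad hypothesis is already of this form with the same $d$. Hence, by the implication recorded around Definition \ref{def:fit} and in subsection \ref{subsec:dimension} — a set of Nagata (resp.\ Assouad) dimension $<d$ badly fits $d$-planes — we conclude that $E$ badly fits $(n+1)$-planes, with a constant $\epsilon_0>0$ in Definition \ref{def:fit} that depends only on $n$ and on the Nagata (resp.\ Assouad) dimension constant of $E$.

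It is worth recalling why that implication holds, since this is where the only genuine geometry in the corollary lives. Suppose for contradiction that, for a small $\epsilon$, there were an $(n+1)$-plane $V$ and a ball $B$ with $V\cap B \subset N_{\epsilon\,\diam(V\cap B)}(E)$. Then $W:=V\cap B$ is a genuine $(n+1)$-dimensional Euclidean ball, of some radius $r$, and in particular contains an $(n+1)$-dimensional cube of side $\sim r$. If $E$ has Assouad dimension $\alpha<n+1$: a maximal $\sim\epsilon r$-separated subset of $W$ has $\gtrsim \epsilon^{-(n+1)}$ points, and moving each point to a nearby point of $E$ yields $\gtrsim \epsilon^{-(n+1)}$ points of $E$ that are $\gtrsim\epsilon r$-separated inside a ball of radius $\lesssim r$; this contradicts the Assouad bound $\lesssim \epsilon^{-\alpha}$ once $\epsilon$ is small. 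If $E$ has Nagata dimension $<n+1$: take a cover of $E$ at an appropriate scale $\sim\epsilon r$ of multiplicity $\le n+1$, intersect the $\sim\epsilon r$-neighborhoods of its members with $W$, and observe that this produces a finite cover of $W$ of multiplicity $\le n+1$ and mesh $\lesssim\epsilon r$; for $\epsilon$ small this contradicts the Lebesgue covering theorem applied to the $(n+1)$-cube sitting inside $W$, which forbids covers of small mesh and multiplicity $\le n+1$. In both cases the admissible threshold for $\epsilon$, hence $\epsilon_0$, depends only on $n$ and the relevant dimension constant.

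With ``$E$ badly fits $(n+1)$-planes'' established, the corollary is immediate. Since $d=n+1\ge 1$, Theorem \ref{thm:main} supplies, for each $\epsilon>0$, a $(d-1)=n$-dimensional coarse $\epsilon$-tangent field for $\mathcal{F}$ with inflation $A$, with the constant in \eqref{eq:epstangentdef} depending on $A$, $\epsilon$, $d$, $\epsilon_0$, and the doubling constant of $E$; substituting $d=n+1$ and the above dependence of $\epsilon_0$ gives exactly the dependence on $A$, $\epsilon$, $n$, the Nagata/Assouad constant, and the doubling constant claimed in the statement. When $n=1$, so $d=2$, we instead invoke Theorem \ref{thm:onedim} to obtain a single $1$-dimensional coarse tangent field valid for all $\epsilon>0$ simultaneously. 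There is thus no real obstacle in this corollary itself: all of the difficulty has been absorbed into Theorems \ref{thm:main} and \ref{thm:onedim} and into the dimension comparison; the only points requiring attention are the bookkeeping $d=n+1$ (so that the output dimension $d-1$ is $n$) and the scale-invariance of the constant $\epsilon_0$ produced above.
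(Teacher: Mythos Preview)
Your proposal is correct and follows essentially the same route as the paper: reduce the dimension hypothesis to ``badly fits $(n+1)$-planes'' via Lemma~\ref{lem:notgood} (or the direct Assouad counting you sketch, which the paper also mentions as an alternative), then invoke Theorems~\ref{thm:main} and~\ref{thm:onedim}. The only cosmetic difference is that for the Assouad case the paper prefers to route through the bound of Le~Donne--Rajala (Assouad dimension bounds Nagata dimension) rather than your direct net-counting argument, but both are valid and yield the same dependence of constants.
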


\begin{remark}\label{rmk:unnested}
For simplicity and consistency with the literature, our definition of multiresolution family requires that the nets $N_k$ which are the centers of balls in $\cF$ are nested ($N_k\subseteq N_{k+1}$). However, this nested property of the nets is not really necessary for the multiresolution family $\cF$ for $E$ appearing in Theorems \ref{thm:main} and \ref{thm:onedim}. In other words, if $N_k$ are (not necessarily nested) $2^{-k}$-nets and 
$$ \cF = \{B(x,2(2^{-k}): x\in N_k, k\in\mathbb{Z}\},$$
then the theorems still hold. This follows from our theorems by a simple argument using the doubling property of $E$. Sometimes (e.g., in unbounded sets $E$) it is easier to build un-nested nets.
\end{remark}

\begin{remark}
A potentially useful feature of our coarse, quantitative results (Theorems \ref{thm:main} and \ref{thm:onedim}, Corollary \ref{cor:Nagata-Assuad}) is that they are already interesting when $E$ is a finite set, in a way we now describe. (This is in contrast to, e.g., Theorem \ref{thm:ACP}, which is trivial for finite sets.)

If $E$ is a set of $N$ points in $\bX$, then trivially $E$ badly fits $1$-planes (with some parameter $\epsilon_0$) and has Nagata dimension $0$ (with some Nagata dimension constant); therefore it has a $1$-dimensional coarse tangent field, though possibly with a large implied constant in \eqref{eq:epstangentdef}.

On the other hand, if one has bounds \emph{independent of $N$} on the data for which $E$ badly fits $d$-planes, or bounds \emph{independent of $N$} on the Nagata dimension and Nagata dimension constant of $E$, then Theorem \ref{thm:main} provides coarse $\epsilon$-tangent fields associated to $E$ with dimension and constant independent of $N$.
\end{remark}

Lastly, we note that the ``badly fitting $d$-planes'' property is not only sufficient, but also necessary to admit a coarse $\epsilon$-tangent field:
\begin{theorem}\label{thm:converse}
Let $\bX$ be a (finite or infinite dimensional) Hilbert space, $E\subseteq \bX$, $\cF$ a multiresolution family for $E$, $A\geq 1$, and $d\in\mathbb{N}$. Suppose that $\cF$ admits a $(d-1)$-dimensional coarse $\epsilon$-tangent field with inflation $A$, for some $\epsilon>0$ sufficiently small (depending on $d$, $A$). Then $E$ badly fits $d$-planes.
\end{theorem}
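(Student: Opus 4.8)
The plan is to argue by contradiction. Suppose $E$ does \emph{not} badly fit $d$-planes. Then for every $\epsilon_0>0$ there is a $d$-plane $V$ and a ball $B$ with $V\cap B\subseteq N_{\epsilon_0\diam(V\cap B)}(E)$; we will use this for a single small $\epsilon_0$ to be chosen depending on $d$, $A$, and $\epsilon$. The idea is that $E$ then looks, at the scale of $B$, very much like a full $d$-dimensional piece of a plane. But a $(d-1)$-dimensional coarse plane field $\tau$ must assign to the relevant balls of $\cF_A$ a subspace of dimension $\le d-1$, and a $d$-dimensional plane cannot be uniformly well-approximated (bilaterally) by affine lines parallel to any fixed $(d-1)$-dimensional subspace. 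Feeding in a suitable curve $\Gamma$ built inside $V\cap B$, we will contradict the defining inequality \eqref{eq:epstangentdef}, since the sum on the left will be forced to be much larger than $C\ell(\Gamma)$.

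Here are the steps in order. First, rescale and translate so that $B=B(0,1)$ and $V\cap B$ is comparable to a unit $d$-ball; by the hypothesis, $N_k$-points of $E$ are $\sim\epsilon_0$-dense in $V\cap B$ at the relevant scale, and in particular $\cF$ contains many balls of radius $\sim\epsilon_0$ centered near points of $V\cap B$. Second, I would choose $\Gamma$ to be a short curve (a segment, or a small ``space-filling-type'' polygonal path of controlled length) lying in $V\cap B$ near a generic point, arranged so that for a definite fraction of the balls $B'\in\cF$ meeting $\Gamma$ with $\diam(B')\le\diam(\Gamma)$, the set $\Gamma\cap AB'$ genuinely spans two independent directions inside $V$ at scale $\diam(B')$ — here one uses that $V$ is a $d$-plane with $d\ge 2$ (the case $d=1$ being vacuous or handled trivially, since $\tau$ is $0$-dimensional and one just uses that $E$ contains a nontrivial segment). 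Third, and this is the crux, I would show that for such a ball $B'$, whatever $(d-1)$-dimensional subspace $\tau(AB')$ has been pre-assigned, one has $\theta^\tau_\Gamma(AB')\ge\epsilon$: any affine line $L\parallel\tau(AB')$ either misses one of the two spanned directions of $\Gamma\cap AB'$ (so the $\sup_{x\in\Gamma\cap AB'}\dist(x,L)$ term is $\gtrsim\diam(B')$) giving a lower bound independent of $\epsilon_0$; this is a compactness/linear-algebra fact about how badly a line parallel to a codimension-$\ge 1$ subspace of a $d$-plane approximates a genuinely $2$-dimensional configuration. Fourth, count: the balls of $\cF$ that meet $\Gamma$, have diameter $\le\diam(\Gamma)$, and carry $\theta^\tau_\Gamma\ge\epsilon$ form, over a range of scales from $\sim\diam(\Gamma)$ down to $\sim\epsilon_0$, a collection whose diameters sum to a quantity $\gtrsim_{d}\log(1/\epsilon_0)\cdot\diam(\Gamma)$ (or, if one uses a space-filling $\Gamma$, one arranges the sum to beat $\ell(\Gamma)$ by an $\epsilon_0$-dependent factor at a single scale). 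Since $\ell(\Gamma)\lesssim\diam(\Gamma)$ for the segment, or is itself controlled in the space-filling case, choosing $\epsilon_0$ small enough (as a function of $C$, $d$, $A$, $\epsilon$, and the doubling constant implicit in how many balls of a given scale meet $\Gamma$) makes the left-hand side of \eqref{eq:epstangentdef} exceed $C\ell(\Gamma)$, a contradiction.

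I expect the main obstacle to be Step 3 together with the bookkeeping in Step 4: one must make quantitative the statement ``a $d$-plane is not bilaterally approximable at definite scales by lines parallel to a fixed $(d-1)$-space'' in a way that produces a \emph{definite} $\epsilon$-lower bound for $\theta^\tau_\Gamma(AB')$ on a \emph{definite proportion} of the multiresolution balls meeting $\Gamma$, uniformly over the (adversarial) choice of $\tau$ and over the location of $\Gamma$ inside $V\cap B$. The delicate point is that $\tau$ is allowed to depend on $\cF$ and hence could be cleverly tuned ball-by-ball; the resolution is that within a single $d$-plane $V$ one can always choose $\Gamma$ (a line segment suffices, taken in a direction chosen after seeing $\tau$, or simply using that for \emph{some} pair of the finitely many coordinate directions of $V$ the assigned $(d-1)$-plane is badly aligned at a positive fraction of scales) so that no single $(d-1)$-dimensional subspace can track it bilaterally at all scales. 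Getting the dependence of $\epsilon_0$ on the parameters to be explicit and the ``sufficiently small $\epsilon$'' threshold to depend only on $d$ and $A$ (as claimed in the statement) will require care but no new ideas beyond a careful scale-by-scale count.
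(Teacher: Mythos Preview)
Your overall contradiction strategy is the same as the paper's: assume $E$ does not badly fit $d$-planes, find a $d$-ball $D$ sitting in an $\epsilon_0$-neighborhood of $E$, and produce a curve $\Gamma\subset D$ for which the left side of \eqref{eq:epstangentdef} is $\gtrsim\log(1/\epsilon_0)$ while $\ell(\Gamma)\lesssim 1$. The scale count in your Step~4 is also correct.

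Where your proposal is incomplete is precisely the point you flag as the main obstacle. You hedge between two incompatible mechanisms: a ``space-filling'' $\Gamma$ that spans two directions in each $AB'$ (so that \emph{any} line misses it, regardless of $\tau$), and a single segment chosen ``after seeing $\tau$''. The first mechanism is what your Step~3 actually argues, but a segment does not span two directions, so Steps~2--3 do not apply to it; and for a genuinely two-dimensional $\Gamma$ you would need to control $\ell(\Gamma)$ against the diameter sum, which you do not do. The second mechanism (a segment) is the right one, but your pigeonhole sketch (``for some pair of coordinate directions the assigned $(d-1)$-plane is badly aligned at a positive fraction of scales'') runs into the difficulty that different segments meet different subfamilies of $\cF$, so the pigeonhole is over different index sets and does not directly yield a single good direction.

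The paper resolves this cleanly by \emph{averaging over all line segments} in $D$: put the natural measure $m$ on lines $\Gamma$ through the half-ball $D_0$ (direction in $\mathbb{S}^{d-1}$, translate in $[-\tfrac14,\tfrac14]^{d-1}$). For each fixed $B\in\cF$ with $\tfrac14 B\cap D_0\neq\emptyset$, the lines nearly orthogonal (within $V$) to $\tau(AB)$ and passing through $\tfrac12 B$ all satisfy $\theta^\tau_\Gamma(AB)\ge\epsilon$, and these form a set of $m$-measure $\gtrsim\diam(B)^{d-1}$. Summing over $B$ gives $\sum\diam(B)^d\gtrsim\log(1/\epsilon_0)$, and Fubini then produces a single segment $\Gamma$ with the desired large sum. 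This averaging is the missing ingredient that lets a \emph{segment} defeat the adversarial, ball-by-ball choice of $\tau$ without any explicit construction.
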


\subsection{Related results and open questions}

Our main results generate a number of other questions and directions. Some we solve here, while others we leave as questions.

\subsubsection{Independence of $\epsilon$}
In Theorem \ref{thm:main}, the coarse $\epsilon$-tangent field that we construct depends on the parameter $\epsilon$. 

\begin{question}\label{q:independence}
In Theorem \ref{thm:main}, can the coarse $\epsilon$-tangent field be chosen independent of $\epsilon$? In other words, in the notation of that theorem, does $\cF$ admit a coarse tangent field with inflation $A$?
\end{question}

As noted in Theorem \ref{thm:onedim}, we are able to resolve this if $d\leq 2$, but not in higher dimensions.

\subsubsection{Stronger summability for coarse tangent fields}
The notion of coarse tangent field in Definition \ref{def:coarsetangent} requires only a ``weak type'' bound on the sum of the $\theta^\tau_\Gamma(B)$ for $B\in\cF$. Given the Analyst's Traveling Salesman Theorem of Jones (Theorem \ref{thm:tst}), it would be natural to wonder if some type of stronger $L^2$-summability holds. Focusing on the case $\bX=\mathbb{R}^2$ and $d=2$, we may ask the following precise question: \emph{Suppose $E$ is a porous set in $\mathbb{R}^2$ and $\cF$ is a multiresolution family for $E$. Is there a coarse line field $\tau\colon \cF \rightarrow \mathcal{L}_1$ such that}
$$ \sum_{\substack{B\in \cF\\ \Gamma \cap B \neq \emptyset}} \beta^\tau_\Gamma(AB)^2 \diam(B) \lesssim \ell(\Gamma)?$$

We show below that the answer to this question is ``no'':

\begin{theorem}\label{thm:example}
There is a compact, porous set $P\subseteq \mathbb{R}^2$, a multiresolution family $\cF$ for $P$, and a constant $A\geq 1$, with the following property: For each coarse line field $\tau\colon \cF \rightarrow \mathcal{L}_1$, we have

$$\sum_{\substack{B\in \cF\\ \Gamma \cap B \neq \emptyset}} \beta^\tau_\Gamma(AB)^2 \diam(B)=\infty$$
for some curve $\Gamma \subseteq P$ of finite length.
\end{theorem}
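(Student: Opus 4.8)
The plan is to build $P$ as a self-similar-type "staircase" set designed so that at each scale $k$ and at many locations, the set $P$ locally looks like a short segment whose \emph{direction varies with scale} in a way that defeats any fixed choice of $\tau$. Concretely, I would start from the construction that Alberti--Cs\"ornyei--Preiss (or its predecessors) use to show sharpness, but quantize it: fix a large integer $M$ and, inside the unit square, place $M$ disjoint sub-squares of side $\sim 1/M$ along a line of some slope $s_1$; inside each of those, repeat the construction along a line of a slope $s_2$ chosen from a finite palette of widely separated slopes; and so on. Porosity is automatic because at every scale a definite fraction of each square is left empty. The resulting $P$ is compact; a multiresolution family $\cF$ is built from the natural $2^{-k}$-nets, with the inflation $A$ chosen large enough that for a ball $B\in\cF$ centered near a given generation-$k$ square, the inflated ball $AB$ sees the "parent" segment of slope $s_k$.

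The second step is the curve. One chooses $\Gamma\subseteq P$ to be a rectifiable curve that threads through the pieces in the canonical order (an arc of the standard "devil's staircase"-type curve through the construction), with $\ell(\Gamma)$ finite by arranging that the slopes $s_k$ and the branching number $M$ are balanced so the length series converges — this is a standard ATSP-type bookkeeping. The key mechanism: given \emph{any} coarse line field $\tau\colon\cF\to\mathcal{L}_1$, at each ball $B$ at scale $k$ the single assigned direction $\tau(AB)$ can be close to at most one of the finitely many slopes in the palette. Hence at a positive fraction of scales $k$ (in fact at all but boundedly many, once the palette has $\geq 3$ well-separated elements, since a curve passing through consecutive generations must traverse several distinct palette slopes), the genuine local direction of $\Gamma$ inside $AB$ differs from $\tau(AB)$ by a definite angle, forcing $\beta^\tau_\Gamma(AB)\gtrsim 1$. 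Summing $\beta^\tau_\Gamma(AB)^2\diam(B)\gtrsim \diam(B)$ over one such ball per bad scale $k$ gives a lower bound $\gtrsim \sum_k 2^{-k}\cdot(\#\text{bad balls at scale }k)$; by tuning the construction so that the number of generation-$k$ squares met by $\Gamma$ times $2^{-k}$ does \emph{not} go to zero summably while $\ell(\Gamma)<\infty$, this sum diverges. The contrast with Theorem~\ref{thm:onedim} / Theorem~\ref{thm:main} is that those only control a \emph{linear} (weak-type) sum of $\diam(B)$ over the bad balls, which here is finite, whereas the $L^2$-$\beta$ sum weights each bad ball by $\diam(B)$ with no extra decay and so blows up.

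The main obstacle I expect is the simultaneous tuning: I need (a) $\ell(\Gamma)<\infty$, which wants the slopes $s_k\to 0$ (small vertical displacements) and/or the branching controlled, but (b) the divergence of $\sum \beta^\tau_\Gamma(AB)^2\diam(B)$, which wants the \emph{angular deviation} at bad scales to stay bounded below — and if the slopes $s_k\to 0$ too fast, all palette elements collapse toward the horizontal and a fixed $\tau(B)=$ horizontal becomes a good approximation at every fine scale. The resolution is to keep the \emph{angular} palette fixed (e.g. slopes $0$, $\pm c$ for a fixed small $c$) but make the \emph{pieces at scale $k$ very short relative to $2^{-k}$}, so that a segment of fixed slope $c$ living in a square of side $\eta_k \ll 2^{-k}$ still only causes displacement $\sim c\,\eta_k$ — this keeps the curve short while preserving the order-one angular defect inside each $AB$ that meets such a piece. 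Making "meets such a piece" precise against the inflated balls $AB$, and verifying that $\Gamma$ does meet $\gtrsim$ (a non-summable number, weighted by $2^{-k}$) of them, is the technical heart; it amounts to a careful ATSP-style accounting of how many generation-$k$ cells an arc-length-parametrized sub-curve of $P$ necessarily visits, combined with the pigeonhole observation that one assigned direction cannot be simultaneously within angle $c/2$ of two palette slopes separated by $c$. I would organize the final write-up as: (1) construction of $P$ and $\cF$ with explicit parameters $\eta_k$, $M$, palette; (2) length estimate $\ell(\Gamma)<\infty$; (3) pigeonhole/angle lemma giving $\beta^\tau_\Gamma(AB)\gtrsim 1$ for the bad balls; (4) counting bad balls and summing to $\infty$.
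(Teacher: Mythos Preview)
Your proposal has a fundamental gap: you are trying to produce a \emph{single} curve $\Gamma$ that defeats every coarse line field $\tau$, but no such curve can exist. Given any rectifiable $\Gamma\subseteq\bX$, the adversary may simply set $\tau(AB)$ equal to (the direction of) a line realizing $\beta_\Gamma(AB)$ for each $B\in\cF$; then $\beta^\tau_\Gamma(AB)=\beta_\Gamma(AB)$ and the Analyst's Traveling Salesman Theorem gives $\sum_B \beta^\tau_\Gamma(AB)^2\diam(B)\lesssim \ell(\Gamma)<\infty$. So the bad curve must depend on $\tau$, and some averaging or selection argument over a family of curves is unavoidable. This is exactly what the paper does: it builds a probability measure $\eta$ on rectifiable curves in $P$ and shows that for every $\tau$ the \emph{expectation} of the $\beta^\tau$-sum is infinite.

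There is a second, related problem in your mechanism. You aim for $\beta^\tau_\Gamma(AB)\gtrsim 1$ at the bad balls, while also asserting that the weak-type sum $\sum_{\text{bad}}\diam(B)$ stays finite (so as not to contradict Theorem \ref{thm:onedim}). But these two statements together force $\sum_B \beta^\tau_\Gamma(AB)^2\diam(B)\lesssim \sum_{\text{bad}}\diam(B)<\infty$, the opposite of what you want. The whole point of the example is that the gap between the $L^2$ sum and the weak-type sum can only be exploited when $\beta^\tau_\Gamma(AB)\to 0$ slowly. The paper achieves this with a diamond construction whose opening angles $a_n\to 0$ are chosen so that $\sum a_n^2<\infty$ (giving finite-length curves) but $\sum a_n^2\log(a_n^{-1})=\infty$; the extra logarithm counts the number of dyadic scales per diamond, and the randomization over top/bottom choices in each diamond provides, for any $\tau$, a curve on which the contribution $\gtrsim a_n^2\log(a_n^{-1})$ survives at level $n$. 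Your ``fixed palette of slopes'' idea cannot produce this effect: either the angles are bounded below and you contradict Theorem \ref{thm:onedim}, or the pieces are short relative to $\diam(AB)$ and then $\beta^\tau_\Gamma(AB)$ is \emph{not} $\gtrsim 1$ (it scales like the piece length over the ball diameter), so your pigeonhole step fails.
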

In particular, it follows easily from this that this $\Gamma$ also satisfies
$$ \sum_{\substack{B\in \cF\\ \Gamma \cap B \neq \emptyset\\ \diam(B)\leq\diam(\Gamma)}} \theta^\tau_\Gamma(AB)^2 \diam(B) = \infty.$$
In fact, there will be many such ``bad curves'' $\Gamma\subseteq P$; see Lemma \ref{lem:counterexample} for a more precise statement that implies Theorem \ref{thm:example}.

This leads to a natural question:
\begin{question}
What is the infimum of all exponents $p$ for which the following holds?

Suppose $E$ is a doubling subset of a Hilbert space $\bX$ and $E$ badly fits $d$-planes. Let $A\geq 1$ and $\cF$ a multiresolution family for $E$. Then there is a $(d-1)$-dimensional coarse plane field $\tau$ on $\cF$ such that
\begin{equation}\label{eq:summability}
\sum_{\substack{B\in \cF\\ \Gamma \cap B \neq \emptyset}} \beta^\tau_\Gamma(AB)^p \diam(B) \lesssim \ell(\Gamma)
\end{equation}
for all rectifiable curves $\Gamma\subseteq \bX$. 
\end{question}
A $\tau$ satisfying \eqref{eq:summability} for some $p>0$ would be a coarse tangent field for $\cF$, so any finite value of $p$ answering this question would answer Question \ref{q:independence}.

The Analyst's Traveling Salesman Theorem itself implies that $p<2$ is impossible, while Theorem \ref{thm:example} shows that $p=2$ is impossible. It may well be the case that one can take $p$ arbitrarily close to $2$, at least when $d=2$.

\subsubsection{Banach and metric spaces}
That Theorem \ref{thm:main} holds for subsets of Hilbert space ultimately comes down, in our proof, to the Analyst's Traveling Salesman Theorem (Theorem \ref{thm:tst}) in Hilbert space. Ultimately, it is crucial for us that the exponent $2$ appears in \textbf{both} parts of Theorem \ref{thm:tst}; this is used in Section \ref{sec:TST}.

Versions of the Traveling Salesman Theorem hold in other Banach spaces (see \cite{DavidSchul, BadgerMcCurdy}), but in this case the relevant exponent for the two halves of Theorem \ref{thm:tst} do not match in general. It may be that the arguments can be pushed through in a different way in some other Banach spaces, or it may be that the situation is unique to Hilbert space; we do not know the answer.

\begin{question}\label{q:banach}
Are there other Banach spaces $\bX$ for which Theorem \ref{thm:main} holds?
\end{question}

Beyond this, one may also ask whether some version of Theorem \ref{thm:main} holds in metric spaces. This would be somewhat analogous to deep work of Bate \cite{Bate_perturb} in the pointwise setting, which adapts parts of \cite{ACP} to metric spaces.

\subsubsection{Other collections of balls}
Theorems \ref{thm:main} and \ref{thm:onedim} construct coarse $\epsilon$-tangent fields for collections $\cF$ that arise as multiresolution families associated to underlying sets $E$. However, the definition of a coarse tangent (or $\epsilon$-tangent) does not \emph{a priori} require the existence of an underlying set $E$, merely a collection of balls.

\begin{question}
Let $\cF$ be a collection of balls in a Hilbert space and $\epsilon>0$, $A\geq 1$. (We do not assume that $\cF$ is a multiresolution family for some set.) Under what conditions can we guarantee that $\cF$ admits a coarse $\epsilon$-tangent field with inflation $A$?
\end{question}

A very specific instance is the following: Suppose $\cF$ is a \textbf{disjoint} collection of balls in the unit square $[0,1]^2$ of $\bX = \mathbb{R}^2$. Does $\cF$ admit a $1$-dimensional coarse tangent field (with some inflation $A>1$)? More generally, one might consider collections of balls in $\mathbb{R}^n$ that satisfy a \emph{Carleson packing condition}, but we do not pursue this line further here.

\subsubsection{Coarse differentials of Lipschitz mappings}
The dual notion of a tangent vector is that of a differential, and to each notion of tangent space there is usually an associated notion of a differential. There are many examples of this phenomenon: corresponding to the tangent-space of a manifold there is the co-tangent space spanned by differentials of smooth functions, corresponding to the space of Weaver derivations there is the Weaver differential  \cite{heinonennonsmooth, weaver, Sc16} and corresponding to the Cheeger tangent space  there is the Cheeger differential \cite{cheeger} --- see also \cite{teriseb,bateteriseb} for more examples of this. Associated to the coarse tangent field there is also a notion of differential $df$. We indicate briefly its definition and how our result implies its existence, and leave a more detailed study of it for future work.

Consider now a Lipschitz function $f:\mathbb{X}\to \R$ and a coarse plane field $\tau$ for some multiresolution family $\cF$ for $E$. The differential for $f$ should be thought of as an example of a \emph{coarse $1$-form} $\omega$, which is an association to each $B\in \cF$ of a linear map $\omega_B:\tau(B)\to \mathbb{R}$. Given a rectifiable curve $\Gamma$ we ask how good of a linear approximation $\omega$  is to $f$ along this curve. This is measured by a $\beta$-number defined as follows
\begin{equation}
\beta_{f,\Gamma}^{\omega,\tau}(B)=\frac{1}{\diam(B)} \inf\{\sup_{x\in \Gamma \cap B} |f(\gamma(t))-\omega(\pi_{\tau(B)}(\gamma(t)))-a|: a\in \R\},
\end{equation}
where $\pi_{\tau(B)}$ is the orthonormal projection onto $\tau(B)$. We say that a coarse $1$-form $df$ is a coarse $\epsilon$-differential for $f$ (with inflation factor $A\geq 1$) if 
\begin{equation}\label{eq:epsdiffdef}
 \sum_{\substack{B\in\cF\\ \Gamma \cap B \neq \emptyset \\ \diam(B) \leq \diam(\Gamma)\\ \beta^{df, \tau}_{f, \Gamma}(AB)\geq \epsilon }} \diam(B) \leq  C\ell(\Gamma),
\end{equation}

A slightly different way to think of this is given by considering the graph of $f$ and its coarse tangent fields. Given a multiresolution family $\cF$ and a Lipschitz function $f: \mathbb{X}\to \mathbb{R}$, we can associate a family of balls to the graph of $f$ by $\cF_f=\{B_f : B\in \cF\}$, where $B_f$ is the smallest ball containing $B\times f(B)$ in $\mathbb{X}\times \mathbb{R}$.  Given $\omega$ and $f$, we can define a coarse plane field on $\cF_f$ by $\tau_{f,\omega}(B_f)=\{(x,\omega(x)): x\in \tau(B)\}.$ Then, a sufficient condition for $df$ to be a coarse $\epsilon$-differential for $f$ with inflation factor $A$ is if $\tau_{f,df}$ is a coarse $\epsilon$-tangent field for $\cF_f$ with inflation factor $A$. This condition is a bit stronger than \eqref{eq:epsdiffdef} since it also assumes that $\Gamma$ is close to a line, and not just that $f$ is well-approximated by a linear function along $\Gamma$ and within the ball $B$. By applying Theorem \ref{thm:main} to the graph of $f$ we obtain the following.

\begin{corollary}
Let $\bX$ be a Hilbert space, $E\subset\bX$ a doubling subset that badly fits $d$-planes, and $\cF$ a multiresolution family for $E$. Then, for every $\epsilon>0$ and $A>1$ there exists an $\epsilon'>0$ and $A'>1$, such that $\cF$ admits a $(d-1)$-dimensional coarse $\epsilon'$-tangent field $\tau$ with inflation factor $A'$ and any $1$-Lipschitz function $f$ admits a coarse $\epsilon$-differential with respect to $\tau$ with inflation factor $A$.
\end{corollary}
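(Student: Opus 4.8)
The plan is to apply Theorem \ref{thm:main} not to $E$ itself but to its lift under the graph map $x \mapsto (x, f(x))$ inside $\bX \times \R$, and then translate the resulting coarse tangent field back into a coarse differential. First I would verify that the graph $G = \{(x,f(x)) : x \in E\}$ is doubling in $\bX \times \R$: since $f$ is $1$-Lipschitz, the graph map is a bi-Lipschitz homeomorphism from $E$ onto $G$, so $G$ inherits the doubling property of $E$ with a controlled constant. Next I would check that $G$ badly fits $d$-planes. Here the point is that if a $d$-plane $V$ in $\bX \times \R$ passes $\epsilon_0$-close to $G \cap B$, then its projection $\pi(V)$ to $\bX$ is a plane of dimension at most $d$ passing comparably close to $E \cap \pi(B)$, because projection is $1$-Lipschitz and, conversely, the $1$-Lipschitz graph of $f$ over $E$ cannot collapse distances by more than a factor of $\sqrt 2$. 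So $G$ badly fits $d$-planes with a constant $\epsilon_0'$ depending only on $\epsilon_0$. This is the step I expect to require the most care: one must be slightly attentive that the diameter normalization in Definition \ref{def:fit} is comparable for $V\cap B$ and $\pi(V)\cap \pi(B)$, and that the projection of a $d$-plane need not be a $d$-plane but is at worst a $d$-plane, which is all that is needed.

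Having established that $G$ is a doubling subset of the Hilbert space $\bX \times \R$ that badly fits $d$-planes, I would note that $\cF_f = \{B_f : B \in \cF\}$ is (comparable to) a multiresolution family for $G$, again using that the graph map is bi-Lipschitz; more precisely, if $N_k$ are the $2^{-k}$-nets underlying $\cF$, their images are $C2^{-k}$-nets in $G$, and Remark \ref{rmk:unnested} (together with the standard doubling reindexing) lets us pass between these and a genuine multiresolution family for $G$ at the cost of absorbing constants into the inflation parameter. Theorem \ref{thm:main}, applied to $G$, $\cF_f$, and a suitably large inflation parameter $A''$ depending on $A$, then produces for each $\epsilon > 0$ a $(d-1)$-dimensional coarse $\epsilon$-tangent field $\sigma_\epsilon$ on $(\cF_f)_{A''}$.

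Finally I would decode $\sigma_\epsilon$ into the data promised by the corollary. Each $\sigma_\epsilon(B_f)$ is a subspace of $\bX \times \R$ of dimension at most $d-1$. I would like to write it in the form $\{(x, \omega_B(x)) : x \in \tau(B)\}$ for a subspace $\tau(B) \subseteq \bX$ of dimension at most $d-1$ and a linear functional $\omega_B$ on $\tau(B)$; this is possible precisely when $\sigma_\epsilon(B_f)$ is a graph over its projection to $\bX$, i.e.\ contains no vertical vector $(0,t)$ with $t \neq 0$. One checks that we may assume this: if $\sigma_\epsilon(B_f)$ contained a vertical direction, then since $f$ is $1$-Lipschitz the graph $G$ is never within a small angle of vertical, so replacing $\sigma_\epsilon(B_f)$ by the (at most $(d-1)$-dimensional) span of a slightly tilted frame changes $\theta^{\sigma_\epsilon}_\Gamma$ by at most a fixed multiple and at worst worsens $\epsilon$ by a bounded factor; alternatively, and more cleanly, one observes that a line $L \parallel \sigma_\epsilon(B_f)$ that well-approximates a lifted curve $\Gamma_f = \{(\gamma(t), f(\gamma(t)))\}$ can always be taken non-vertical, so the vertical part of $\sigma_\epsilon(B_f)$ is never used and may simply be discarded. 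Setting $\tau(B)$ to be this projected subspace and $\omega_B$ the associated functional, one then has, for any rectifiable $\Gamma \subseteq \bX$, that $\Gamma_f$ is a rectifiable curve of comparable length in $\bX \times \R$, and the smallness of $\theta^{\sigma_\epsilon}_{\Gamma_f}(A'' B_f)$ forces both $\Gamma_f$ to lie near a line parallel to $\tau_{f,df}(B_f)$ and, a fortiori, $f$ to agree along $\Gamma$ with the affine function $x \mapsto \omega_B(\pi_{\tau(B)}(x)) + a$ up to error $\lesssim \epsilon \diam(B)$; that is, $\beta^{df,\tau}_{f,\Gamma}(AB) \lesssim \theta^{\sigma_\epsilon}_{\Gamma_f}(A''B_f)$ after adjusting $A$ and the $\epsilon$-threshold. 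Hence the set of balls where $\beta^{df,\tau}_{f,\Gamma}(AB) \geq \epsilon$ is contained, up to the reindexing between $\cF$ and $\cF_f$, in the set of balls where $\theta^{\sigma_{\epsilon'}}_{\Gamma_f}(A''B_f) \geq \epsilon'$ for an appropriate $\epsilon' = \epsilon'(\epsilon, d, A)$, whose diameter sum is $\lesssim \ell(\Gamma_f) \lesssim \ell(\Gamma)$ by Theorem \ref{thm:main} applied to $G$. Taking $\tau = \tau_{f,df}$-lift as the coarse $\epsilon'$-tangent field on $\cF$ and $df = (\omega_B)_{B}$ the coarse $1$-form completes the proof. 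The main obstacle, as noted, is the geometric bookkeeping in the first paragraph — transferring "badly fits $d$-planes" and "multiresolution family" across the graph map with constants depending only on the allowed data — rather than anything in the final decoding step, which is essentially linear algebra plus the observation that $1$-Lipschitz graphs stay away from vertical.
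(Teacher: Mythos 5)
Your overall strategy — lift to the graph $G=E_f$ in $\bX\times\R$, check that $G$ is doubling and badly fits $d$-planes, apply Theorem \ref{thm:main} there, and decode the resulting plane field into a pair $(\tau,\omega)$ using the fact that a $1$-Lipschitz graph stays away from vertical — is the same route the paper takes, and those steps are carried out correctly at the level of a sketch. But there is a genuine gap in the quantifier structure. The corollary asserts that $\cF$ admits a single coarse $\epsilon'$-tangent field $\tau$ such that \emph{every} $1$-Lipschitz $f$ admits a coarse $\epsilon$-differential \emph{with respect to that $\tau$}; recall that $\beta^{\omega,\tau}_{f,\Gamma}(B)$ is defined using the projection $\pi_{\tau(B)}$, so the field $\tau$ must be fixed before $f$ is given. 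In your construction, $\tau(B)$ is defined as $\pi_{\bX}(\sigma_\epsilon(B_f))$, the projection of the tangent field of the graph of $f$ — so your $\tau$ depends on $f$. What you prove is the weaker statement that each $f$ admits a differential with respect to some $f$-dependent field $\tau_f$.

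The paper closes this gap by applying Theorem \ref{thm:main} \emph{twice}: once to $E$ itself, producing an $f$-independent field $\tau$ on $\cF$, and once to the graph $E_f$, producing $\overline{\tau}_f$ on $\cF_f$. One then replaces $\overline{\tau}_f(B_f)$ by $\overline{\tau}_f(B_f)\cap \pi_{\bX}^{-1}(\tau(B))$ to force $\pi_{\bX}(\overline{\tau}_f(B_f))\subseteq \tau(B)$, defines $\omega_B$ on this projected subspace via $\pi_{\R}\circ(\pi_{\bX}|_{\overline{\tau}_f(B_f)})^{-1}$, and extends it linearly to all of $\tau(B)$. The intersection step needs its own justification — one must check that the intersected field is still a coarse tangent field for $E_f$, which follows because the balls where $\pi_{\bX}(\Gamma)$ fails to be $\epsilon'$-approximated in the directions of $\tau(B)$ already have controlled diameter sum by the choice of $\tau$, and on the remaining balls the approximating line for $\Gamma$ may be taken inside $\pi_{\bX}^{-1}(\tau(B))$. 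This compatibility argument is the ingredient missing from your proposal; without it the differential you build is not "with respect to" a single admissible $\tau$ as the statement requires.
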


\begin{proof} We sketch the argument. If $E\subset \mathbb{X}$ poorly fits $d$-planes, then $E_f=\{(x,f(x)): x\in E\}$ poorly fits $d$-planes. By Theorem \ref{thm:main}, there exists $\epsilon'$-coarse tangent field $\tau$ with inflation factor $A'>1$ for $E$. Now, $\cF_f$ may not quite be a multi-resolution family for $E_f$, but a standard doubling argument implies that there does exist a multiresolution family $\cF_{f,2}$ of $E_f$ so that for each $B\in \cF_f$ we have some ball $B_2\in \cF_{f,2}$ with at most twice $B$'s radius such that $AB\subset 2AB_2$.  Theorem \ref{thm:main} gives the existence of a coarse $\epsilon$-tangent field $\tau_2$ for $\cF_{f,2}$ with inflation factor $2A$. From this, it is easy to see that a coarse $\epsilon$-tangent field  for $\cF_{f,2}$ defines a coarse $\epsilon$-tangent field $\overline{\tau}_f$ for $\cF_f$. We can assume that $\tau_\pi(B)=\pi_{\mathbb{X}}(\overline{\tau}_f(B_f))$ satisfies $\tau_\pi(B)\subset \tau(B)$, since if this does not hold, then we can replace the field by $\overline{\tau}_f(B_f)\cap \pi_{\mathbb{X}}^{-1}(\tau(B))$.  (Here $\pi_{\mathbb{X}}, \pi_\mathbb{R}$ are the orthonormal projections on the factors of $\mathbb{X}\times \mathbb{R}$.)  Indeed, if $\Gamma$ is a rectifiable curve in $\mathbb{X}\times \R$, then we can first estimate the scales where $\pi_{\mathbb{X}}(\Gamma)$ is far away from to $\tau(B)$, and removing these we have that $\Gamma$ is well-approximated by a line contained in $\pi_{\mathbb{X}}^{-1}(\tau(B))$.

Once $\epsilon>0$ is small enough, one can find an $\omega$ s.t. $\overline{\tau}_f\subset \tau_{f,\omega}$ for some coarse $1$-form $\omega$.   Indeed, $\overline{\tau}_f$ is a graph over $\tau_\pi(B)=\pi_{\mathbb{X}}(\overline{\tau}_f(B_f))\subset \tau(B)$ with respect to the linear map $\omega_1(B)=\pi_{\mathbb{R}} \circ (\pi_{\mathbb{X}}|_{\overline{\tau}_f(B)})^{-1}$. The form $\omega$ is then defined by possibly extending $\omega_1$ from $\tau_{\pi}$ to all of $\tau(B)$.  The claim then follows from the discussion preceding the statement, since $\tau_{f,\omega}$ is a coarse field for $E_f$.   
\end{proof}
If $d=2$, then the differential and field can be chosen independent of $\epsilon>0$ by applying Theorem \ref{thm:onedim} instead of Theorem \ref{thm:main} in the argument.

\subsection{Outline of the paper}
Sections \ref{sec:prelim} contains some notation as well as some background results on dyadic cubes in metric spaces and the relation between dimension theory and the ``badly fitting $d$-planes'' property. Section \ref{sec:pointwise} contains the proof of our ``pointwise'' result, Theorem \ref{thm:pointwise}.

The remainder of the paper, taking up most of the length, is devoted to the results on coarse tangent fields. Section \ref{sec:TST} contains some results connected to the Analyst's Traveling Salesman Theorem of Jones; the main result there is Proposition \ref{prop:str}.

The proofs of the ``coarse'' Theorems \ref{thm:main}, \ref{thm:onedim}, and \ref{thm:converse} are in Sections \ref{sec:hilbertproof} and \ref{sec:finalproofs}. Before that, in Section \ref{sec:d=2}, we give a sketch of the main argument in the special case $\bX=\mathbb{R}^2$ and $d=2$. Section \ref{sec:d=2} is not logically necessary, and we include it only to help the reader follow the main ideas in the following two sections more closely.

Finally, Section \ref{sec:example} contains the example that proves Theorem \ref{thm:example}.

\section{Preliminaries}\label{sec:prelim}

\subsection{Basics}
Throughout the paper, $\bX$ will denote a (finite or infinite dimensional) Hilbert space. We use the notation $\angle(v,w)$ for the angle between two vectors in $\bX$. If $\tau,\sigma$ are subspaces of $\bX$, we define the angle from $\tau$ to $\sigma$ by
$$ \angle(\tau,\sigma) = \sup_{v\in\tau} \inf_{w\in\sigma} \angle(v,w)$$
and the ``two-sided'' distance between the subspaces by
$$ D(\tau,\sigma) = \max\{\angle(\tau,\sigma), \angle(\sigma,\tau)\}.$$
We write $\langle \tau,\sigma \rangle$ to denote the subspace of $\bX$ spanned by $\tau\cup\sigma$. For $x,y\in \mathbb{X}$ we denote the line segment connecting them by $[x,y]$.

A closed ball in $\bX$ centered at $x$ and with radius $r$ is denoted $B(x,r)$. If $\delta>0$ and $E\subseteq\bX$, then the \emph{$\delta$-neighborhood} of $E$ is
$$ N_\delta(E) = \{x\in\bX: \dist(x,E) \leq \delta\}.$$
A maximal $\delta$-separated subset of $E$ is called a \emph{$\delta$-net} in $E$. If $E$ is bounded, then standard arguments imply that $E$ admits a family $\{N_k\}_{k\in\mathbb{Z}}$ of $2^{-k}$-nets that are nested, i.e., $N_k\subseteq N_{k+1}$ for each $k\in\mathbb{Z}$. See also Remark \ref{rmk:unnested}.

A \emph{rectifiable curve} in $\bX$ is a Lipschitz image of $[0,1]$ in $\bX$ or, equivalently, a compact, connected set of finite one-dimensional Hausdorff measure, which we write as $\ell$.

\subsection{Dyadic cubes} 
In the arguments below, we use a construction due to Christ \cite{Christ} of a system of ``dyadic cubes'' in metric spaces (which for us will only be subsets of Hilbert space). Our requirements on these cubes are quite weak, and we do not need, for instance, any information about the size of the boundaries of the cubes. Essentially, all that we need from these constructions is an arrangement of the balls in a multiresolution family into a tree structure.

\begin{proposition}[Christ]\label{prop:christ}
Let $(M,d)$ be a metric space. There is a collection of open subsets $\{Q^k_\alpha\subseteq M : k\in\mathbb{Z}, \alpha\in I_k\}$ and constants $s=2^{-N}$ for some $N\in\mathbb{N}$, $a_0>0$, and $C_1>0$ such that the following hold:
\begin{enumerate}[(i)]
\item If $k \leq \ell$, $\alpha\in I_k$, $\beta\in I_\ell$, then either $Q^\ell_\beta \subseteq Q^k_\alpha$ or $Q^\ell_\beta \cap Q^k_\alpha = \emptyset$.
\item For each $(k,\alpha)$ and $\ell<k$, there is a unique $\beta \in I_l$ with $Q^k_\alpha\subseteq Q^\ell_\beta$. 
\item For each $k\in\mathbb{Z}$ and $\alpha\in I_k$, there is a point $z^k_\alpha$ with
$$ B_M(z^k_\alpha, a_0 s^k) \subseteq Q^k_\alpha \subseteq B_M(z^k_\alpha, C_1 s^k).$$
The points $\{z^k_\alpha: \alpha\in I_k\}$ can be chosen from an arbitrary $s^k$-net.
\item If $M$ is a subset of a Hilbert space $\bX$ and we set $B(Q^k_\alpha)=B(z^k_\alpha, C_1 s^k)\subset \bX$ for each $k,\alpha$, then every ball centered on $M$ is contained in some $B(Q^k_\alpha)$ of comparable diameter.
\item If $M$ is a subset of a Hilbert space $\bX$ and $A\geq 2$, then
$$ k\leq \ell, Q^\ell_\beta \subseteq Q^k_\alpha \Rightarrow AB(Q^\ell_\beta) \subseteq AB(Q^k_\alpha).$$
\end{enumerate}
The constants $s, a_0, C_1$ are absolute and do not depend on any properties of $M$.
\end{proposition}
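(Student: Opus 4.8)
The plan is to obtain properties (i)--(iii) from the classical construction of a system of dyadic cubes in a metric space due to Christ \cite{Christ}, and then to derive the two Hilbert-space statements (iv) and (v) as short consequences of (i)--(iii) together with the single structural fact that consecutive scales in the construction differ by the fixed ratio $s=2^{-N}\le 1/2$. The real content — and the one place I would argue by citation rather than reproduction — is (i)--(iii); (iv) and (v) are elementary.

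\emph{Properties (i)--(iii).} Fix $N$ large, equivalently $s=2^{-N}$ small with the threshold depending only on the absolute constants $a_0,C_1$ below, and for each $k\in\mathbb{Z}$ fix a maximal $s^k$-separated set $N_k=\{z^k_\alpha\}_{\alpha\in I_k}$ in $M$; maximality gives both that distinct centers are $s^k$-separated and that every point of $M$ lies within $s^k$ of some center. Define a parent map assigning to each scale-$(k+1)$ center a nearest scale-$k$ center (ties broken by a fixed well-ordering of the $I_k$), so that a center lies within $s^k$ of its parent; iterating, each $z^\ell_\beta$ with $\ell\ge k$ has a unique scale-$k$ ancestor. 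One then declares $Q^k_\alpha$ to be, roughly, the interior of the closure of the union of the small balls $B(z^\ell_\beta,a_0 s^\ell)$ over all descendants $z^\ell_\beta$ ($\ell\ge k$) of $z^k_\alpha$, for a suitable small absolute $a_0$. The delicate part of Christ's theorem is to show that for $s$ small these sets are open and satisfy the nesting dichotomy (i), the unique-parent property (ii), and the sandwich $B(z^k_\alpha,a_0 s^k)\subseteq Q^k_\alpha\subseteq B(z^k_\alpha,C_1 s^k)$ of (iii), with $s,a_0,C_1$ absolute; enlarging $C_1$ if necessary (harmless), we may assume $C_1>1$. In particular, the cubes at a fixed scale $k$ partition $M$.

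\emph{Property (v).} Suppose $k\le\ell$ and $Q^\ell_\beta\subseteq Q^k_\alpha$. If $\ell=k$ then (i) forces $\beta=\alpha$ and the claim is trivial, so take $\ell\ge k+1$. From $z^\ell_\beta\in B(z^\ell_\beta,a_0 s^\ell)\subseteq Q^\ell_\beta\subseteq Q^k_\alpha\subseteq B(z^k_\alpha,C_1 s^k)$ we get $\dist(z^\ell_\beta,z^k_\alpha)\le C_1 s^k$, so for $y\in AB(Q^\ell_\beta)=B(z^\ell_\beta,AC_1 s^\ell)$,
$$ \dist(y,z^k_\alpha)\le AC_1 s^\ell + C_1 s^k \le (As+1)C_1 s^k \le AC_1 s^k, $$
the last step using $s\le 1/2$ and $A\ge 2$; hence $AB(Q^\ell_\beta)\subseteq AB(Q^k_\alpha)$. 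For \emph{property (iv)}, let $B(x,r)$ be a ball with $x\in M$. Using $C_1>1$, pick $j\in\mathbb{Z}$ maximal with $s^j\ge r/(C_1-1)$, so that $r/(C_1-1)\le s^j< s^{-1}r/(C_1-1)$ and every scale-$j$ cube has diameter $2C_1 s^j$ comparable to $r$ with constants depending only on $s$ and $C_1$. By maximality of $N_j$ there is a center $z^j_\gamma$ with $\dist(x,z^j_\gamma)\le s^j$, and then for all $y\in B(x,r)$,
$$ \dist(y,z^j_\gamma)\le r+s^j\le (C_1-1)s^j+s^j = C_1 s^j, $$
so $B(x,r)\subseteq B(z^j_\gamma,C_1 s^j)=B(Q^j_\gamma)$, a ball of diameter comparable to $r$.

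The main obstacle is entirely inside (i)--(iii): producing \emph{open} sets obeying the two-sided ball bound requires Christ's careful inductive argument, which I would invoke rather than redo. Granting that, the two Hilbert-space additions are routine, the only points worth flagging being that (v) uses $s\le 1/2$ together with $A\ge 2$, and (iv) uses $C_1>1$ — and, importantly, that in (iv) one should not use the cube containing $x$ (whose center can be far from $x$) but rather a scale-$j$ cube whose \emph{center} is within $s^j$ of $x$, which exists by the covering property of the net.
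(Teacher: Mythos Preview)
Your proposal is correct and takes essentially the same approach as the paper: cite Christ \cite{Christ} for (i)--(iii) and observe that (iv) and (v) follow easily from these together with the fixed scale ratio. In fact you give more detail than the paper does, supplying explicit verifications of (iv) and (v) (including the useful remark that for (iv) one should use a cube whose \emph{center} is close to $x$, not the cube containing $x$), whereas the paper simply asserts that these ``follow easily from Christ's construction.''
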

To observe that doubling is not required for Proposition \ref{prop:christ}, see the discussion in \cite{Christ} following the statement of that result, which says that ``(3.2) through (3.5)...concern only the quasi-metric space structure, and we have nothing at our disposal in the proof but the quasi-triangle inequality.'' Christ's items (3.2) through (3.5) are equivalent to items (i) through (iii) of Proposition \ref{prop:christ} below, while (iv) and (v) of Proposition \ref{prop:christ} follows easily from Christ's construction. The fact that we can take our parameter $s$ (which is called $\delta$ in \cite{Christ}) to be dyadic also follows from the argument in \cite{Christ}, which requires only that it is sufficiently small.

In our applications, the points $z^k_\alpha$ will always be chosen from a collection of \textit{nested} $s^k$-nets.

We write $\Delta_M$ to be the cubes constructed by Proposition \ref{prop:christ} in a given metric space $M$. With $M$ understood from context, we may just write $\Delta$ and group the cubes by scale as 
$$ \Delta_k = \{Q^k_\alpha: \alpha\in I_k\}.$$ 
If $Q=Q^k_\alpha \in \Delta_k$, then we set $z(Q)=z^k_\alpha$.

If $Q\in \Delta_n$, we write $Q^{\uparrow}$ or $Q^{\uparrow 1}$ for the unique cube in $\hDelta_{n-1}$ containing it, i.e., its ``parent''. Cubes with the same parent are ``siblings''. Inductively, we define $Q^{\uparrow (k+1)} = (Q^{\uparrow k})^{\uparrow}$ and $Q^{\uparrow 0}=Q$. If $E$ is a subset of a Hilbert space and $Q=Q^k_\alpha \in \Delta_E$, then as in (iv) we will write
$$ B(Q) = B(z^k_\alpha, C_1 s^k) \subset \bX.$$
We observe that if $M$ is a subset of a Hilbert space, then by choosing the points $\{z^k_\alpha\}$ to come from a fixed sequence of nested $2^{-nk}$-nets, we can ensure that the collection $\{B(Q):Q\in\Delta_M\}$ is a subset of a multiresolution family on $M$.

\subsection{Dimension theory}\label{subsec:dimension}
In Corollary \ref{cor:Nagata-Assuad} we use some notions of dimension.

\begin{definition}
Let $M$ be a metric space. A covering $\mathcal{B}$ of $M$ is called \emph{$r$-bounded} if each set in the covering has diameter at most $r$. The \emph{$s$-multiplicity} of $\mathcal{B}$ is the infimum of all integers $n$ such that every subset of $M$ with diameter at most $s$ meets at most $n$ sets from $\mathcal{B}$.

Finally, the \emph{Nagata dimension} of the metric space $M$ is the infimum of all integers $n$ with the following property: there is a constant $c>0$ such that, for all $s>0$, $M$ admits a $cs$-bounded cover with $s$-multiplicity at most $n+1$. 

We call $c$ the ``Nagata dimension constant'' of $M$.
\end{definition}

The Nagata dimension is a quantitative analog of the purely topological Lebesgue covering dimension. It has extensive connections to quasisymmetric and Lipschitz geometry; we refer the reader to \cite{LangSchlichenmaier} for many details.

We will see below that if a subset of a Hilbert space has Nagata dimension strictly less than $d$, then it badly fits $d$-planes. The same statement holds for another well-studied notion of dimension, the Assouad dimension, since it bounds Nagata dimension from above \cite{LeDonneRajala}. We refer the reader to, e.g., Chapter 10 of \cite{Heinonen} for a definition of Assouad dimension, which we do not use directly here.

We will also use the standard notion of a doubling metric space: A metric space $M$ is \emph{doubling} if there is a constant $N$ such that each ball in $M$ can be covered by $N$ balls of half the radius. This condition is equivalent to finiteness of the Assouad dimension and therefore implies finiteness of the Nagata dimension.

For us, the Nagata dimension enters via the following lemma:

\begin{lemma}\label{lem:notgood}
Let $\bX$ be a Hilbert space and $E$ a subset of $\bX$ with Nagata dimension at most $n$. Then $E$ badly fits $(n+1)$ -planes.  

The constant $\epsilon_0$ in Definition \ref{def:fit} depends only on $n$ and the Nagata dimension constant of $E$.
\end{lemma}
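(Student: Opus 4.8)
The plan is to argue by contradiction. Suppose $E$ has Nagata dimension at most $n$ with Nagata dimension constant $c$, but $E$ does not badly fit $(n+1)$-planes. Then for every $\epsilon_0>0$ there is an $(n+1)$-plane $V$ and a ball $B$ with $V\cap B \subseteq N_{\epsilon_0 \diam(V\cap B)}(E)$. By translating and rescaling (both Nagata dimension and the hypothesis are scale- and translation-invariant, and affine), I may assume $V$ is a fixed $(n+1)$-dimensional linear subspace $V_0 \cong \mathbb{R}^{n+1}$ and $B\cap V = B_{V_0}(0,1)$, so that $B_{V_0}(0,1) \subseteq N_{\epsilon_0}(E)$ for an $\epsilon_0$ to be chosen small depending only on $n$ and $c$. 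The point will be that this forces $E$ to contain an $\epsilon_0$-dense subset of the unit ball of $\mathbb{R}^{n+1}$, and such a set cannot have Nagata dimension $\leq n$ with a fixed constant once $\epsilon_0$ is small enough — this contradicts the classical fact that $\mathbb{R}^{n+1}$ (or its unit ball) has Nagata dimension exactly $n+1$, together with the quantitative stability of Nagata dimension under Gromov--Hausdorff-type perturbations.

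The key steps, in order: (1) Extract from the hypothesis a set $E' \subseteq E$ with $E' \subseteq B_{V_0}(0,1)$ (project $E\cap N_{\epsilon_0}(B_{V_0}(0,1))$ onto $V_0$ if convenient, or just work with the $\epsilon_0$-dense points directly) such that every point of $B_{V_0}(0,1)$ is within $\epsilon_0$ of $E'$, i.e. $E'$ is $\epsilon_0$-dense in the unit ball $\mathbb{B}^{n+1}$. (2) Observe that $E'$, being a subset of $E$, inherits Nagata dimension $\leq n$ with the same constant $c$ (Nagata dimension does not increase under passing to subspaces, and the constant is preserved). (3) Invoke the known lower bound: $\mathbb{B}^{n+1} \subseteq \mathbb{R}^{n+1}$ has Nagata dimension $n+1$, and more precisely there is a scale $s_0 = s_0(n)$ at which \emph{any} $cs$-bounded cover with multiplicity parameter $s$ must have $s$-multiplicity at least $n+2$ — this is where the constant $c$ enters, since a coarser cover is "allowed" more overlap. (4) Transfer this obstruction to the $\epsilon_0$-net $E'$: if $\epsilon_0$ is small compared to the scale $s_0(n)$ and to $c$, then a good Nagata cover of $E'$ (at the appropriate scale) could be thickened by $\epsilon_0$ to produce a cover of $\mathbb{B}^{n+1}$ with the same bounded-diameter and multiplicity bounds up to harmless constant adjustments, contradicting step (3).

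I expect step (3)–(4), making the "$\mathbb{R}^{n+1}$ has Nagata dimension $n+1$, quantitatively and stably under $\epsilon_0$-perturbation" argument fully rigorous, to be the main obstacle. The qualitative statement $\dim_N(\mathbb{R}^{n+1}) = n+1$ is standard (see \cite{LangSchlichenmaier}), but here I need the lower bound in the form: there exists $\epsilon_0 = \epsilon_0(n,c) > 0$ such that no $\epsilon_0$-dense subset of $\mathbb{B}^{n+1}$ admits a $cs$-bounded cover with $s$-multiplicity $\leq n+1$ for all $s$. One clean way to get this is via a connectedness/degree or Lebesgue-covering-lemma argument: a cover of $\mathbb{B}^{n+1}$ by sets of diameter $< 1$ (say) with multiplicity $\leq n+1$ would, after a standard nerve/partition-of-unity argument, give a map $\mathbb{B}^{n+1} \to$ (an $n$-dimensional simplicial complex) that is close to the identity, which is impossible by the topological dimension of $\mathbb{B}^{n+1}$ — and the $\epsilon_0$-density lets one run this same argument on $E'$ with $\epsilon_0$-thickened cover sets. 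The bookkeeping relating $c$, the chosen scale $s$, and $\epsilon_0$ is routine but must be done carefully; alternatively, one can cite the stability of Nagata dimension under quasi-isometries or large-scale equivalences if a suitably quantitative version is available in the literature. I would present the degree-theoretic version as the self-contained route and relegate the constant-chasing to a short verification.
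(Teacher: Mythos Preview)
Your proposal is correct and follows essentially the same route as the paper's proof: both argue by contradiction, use that the unit ball in $\mathbb{R}^{n+1}$ has Nagata dimension $n+1$ (the paper cites \cite[Theorem 2.2]{LangSchlichenmaier} rather than sketching a degree-theoretic argument), and transfer a Nagata cover of $E$ to a cover of the ball $D=V\cap B$ by thickening each cover set and intersecting with $D$, checking that diameter bounds and $s$-multiplicity survive with adjusted constants. The paper's execution is slightly cleaner than your step~(1): rather than extracting a subset $E'\subseteq E$ lying in $V_0$ (which, as you note, forces either a projection or working with points not actually in $V_0$), the paper works directly with the cover $\mathcal{B}$ of $E$ and sets $B'=N_s(B)\cap D$ for each $B\in\mathcal{B}$, avoiding the detour through $E'$ entirely.
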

This fact follows from the more general Proposition 2.18 of \cite{LeDonneRajala}, but we give a straightforward direct proof.
\begin{proof}
Fix $\bX$, $E$ as in the assumptions. Let $c$ be the Nagata dimension constant for $E$, which we may assume is at least $1$ without loss of generality.

We begin with the following observation: There is a constant $s>0$ such that, if $D\subseteq \bX$ is isometric to a unit-diameter ball in $\mathbb{R}^{n+1}$, then $D$ does not admit an $10cs$-bounded covering with $s$-multiplicity at most $n+1$. Indeed, such a ball in $\mathbb{R}^{n+1}$ has Nagata dimension $n+1$ (see \cite[Theorem 2.2]{LangSchlichenmaier}), so such an $s$ must exist.

Let $\epsilon_0 = s/2$. We claim that $E$ badly fits $(n+1)$-planes, with parameter $\epsilon_0$. Suppose not.  Then there is a $d$-plane $V\subseteq \bX$ and a ball $B$ with $\dist(V\cap B, E)<\epsilon_0\diam(V\cap B)$. If we set $D=V\cap B$, then $D$ is isometric to a ball in $\mathbb{R}^{n+1}$, and it lies in $N_{\epsilon_0 \diam(D)}(E)$. By rescaling, we may assume that $\diam(D)=1$.

The set $E$ has Nagata dimension $n$ with constant $c$, and so it has a $3cs$-bounded cover $\mathcal{B}$ with $3s$-multiplicity at most $n+1$. We may assume that each element of $\mathcal{B}$ is contained in $E$. 

For each $B\in\mathcal{B}$, set $B' = N_s(B) \cap D$ and collect all the sets $B'$ into $\mathcal{B}'$. Then, by our assumption on $D$ and our choice of $\epsilon_0$, $\mathcal{B}'$ is a $10cs$-bounded cover of $D$.

Moreover, $\mathcal{B}'$ has $s$-multiplicity at most $n+1$. Indeed, suppose $A'\subseteq D$ and $\diam(A')\leq s$. Let $\{B'_i\}_{i\in I}$ be the sets in $\mathcal{B}'$ that touch $A'$. Then $A=N_s(A)\cap E$ has diameter at most $3s$ and each corresponding ball $B_i\in\mathcal{B}$ with $i\in I$ touches $A$. It follows that $|I|\leq n+1$ and hence $\mathcal{B}'$ has $s$-multiplicity at most $n+1$.

We thus have a $10cs$-bounded covering of $D$ with $s$-multiplicity at most $n+1$, which contradicts our observation about $(n+1)$-balls made earlier.
\end{proof}

\section{Proof of the ``pointwise'' Theorem \ref{thm:pointwise}}\label{sec:pointwise}
In this section, we prove our qualitative result on the existence of pointwise weak tangent fields, Theorem \ref{thm:pointwise}. As remarked in the introduction, this section is inspired by the work of Alberti--Cs\"ornyei--Preiss \cite{ACP}. We note however that our approach does not use the covering arguments of that work.

\begin{proof}[Proof of Theorem \ref{thm:pointwise}]
The existence of pointwise weak tangent fields is preserved under taking subsets and countable unions, and the property of badly fitting $d$-planes is preserved under taking closures. Therefore, it suffices to show that if $E$ is a subset of a Hilbert space $\bX$ that badly fits $d$-planes, then $E$ has a pointwise weak tangent field.

If $E\subset \bX$ is a compact set, we call a subspace $V\subset \bX$ a very weak sub-tangent of $E$ at $x\in E$ if 
\begin{equation}\label{eq:limittangent}
\lim_{r\to 0} \left(\frac{\sup_{y\in V\cap B(0,r)} d(x+y,E)}{r}\right)=0.
\end{equation}
Without loss of generality, we may assume that $\bX$ is the closed linear span of $E$ and thus separable.

Let $\mathcal{T}_x$ be the collection of very weak sub-tangents at $x$. Notice that every subspace in $\mathcal{T}_x$ is at most $(d-1)$-dimensional, since $E$ badly fits $d$-planes. Let $\mathcal{M}_x$ be the collection of maximal subspaces in $\mathcal{T}_x$ and define $\tau(x) = \bigcap_{M\in \mathcal{M}_x} M$. It is immediate that $\tau(x)$ has dimension at most $d-1$.

We show that $\tau$ is a tangent field. Let $\gamma:[0,1]\to \RR^d$ be a rectifiable curve parametrized by constant speed, and assume that $T=\{t\in [0,1]: \gamma(t)\in E\}$ has $|T|>0$. We aim to show that for a.e. $t\in T$ we have $\gamma'(t)\in \tau(\gamma(t))$. If not, there is a positive measure subset $T'\subset T$ so that there exists a subspace $M(\gamma(t))\in \mathcal{M}_x$ s.t. $\gamma'(t)\not\in M(\gamma(t))$ for a.e. $t\in T'$. By a fairly standard Borel-selection argument (see e.g. Bogachev \cite[Theorem 6.9.1.]{Bogachev}), we can choose for a.e. $t\in T'$ such a subspace and assume that $t\to M(\gamma(t))$ is measurable. (It is not too hard to see that $\mathcal{M}_x,\mathcal{T}_x$ define Borel subsets in $\bX\times \bigcup_{d\in \mathbb{N}} GL(d,\bX)$.)

By using a decomposition argument with respect to dimension and a standard Lusin's theorem argument, we can pass to a compact subset $T''\subset T'$ of positive measure, for which we have that $M(\gamma(t))$ varies continuously with respect to $t\in T''$. Further, by Egorov's theorem, we can assume that the limit in \eqref{eq:limittangent} is uniform for all $x=\gamma(t)$ and $V=M(\gamma(t))$. 

Next, for a.e. $t\in T''$ we have that $\gamma(t)$ is differentiable, $\gamma'(t)\neq 0$, $t$ is a Lebesgue point of $T''$, and $t$ is an approximate continuity point of $\gamma'$. Fix such a $t$ and let $x=\gamma(t)$ and $M'=\langle M(\gamma(t)),\gamma'(t)\rangle$. We argue that $M'\in \mathcal{T}_x$, which contradicts the maximality of $M(x)$ and yields our claim.

By differentiability, approximate continuity and the Lebesgue point property, we have for every $s\in \mathbb{R}$ that there exists a $t_s\in T''$ so that $\lim_{s\to 0}\frac{d(x+s\gamma'(t),\gamma(t_s))}{|s|}=0$

Let $y\in B(0,r)\cap M'$ and decompose $y=s_y\gamma'(t)+y_{M}$ for $y_M\in M$ and $s_y\in \mathbb{R}$. Then 
\begin{align*}
&\limsup_{r\to 0} \frac{\sup_{y\in M'\cap B(0,r)} d(x+y,E)}{r} \\
&\hspace{50pt}\leq\limsup_{r\to 0} \frac{\sup_{y\in M'\cap B(0,r)} d(x+y,\gamma(t_{s_y})+y_{M}) + d(\gamma(t_{s_y})+y_{M},E)}{r}\\
&\hspace{50pt}=\limsup_{r\to 0} \frac{\sup_{y\in M'\cap B(0,r)} d(x+s_y\gamma',\gamma(t_{s_y})) + \sup_{y\in M'\cap B(0,r)} d(\gamma(t_{s_y})+y_{M},E)}{r}=0.
\end{align*} 
In the final step, the first term tends to zero since $\lim_{r\to 0}\sup_{y\in B(0,r)\cap M'}\frac{|s_y|}{r}<\infty$, and the second tends to zero because the limit in \eqref{eq:limittangent} converges to zero uniformly for $t\in T''$ and since $\lim_{r\to 0}\sup_{y\in B(0,r)\cap M'}\frac{|y_M|}{r}<\infty$. 
\end{proof}

\section{Some necessary results related to the Traveling Salesman Theorem}\label{sec:TST}

Our main goal in this section is to prove Proposition \ref{prop:str} below, which will be essential to the proof of Theorem \ref{thm:main}. Many of the ideas in this section are contained in the literature in some ways, although we do not know that they have been assembled in this form before.

Throughout this section, we fix a (finite or infinite dimensional) Hilbert space $\bX$, a doubling subset $E\subseteq \bX$, and a rectifiable curve $\Gamma \subseteq \bX$. With $s\in (0,1)$ as in Proposition \ref{prop:christ}, we fix nested sequnces of nets for both $\Gamma$ and $E$ and associated collections $\Delta_\Gamma$ of cubes for $\Gamma$ and $\Delta_E$ of cubes for $E$ as in Proposition \ref{prop:christ}.

Before stating Proposition \ref{prop:str}, we need a few preliminary facts. The first is the following strengthening of Theorem \ref{thm:tst}(ii), which is actually already a consequence of the proof of the theorem. 

\begin{proposition}\label{p:TST-betas-bounded}
Let $\GuyLambda>1$ be given. 
Then 
\begin{equation}\label{eq:tstGamma}
\sum_{ \substack{Q\in \Delta_\Gamma\\ \diam(B(Q))\leq \diam(\Gamma)}} \theta_\Gamma(\GuyLambda B(Q))^2\diam(B(Q)) \lesssim \ell(\Gamma).
\end{equation}
and
\begin{equation}\label{eq:tstE}
\sum_{ \substack{Q\in \hDelta_E\\ \GuyLambda B(Q)\cap \Gamma\neq \emptyset\\ \diam(B(Q))\leq \diam(\Gamma)}} \theta_\Gamma(\GuyLambda B(Q))^2\diam(B(Q)) \lesssim \ell(\Gamma).
\end{equation}

The implied constants depends only on $\GuyLambda$ and (in \eqref{eq:tstE}) on the doubling constant of $E$.
\end{proposition}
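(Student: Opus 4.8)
The plan is to derive both displays from Theorem \ref{thm:tst}(ii) by comparing the Christ cube balls to the balls of a multiresolution family and invoking the doubling property. First I would recall that, by Proposition \ref{prop:christ}(iv) applied to $M=\Gamma$ and to $M=E$, the collections $\{B(Q):Q\in\Delta_\Gamma\}$ and $\{B(Q):Q\in\Delta_E\}$ are (sub)families of multiresolution families, in which each ball $B(Q^k_\alpha)$ has diameter $\sim s^k$; moreover for each $k$ there are only boundedly many scales of dyadic generation whose associated balls have a given comparable diameter. So it suffices to prove the analogue of \eqref{eq:tstGamma}--\eqref{eq:tstE} with $\Delta_\Gamma$, $\Delta_E$ replaced by an honest multiresolution family, and with $\theta_\Gamma$ replaced by $\beta_\Gamma$ --- the passage from $\beta$ to $\theta$ being the content of the strengthening I describe below.

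The heart of \eqref{eq:tstGamma} is that Theorem \ref{thm:tst}(ii) already gives $\sum_{B\in\cF_\Gamma}\beta_\Gamma(AB)^2\diam(B)\lesssim\ell(\Gamma)$ for a multiresolution family $\cF_\Gamma$ of the connected set $\Gamma$, and the standard refinement of its proof (this is what the sentence ``already a consequence of the proof'' refers to) yields the stronger statement with $\beta$ replaced by $\theta$: if $\Gamma$ has a good approximating line through $AB(Q)$ in the one-sided sense, then because $\Gamma$ is connected and passes near $B(Q)$, it must in fact fill out that line up to the same error, so $\theta_\Gamma(\GuyLambda B(Q))\lesssim\beta_\Gamma(\GuyLambda' B(Q))$ for a slightly larger inflation $\GuyLambda'$; I would cite \cite{Schul} (and \cite{Okikiolu}, \cite{Jones}) for this. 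Summing over $Q\in\Delta_\Gamma$ with $\diam(B(Q))\le\diam(\Gamma)$ and absorbing the inflation change and the bounded scale-overcounting into the implied constant gives \eqref{eq:tstGamma}.

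For \eqref{eq:tstE} I would transfer the estimate from $\Delta_\Gamma$ to $\Delta_E$ by a covering argument using the doubling property of $E$. Fix $Q\in\Delta_E$ with $\GuyLambda B(Q)\cap\Gamma\ne\emptyset$ and $\diam(B(Q))\le\diam(\Gamma)$. If $\theta_\Gamma(\GuyLambda B(Q))$ is not small, pick a point of $\Gamma$ in $\GuyLambda B(Q)$ and a cube $Q'\in\Delta_\Gamma$ of comparable diameter whose ball contains $\GuyLambda B(Q)$ (possible since $\{B(Q'):Q'\in\Delta_\Gamma\}$ sits inside a multiresolution family of $\Gamma$); then $\theta_\Gamma(\GuyLambda B(Q))\lesssim \theta_\Gamma(\GuyLambda'' B(Q'))$ with a controlled enlargement of the inflation. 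Now I need that each such $Q'\in\Delta_\Gamma$ arises from only boundedly many $Q\in\Delta_E$ of a given scale: this is exactly where the doubling constant of $E$ enters, since the centers $z(Q)$ for $Q\in\Delta_E$ of a fixed generation form an $s^k$-separated set, and boundedly many of them can lie within a fixed multiple of $\diam(B(Q'))$ of a common point. Combining this bounded multiplicity with \eqref{eq:tstGamma} for the $\Delta_\Gamma$ side yields \eqref{eq:tstE}, with implied constant depending on $\GuyLambda$ and the doubling constant of $E$ only.

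The main obstacle is the bookkeeping in the $\beta$-to-$\theta$ strengthening: one must be careful that the enlargement of the inflation parameter stays bounded and independent of the cube, and that the ``filling out the line'' step genuinely uses connectedness of $\Gamma$ (so it does not transfer verbatim to $E$, which is why the $\Delta_E$ sum requires the detour through $\Delta_\Gamma$ plus doubling rather than a direct application of Theorem \ref{thm:tst} to $E$). Everything else --- comparing Christ balls to multiresolution balls, controlling overcounting of scales, and the doubling multiplicity bound --- is routine.
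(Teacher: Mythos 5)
Your reduction of \eqref{eq:tstE} to \eqref{eq:tstGamma} --- enclosing each $\GuyLambda B(Q)$, $Q\in\Delta_E$, in a comparable ball $\GuyLambda' B(R)$ with $R\in\Delta_\Gamma$ and using the doubling of $E$ to make the assignment $Q\mapsto R$ boundedly many-to-one --- is exactly the paper's first step. The gap is in \eqref{eq:tstGamma}: the pointwise comparison $\theta_\Gamma(\GuyLambda B(Q))\lesssim\beta_\Gamma(\GuyLambda' B(Q))$ that you want to extract from connectedness of $\Gamma$ is false. Take $\Gamma=[0,1]\times\{0\}\subset\RR^2$ and $B=B((1,0),r)$ with $r$ small. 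Then $\beta_\Gamma(\GuyLambda' B)=0$ for every inflation, but $\theta_\Gamma(\GuyLambda B)\gtrsim 1$: any line $L$ that stays close to $\Gamma\cap \GuyLambda B=[1-\GuyLambda r,1]\times\{0\}$ is essentially the $x$-axis, and then $L\cap \GuyLambda B$ contains points near $(1+\GuyLambda r,0)$ at distance $\approx \GuyLambda r$ from $\Gamma$. Connectedness forces $\Gamma$ to exit the ball, but only on one side; it does not force $\Gamma$ to fill out the full chord of its best unilateral approximating line. The same failure occurs at any ball where the curve turns back at a scale much smaller than the ball, so ``small $\beta$ plus connectedness implies small $\theta$'' is not a lemma you can cite from \cite{Schul} --- the Traveling Salesman Theorem there is a $\beta$ statement, and the bilateral upgrade is precisely what needs proof here.

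This is why the paper does not argue ball-by-ball. It fixes a Lipschitz parametrization $\gamma$ and, after discarding the summable families of cubes where $\beta_\Gamma(10\GuyLambda B(Q))\geq\delta$ or where $\Gamma\subset 10\GuyLambda B(Q)$, selects for each remaining $Q$ an arc $\gamma_Q$ of $\gamma$ that meets $\GuyLambda B(Q)$ and leaves $2\GuyLambda B(Q)$, so that the chord $L_Q$ through the endpoints of $\gamma_Q$ genuinely spans the ball. The bilateral bound then follows from the flatness $\tilde{\beta}(\gamma_Q)$ of the arc relative to its own chord (a connected arc joining the chord's endpoints and lying in a thin tube around it is bilaterally close to it), and the cubes where $\tilde{\beta}(\gamma_Q)$ is not small compared to $\theta_\Gamma(\GuyLambda B(Q))$ are controlled by the filtration estimate $\sum_Q\tilde{\beta}(\gamma_Q)^2\diam(\gamma_Q)\lesssim\ell(\Gamma)$ from Lemmas 3.4 and 3.6 of \cite{Krandel-TST}. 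This arc-selection and chord argument, not a pointwise $\beta$-to-$\theta$ inequality, is the real content of the proposition, and your proposal does not supply it.
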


\begin{remark}\label{r:tst-half-1}
        We remark that Proposition \ref{p:TST-betas-bounded} relies on the Pythagorean inequality $c^2-a^2\geq  h^2$, where $c$ is the hypotenuse of a right angle triangle with leg $h$ and base $a$. In a non-Hilbertian Banach space, similar results are known to sometimes hold, but with modified exponents.  See  Theorem 1.7 of \cite{BadgerMcCurdy}
				as well as Corollary D of \cite{DavidSchul}.
    
\end{remark}
\begin{proof}[Proof of Proposition \ref{p:TST-betas-bounded}]
We remark first that \eqref{eq:tstE} follows easily from \eqref{eq:tstGamma} (for a comparable $\GuyLambda$). If $Q\in\hDelta_E$ is as in \eqref{eq:tstE}, then $\lambda B(Q)$ is contained in $\GuyLambda' B(R)$ for some cube $R\in\Delta_\Gamma$  as in \eqref{eq:tstGamma} with $\diam(B(R))\approx \diam(B(Q))$ and $\GuyLambda' \approx \GuyLambda$. The number of such $B(Q)$ associated to a fixed $B(R)$ is controlled by the doubling constant of $E$.

Therefore, we may focus on \eqref{eq:tstGamma}. We may assume that $\Gamma$ has finite length, otherwise there is nothing to show. Fix a Lipschitz parametrization $\gamma\colon [0,1] \rightarrow \Gamma$ of $\Gamma$ with Lipschitz constant controlled by $2\ell(\Gamma)$. (See, e.g., Theorem 4.4 of \cite{AlbertiOttolini}.) 
By Theorem \ref{thm:tst}, we have that 
    \begin{equation}\label{e:tomato-0}
     \sum_{Q\in \Delta_\Gamma, \diam(Q)\leq \diam(\Gamma)} \beta_\Gamma(10\GuyLambda B(Q))^2\diam(Q) \lesssim_{\GuyLambda} \ell(\Gamma).
    \end{equation}
Indeed, the balls $B(Q)$ for $Q\in\Delta_\Gamma$ form a subset of a multiresolution family on $\Gamma$, and so this follows from Theorem \ref{thm:tst}.

Thus, in proving the proposition, we may reduce to considering only cubes $Q\in \Delta_\Gamma$ with $\beta_\Gamma(10\GuyLambda B(Q))<\delta$, for a fixed parameter $\delta>0$ to be chosen below.  We may further assume that $\Gamma\setminus 10\GuyLambda B(Q)\neq\emptyset$ for each $Q$ in the sum, as the sum over cubes not satisfying this condition is bounded by $\sim\diam(\Gamma)$.

We introduce some notation from \cite{Krandel-TST}. An \emph{arc} in $\gamma$ is the restriction of $\gamma$ to a closed interval. If $B$ is a ball in $\bX$, then $\Lambda(B)$ denotes the collection of (maximal) arcs with domains in $\gamma^{-1}(\overline{2B})$ whose images intersect $B$. If $\tau$ is an arc in $\gamma$, then
$$\tilde{\beta}(\tau) = \sup_{x\in\text{image}(\tau)} \frac{\dist(x,[\tau(a),\tau(b)])}{\diam(\text{image}(\tau))}.$$

For each $Q\in\Delta_\Gamma$, choose any $\gamma^0_Q\in \Lambda(\GuyLambda B(Q))$. From Lemma 3.4 and Lemma 3.6 of \cite{Krandel-TST}, we are guaranteed an extension $\gamma_Q$ of $\gamma^0_Q$ by  subarcs (of $\gamma$) of total diameter $<\diam(\gamma^0_Q)/(10\GuyLambda) <\diam(B(Q))/10$ 
and so that 
\[\sum_{Q\in \Delta_\Gamma}\tilde{\beta}(\gamma_Q)^2\diam(\gamma_Q) \lesssim \ell(\Gamma)\]
Thus, in proving our proposition, we may further reduce to looking at cubes so that 
$$\tilde{\beta}(\gamma_Q)<\delta\theta_\Gamma(\GuyLambda B(Q)).$$
After these reductions, we are now left with cubes $Q$ with corresponding arcs $\gamma_Q$ so that 
\begin{itemize}
\item  $\gamma_Q$ intersects $\GuyLambda B(Q)$ and leaves $2\GuyLambda B(Q)$
\item $\tilde{\beta}(\gamma_Q)<\delta \theta_\Gamma(\GuyLambda B(Q)) <\delta$
\item $\beta_\Gamma(10\GuyLambda B(Q))<\delta$
\end{itemize}
Let $L_Q$ be the line going through the endpoints of $\gamma_Q$, 
and $I_Q\subset L_Q$ the interval connecting these endpoints.
Then 
\begin{equation}\label{e:tomato-1}
\beta_\Gamma(3\GuyLambda B(Q))\diam(B(Q))\gtrsim \sup\{\dist(x,L_Q):x\in \GuyLambda B(Q)\cap \Gamma\}.
\end{equation}

Each point in $L_Q\cap \GuyLambda B(Q)$ has a point in $\gamma_Q$ within distance  comparable to $\tilde{\beta}(\gamma_Q)\diam(B(Q))<\delta\theta_\Gamma(\GuyLambda B(Q))\diam(B(Q))$. With $\delta>0$ being sufficiently small (depending on the implied constants),
this implies that 
\[\theta_\Gamma(\GuyLambda B(Q))\diam(\GuyLambda B(Q))<2\sup\{\dist(x,L_Q):x\in \GuyLambda B(Q)\cap \Gamma\}.\]
Combining this with \eqref{e:tomato-1} and \eqref{e:tomato-0} we have the proposition.
\end{proof}

We next need a lemma that says that $\Gamma$ is near $E$ in ``most'' of the cubes of $\hDelta_E$ that it touches. Given a ball $B\subset \bX$, we write
$$ d_{\Gamma,E}(B) = \diam(B)^{-1}\sup\{\dist(x,E):x\in B\cap \Gamma\},$$
with the understanding that $d_{\Gamma, E}(B) = 0$ if $\Gamma \cap B =\emptyset$.

\begin{lemma}\label{l:TST-analogue-for-d}
Given $\GuyLambda>1$, we have
$$ \sum_{\substack{Q\in \hDelta_E\\ \diam(B(Q))\leq \diam(\Gamma)}} d_{\Gamma,E}(\GuyLambda B(Q))^2\diam(B(Q)) \lesssim \ell(\Gamma),
$$
where the implied constant depends only on $\GuyLambda$ and the doubling constant of $E$.
\end{lemma}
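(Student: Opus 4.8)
The plan is to replace each cube $Q\in\hDelta_E$ that contributes to the sum by an explicit ball $B_Q$ that is disjoint from $E$ and carries a definite amount of the length of $\Gamma$, and then to sum these contributions by a bounded-overlap argument. The only structural facts about $E$ that I would use are that $E$ is doubling and that, by Proposition \ref{prop:christ}(iii), the cube centers $\{z(Q):Q\in\hDelta_k\}$ of a fixed generation $k$ are $\gtrsim s^k$-separated; accordingly the final constant will depend only on $\GuyLambda$ and the doubling constant of $E$.

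First I would discard the cubes with $\Gamma\cap\GuyLambda B(Q)=\emptyset$ (which contribute $0$) and, for each remaining cube $Q$ of generation $k$, choose a point $x_Q\in\Gamma\cap\GuyLambda B(Q)$ with $\dist(x_Q,E)\ge \tfrac12\, d_{\Gamma,E}(\GuyLambda B(Q))\,\diam(\GuyLambda B(Q)) = d_{\Gamma,E}(\GuyLambda B(Q))\,\GuyLambda C_1 s^k$. Since $z(Q)\in E$ and $x_Q\in\GuyLambda B(Q)$ one has $\dist(x_Q,E)\le\GuyLambda C_1 s^k$, so $d_{\Gamma,E}(\GuyLambda B(Q))\le\tfrac12$ automatically. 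I would then split the sum into the cubes with $d_{\Gamma,E}(\GuyLambda B(Q))>\tfrac1{2\GuyLambda}$ and those with $d_{\Gamma,E}(\GuyLambda B(Q))\le\tfrac1{2\GuyLambda}$, the latter being further decomposed dyadically into the families $\cG_j=\{Q:\ d_{\Gamma,E}(\GuyLambda B(Q))\in(2^{-j-1},2^{-j}]\}$ over those $j$ with $2^{-j}\le\tfrac1{2\GuyLambda}$.

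To each such $Q$ I attach the ball $B_Q=B(x_Q,r_Q)$, taking $r_Q$ to be a small fixed multiple of $s^k$ in the first family and $r_Q\approx 2^{-j}\GuyLambda s^k$ for $Q\in\cG_j$. In every case $r_Q<\dist(x_Q,E)$, so $B_Q\cap E=\emptyset$, and $\diam(B_Q)<\diam(B(Q))\le\diam(\Gamma)$, so $\Gamma\not\subset B_Q$; since $\Gamma$ is path-connected with finite $\cH^1$-measure, some subarc of $\Gamma$ joins the center $x_Q$ to $\partial B_Q$, whence $\ell(\Gamma\cap B_Q)\ge r_Q$. A direct computation using $\diam(B(Q))=2C_1 s^k$, $d_{\Gamma,E}(\GuyLambda B(Q))\le\tfrac12$, and $d_{\Gamma,E}(\GuyLambda B(Q))\le 2^{-j}$ on $\cG_j$ then yields $d_{\Gamma,E}(\GuyLambda B(Q))^2\diam(B(Q))\lesssim_{\GuyLambda}\ell(\Gamma\cap B_Q)$ in the first family and $d_{\Gamma,E}(\GuyLambda B(Q))^2\diam(B(Q))\lesssim \GuyLambda^{-1}2^{-j}\,\ell(\Gamma\cap B_Q)$ for $Q\in\cG_j$.

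The crucial step — and the one I expect to be the main obstacle — is the overlap estimate for the balls $\{B_Q\}$: within the first family the overlap is $\lesssim\log\GuyLambda$ times a doubling constant, and within each $\cG_j$ it is $\lesssim j$ times a doubling constant. This is exactly where the $E$-free property of $B_Q$ enters: if $p\in B_Q\cap B_{Q'}$ with $k(Q)\le k(Q')$, then $x_{Q'}$ lies within $\lesssim r_Q$ of $x_Q$ while $z(Q')\in E$ must lie outside the $E$-free ball $B(x_Q,\dist(x_Q,E))$, and the triangle inequality forces $s^{k(Q')}\gtrsim 2^{-j}s^{k(Q)}$, i.e.\ $k(Q')-k(Q)\lesssim j$, so only $O(j)$ generations can meet $p$; and at a fixed generation the relevant centers $z(Q)$ lie in a ball of radius $\lesssim\GuyLambda s^k$ and are $\gtrsim s^k$-separated, so doubling bounds their number by a constant depending only on $\GuyLambda$ and the doubling constant. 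Feeding these overlaps into the charging inequalities gives $\sum_{Q\in\cG_j}d_{\Gamma,E}(\GuyLambda B(Q))^2\diam(B(Q))\lesssim_{\GuyLambda} j\,2^{-j}\,\ell(\Gamma)$, which is summable in $j$; the first family is handled in the same way with the factor $j$ replaced by $\log\GuyLambda$. Adding the two contributions proves the lemma, and a doubling argument identical to the one reducing \eqref{eq:tstE} to \eqref{eq:tstGamma} shows the bound is insensitive to the (comparable) choice of $\GuyLambda$.
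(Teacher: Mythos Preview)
Your argument is correct and follows essentially the same idea as the paper's proof: to each contributing cube $Q$ you associate a piece of $\Gamma$ of length $\approx d_{\Gamma,E}(\GuyLambda B(Q))\diam(B(Q))$ lying at distance $\approx d_{\Gamma,E}(\GuyLambda B(Q))\diam(B(Q))$ from $E$, and then exploit this distance to bound overlaps. The only difference is bookkeeping: the paper skips the dyadic-in-$d$ decomposition by proving the pointwise bound $\sum_{Q:\,x\in A_Q} d_{\Gamma,E}(\GuyLambda B(Q))\lesssim 1$ directly (a geometric series in scales, using $d_{\Gamma,E}(\GuyLambda B(Q))\approx \dist(x,E)/\diam(B(Q))$) and then writing $\sum d^2\diam\lesssim\int_\Gamma\sum_{x\in A_Q}d$, which yields the same conclusion without the $j2^{-j}$ summation or the separate treatment of large $d$.
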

\begin{proof}
Throughout the proof, we will write $d$ for $d_{\Gamma,E}$. Observe that each cube $Q\in\hDelta_E$ with $\GuyLambda B(Q)\cap\Gamma\neq\emptyset$ and $\diam(B(Q))\leq \diam(\Gamma)$ satisfies
$$ d(\GuyLambda B(Q)) \leq 1$$
and
\begin{equation}\label{eq:d}
d(\GuyLambda B(Q))\diam(B(Q)) \leq \diam(B(Q)) \leq \diam(\Gamma). 
\end{equation}

It suffices to consider $Q$ with $d(\GuyLambda B(Q))\diam(B(Q))>0$. To each such $Q\in \hDelta_E$ with $\Gamma$ intersecting $\GuyLambda B(Q)$, assign an arc $A_Q$ of $\Gamma\cap 2\GuyLambda B(Q)$ with 
$$ \dist(x, E) \approx_{\GuyLambda} d(\GuyLambda B(Q))\diam(B(Q)) \approx_{\GuyLambda} \ell(A_Q) \text{ for all } x\in A_Q.$$
This can be done by choosing a point $z\in\Gamma\cap \GuyLambda B(Q)$ with $\dist(z,E)\geq \frac{1}{2}d(\GuyLambda B(Q))\diam(\lambda B(Q))$ and setting $A_Q$ to be an arc of $\Gamma$ containing $z$ of length $\frac{1}{100\GuyLambda}d(\GuyLambda B(Q))\diam(\lambda B(Q))$, which must exist by \eqref{eq:d}.

Fix $x\in \Gamma$. The doubling property of $E$ implies that  there are a bounded number of cubes $Q\in\hDelta_E$ at each fixed scale with $x\in A_Q\subseteq 2\lambda B(Q)$. Let $Q_0$ be a minimal cube such that $x\in A_{Q_0}$. In that case, $\dist(x,E)\approx d(\GuyLambda B(Q_0))\diam(B(Q_0))$. If $Q\in\hDelta_E$ and $x$ is also in $A_Q$, then  $\dist(x,E)\approx d(\GuyLambda B(Q))\diam(B(Q))$ and so
$$ d(\GuyLambda B(Q)) \approx d(\GuyLambda B(Q_0)) \left(\frac{\diam(B(Q_0))}{\diam(B(Q))}\right).$$
Summing a geometric series, it follows that
$$ \sum_{\substack{Q\in\hDelta_E\\ \diam(B(Q))\leq \diam(\Gamma)\\ x\in A_Q}} d(\GuyLambda B(Q)) \lesssim d(\GuyLambda B(Q_0)) \lesssim 1.$$

We then have
\begin{align*}
 \sum_{\substack{Q\in \hDelta_E\\ \diam(B(Q))\leq \diam(\Gamma)}} d(\GuyLambda B(Q))^2\diam(Q) &\lesssim \sum_{\substack{Q\in \hDelta_E\\ \diam(B(Q))\leq \diam(\Gamma)}}  d(\GuyLambda B(Q)) \ell(A_Q)\\
 &= \int_\Gamma \left(\sum_{\substack{Q\in\hDelta_E\\ \diam(B(Q))\leq \diam(\Gamma)\\ x\in A_Q}} d(\GuyLambda B(Q)) \right) dx\\
 &\lesssim \ell(\Gamma).
\end{align*}

\end{proof}

Let us define a \emph{stopping time region} as a collection $\cS\subseteq \hDelta_E$ with the properties that
\begin{enumerate}
    \item $\cS$ contains a ``top'' cube $Q(S)$ that contains all other cubes of $S$.
    \item $\cS$ is ``connected'': if $Q \subseteq Q' \subseteq Q''$ and both $Q, Q''$ are in $\cS$, then so is $Q'$. 
\end{enumerate}
Note: the definition of stopping time region employed often by David and Semmes, e.g. in \cite{DavidSemmes}, typically also requires that for each $Q\in S$, either all children of $Q$ lie in $\cS$ or none of them do. We will consider a version of this later on in the proof of Lemma \ref{l:first-stop-construction}, but we do not include it in the definition here.

Let $\cC_\Gamma = \{Q\in\hDelta_E : B(Q) \cap \Gamma \neq \emptyset, \diam(B(Q))\leq \diam(\Gamma)\}$. 
Note that $\cC_\Gamma$ is ``upwards connected'' in the sense that if $Q\in\cC_\Gamma$, then so are all its ancestors $Q^{\uparrow k}$ with $\diam(Q^{\uparrow k})\leq \diam(\Gamma)$. This follows from Proposition \ref{prop:christ}(v).

We need the following result, which is a type of ``coronization'' (in the language of \cite{DavidSemmes}). It says that, at the expense of throwing away a controlled collection of ``bad'' cubes, the collection $\cC_\Gamma$ can be partitioned into stopping time regions on which $\Gamma$ is very flat, very close to $E$, and within which the line of best fit does not change too much. Although similar results are known in many settings, the facts that $\Gamma$ is not Ahlfors regular and that we work in Hilbert space rather than $\mathbb{R}^n$ means that we cannot directly quote from the literature.

\begin{proposition}\label{prop:str}
Let $\Gamma$ be a rectifiable curve in $\bX$ and $\alpha,\delta, d_0>0$, $\GuyLambda>1$. The collection $\cC_\Gamma$ can be written as a partition
$$ \cC_\Gamma = \mathcal{B} \cup \bigcup_i \cS_i$$
with the following properties:
\begin{enumerate}
    \item Each $\cS_i$ is a stopping time region.
    \item If $R\in \cS_i$, then $d_{E,\Gamma}(\GuyLambda B(R)) < d_0$. 
    \item If $R\in \cS_i$, then $\theta_\Gamma(\GuyLambda B(R))<\delta$. 
    \item If $R\in \cS_i$, then $\angle(\tau_\Gamma(Q(\cS_i)), \tau_\Gamma(R)) < \alpha$, where $\tau_\Gamma(Q)$ denotes the direction of a line minimizing the infimum in $\theta_\Gamma(\GuyLambda B(Q))$.
    \item $\sum_{i} \diam(B(Q(\cS_i))) \lesssim \ell(\Gamma)$.
    \item $\sum_{Q\in\mathcal{B}} \diam(B(Q)) \lesssim \ell(\Gamma)$.
\end{enumerate}
The implied constants depend only on $d,d_0,\alpha,\delta, \GuyLambda$, and the doubling constant of $E$.
    
\end{proposition}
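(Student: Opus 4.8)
The plan is to build the stopping time regions by a standard stopping-time construction applied inside $\cC_\Gamma$, and then control the total size of the bad cubes and the tops using the summability estimates already established in this section (Proposition \ref{p:TST-betas-bounded} and Lemma \ref{l:TST-analogue-for-d}). Concretely, I would first fix a large auxiliary parameter $\GuyLambda' \approx \GuyLambda$ (inflated enough that all the relevant inclusions of inflated balls behave well under Proposition \ref{prop:christ}(v)), and call a cube $Q\in\cC_\Gamma$ \emph{flat-good} if $\theta_\Gamma(\GuyLambda' B(Q))<\delta/2$ and \emph{close-good} if $d_{E,\Gamma}(\GuyLambda' B(Q))<d_0/2$. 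By Proposition \ref{p:TST-betas-bounded} and Lemma \ref{l:TST-analogue-for-d}, the cubes that fail either condition satisfy $\sum \diam(B(Q))\lesssim \delta^{-2}\ell(\Gamma)$ and $\sum\diam(B(Q))\lesssim d_0^{-2}\ell(\Gamma)$ respectively (a weak-type/Chebyshev estimate extracted from the $\ell^2$-sum), so these can be placed directly into $\mathcal{B}$, giving part (6) for that portion of the bad set.

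Next I would run the stopping-time construction that produces the $\cS_i$ together with the angle control (4). Start with any ``top'' cube $Q_0$ that is flat-good and close-good and whose ancestors in $\cC_\Gamma$ are not (the maximal such cubes tile $\cC_\Gamma$ up to the first-layer bad cubes). Let $v_0=\tau_\Gamma(\GuyLambda' B(Q_0))$. Add descendants of $Q_0$ to $\cS(Q_0)$ as long as they remain flat-good, close-good, and satisfy $\angle(v_0,\tau_\Gamma(\GuyLambda' B(R)))<\alpha$; stop at the maximal cubes violating any of these three. By construction each $\cS_i$ is automatically ``connected'' and has a top, so (1) holds, and (2), (3), (4) hold by design with $\delta,d_0$ in place of $\delta/2,d_0/2$ (the factor $2$ and the passage from $\GuyLambda'$ to $\GuyLambda$ absorb the harmless geometric errors). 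The children of the stopped cubes, together with any flat/close-good-but-not-from-a-good-ancestor cubes, become the new tops of the next generation of stopping regions; cubes that were stopped because they failed flatness or closeness go into $\mathcal{B}$ and are already controlled above.

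The remaining and genuinely substantive point is part (5): bounding $\sum_i \diam(B(Q(\cS_i)))$, equivalently bounding the number (with multiplicity weighted by diameter) of stopping tops. Tops arise for one of two reasons: (a) the parent cube failed flatness/closeness — these are in bijection (up to bounded multiplicity, via doubling of $E$) with cubes counted in $\mathcal{B}$, hence summable by the estimates above; or (b) the parent was flat-good and close-good but the \emph{angle} between $\tau_\Gamma$ and the top direction $v$ of the ambient stopping region exceeded $\alpha$. For case (b) I would use a standard ``angle change forces $\beta$'' argument: if $R\subseteq Q'\subseteq Q(\cS_i)$ with $\angle(\tau_\Gamma(\GuyLambda' B(R)),\tau_\Gamma(\GuyLambda' B(Q')))$ large at consecutive scales, then along the chain from $Q(\cS_i)$ down to $R$ the angle must jump by a definite amount at some cube, and a jump of angle $\gtrsim\alpha$ across a cube $Q''$ where both $\theta_\Gamma(\GuyLambda' B(Q''))$ and $\theta_\Gamma(\GuyLambda' B((Q'')^\uparrow))$ are small forces $\theta_\Gamma(\GuyLambda' B(Q''))\gtrsim_\alpha$ (two nearly-coincident flat approximations at comparable scales with very different directions cannot both approximate $\Gamma$). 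Since the cubes where $\theta_\Gamma(\GuyLambda' B(\cdot))\gtrsim_\alpha 1$ are $\sum\diam(B(Q))\lesssim_\alpha\ell(\Gamma)$-summable by Proposition \ref{p:TST-betas-bounded}, and each such cube can serve as the ``cause'' of boundedly many tops at nearby scales (by doubling), we get $\sum_i\diam(B(Q(\cS_i)))\lesssim \ell(\Gamma)$ with the claimed dependence on the parameters. This angle-change-to-$\theta$ mechanism, and the bookkeeping that assigns each stopping top to a unique bad or large-$\theta$ cube with bounded overlap, is where I expect the real work to lie; everything else is a routine stopping-time and geometric-series argument.
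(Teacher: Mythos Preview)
Your approach to properties (1)--(4) and (6), and to the case (a) portion of (5), is sound and matches the paper's in spirit. The genuine gap is in your argument for (5), case (b) --- controlling the tops that arise from the angle condition. Your claim that ``along the chain from $Q(\cS_i)$ down to $R$ the angle must jump by a definite amount at some cube'' is false if by ``definite'' you mean $\gtrsim \alpha$: since every cube in the chain has $\theta_\Gamma(\GuyLambda' B(\cdot))<\delta/2$, the angle between the best-fit lines of a cube and its parent is $\lesssim \delta$, so the total rotation $\alpha$ can be accumulated through roughly $\alpha/\delta$ small increments with no individual $\theta$ ever being $\gtrsim_\alpha 1$. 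If instead you try to charge each angle-stop $R$ to the accumulated sum $\sum_{Q\supseteq R,\,Q\in\cS(R)} \theta_\Gamma(\GuyLambda' B(Q)) \gtrsim \alpha$ and swap the order of summation, you are led to control $\sum_Q \theta_\Gamma(\GuyLambda' B(Q))\diam(B(Q))$, an $\ell^1$-quantity that is \emph{not} bounded by the $\ell^2$-type estimate of Proposition \ref{p:TST-betas-bounded}. Concretely, a curve with slowly drifting tangent (say $\theta\approx 1/k$ at scale $s^k$, so $\sum \theta^2<\infty$ but $\sum\theta=\infty$) would force unboundedly many angle-restarts under your stopping rule while keeping $\ell(\Gamma)$ bounded, so your claimed bound on $\sum_i \diam(B(Q(\cS_i)))$ fails.

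The paper handles this via a two-stage decomposition, carried out first in $\Delta_\Gamma$ and then transferred to $\cC_\Gamma\subset\Delta_E$ by doubling. In the first stage one stops when the \emph{accumulated} quantity $\sum \theta_\Gamma^2$ along the chain exceeds a threshold $\epsilon_J^2$; these stops are summable precisely because the $\ell^2$-TST bound matches the stopping criterion (Lemma \ref{l:control-large-jones-times}), and on each resulting region one constructs a $(1,1+c\epsilon_J^2)$-bi-Lipschitz curve approximating $\Gamma$ (Lemma \ref{l:first-stop-construction}). In the second stage, angle rotation on each bi-Lipschitz piece is controlled not through $\theta$ but through \emph{length}: the Pythagorean argument of Lemma \ref{l:control-stop-angle-large} shows that the sub-intervals where the approximating curve has rotated by $\gtrsim\alpha$ have total length at most $(1-q)$ times that of the top, giving geometric decay and hence summable angle-tops. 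This bi-Lipschitz-plus-Pythagorean mechanism is the key idea missing from your sketch.
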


The rest of this section is devoted to proving Proposition \ref{prop:str}. 
While the goal is to decompose $\mathcal{C}_\Gamma\subseteq \hDelta_E$ into stopping time regions, we will first decompose $\Delta_\Gamma$. We will then use this decomposition to get an analogous one for $\cC_\Gamma$.

We will construct the decomposition of $\Delta_\Gamma$ in a few steps.  There will be a preliminary partition
\begin{equation}\label{e:decomp-0}
    \Delta_\Gamma = \cB^0 \cup \bigcup_j \cS^0_j,
\end{equation}
and after that we will further break each $\cS^0_j$ into more stopping time regions and add some additional ``bad'' cubes.

A remark about constants: In the course of the proof, we will have small positive constants $\epsilon_J$ and $\epsilon_\theta$. The given constant $\alpha$ in Proposition \ref{prop:str} determines $\epsilon_J$ in Lemma \ref{l:control-stop-angle-large}. The constant $\epsilon_\theta$ is used earlier, and needs to be much smaller than $\min(\epsilon_J,1,\alpha/(1000\GuyLambda))$.

The decomposition \eqref{e:decomp-0} goes in the following way. Each cube $Q\in\Delta_\Gamma$ such that $\theta(\GuyLambda B(Q))\geq \epsilon_\theta$ or $d(\GuyLambda B(Q))\chi_E(\frac12\GuyLambda B(Q)) \geq d_0$ is placed in $\cB^0$. (Here we use the shorthand $\chi_A(B)=1$ if $A\cap B\neq\emptyset$ and $0$ otherwise.) The stopping time regions $\cS^0_j$ are then the ``connected components'' of $\Delta_\Gamma \setminus \mathcal{B}^0$, under the tree structure of $\Delta_\Gamma$. Cubes in $\Delta_\Gamma \setminus \mathcal{B}^0 = \bigcup_{j} \cS^0_j$ are exactly those that satisfy the condition:

\begin{equation}\label{stop-time-item:large-beta}
 \theta(\GuyLambda B(Q))<\epsilon_\theta \text{ and } d(\GuyLambda B(Q))\chi_E\left(\frac12\GuyLambda B(Q)\right)<d_0.
\end{equation}

This gives \eqref{e:decomp-0}.
\begin{lemma}
The collection $\cB^0$ satisfies the packing condition
$$ \sum_{Q\in \cB^0} \diam(B(Q)) \lesssim \ell(\Gamma),$$
with implied constant depending only on the parameters in Proposition \ref{prop:str}.	
    \end{lemma}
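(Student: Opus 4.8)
The plan is to split $\cB^0$ according to which defining inequality holds. Write $\cB^0 = \cB^0_\theta \cup \cB^0_d$, where (restricting throughout to cubes with $\diam(B(Q))\le \diam(\Gamma)$, as in the rest of this section)
\[
\cB^0_\theta = \{Q\in\Delta_\Gamma : \theta_\Gamma(\GuyLambda B(Q))\ge \epsilon_\theta\}, \qquad \cB^0_d = \{Q\in\Delta_\Gamma : d_{\Gamma,E}(\GuyLambda B(Q))\,\chi_E(\tfrac12\GuyLambda B(Q))\ge d_0\}.
\]
Since $\chi_E(\tfrac12\GuyLambda B(Q))\in\{0,1\}$, membership in $\cB^0_d$ forces both $\tfrac12\GuyLambda B(Q)\cap E\neq\emptyset$ and $d_{\Gamma,E}(\GuyLambda B(Q))\ge d_0$. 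For the $\theta$-part, Chebyshev and $\diam(Q)\approx\diam(B(Q))$ (Proposition \ref{prop:christ}) give
\[
\sum_{Q\in\cB^0_\theta}\diam(B(Q)) \le \epsilon_\theta^{-2}\sum_{\substack{Q\in\Delta_\Gamma\\ \diam(B(Q))\le \diam(\Gamma)}}\theta_\Gamma(\GuyLambda B(Q))^2\diam(B(Q)) \lesssim_{\epsilon_\theta,\GuyLambda}\ell(\Gamma),
\]
the last step being exactly \eqref{eq:tstGamma} of Proposition \ref{p:TST-betas-bounded}.

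\textbf{The $d$-part via transfer to $\hDelta_E$.} For $Q\in\cB^0_d$ we have $d_{\Gamma,E}(\GuyLambda B(Q))\ge d_0$, so again by Chebyshev it suffices to prove
\[
\sum_{\substack{Q\in\Delta_\Gamma,\ \tfrac12\GuyLambda B(Q)\cap E\neq\emptyset\\ \diam(B(Q))\le \diam(\Gamma)}} d_{\Gamma,E}(\GuyLambda B(Q))^2\diam(B(Q)) \lesssim_{\GuyLambda}\ell(\Gamma).
\]
Lemma \ref{l:TST-analogue-for-d} gives such an estimate but only over cubes of $\hDelta_E$, so the idea is to transfer: for each $Q$ in the sum pick $p\in \tfrac12\GuyLambda B(Q)\cap E$ and let $R=R(Q)\in\hDelta_E$ be the cube at the \emph{same} scale as $Q$ that contains $p$. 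Using Proposition \ref{prop:christ}(iii) one checks $\GuyLambda B(Q)\subseteq 2\GuyLambda B(R)$ and $\diam(B(R))\approx \diam(B(Q))$, whence $d_{\Gamma,E}(\GuyLambda B(Q))\lesssim_{\GuyLambda} d_{\Gamma,E}(2\GuyLambda B(R))$; thus each $Q$'s contribution is dominated by that of $R$ in the sum $\sum_{R\in\hDelta_E,\ 2\GuyLambda B(R)\cap\Gamma\neq\emptyset,\ \diam(B(R))\le\diam(\Gamma)} d_{\Gamma,E}(2\GuyLambda B(R))^2\diam(B(R))\lesssim_\GuyLambda\ell(\Gamma)$ supplied by Lemma \ref{l:TST-analogue-for-d}.

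\textbf{The main obstacle.} The only real work lies in controlling the multiplicity of the map $Q\mapsto R(Q)$. Because $\bX$ may be infinite dimensional and it is $E$, not $\Gamma$, that is doubling, one \emph{cannot} bound the number of scale-$s^k$ cubes $Q$ with $z(Q)$ in a fixed ball of radius $\sim_\GuyLambda s^k$ by ambient geometry alone. Here I would instead use that $\Gamma$ is connected of finite length: the centers $z(Q)$ of the cubes mapping to a fixed $R$ form an $s^k$-separated subset of $\Gamma$ inside $O_\GuyLambda(1)B(R)$, and a connected set containing $m$ points that are $\rho$-separated has $\mathcal H^1$-measure $\gtrsim m\rho$ (comparing disjoint balls $B(p_i,\rho/2)$), so the multiplicity of $R$ is $\lesssim_\GuyLambda \mathcal H^1(\Gamma\cap C\GuyLambda B(R))/\diam(B(R))$. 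One then has to feed this non-constant multiplicity back through a variant of the proof scheme of Lemma \ref{l:TST-analogue-for-d} — assigning to $R$ an arc $A_R\subseteq\Gamma\cap 2C\GuyLambda B(R)$ of length $\approx_\GuyLambda d_{\Gamma,E}(2\GuyLambda B(R))\diam(B(R))$ on which $\dist(\cdot,E)$ is comparable to $\ell(A_R)$, and exploiting the lower bound $d_{\Gamma,E}(2\GuyLambda B(R))\gtrsim_\GuyLambda d_0$ to pin down the relevant scales and sum the resulting geometric series in $k$ — to obtain the bound $\lesssim_{\GuyLambda,d_0}\ell(\Gamma)$. Combined with the two Chebyshev reductions and the trivial contribution of the finitely many oversized scales, this yields $\sum_{Q\in\cB^0}\diam(B(Q))\lesssim\ell(\Gamma)$ with implied constant depending only on the parameters of Proposition \ref{prop:str}.
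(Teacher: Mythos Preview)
Your setup, splitting into $\cB^0_\theta\cup\cB^0_d$, and the treatment of $\cB^0_\theta$ via Proposition~\ref{p:TST-betas-bounded} exactly match the paper. Your transfer $Q\mapsto R(Q)\in\hDelta_E$ and the invocation of Lemma~\ref{l:TST-analogue-for-d} are also the same in spirit, and you correctly identify that the only real issue is the multiplicity of this map in infinite dimensions.

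The gap is in how you resolve this multiplicity. Your bound $\#\{Q:R(Q)=R\}\lesssim_\GuyLambda \ell(\Gamma\cap C\GuyLambda B(R))/\diam(B(R))$ is correct, but the sketched continuation is not sufficient. After absorbing this non-constant multiplicity, you are left needing something like $\sum_{R:\,d(2\GuyLambda B(R))\gtrsim d_0}\ell(\Gamma\cap C\GuyLambda B(R))\lesssim\ell(\Gamma)$, i.e.\ $\int_\Gamma \#\{R: d(2\GuyLambda B(R))\gtrsim d_0,\ x\in C\GuyLambda B(R)\}\,dx\lesssim \ell(\Gamma)$. For a fixed $x\in\Gamma$, doubling of $E$ bounds the number of relevant $R$ \emph{per scale}, but nothing in your argument bounds the number of \emph{scales}: the condition $d(2\GuyLambda B(R))\gtrsim d_0$ only says some point of $\Gamma$ in $2\GuyLambda B(R)$ is far from $E$, which can happen at every large scale simultaneously without forcing $x$ itself to be far from $E$. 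The ``geometric series in $k$'' you allude to does not materialize from $d\gtrsim d_0$ alone, and the arc~$A_R$ device does not help once the integration point $x$ need not lie in $A_R$.

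The paper avoids this by first peeling off one more layer: among the cubes in $\cB^0_d$, those with $\beta_\Gamma(10\GuyLambda B(Q))\ge\epsilon_\theta$ are controlled by another application of Proposition~\ref{p:TST-betas-bounded}, and for the remaining cubes one has $\beta_\Gamma(10\GuyLambda B(Q))<\epsilon_\theta$, so $\Gamma$ is close to a line in $10\GuyLambda B(Q)$. Then the centers $z(Q)$ of all scale-$k$ cubes $Q$ mapping to a fixed $R$ are $s^k$-separated points lying within $O(\epsilon_\theta s^k)$ of a fixed line segment of length $O_\GuyLambda(s^k)$, and there are only boundedly many of those. This gives \emph{bounded} multiplicity directly, and Lemma~\ref{l:TST-analogue-for-d} finishes. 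The missing idea in your argument is precisely this use of flatness of $\Gamma$ to upgrade the length-based multiplicity estimate to an absolute one.
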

\begin{proof}
    This relies on Proposition \ref{p:TST-betas-bounded} and
  Lemma \ref{l:TST-analogue-for-d}.  The control on the collection of cubes $Q\in\Delta_\Gamma$ that satisfy $\theta(\GuyLambda B(Q))\geq\epsilon_\theta$ follows immediately from Proposition \ref{p:TST-betas-bounded} (for $\Delta_\Gamma$).

Next, we must consider $Q\in\Delta_\Gamma$ such that $d(\GuyLambda B(Q))\geq d_0$ and $\frac12 \GuyLambda B(Q) \cap E \neq \emptyset$. By Proposition \ref{p:TST-betas-bounded}, we already control those for which $\beta(10\GuyLambda B(Q)) \geq \epsilon_\theta$, so we are free to also assume the reverse inequality for these $Q$.

Each one of these cubes $Q$ has $\GuyLambda B(Q) \subseteq \GuyLambda B(R_Q)$ for some $R_Q\in\Delta_E$, $\diam(B(R_Q)) \approx \diam(B(Q))$, and $d(\GuyLambda B(R_Q)) \gtrsim d_0$. It follows from Lemma \ref{l:TST-analogue-for-d} that $\sum \diam(B(R_Q))\lesssim \ell(\Gamma)$. It therefore suffices to argue that the map $Q\mapsto R_Q$ is bounded-to-one. Fixing $R\in\Delta_E$ observe that all $Q\in \Delta_\Gamma$ of this type with $R=R_Q$ must be centered close to a fixed line segment. It follows that $\lambda B(R)$ can only contain a controlled number of different such $\lambda B(Q)$, $Q\in \Delta_\Gamma$ of scale comparable to $R$. This completes the proof.
 
  \end{proof}

Next, we will take each preliminary stopping time region $\cS^0_j$ and decompose it further as
\begin{equation}\label{e:decomp-1}
        \cS^0_j =  \bigcup_{i,j} \cS^1_{i,j}. 
    \end{equation}

First consider a maximal stopping time region $\cS\subseteq \cS^0_j$ with top cube $Q(\cS^0_j)$ that has the following property: For each $Q\in \cS$ and each sibling $Q'$ of $Q$, 
\begin{equation}\label{stop-time-item:large-jones}
\sum_{I\in \cS, I \supset Q'} \theta^2(\GuyLambda B(I)) < \epsilon_J^2.
\end{equation} 
Because $\epsilon_\theta<\epsilon_J$ and $Q\in\cS^0_j$, such a stopping time region $\cS\subseteq \cS^0_j$ must exist. We then proceed inductively: consider a maximal cube $Q$ of $\cS^0_j$ that is not in any of the $\cS^1_{i,j}$ constructed so far. For each such cube $Q$, we construct a maximal stopping time region $\cS\subseteq \cS^0_j$ with top cube $Q$ that satisfies \eqref{stop-time-item:large-jones}, and add $\cS$ to the list of the $\cS^1_{i,j}$. This process exhausts $\cS^0_j$ into the decomposition \eqref{e:decomp-1}.

We next argue that the new stopping time regions $\cS^1_{i,j}$ have good properties (Lemma \ref{l:first-stop-construction}), and then we will control how many of them there are (Lemma \ref{l:control-large-jones-times}).

\begin{lemma}\label{l:first-stop-construction}
Suppose $\GuyLambda>2s$ and $\epsilon_\theta>0$ is sufficiently small (depending on $\GuyLambda$ and $\alpha$.)
For each $\cS^1_{i,j}$, there is a set $\Gamma^1_{i,j}$ with the following properties:
    \begin{enumerate}[(i)]

\item $\Gamma^1_{i,j}$ is a bi-Lipschitz image of an interval of size $\diam(B(Q(S^1_{i,j})))$ with constants $(1,1+c\epsilon_J^2)$,
\item for each $Q\in \cS^1_{i,j}$,  $\Gamma^1_{i,j}\cap B(Q)$ has Hausdorff distance $<\frac{\alpha}{10 \GuyLambda}\cdot \diam(B(Q))$ to $\Gamma\cap B(Q)$.
    \end{enumerate}
    \end{lemma}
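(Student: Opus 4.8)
The plan is to build the curve $\Gamma^1_{i,j}$ as a "straightening" of $\Gamma$ along the stopping time region $\cS^1_{i,j}$. First I would fix the top cube $Q_0 = Q(\cS^1_{i,j})$ and the line $L_0$ minimizing $\theta_\Gamma(\GuyLambda B(Q_0))$; since $\theta_\Gamma(\GuyLambda B(Q_0)) < \epsilon_\theta$ is tiny, $\Gamma$ restricted to $\GuyLambda B(Q_0)$ is within $\epsilon_\theta \diam(B(Q_0))$ of $L_0$ in Hausdorff distance. The idea is to define $\Gamma^1_{i,j}$ by gluing together, at each scale, the approximating segments coming from the cubes of $\cS^1_{i,j}$: inside a cube $Q\in\cS^1_{i,j}$ one replaces $\Gamma$ by (a piece of) the line $L_Q$ minimizing $\theta_\Gamma(\GuyLambda B(Q))$. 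Because every $Q\in\cS^1_{i,j}$ satisfies \eqref{stop-time-item:large-beta} (so $\theta_\Gamma(\GuyLambda B(Q)) < \epsilon_\theta$) and, crucially, satisfies the Jones-type smallness condition \eqref{stop-time-item:large-jones} controlling the sum of $\theta^2$ along each branch by $\epsilon_J^2$, the successive corrections are summable and the total length distortion is controlled by $\sim \epsilon_J^2 \diam(B(Q_0))$.

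Concretely, I would parametrize as follows. By the standard "stopping-time constructs a Lipschitz graph" machinery (cf.\ the construction in \cite{DavidSemmes} or the curve-building arguments underlying Theorem \ref{thm:tst}, adapted to a single stopping region), one produces a function $g$ on an interval $I$ of length $\sim \diam(B(Q_0))$, with image $\Gamma^1_{i,j}$, such that: (a) at the top scale $g$ traces out essentially the segment $L_0 \cap \GuyLambda B(Q_0)$; (b) refining from a cube $Q$ to its children in $\cS^1_{i,j}$, one perturbs $g$ on the corresponding subinterval by an amount $\lesssim \theta_\Gamma(\GuyLambda B(Q))\diam(B(Q))$, and the directions of the pieces change by an angle $\lesssim \theta_\Gamma(\GuyLambda B(Q))$ (here the Hilbert space geometry and the fact that consecutive approximating lines for nested flat cubes are nearly parallel — a standard angle estimate from $\theta$ being small — is used). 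Squaring and summing the angle increments along any branch, \eqref{stop-time-item:large-jones} gives total turning $\lesssim \epsilon_J$ and, via the Pythagorean bound as in Remark \ref{r:tst-half-1}, total length excess $\lesssim \epsilon_J^2 \diam(B(Q_0))$, yielding the bi-Lipschitz constant $(1, 1+c\epsilon_J^2)$ in (i). For (ii), inside a fixed $Q\in\cS^1_{i,j}$ both $\Gamma \cap B(Q)$ and $\Gamma^1_{i,j}\cap B(Q)$ lie within $\lesssim \theta_\Gamma(\GuyLambda B(Q))\diam(B(Q)) + \epsilon_J \diam(B(Q)) \lesssim \epsilon_\theta \diam(B(Q))$ of a common line; since $\GuyLambda > 2s$ and cubes have comparable size to their balls, choosing $\epsilon_\theta$ small relative to $\alpha/\GuyLambda$ gives Hausdorff distance $< \frac{\alpha}{10\GuyLambda}\diam(B(Q))$, and one checks both inclusions (every point of $\Gamma\cap B(Q)$ is near $\Gamma^1_{i,j}$ and conversely) using that $\theta$, not merely $\beta$, is small so $\Gamma$ fills out the approximating segment.

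The main obstacle I anticipate is making the gluing genuinely produce a bi-Lipschitz curve (injective, with the claimed two-sided constants) rather than merely a connected set or a Lipschitz-but-possibly-overlapping path: one must order the children of each cube consistently along the approximating line and check that the perturbations at different scales, and between different branches, do not cause the pieces to cross or double back. This is exactly the delicate bookkeeping in the Jones/Okikiolu/Schul construction, and the condition \eqref{stop-time-item:large-jones} is precisely what is available to make it work — it bounds the accumulated direction change so that "far along the parameter interval" corresponds to "far along the curve." I would handle this by the usual device of defining $g$ as a uniform limit of piecewise-linear approximants $g_k$ (one per scale), showing $\|g_{k+1}-g_k\|_\infty$ is summable and that each $g_k$ is bi-Lipschitz with constants converging to $(1,1+c\epsilon_J^2)$, with injectivity in the limit following from the angle control. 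The remaining verifications — that $\diam \Gamma^1_{i,j} \sim \diam B(Q_0)$ and the Hausdorff-closeness in (ii) — are then routine consequences of the construction and the smallness of $\epsilon_\theta$.
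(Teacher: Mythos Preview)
Your proposal is correct and follows essentially the same approach as the paper: build $\Gamma^1_{i,j}$ as a limit of piecewise-linear approximants refined scale by scale through the stopping region, use the Pythagorean theorem together with \eqref{stop-time-item:large-jones} to control the bi-Lipschitz constant, and use $\epsilon_\theta \ll \alpha/\GuyLambda$ for the Hausdorff closeness; the paper makes the ``ordering'' step (your acknowledged obstacle) explicit by inductively ordering the net points $\{z(Q):Q\in\cS\}$ so that their projections onto best-fitting lines are monotone. One small correction: the Pythagorean inequality used here is the one in Remark~\ref{r:tst-half-2} ($c^2-a^2\leq h^2$, bounding length \emph{excess} from above), not the one in Remark~\ref{r:tst-half-1}.
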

    While there are similar (and in many ways more general) statements in the literature (for example, Lemma 6.1 in \cite{ENV}, Theorem 1.5 in \cite{DavidToro_holes}, and Theorem 6.3 in \cite{Lerman}), none of them fit our need exactly, so we provide a short proof here.

    \begin{remark}\label{r:tst-half-2}
        The proof of Lemma \ref{l:first-stop-construction} uses the Pythagorean inequality $c^2-a^2\leq h^2$
        where $c$ is the hypotenuse of a right angle triangle with leg $h$ and base $a$. 
        This is the opposite inequality than the one used in Remark \ref{r:tst-half-1}.
        In a non-Hilbertian Banach space, analogous results would require modifying the exponents in a similar fashion to Lemma 6.1 of \cite{ENV}. 
    \end{remark}
    \begin{proof}[Proof of Lemma \ref{l:first-stop-construction}]
        Let $\cS=\cS^1_{i,j}$ and set $X_{\cS}=\{z(Q):Q\in \cS\}\subseteq \Gamma$.
We will also denote by $(X_{\cS})_n$ all the points in ${\cS}$ corresponding the the centers of the $0,1,...,n$-th generation children of $Q({\cS})$ In particular, note that  $(X_{\cS})_0$ is a singleton and $(X_{\cS})_n\subset (X_{\cS})_{n+1}$.       
        We claim that $X_{\cS}$ can be ordered so that, for each ball $B(Q)$ with $Q\in \cS$, the projection of 
        points of an $\approx \epsilon_\theta \diam(B(Q))$-net in $(X_{\cS})\cap B(Q)$ onto a best fitting line is either increasing or decreasing in order. 
				We construct this order by induction as follows.
        
        The singleton $(X_{\cS})_0$ clearly has an order as desired.
        Suppose that $(X_{\cS})_n$ has an order as desired.  We wish to extend it to an order on $(X_{\cS})_{n+1}$. 
        Let $x,y\in  (X_{\cS})_{n+1}$, and assume without loss of generality that $x\notin (X_{\cS})_n$.
 
Suppose first that $x$ and $y$ are contained in some $Q\in \cS$ that is an $n$th generation descendant of $Q(\cS)$. Since $\theta_\Gamma(\GuyLambda B(Q))<\epsilon_\theta$, we may order $x,y$ in a way consistent with both the order on $(X_{\cS})_n$ and the projection onto the best-fitting line for $\GuyLambda B(Q)$.

If $x,y$ are not contained in a common $n$th generation $Q\in \cS$, then $x$ and $y$ have distance much greater than the diameter of a $n$-th generation descendant of $Q(\cS)$. In this case, the order between $x$ and $y$ is decided using the nearest points of $(X_{\cS})_n$.

This order satisfies the desired property. Now we use the order as follows.

Add to $X_{\cS}$ at most two points (from $\Gamma$), so that there is a first and last point in $X_{\cS}$.
Let $X_n$ be a sequence of nested nets for $X_{\cS}$, so that 
$X_0$ has  exactly  two points $x_0^0, x_0^1$ which are the smallest and largest in $\Gamma$ with respect to the order.
Use $X_n$ to build a polygon whose edges are always between consecutive points with respect to the order. Let $\gamma_n$ be the ordered constant speed parametrization of this polygon by $[0,1]$.
Since $\GuyLambda > 2s$, the polygon parametrized by $\gamma_n$ is contained in the union of 
$\{\frac12\GuyLambda B(Q): z(Q)\in (X_{\cS})_n\}$.
It follows from the Pythagorean theorem that the growth in the derivative $\gamma_{n+1}'(x)$ compared to that of $\gamma_n'(x)$ is bounded by a factor of $1+c\theta_\Gamma(\GuyLambda B(Q))^2$ for some $c\geq 1$, where $\frac12\lambda B(Q)\ni \gamma_n(x)$. 
Thus, 
 $\gamma_n$ is bi-Lipschitz with constant controlled by $|X_0^0-X_0^1|\cdot (1,1+2c\epsilon_J^2)$ (using that $e^x\leq 1+2x$ for small $x$).
        Finally, take $\gamma$ to be the limit of a subsequence of the $\gamma_n$.
        Note that $\theta(\GuyLambda B(Q))<\epsilon_\theta<\alpha/1000\GuyLambda$, and so 
      $\gamma$ approximates $\Gamma^1_{i,j}$ up to error $\alpha/500\GuyLambda$ in each $B(Q), Q\in\cS$.

    \end{proof}

\begin{lemma}\label{l:control-large-jones-times}
The collection $\{\cS^1_{i,j}\}$ satisfies the packing condition
$$ \sum_{i,j} \diam(B(Q(\cS^1_{i,j}))) \lesssim \ell(\Gamma).$$
    \end{lemma}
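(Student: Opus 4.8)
The plan is to control the number of stopping time regions $\cS^1_{i,j}$ by charging each one to the ``large Jones function'' event that forced it to stop, and then invoking Proposition \ref{p:TST-betas-bounded} to sum these events. Recall that each $\cS^1_{i,j}$ (other than possibly a bounded number of ``initial'' ones in each $\cS^0_j$) was created because some cube $Q$ in the parent region $\cS^0_j$ failed the stopping condition \eqref{stop-time-item:large-jones}: there is a sibling $Q'$ of a cube $Q\in\cS^0_j$ with $\sum_{I\in\cS^0_j,\, I\supset Q'}\theta^2(\GuyLambda B(I)) \geq \epsilon_J^2$. The idea is that such a ``$\theta^2$-sum reaches $\epsilon_J^2$'' event can only happen boundedly often along any given branch, and the total number of such events is controlled by $\epsilon_J^{-2}\sum_I \theta^2(\GuyLambda B(I))\diam(B(I))$ once one tracks diameters.

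The key steps, in order. First I would set up the charging: for each $\cS^1_{i,j}$ with top cube $Q(\cS^1_{i,j})$ that is \emph{not} the top cube of its parent $\cS^0_j$, the maximality of the stopping time region means that $Q(\cS^1_{i,j})^{\uparrow}$ lies in some previously-constructed $\cS^1_{i',j}$, and the reason $Q(\cS^1_{i,j})$ began a new region is that condition \eqref{stop-time-item:large-jones} fails at $Q(\cS^1_{i,j})^{\uparrow}$ within $\cS^0_j$ — equivalently, along the branch from $Q(\cS^0_j)$ down to the relevant sibling, the accumulated $\theta^2$ has crossed the threshold $\epsilon_J^2$. Second, I would observe that within a single $\cS^0_j$, since $\theta(\GuyLambda B(I))<\epsilon_\theta$ for every $I\in\cS^0_j$ (by \eqref{stop-time-item:large-beta}) and $\epsilon_\theta\ll\epsilon_J$, crossing the threshold is a genuine accumulation: each top cube $Q(\cS^1_{i,j})$ can be associated to a ``stopping ancestor'' together with the collection of cubes $I$ with $Q(\cS^1_{i,j})\subseteq I \subseteq$ (the previous threshold-crossing ancestor) whose $\theta^2(\GuyLambda B(I))$ values sum to at least $\sim\epsilon_J^2$. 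Third, I would run the standard geometric-series/stopping argument: since $\diam(B(I))\lesssim \diam(B(Q(\cS^1_{i,j})))$ is false in general but $\diam$ decreases geometrically down the tree, one sums $\diam(B(Q(\cS^1_{i,j})))$ against $\epsilon_J^{-2}\sum_{I \text{ in the associated block}}\theta^2(\GuyLambda B(I))\diam(B(I))$, using that the blocks of cubes $I$ attached to distinct top cubes $Q(\cS^1_{i,j})$ are essentially disjoint (they lie along disjoint sub-branches, up to bounded overlap from the sibling bookkeeping and the doubling of $E$). This yields
$$ \sum_{i,j}\diam(B(Q(\cS^1_{i,j}))) \lesssim \diam(\Gamma) + \epsilon_J^{-2}\sum_{I\in\Delta_\Gamma,\,\diam(B(I))\leq\diam(\Gamma)}\theta_\Gamma(\GuyLambda B(I))^2\diam(B(I)),$$
where the $\diam(\Gamma)$ term absorbs the bounded number of ``initial'' regions per $\cS^0_j$ and the top cubes $Q(\cS^0_j)$ themselves (which were already packed by the previous lemma). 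Finally, I would apply Proposition \ref{p:TST-betas-bounded}, equation \eqref{eq:tstGamma}, to bound the last sum by $\lesssim \ell(\Gamma)$, with the implied constant depending on $\GuyLambda$, $\epsilon_J$ (hence on $\alpha$), and the doubling constant of $E$.

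The main obstacle I anticipate is the bookkeeping in the charging argument: condition \eqref{stop-time-item:large-jones} is phrased in terms of \emph{siblings} $Q'$ of cubes in $\cS$, and the $\theta^2$-sums run over ancestors of $Q'$ within $\cS^0_j$, so I must be careful that the ``blocks'' of cubes I charge to distinct top cubes $Q(\cS^1_{i,j})$ genuinely have bounded overlap — a cube $I$ could in principle be an ancestor of siblings of cubes in several different regions. The way around this is to note that siblings of a cube have comparable diameter and are within bounded distance, so by the doubling property of $E$ (and hence of the net structure on $\Gamma$), each cube $I$ is an ancestor-of-a-sibling for only a bounded number of cubes at each scale, and the geometric decay of diameters then controls the total overlap. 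Once this finite-overlap claim is in place, the rest is the routine geometric-series estimate together with Proposition \ref{p:TST-betas-bounded}.
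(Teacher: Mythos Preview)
Your overall strategy---charge each top cube $Q(\cS^1_{i,j})$ to the block of ancestors whose $\theta^2$-sum has reached $\approx \epsilon_J^2$, switch the order of summation, and invoke Proposition~\ref{p:TST-betas-bounded}---is exactly the paper's argument. The paper phrases it in terms of the minimal cubes $R\in\min(\cS^1_{i,j})$ rather than the top cubes of the next region, but these differ by one generation, so that is immaterial.

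The genuine gap is in your justification of the ``bounded overlap'' step. After switching the order of summation, what is needed is that for each fixed $I\in\cS^1_{i',j}$,
\[
\sum_{\substack{R\in\min(\cS^1_{i',j})\\ R\subset I}}\diam(B(R))\ \lesssim\ \diam(B(I)).
\]
You attribute this to ``the doubling property of $E$ (and hence of the net structure on $\Gamma$)'', but that inference fails: the cubes here lie in $\Delta_\Gamma$, not $\Delta_E$, and the doubling of $E$ tells you nothing about the tree structure on $\Gamma$. In an infinite-dimensional $\bX$ the curve $\Gamma$ need not be doubling, and the condition $\theta_\Gamma(\GuyLambda B(Q))<\epsilon_\theta$ alone does not prevent $\Gamma$ from having many nearly-parallel strands inside the thin tube around the best-fit line; a cube of $\Delta_\Gamma$ can therefore have an unbounded number of children, so your ``bounded number at each scale plus geometric decay'' argument does not go through.

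The paper closes this gap by invoking Lemma~\ref{l:first-stop-construction}: the centers $\{z(Q):Q\in\cS^1_{i',j}\}$ lie on a $(1,1+c\epsilon_J^2)$-bi-Lipschitz image $\Gamma^1_{i',j}$ of an interval. Disjoint minimal cubes $R\subset I$ then correspond to disjoint subarcs of this curve of comparable diameter, and since the curve has length $\lesssim\diam(B(I))$ inside $B(I)$, the displayed diameter sum is controlled. It is precisely the second stopping condition \eqref{stop-time-item:large-jones}---bounded accumulated $\theta^2$---that makes this bi-Lipschitz construction possible, so Lemma~\ref{l:first-stop-construction} is not a side remark but the reason the regions $\cS^1_{i,j}$ were introduced. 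Your proposal would be complete once you replace the appeal to doubling of $E$ with this use of the bi-Lipschitz curve.
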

    The proof below follows the ideas from Lemma 11.4 in \cite{AzzamSchul}. Again, as in Remark \ref{r:tst-half-1}, part of the Pythagorean Theorem is used.
		
\begin{proof}

Fix a single stopping time region $\cS=\cS^0_j$ as in \eqref{e:decomp-0}.
Let $\min(\cS)$ denote its collection of minimal cubes (that are not singletons) and let $\minUnion(\cS^0_j)=\bigcup_i \min(\cS^1_{i,j})$.
For $R\in \minUnion(S^0_j)\setminus \mathcal{B}^1_j$, denote by $\cS(R)$ the stopping time region $\cS^1_{i,j}$ that contains $R$.
Then for all $R\in \minUnion(\cS^0_j)$,
$$\sum_{Q:\cS(R)\ni Q\supset R}\theta^2(\GuyLambda B(Q))\approx \epsilon_J^2.$$ 
The upper bound holds by construction of $\cS^1_{i,j}$. For the other direction, observe that $R$ has a child $R'$ that is not in $\cS(R)$, and $\theta_\Gamma(\lambda B(R'))\lesssim \theta_\Gamma(\lambda B(R))$.  Thus, whether $\cS(R)$ stopped because of condition \eqref{stop-time-item:large-beta} or \eqref{stop-time-item:large-jones}, we have the lower bound.

We may therefore write
\begin{align}
   \sum_{j} \sum_{R\in \minUnion(\cS^0_j)} \diam(B(R)) &\lesssim
    \frac1{\epsilon_J^2}  \sum_{j}   \sum_{R\in \minUnion(\cS^0_j)} \diam(B(R)) \sum_{Q:\cS(R)\ni Q\supset R}\theta^2(\GuyLambda B(Q))\\
    &\leq \frac1{\epsilon_J^2} \sum_{j}   \sum_{i} \sum_{Q\in \cS^1_{i,j}} \theta^2(\GuyLambda B(Q))
                       \sum_{R\in \minUnion(\cS^0_j):R\subset Q\in \cS(R)} \diam(B(R))
\label{e:sum-for-2.3-1}
\end{align}

Fix for the moment a cube $Q\in \cS^1_{i,j}$, for some $i,j$, and recall the (bi-Lipschitz) curve $\Gamma^1_{i,j}$ constructed in Lemma \ref{l:first-stop-construction}. We will consider the cubes $R$ appearing on the right-hand side of \eqref{e:sum-for-2.3-1} for this fixed $Q$. These are the cubes $R\in \min(\cS^1_{i,j})$ contained in $Q$, and so for one thing they are disjoint cubes. For each of these $R$, Lemma \ref{l:first-stop-construction} (and Proposition \ref{prop:christ}) allows us to choose a ball $B^R\subseteq B(R)$ with
$$ \diam(B^R \cap \Gamma^1_{i,j}) \gtrsim\diam(R)$$
and such that all these balls $B^R$ are disjoint.

Thus we get that the right-hand side of \eqref{e:sum-for-2.3-1} is bounded by 
$$\approx \frac1{\epsilon_J^2}  \sum_{j}  \sum_{i} \sum_{Q\in S^1_{i,j}} \theta^2(\GuyLambda B(Q))\diam(B(Q))\lesssim \ell(\Gamma),$$
where in the last inequality we used 
Proposition \ref{p:TST-betas-bounded}.

Finally, we observe that each $Q(\cS^1_{i,j})$ must be a child of some cube in some $\minUnion(\cS^0_j)$ or some cube in some $\cB^0_j$, and all of these have now been controlled.

\end{proof}    
We next argue that each bi-Lipschitz curve $\Gamma^1_{i,j}$ does not have too much ``rotation''. (In $\RR^n$ this would follow from standard coronization results for uniformly rectifiable sets, but this does not seem to have been written down in Hilbert space.)

To fix some notation, let $\Delta[0,1]$ be the standard dyadic filtration on $[0,1]$, let $g:[0,1]\to \bX$ be bi-Lipschitz, and let $\cS\subset \Delta[0,1]$ be a stopping time region with $Q(\cS)=[0,1]$. We write $g_{\cS}$ to denote the ``$\cS$-polygonal approximation'' to $g$. This means that
\begin{itemize}
\item $g_{\cS}$ is continuous,
\item $g_{\cS}=g$ at all endpoints of intervals in $\cS$, 
\item $g_{\cS}=g$ at all points that are contained in arbitrarily small intervals of $\cS$,
\item if $I$ is a minimal interval in ${\cS}$, then $g_{\cS}$ is affine on $I$, and
\item if $I$ is a dyadic interval not in $\cS$ but whose sibling is in $\cS$, then $g_{\cS}$ is affine on $I$.
\end{itemize}

\begin{lemma}\label{l:control-stop-angle-large}
    Let $\alpha_0>0$ be given. 
    There is a $t_0>0$ and $q\in (0,1)$ so that for  $t\in [0,t_0)$ the following holds:
		
    Let $g:[0,1]\to \bX$ be $(1,1+t^2)$-biLipschitz and let $\cS\subset \Delta[0,1]$ be a stopping time region with $Q(\cS)=[0,1]$. Then 
    there is  a stopping time region $\cS'\subset \cS$ such that 
		\begin{enumerate}[(i)]
		\item $Q(\cS')=Q(\cS)$,
		\item $g_{\cS'}$ is the graph of a Lipschitz function with Lipschitz constant at most $\alpha_0$, and
		\item the collection of minimal intervals of $\cS'$ that are not minimal in $\cS$ has total length at most $1-q$.
		\end{enumerate}
		 
\end{lemma}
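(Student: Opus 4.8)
The plan is to build $\cS'$ by deleting from $\cS$ the comparatively few dyadic intervals on which the chord of $g$ turns too far from a single fixed reference direction. Since $g$ is $(1,1+t^2)$-biLipschitz, $|g(1)-g(0)|\ge 1>0$, so I may set $e := (g(1)-g(0))/|g(1)-g(0)|$; this is the direction over which $g_{\cS'}$ will be a graph. For a dyadic interval $I=[a,b]\subseteq[0,1]$ write $v_I=g(b)-g(a)$ and $\theta_I=\angle(v_I,e)\in[0,\pi]$, noting $\theta_{[0,1]}=0$. The one estimate I will need is that for every family $\mathcal{D}$ of pairwise disjoint dyadic subintervals of $[0,1]$,
\[ \sum_{I\in\mathcal{D}}\bigl(|v_I|-\langle v_I,e\rangle\bigr)\le t^2 . \]
To see this, complete any finite subfamily of $\mathcal{D}$ to a finite dyadic partition $\mathcal{P}$ of $[0,1]$; since each summand is nonnegative and chords telescope, the subsum is at most $\sum_{I\in\mathcal{P}}|v_I|-\langle\sum_{I\in\mathcal{P}}v_I,e\rangle\le\ell(g)-|g(1)-g(0)|\le(1+t^2)-1=t^2$, and one takes the supremum over finite subfamilies.

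Next I would fix $\alpha_1=\arctan\alpha_0$ and $c_1=1-\cos\alpha_1>0$, call a dyadic interval $I\subsetneq[0,1]$ \emph{bad} if its parent lies in $\cS$ and $\theta_I>\alpha_1$, and let $\cB$ be the family of maximal bad intervals (pairwise disjoint, and not containing $[0,1]$ since $\theta_{[0,1]}=0$). For $I\in\cB$ we have $|v_I|-\langle v_I,e\rangle=|v_I|(1-\cos\theta_I)\ge c_1|I|$ since $|v_I|\ge|I|$, so the estimate above forces $\sum_{I\in\cB}|I|\le t^2/c_1$; this is where the smallness of $t$ enters. I then define
\[ \cS'=\{Q\in\cS:\ Q\text{ is not strictly contained in the parent of any }I\in\cB\}. \]
From connectedness of $\cS$ and $Q(\cS)=[0,1]$ it is routine that $\cS'\subseteq\cS$ is a stopping time region with $Q(\cS')=Q(\cS)=[0,1]$, giving (i).

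Finally I would examine the polygon $g_{\cS'}$, whose edges are the chords $v_I$ over the dyadic intervals of a partition of $[0,1]$ of two types: (a) parents of intervals of $\cB$ (those lying in $\cS'$), and (b) intervals on which $g_\cS$ is already affine but which are not strictly contained in the parent of any interval of $\cB$; in both cases, when $I\ne[0,1]$ the parent of $I$ lies in $\cS$. For type (a), maximality of $\cB$ precludes $\theta_I>\alpha_1$; for type (b), $\theta_I>\alpha_1$ would make $I$ bad, hence contained in a maximal bad interval and so strictly contained in that interval's parent, a contradiction. Thus every edge of $g_{\cS'}$ makes angle at most $\alpha_1<\pi/2$ with $e$, so $g_{\cS'}$ is strictly monotone along $e$ and is the graph over $\R e$ of a function of slope at most $\tan\alpha_1=\alpha_0$, which is (ii). For (iii), an interval minimal in $\cS'$ but not minimal in $\cS$ must be the parent of some interval of $\cB$, and since such a parent is contained in the union of an interval of $\cB$ with its sibling, these intervals have total length at most $2\sum_{I\in\cB}|I|\le 2t^2/c_1$; taking $q=\tfrac12$ and $t_0=\sqrt{c_1/4}$ makes this $\le 1-q$ whenever $t<t_0$. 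The step I expect to be the main obstacle is this last paragraph: correctly enumerating the edge intervals of $g_{\cS'}$ and bookkeeping minimal intervals, ``stopped children'', and possibly nested parents of bad intervals --- all elementary, but needing care.
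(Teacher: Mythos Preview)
Your proof is correct and follows essentially the same approach as the paper's: fix the reference direction $e=(g(1)-g(0))/|g(1)-g(0)|$, stop $\cS$ wherever the chord angle with $e$ exceeds a threshold, and bound the total length of the stopped intervals by comparing the arc length $\le 1+t^2$ to the projection $\ge 1$ onto $\R e$ (the Pythagorean excess). The only cosmetic differences are that you stop at the parent of a maximal ``bad'' interval rather than at the bad interval itself, and you phrase the key inequality as $\sum_I |v_I|(1-\cos\theta_I)\le t^2$ rather than via the slope of the graph; your version is in fact written out more carefully than the paper's.
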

In other words, we cut off $\cS$ wherever $g$ turns too much, but we are able to do it infrequently enough that the polygonal approximations to $g$ did not change too often. 
\begin{proof}[Proof of Lemma \ref{l:control-stop-angle-large}]  

    We may assume $t_0<\min(\alpha_0/20,1)$. For each dyadic interval $I=[a,b]\subseteq [0,1]$, let $L_I = [g(a), g(b)]\subseteq \bX$. Note that if $J$ is a child of $I$, then the angle between $L_I$ and $L_J$ is at most $10t_0$.
    Set  $L_{\cS}=L_{[0,1]}$. If $L$ is the full line containing $L_{\cS}$, let us write $\bX = L \times \mathbb{Y}$. 
    Add $[0,1]$ to $\cS'$, and continue adding children $I$ from $\cS$ until
    the angle between $L_I$ and $L_S$ is more than $\alpha_0-10t_0$.

After this process completes, $g_{\cS'}$ can be viewed as the graph of a Lipschitz function $h\colon L_{\cS} \rightarrow \mathbb{Y}$. If $I$ is a minimal interval of $\cS'$ that is \textbf{not} a minimal interval of $\cS$, then $|h'|\in[\frac12\alpha_0, \alpha_0]$ on $I$. 

Such an interval therefore contributes to the length of $g_{\cS'}([0,1])$ by a multiplicative  factor of at least $\geq 1+\frac18 \alpha_0^2$ (times the size of its projection onto $L_S$). However, the length of the graph is  bounded by the length of $L_S\cdot(1+t^2)$.
So the sum of the lengths of new intervals where $S'$ had to be stopped have total length ( projected onto $L_S$) bounded by 
$$|L_S| \frac{1+t^2}{1+\frac18\alpha_0^2}$$

Since we may take  $t^2\leq t_0^2\ll \alpha_0^2$, there is a $q>0$ such that 
$\frac{1+t^2}{1+\frac12\alpha_0^2}<1-q$.  
\end{proof}

We are now ready to complete the proof of Proposition \ref{prop:str}. 

Consider a single stopping time region $\cS^1_{i,j}\subseteq \Delta_\Gamma$; by rescaling if necessary we may assume that $\diam(B(Q(\cS^1_{i,j}))=1$. By Lemma \ref{l:first-stop-construction}, $\cS^1_{i,j}$ comes associated with a bi-lipschitz curve $\Gamma^1_{i,j}$, with bi-Lipschitz parametrization $\gamma^1_{i,j}\colon [0,1]\rightarrow \Gamma^1_{i,j}$.

Since $\gamma^1_{i,j}$ is bi-Lipschitz, we may build a stopping time region $\cS\subseteq\Delta[0,1]$ so that
\begin{itemize}
\item for each $I\in \cS$, there is a cube $Q\in \cS^1_{i,j}$ with $\gamma^1_{i,j}(I)$ intersecting $B(Q)$ and having comparable diameter, and
\item for each $Q\in \cS^1_{i,j}$, there is an interval $I\in \cS$ with $\gamma^1_{i,j}(I)$ intersecting $B(Q)$ and having comparable diameter.
\end{itemize}

Apply Lemma \ref{l:control-stop-angle-large} with  $g=\gamma^1_{i,j}$ and parameter $\alpha_0=\alpha/(100\GuyLambda)$ to $\cS$, creating $\cS'$. Then, on each minimal interval $I$ of $\cS'$ that is not minimal in $\cS$, repeat on the cubes of $\cS$ contained in $I$. Continue this inductively. This decomposes $\cS$ as $\cup \cS'_{k}$. Moreover, the geometric decay parameter $1-q$ in Lemma \ref{l:control-stop-angle-large} ensures that
\begin{equation}\label{eq:intervalsum}
 \sum_{k} |Q(\cS'_k)| \lesssim \diam(B(Q(\cS_{i,j}))).
\end{equation}

As above, we may now associate each $\cS'_k\subseteq \Delta[0,1]$ to a stopping time region $\cS_{i,j,k}\subseteq \cS$ so that
    \begin{equation}\label{e:decomp-2}
        \cS^1_{i,j} =  \bigcup_{i,j,k} \cS_{i,j,k}.
    \end{equation}
In each $\cS_{i,j,k}$ the set $\Gamma^1_{i,j}$ does not rotate more than $\alpha_0$, i.e., it can be viewed as a Lipschitz graph of slope $\alpha_0$. over the interval spanning $B(Q(\cS_{i,j,k}))$. Furthermore, \eqref{eq:intervalsum} implies that
\begin{equation}\label{eq:cubesum}
 \sum_{k} \diam(B(Q(\cS_{i,j,k}))) \lesssim \diam(B(Q(\cS_{i,j}))).
\end{equation}

We now use our decompositions to obtain Proposition \ref{prop:str} by declaring a cube $Q\in \cC_\Gamma$ to be in $\cB$ if $100\GuyLambda B(Q)\cap B(Q')\neq \emptyset$ for some 
$$ Q' \in \mathcal{B}^0 \cup \bigcup_{i,j,k} Q(\cS_{i,j,k}).$$
of similar scale. (Note that the doubling of $E$ means there is a control over how many times each $Q'$ is used in this way).  We then  define stopping time regions $\cS$ in $\mathcal{C}_\Gamma$ as the connected components (in the tree structure of $\mathcal{C}_\Gamma$) of $\mathcal{C}_\Gamma \setminus \mathcal{B}$. 

Finally, we check that in each $\cS$ obtained in this way we have the desired properties for Proposition \ref{prop:str}. Properties (2), (3), (5), and (6) follow from the associated properties for the stopping time regions constructed in $\Delta_\Gamma$ and the doubling property of $E$. The only thing that really needs verification is property (4), namely that if $R\in \cS$, then $\angle(\tau_\Gamma(Q(\cS)), \tau_\Gamma(R)) < \alpha$. This follows from the choice of $\alpha_0\ll \alpha$ and the $\alpha/10$-approximation of Lemma \ref{l:first-stop-construction}. This completes the proof of Proposition \ref{prop:str}.

\section{A sketch of the coarse argument in a special case}\label{sec:d=2}
The proof of Theorem \ref{thm:main} will occupy Sections \ref{sec:hilbertproof} and \ref{sec:finalproofs}. In this section, we will give a sketch of the proof of Theorem \ref{thm:main} in the special case $\bX=\mathbb{R}^2$ and $d=2$. This exhibits some of the main ideas of the general argument, while allowing us to take some shortcuts and elide some of the more technical parts of the full proof.

To be clear, nothing in this section is logically necessary to the proof of Theorem \ref{thm:main}, and all the details of the argument in all dimensions are given in Sections  \ref{sec:hilbertproof} and \ref{sec:finalproofs}. Our goal in this section is just to provide some scaffolding to help understand the detailed arguments below. In that spirit, we ignore some technical issues and are cavalier about constants in this section.

Thus, we now suppose that $\bX=\mathbb{R}^2$ and that $E\subseteq\mathbb{R}^2$ badly fits $2$-planes. (This is easily seen to be equivalent to $E$ being a \emph{porous} set: each ball of $\mathbb{R}^2$ contains a ball of comparable size that is disjoint from $E$.) We also fix a multiresolution family $\cF$ for $E$ and constants $A\geq 1$, $\epsilon>0$. According to Theorem \ref{thm:main}, $\cF$ should admit a coarse $\epsilon$-tangent field with inflation $A$, and this is what we will sketch in this section.

A first observation is that, instead of working directly with $\cF$, we can instead put a dyadic cube structure $\Delta=\Delta_E$ on $E$ using Proposition \ref{prop:christ}. We will then assign a line $\tau(Q)$ to each $Q\in \Delta$ in such a way that
\begin{equation}\label{eq:planesum}
\sum_{\substack{Q\in\Delta\\ \Gamma \cap B(Q)\neq\emptyset\\ \diam(B(Q))\leq \diam(\Gamma)\\ \theta^\tau_\Gamma(AB(Q)) \geq \epsilon}} \diam(B(Q)) \lesssim \ell(\Gamma)
\end{equation}
for every rectifiable curve $\Gamma$ in $\mathbb{R}^2$. Theorem \ref{thm:main} then easily follows using the doubling property of $E$.

\subsection{Construction of the coarse \texorpdfstring{$\epsilon$}{eps}-tangent line field in the plane}
Let us now describe how to construct the assignment of lines $\tau\colon \Delta \rightarrow \mathcal{L}_1$.
 
We will fix small parameters $\epsilon_2\ll \epsilon_1\ll \epsilon$ and a large parameter $\Lambda \approx 1/\epsilon_1$. We also write $\ball(Q)$ for $AB(Q)$.

If $Q\in \Delta$ and $L$ is an affine line in $\mathbb{R}^2$, we say that $L$ is \textit{good for $Q$} if
\begin{itemize}
\item $ L \cap 3\ball(Q) \neq \emptyset,$ and
\item $ L \cap 5\ball(Q) \subseteq N_{\epsilon_2\diam(\ball(Q))}(E).$
\end{itemize}
That is, a line is good for $Q$ if it passes near $Q$ and remains quite close to $E$ while it is near $Q$. 

For each affine line $L\subset \mathbb{R}^2$, let $N(L,Q)$ be the smallest non-negative integer $n$ such that $L$ is \emph{not} good for $Q^{\uparrow n}$. For each line $\tau\in \cL_1$, let $N(\tau,Q)$ be the maximum of $N(L,Q)$ over all lines $L$ parallel to $\tau$.

For each $Q\in\Delta$, we now set $\tau(Q)$ to be a choice of $\tau\in\mathcal{L}_1$ that maximizes $N(\tau,Q)$.

\subsection{Why this line field works in the plane}
We now consider a rectifiable curve $\Gamma$ in the plane and explain why \eqref{eq:planesum} holds. We choose small parameters $\alpha,\delta,d_0$ and $\lambda = A\Lambda$ and apply Proposition \ref{prop:str}; in particular, we are free to choose the first three parameters extremely small compared to $\epsilon,\epsilon_1,\epsilon_2$.

We are now granted a decomposition of the set
$$ \mathcal{C}_\Gamma = \{Q\in\Delta: B(Q)\cap \Gamma\neq\emptyset, \diam(B(Q))\leq\diam(\Gamma)\}$$
as
\begin{equation}\label{eq:planestr}
\mathcal{C}_\Gamma = \mathcal{B} \cup \bigcup_{i} \mathcal{S}_i
\end{equation}
with the properties of Proposition \ref{prop:str}. Each stopping time region $\cS_i$ has a top cube $Q(\cS_i)$.

Fix one of these $\cS_i$ and focus on it. Let $\tau_\Gamma$ be the direction of a line achieving the infimum in $\theta_\Gamma(\Lambda \ball(Q(\cS_i)))$, i.e., the line of best fit for $\Gamma$ in this ball. Note that by Proposition \ref{prop:str}, $\Gamma$ is well-approximated by lines in each cube of $\cS_i$ \emph{and} these lines turn very little as $Q$ varies in $\cS_i$. Thus, $\tau_\Gamma$ fits $\Gamma$ very closely not only in $\Lambda \ball(Q(\cS_i))$ but also in $\Lambda \ball(Q)$ for every $Q\in \cS_i$.

Now fix a very dense finite collection of lines $\cL^\epsilon \subset \cL_1$. Let $\tau^\epsilon(Q)$ be the closest element of $\cL^\epsilon$ to $\tau(Q)$. For each $\tau\in\cL^\epsilon$, we set
$$ \mathcal{T}_{i,\tau} = \{Q\in \cS_i : \angle(\tau^\epsilon(Q), \tau_\Gamma) \geq \epsilon_1\}.$$
These are cubes for which we have ``guessed poorly'': our assignment of $\tau(Q)$ (and hence $\tau^\epsilon(Q))$ is quite far from the direction the curve actually travels near $Q$.

Suppose now that $Q$ is a cube in $\mathcal{T}_{i,\tau}$. Since $E$ is porous, $B(Q)$ contains a ball $B$ of comparable diameter that is far from $E$, e.g., $10B \subseteq E^c$. The ``hole'' $B$ casts a ``shadow'' in the $\tau^\epsilon(Q)$ direction, i.e.,
$$ \{x : (x+\tau^\epsilon(Q)) \cap B \neq \emptyset\}$$
Recall that $\Gamma$ is very close to a line in the $\tau_\Gamma$ direction, and $\angle(\tau^\epsilon(Q), \tau_\Gamma) \geq \epsilon_1$. Because of the angle lower bound between $\tau^\epsilon(Q)$ and $\tau_\Gamma$, we may ensure that this shadow of $B$ contains a cube $H\in\mathcal{C}_\Gamma$ that is \emph{near} $B(Q)$ and has \emph{size comparable to} that of $B(Q)$. See Figure \ref{fig:hole} for a sketch of this construction.

\tikzset{every picture/.style={line width=0.75pt}} 
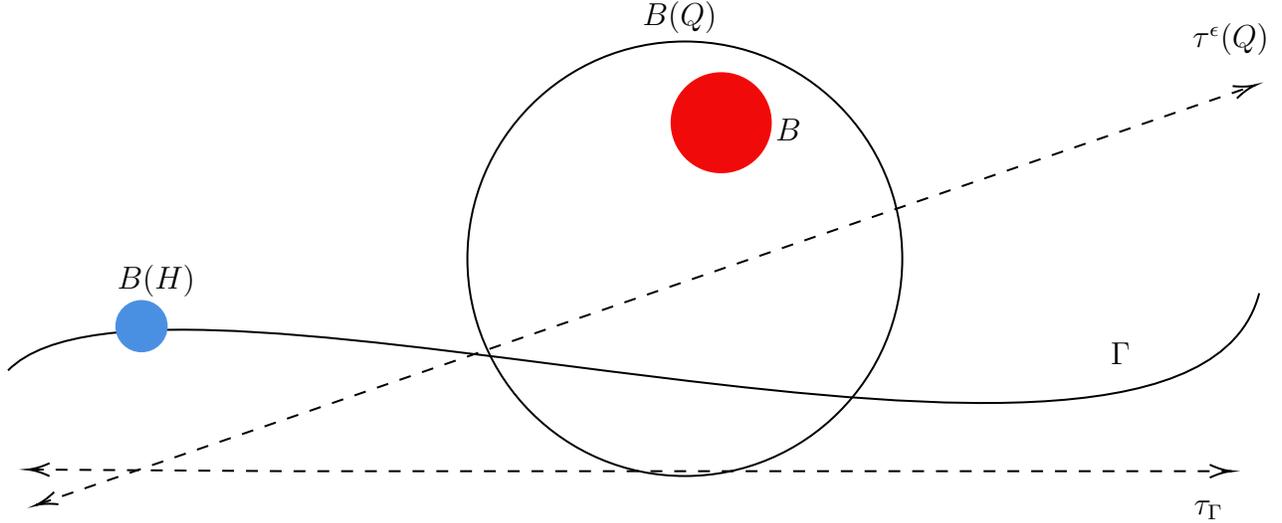
\begin{figure} \label{fig:hole}
\begin{tikzpicture}[x=0.75pt,y=0.75pt,yscale=-1,xscale=1]

\draw    (13.33,208.05) .. controls (89.33,130.05) and (608.33,309.05) .. (644.33,169.05) ;
\draw   (245,151.67) .. controls (245,91.1) and (294.1,42) .. (354.67,42) .. controls (415.23,42) and (464.33,91.1) .. (464.33,151.67) .. controls (464.33,212.23) and (415.23,261.33) .. (354.67,261.33) .. controls (294.1,261.33) and (245,212.23) .. (245,151.67) -- cycle ;
\draw  [dash pattern={on 4.5pt off 4.5pt}]  (25.33,258.06) -- (154.33,258.88) -- (628.33,258.88) ;
\draw [shift={(630.33,258.88)}, rotate = 180] [color={rgb, 255:red, 0; green, 0; blue, 0 }  ][line width=0.75]    (10.93,-3.29) .. controls (6.95,-1.4) and (3.31,-0.3) .. (0,0) .. controls (3.31,0.3) and (6.95,1.4) .. (10.93,3.29)   ;
\draw [shift={(23.33,258.05)}, rotate = 0.36] [color={rgb, 255:red, 0; green, 0; blue, 0 }  ][line width=0.75]    (10.93,-3.29) .. controls (6.95,-1.4) and (3.31,-0.3) .. (0,0) .. controls (3.31,0.3) and (6.95,1.4) .. (10.93,3.29)   ;
\draw  [color={rgb, 255:red, 244; green, 7; blue, 7 }  ,draw opacity=1 ][fill={rgb, 255:red, 240; green, 10; blue, 10 }  ,fill opacity=1 ] (348,83) .. controls (348,69.19) and (359.19,58) .. (373,58) .. controls (386.81,58) and (398,69.19) .. (398,83) .. controls (398,96.81) and (386.81,108) .. (373,108) .. controls (359.19,108) and (348,96.81) .. (348,83) -- cycle ;
\draw  [color={rgb, 255:red, 74; green, 144; blue, 226 }  ,draw opacity=1 ][fill={rgb, 255:red, 74; green, 144; blue, 226 }  ,fill opacity=1 ] (67.95,185.69) .. controls (67.95,178.68) and (73.63,173) .. (80.64,173) .. controls (87.65,173) and (93.33,178.68) .. (93.33,185.69) .. controls (93.33,192.7) and (87.65,198.38) .. (80.64,198.38) .. controls (73.63,198.38) and (67.95,192.7) .. (67.95,185.69) -- cycle ;
\draw  [dash pattern={on 4.5pt off 4.5pt}]  (29.22,275.4) -- (640.44,64.7) ;
\draw [shift={(642.33,64.05)}, rotate = 160.98] [color={rgb, 255:red, 0; green, 0; blue, 0 }  ][line width=0.75]    (10.93,-3.29) .. controls (6.95,-1.4) and (3.31,-0.3) .. (0,0) .. controls (3.31,0.3) and (6.95,1.4) .. (10.93,3.29)   ;
\draw [shift={(27.33,276.05)}, rotate = 340.98] [color={rgb, 255:red, 0; green, 0; blue, 0 }  ][line width=0.75]    (10.93,-3.29) .. controls (6.95,-1.4) and (3.31,-0.3) .. (0,0) .. controls (3.31,0.3) and (6.95,1.4) .. (10.93,3.29)   ;

\draw (332,20.4) node [anchor=north west][inner sep=0.75pt]    {$B( Q)$};
\draw (610,272.4) node [anchor=north west][inner sep=0.75pt]    {$\tau _{\Gamma }$};
\draw (609.33,31.4) node [anchor=north west][inner sep=0.75pt]    {$\tau ^{\epsilon }( Q)$};
\draw (399,79.4) node [anchor=north west][inner sep=0.75pt]    {$B$};
\draw (67,153.4) node [anchor=north west][inner sep=0.75pt]    {$B( H)$};
\draw (568,192.4) node [anchor=north west][inner sep=0.75pt]    {$\Gamma $};

\end{tikzpicture}
\caption{A sketch of the construction of $H$ from $Q$. Lines through $B(H)$ parallel to $\tau^\epsilon(Q)$ intersect the ``hole'' $B$ and thus pass far from $E$.}
\end{figure}

Collecting all these shadowed cubes $H$ for all $Q\in\mathcal{T}_{i,\tau}$, we may then use a covering argument to achieve the following. Each $Q\in\mathcal{T}_{i,\tau}$ is associated to a cube $H(Q) \in \mathcal{C}_\Gamma$ with the following properties:
\begin{itemize}
\item $B(H(Q))$ is well inside in the ``shadow'' of some ``hole'' in $\ball(Q(\cS_i))$ (i.e., a large expansion of $B(H(Q))$ is in this shadow),
\item $\diam(B(H(Q)) \gtrsim \diam(B(Q))$,
\item $\dist(B(H(Q)), B(Q)) \lesssim \diam(B(Q))$, and
\item the balls $B(H(Q))$ are disjoint.
\end{itemize}
In particular, the last of these conditions and the flatness of $\Gamma$ implies that
\begin{equation}\label{eq:planeHsum}
\sum_{Q\in\mathcal{T}_{i,\tau}} \diam(B(H(Q))\lesssim \diam(Q(\cS_i))
\end{equation}

We now wish to control the mapping $Q\mapsto H(Q)$. We first argue that each cube $Q\in\mathcal{T}_{i,\tau}$ cannot be contained inside its associated $B(H(Q))$, or even inside a large expansion of $B(H(Q))$. For convenience, suppose $Q\in\Delta_n$ and $Q(\cS_i)\in \Delta_0$. Suppose that $Q$ itself were contained inside a large expansion of $B(H(Q))$. 
Then $5\ball(Q)$ would be contained in the shadow of one of the holes $B\subseteq E^c$ contained above. It follows that the line $\tau^\epsilon(Q)$, along with any line sufficiently close to it, would fail to be good for $Q^{\uparrow k}$ with $k\leq n$. Thus, in the notation of the previous subsection, $N(\tau(Q),Q)\leq n$. On the other hand, the curve $\Gamma$ is both very close to (a translation of) the line $\tau_\Gamma$ and very close to the set $E$ in each $Q\in\cS_i$. From this, it follows that $N(\tau_\Gamma, Q) > n$. Since $\tau(Q)$, by our construction, was supposed to maximize $N(\tau,Q)$ over all choices of $\tau$, this yields a contradiction. Therefore, $Q$ is \textbf{not} contained inside a large expansion of $B(H(Q))$.

From this and the properties of $H(Q)$, it follows that $B(Q)$ and $B(H(Q))$ are comparable in size and near each other. The doubling property of $E$ then implies that 
\begin{equation}\label{e:bdd-to-one}
  \textrm{the assignment }  Q\mapsto H(Q) \textrm{ is bounded-to-one.}
\end{equation}
From \eqref{e:bdd-to-one}  and \eqref{eq:planeHsum}, we see that
\begin{equation}\label{eq:planebadsum}
\sum_{Q\in \mathcal{T}_{i,\tau}} \diam(B(Q)) \lesssim \diam(Q(\cS_i)).
\end{equation}

The argument for \eqref{eq:planesum} is now almost complete. Suppose now that $Q$ is one of the cubes summed over in \eqref{eq:planesum}. One possibility is that $Q\in \mathcal{B}$ where $\mathcal{B}$ is as in \eqref{eq:planesum}. The total diameter of all such cubes is immediately controlled by Proposition \ref{prop:str}(5).

Otherwise, $Q\in \cS_i$ for some $i$. In that case, $\Gamma$ lies very close to the line $\tau_\Gamma$ in $Q$. If $\tau(Q)$ were close to $\tau_\Gamma$ (say, within angle $2\epsilon_1 \ll \epsilon$), then we would have ``guessed correctly'' and $\theta^\tau_\Gamma(\ball(Q))$ would be less than $\epsilon$. Hence, in this case, $Q$ would not appear in the sum \eqref{eq:planesum} at all.

If in fact $\tau(Q)$ were \emph{not} close to $\tau_\Gamma$, then $Q$ would be in $\mathcal{T}_{i,\tau}$ for one of (a controlled number of!) possible choices of $\tau\in\mathcal{L}_\epsilon$. In that case, the total diameter of all these cubes in a single $\cS_i$ is controlled by $\diam(Q(\cS_i))$, by \eqref{eq:planebadsum}, and the total diameter of all the $B(Q(\cS_i)))$ is controlled by $\ell(\Gamma)$ by Proposition \ref{prop:str}(6).

Thus, \eqref{eq:planesum} holds and we have completed our sketch of the proof of Theorem \ref{thm:main} in the special case $\bX=\mathbb{R}^2$ and $d=2$.

\section{Coarse tangent fields depending on \texorpdfstring{$\epsilon$}{eps} in Hilbert space}\label{sec:hilbertproof}
In this section, we do most of the work to prove Theorem \ref{thm:main} for subsets of Hilbert space and arbitrary $d\geq 1$; the final details will be handled in Section \ref{sec:finalproofs}. The argument follows the general framework of the planar argument sketched in the previous section, but the added generality requires a number of changes (and of course, more detailed arguments). There are two major differences from the planar case:
\begin{enumerate}[(i)]
\item When $d>2$, we need a more complicated inductive construction of the coarse tangent field $\tau$ and a corresponding ``multi-stage'' version of the planar argument. In particular, the construction of $\tau$, while it does extend the construction in the planar case, is perhaps not the most obvious extension of it to higher dimensions.
\item The fact that the ambient space $\bX$ may not be finite-dimensional makes some arguments more difficult. For instance, we cannot na\"ively discretize the collection of all possible lines, as we did in the planar case.
\end{enumerate}

To begin, we fix a (finite or infinite dimensional) Hilbert space $\bX$. If $E$ is a doubling subset of $\bX$, we can apply apply Proposition \ref{prop:christ} to construct a family $\Delta=\bigcup_n \Delta_n$ of ``dyadic cubes'' for $E$. Recall that these cubes have a scaling factor $s\in (0,1)$ (which is an absolute constant), and each $Q\in\Delta_n$ comes equipped with a ball
\[
B(Q) = B(z(Q), C_1 s^n) \subset \bX
\]
containing $Q$, and that
$$ B(z(Q), a_0 s^n) \cap E \subseteq Q \subseteq B(Q).$$

In this section, will show the following proposition, which (modulo some small arguments in Section \ref{sec:finalproofs}) essentially proves Theorem \ref{thm:main}.

\begin{proposition}\label{prop:epstangent}
Let $E$ be a doubling subset of $\bX$ that badly fits $d$-planes (with parameter $\epsilon_0$). Fix $A\geq 1$.

Let $\hDelta$ be a system of dyadic cubes for $E$, as above, and let $\epsilon>0$. Then there is a $(d-1)$-dimensional coarse plane field $\tau=\tau_\epsilon$ defined on the collection
$$ \{AB(Q) : Q\in\hDelta\}$$
such that
\begin{equation}\label{eq:epstangentprop}
 \sum_{\substack{Q\in\hDelta \\ \Gamma \cap B(Q)\neq \emptyset\\ \beta^{\tau}_{\Gamma}(AB(Q))\geq \epsilon}} \diam(B(Q)) \lesssim \ell(\Gamma)
\end{equation}
for every rectifiable curve $\Gamma\subseteq \bX$.

The implied constant depends on $A, \epsilon, \epsilon_0, d$ and the doubling constant of $E$.
\end{proposition}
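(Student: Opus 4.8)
The plan is to follow the scaffolding of Section~\ref{sec:d=2}, but to set up an inductive construction of $\tau$ that can produce a plane of dimension up to $d-1$ rather than just a line. First I would fix, for each cube $Q\in\hDelta$, a notion of a plane $V$ of dimension $j\le d-1$ being \emph{good for $Q$}: $V\parallel$ some $(\le j)$-dimensional linear space, $V$ passes near $Q$ (e.g.\ $V\cap 3\ball(Q)\ne\emptyset$), and $V$ stays within $\epsilon_2\diam(\ball(Q))$ of $E$ throughout $5\ball(Q)$, where $\ball(Q)=AB(Q)$ and $\epsilon_2\ll\epsilon_1\ll\epsilon$. For a plane $V$, let $N(V,Q)$ be the least $n\ge 0$ such that $V$ (suitably translated) fails to be good for $Q^{\uparrow n}$; and for a linear subspace $\sigma$ of dimension $j$ define $N(\sigma,Q)$ as the sup of $N(V,Q)$ over affine planes $V\parallel\sigma$ that are ``reasonably spread out'' inside the relevant ball. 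I would then build $\tau(Q)$ greedily by dimension: choose a $1$-dimensional $\sigma_1$ maximizing $N(\cdot,Q)$; if adding a new direction orthogonal to $\sigma_1$ can strictly increase $N$, augment to a $2$-dimensional $\sigma_2\supseteq\sigma_1$ maximizing $N$ among such augmentations; continue up to dimension $d-1$. Since $E$ badly fits $d$-planes with parameter $\epsilon_0$ (and taking $\epsilon_2\ll\epsilon_0$), no $d$-dimensional affine plane can stay $\epsilon_2$-close to $E$ across any ball, so the greedy process cannot reach dimension $d$: this is the structural reason the field has dimension $\le d-1$. Because the ambient space may be infinite-dimensional, at each stage I would not discretize all of $\cL_j$ globally; instead, I would argue that the relevant planes competing to be good for a fixed $Q$ lie close to $N_{\epsilon_2\diam}(E)$ restricted to $5\ball(Q)$, a doubling set, so a net of controlled cardinality suffices to compute and realize the maxima. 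The net of planes $\cL^\epsilon_j$ near each scale can be taken depending on $\ball(Q(\cS_i))$ rather than globally.

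Next I would run the curve argument. Given a rectifiable $\Gamma$, apply Proposition~\ref{prop:str} with $\alpha,\delta,d_0$ chosen $\ll\epsilon_2$ and $\GuyLambda=A\Lambda$, decomposing $\cC_\Gamma=\cB\cup\bigcup_i\cS_i$. On a fixed $\cS_i$, $\Gamma$ is uniformly $\delta$-flat along a single line $\tau_\Gamma$ (nearly constant over $\cS_i$ by property (4)) and is $d_0$-close to $E$. The key quantity is, for $Q\in\cS_i$, whether $\tau(Q)$ is ``bad'' for $\Gamma$, meaning $\angle(\tau_\Gamma,\tau(Q))\ge\epsilon_1$ — equivalently the best fitting affine line parallel to $\tau(Q)$ cannot track $\Gamma$ to within $\epsilon$ in $\ball(Q)$. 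For such a bad $Q$, I would mimic the ``hole and shadow'' construction: $E$ fails to badly fit $(\dim\tau(Q))$-planes? No — rather, I would use that $\tau(Q)$ has dimension $\le d-1$ and $\tau_\Gamma\not\subseteq\tau(Q)$ (up to angle $\epsilon_1$); combined with $E$ badly fitting $d$-planes, there is a ball $B\subseteq\ball(Q)$ of comparable size with $10B\subseteq E^c$ positioned so that the ``slab'' $\{x:(x+\tau(Q))\cap B\ne\emptyset\}$ contains a cube $H(Q)\in\cC_\Gamma$ near $B(Q)$ of comparable size, and so that the affine plane through (a large expansion of) $B(H(Q))$ parallel to $\tau(Q)$ meets $B$ and hence passes far from $E$. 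A Vitali-type covering then makes the balls $B(H(Q))$ disjoint, giving $\sum_{Q\in\cT_{i,\tau}}\diam(B(H(Q)))\lesssim\diam(B(Q(\cS_i)))$ using the flatness of $\Gamma$ (the $B(H(Q))$ all sit in a thin tube around a line).

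The crux — and the place the greedy construction of $\tau$ pays off — is to show $Q\mapsto H(Q)$ is bounded-to-one, via the claim that $Q$ is \emph{not} contained in a large expansion of $B(H(Q))$. Suppose it were. Then $5\ball(Q)$ lies in the shadow of the hole $B$, so $\tau(Q)$ (and every nearby plane, hence every plane in the relevant net, and in fact every plane of dimension $\le d-1$ obtained by augmenting a sub-plane of $\tau(Q)$ — this is where I must be careful to set up $N(\sigma,Q)$ so that the maximizing property of $\tau(Q)$ transfers) fails to be good at $Q^{\uparrow k}$ for all $k\le n$; so $N(\tau(Q),Q)\le n$. On the other hand, along $\cS_i$ the curve $\Gamma$ is both $\delta$-flat in direction $\tau_\Gamma$ and $d_0$-close to $E$ on every scale from $Q$ up to $Q(\cS_i)$, so the affine line parallel to $\tau_\Gamma$ is good for $Q^{\uparrow k}$ for all $k\le n$, i.e.\ $N(\tau_\Gamma,Q)>n$; and $\tau_\Gamma$ is $1$-dimensional, a candidate at the first stage of the greedy construction, so $N(\tau(Q),Q)\ge N(\tau_\Gamma,Q)>n$, a contradiction. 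Hence $B(Q)$ and $B(H(Q))$ are comparable and close; doubling of $E$ gives bounded-to-one, and therefore $\sum_{Q\in\cT_{i,\tau}}\diam(B(Q))\lesssim\diam(B(Q(\cS_i)))$. Summing over the (controlled number of) nets $\tau$, over $i$, and using Proposition~\ref{prop:str}(5)--(6) together with the fact that cubes in $\cS_i$ where $\angle(\tau_\Gamma,\tau(Q))<\epsilon_1$ automatically have $\beta^\tau_\Gamma(\ball(Q))<\epsilon$, we obtain \eqref{eq:epstangentprop}. The main obstacle I anticipate is exactly the bounded-to-one step in the higher-dimensional case: making the ``$\tau(Q)$ maximizes $N$'' property robust enough that the shadow argument rules out $Q$ being swallowed by $B(H(Q))$ — this is why the greedy/dimension-by-dimension construction (rather than a one-shot choice) and a careful definition of $N(\sigma,Q)$ for lower-dimensional competitors $\sigma$ are needed, and why I expect Section~\ref{sec:hilbertproof} to need a genuinely multi-stage induction rather than a direct transcription of the planar proof.
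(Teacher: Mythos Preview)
Your overall framework is right --- the greedy, dimension-by-dimension construction of $\tau$, the application of Proposition~\ref{prop:str}, the hole-and-shadow mechanism, and the final summation over stopping time regions all match the paper. You also correctly identify, in your last paragraph, that the bounded-to-one step will need a genuinely multi-stage argument. The gap is that the single-stage version you actually wrote down does not work, and for a concrete reason you did not quite isolate.

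The key error is the chain of inequalities at the ``no containment'' step. You argue: $N(\tau_\Gamma,Q)>n$ because $\tau_\Gamma$ is good all the way up $\cS_i$; and since $\tau_\Gamma$ is a $1$-dimensional competitor at the first stage of the greedy construction, $N(\tau(Q),Q)\ge N(\tau_\Gamma,Q)>n$. But $\tau(Q)=\tau_{d-1}(Q)$ is a $(d-1)$-plane, and the greedy construction only guarantees $N(\tau_1(Q),Q)\ge N(\tau_\Gamma,Q)$. There is no reason $N(\tau_{d-1}(Q),Q)\ge N(\tau_1(Q),Q)$; indeed the opposite is typical, since higher-dimensional planes are \emph{harder} to keep close to $E$. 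So the contradiction collapses. Relatedly, your hole $B$ with $10B\subseteq E^c$ does not come for free from ``badly fits $d$-planes'' when $d<\dim\bX$: that hypothesis only produces a \emph{point} in a $d$-plane far from $E$, not a full ambient ball, and in the paper the hole is actually produced at level $k$ from the failure of the specific $(k{+}1)$-plane $\langle\tau^\epsilon_k(Q),\tau_\Gamma\rangle$ to be good.

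The paper's fix is exactly the multi-stage structure you anticipate but do not supply. For each $Q\in\cT_i$ one records a decreasing sequence of levels $n_0(Q)=n(Q)+1\ge n_1(Q)\ge\dots\ge n_{d-1}(Q)=0$, where $n_k(Q)$ is (essentially) $N(Q,\langle\tau^\epsilon_k(Q),\tau_\Gamma\rangle)$. At each level $k$ one constructs a hole $H_k(Q)$ using the failure of $\langle\tau^\epsilon_k(Q),\tau_\Gamma\rangle$ at scale $Q^{\uparrow n_k(Q)}$, and the ``no containment'' contradiction at level $k$ pits $\tau_k(Q)$ against the \emph{same-dimensional} competitor $\langle\tau^\epsilon_{k-1}(Q),\tau_\Gamma\rangle$ (both $k$-planes containing $\tau_{k-1}$), which is exactly what the greedy maximization at stage $k$ controls. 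One then proves two ``transfer'' estimates --- the cubes $Q^{\uparrow n_k(Q)}$ associated to a fixed hole $H$ have controlled total diameter, and the holes $H_k(Q)$ associated to cubes with a fixed $Q^{\uparrow n_{k-1}(Q)}=R$ have controlled total diameter --- and alternates them in a nested sum that telescopes from level $d-1$ down to $0$, finally landing on $\diam(B(Q(\cS_i)))$. This telescoping replaces your single bounded-to-one assertion, and is the substantive new content beyond Section~\ref{sec:d=2}.
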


Some ideas in the proof of Proposition \ref{prop:epstangent} are similar to the case in which $d=2$ and $\bX=\mathbb{R}^2$, handled in the previous section, but there are significant additional complexities in the details. Where arguments are essentially the same as in the planar case, we reference the previous section.

\subsection{Construction of the coarse plane field}\label{subsec:construction}
Let $\bX$ be a (finite or infinite dimensional) Hilbert space. Also fix $A\geq 1$, $d\in\mathbb{N}$, $\epsilon_0\in (0,1)$, $\epsilon>0$ and a set $E\subseteq \bX$ that badly fits $d$-planes (with parameter $\epsilon_0>0$). Let $\hDelta$ be a family of dyadic cubes on $E$ as above. For $Q\in\Delta$, write $\ball(Q) = AB(Q)$.

Our goal in this subsection is to construct a coarse $(d-1)$-dimensional plane field $\tau = \tau_\epsilon$ that assigns to each ball $\ball(Q)$ with $Q\in\hDelta$ a $(d-1)$-dimensional plane in $\bX$ and satisfies the conclusion of Proposition \ref{prop:epstangent}. We may assume without loss of generality that $\epsilon \leq \epsilon_0$. To compress the notation, we will write $\tau(Q)$ for $\tau(\ball(Q))$, i.e., we will consider $\tau$ as being defined on cubes rather than on their associated balls. The construction of $\tau$ will depend on both $\epsilon$ and the set $E$.

As in the planar construction, we need some parameters. Let $\epsilon_1 = \frac{\epsilon}{2A}$ and $\Lambda = 100(\epsilon_1)^{-1}A$. Next, for $k\in\{1,\dots, d-1\}$, we choose positive constants $\epsilon^{(k)}_2$ so that
$$ \epsilon^{(1)}_2 \ll \Lambda^{-2} \epsilon^{(2)}_2 \leq  \epsilon^{(2)}_2 \ll \Lambda^{-2} \epsilon^{(3)}_2 \leq \epsilon^{(3)}_2 \ll \dots \ll \Lambda^{-2} \epsilon_2^{(d)} \leq \epsilon_2^{(d)}  \ll \epsilon_0 < 1.$$
The implied constants here depend on $d$ and the doubling constant of $E$, and will be determined in the course of the argument.

Before defining $\tau$, we make some preliminary definitions analogous to those in the planar case.
\begin{definition}
Let $Q\in \Delta$ and $V$ an affine subspace of dimension $k$ in $\bX$ (i.e., not necessarily through the origin). The subspace $V$ is called \textit{good for $Q$} if
\begin{itemize}
\item $ V \cap 3\ball(Q) \neq \emptyset,$ and
\item $ V \cap 5\ball(Q) \subseteq N_{\epsilon^{(k)}_2 \diam(\ball(Q))}(E).$
\end{itemize}
\end{definition}

For each $Q\in\hDelta$ and subspace $V$, let $N(V,Q)$ be the smallest non-negative integer $n$ such that $V$ is \emph{not} good for $Q^{\uparrow n}$. For each $\tau\in \cL_k$, let $N(\tau,Q)$ be the maximum of $N(V,Q)$ over all $k$-dimensional affine subspaces $V$ parallel to $\tau$.

For each $Q\in\hDelta$, let $\tau_1(Q)$ be a choice of $\tau\in\cL_1$ that maximizes $N(\tau,Q)$ among $\tau \in \cL_1$. We then recursively define $\tau_{k+1}(Q)$ to be a choice of $\tau\in \cL_{k+1}$ that maximizes $N(\tau,Q)$ among those $\tau\in  \cL_{k+1}$ with the property that $\tau_k\subset \tau$. In other words,
$$ N(\tau_{k+1}(Q), Q) = \max\{ N(\tau, Q) : \tau\in \cL_{k+1}, \tau \supseteq \tau_k\}.$$

Finally, we set $\tau(Q)=\tau_{d-1}(Q)$. This completes the definition of the coarse plane field $\tau\colon \hDelta \rightarrow \cL_{d-1}$.

\begin{remark}
This construction assigns to each $Q$ a $(d-1)$-dimensional subspace, while Definition \ref{def:coarsetangent} permits us to be less ``wasteful'' and assign lower-dimensional subspaces if possible. Of course, one can always increase the dimension of a subspace of lower dimension, but there may be times where it is possible to have, e.g., a $2$-dimensional coarse tangent field that assigns only lines on some balls. In general, we our argument will bound the dimension of each $\tau(Q)$ in Proposition \ref{prop:epstangent} by a parameter $\hat{k}(Q)\leq d-1$ that is defined in \eqref{eq:khat} below. 
\end{remark}

\subsection{Proof of Proposition \ref{prop:epstangent}}
\begin{proof}[Proof of Proposition \ref{prop:epstangent} for $d>2$]
We have now fixed parameters $A\geq 1$, $d\in\mathbb{N}$, a doubling set $E\subseteq \bX$ that badly fits $d$-planes (with parameter $\epsilon_0>0$), and $\epsilon>0$. We have applied Proposition \ref{prop:christ} to obtain a dyadic decomposition $\hDelta_E$ of $E$. With parameters $\epsilon_1$, $\Lambda$, and $\epsilon^{(k)}_2$ as in the previous subsection, we have defined a coarse $(d-1)$-dimensional plane field $\tau\colon \hDelta_E \rightarrow \cL_{d-1}$ as above. We now prove that $\tau$ satisfies the conclusion of Proposition \ref{prop:epstangent}.

Consider an arbitrary connected set $\Gamma$ in $Q_0$. Choose parameters $\alpha, \delta, d_0>0$ sufficiently small; it will suffice to make sure that
$$ \Lambda(\alpha + \delta + d_0) \ll \epsilon^{(1)}_2,$$
where the implied constant will depend on $A$, $d$ and the doubling constant of $E$, and will be determined in the course of the argument. We emphasize that no properties of $\Gamma$ come into the choices of these constants.

Let
$$ \cC_\Gamma = \{Q\in\hDelta_E : B(Q) \cap \Gamma \neq \emptyset, \diam(B(Q))\leq \diam(\Gamma)\}.$$

In proving proposition \ref{prop:epstangent}, we need only concern ourselves with  $Q\in\cC_\Gamma$. Indeed, it follows from the doubling property of $E$ and a simple geometric series argument (using the trivial bound $\beta^\tau_\Gamma(AB) \leq \diam(\Gamma)/\diam(AB)$) that
\begin{equation}\label{eq:bigcubes}
\sum_{\substack{Q\in\Delta_E\\ B(Q) \cap \Gamma \neq \emptyset\\ \diam(B(Q))> \diam(\Gamma)\\ \beta^\tau_\Gamma(AB(Q))\geq \epsilon}} \diam(B(Q)) \lesssim \diam(\Gamma) \lesssim \ell(\Gamma).
\end{equation}

Apply Proposition \ref{prop:str} with these parameters $\alpha,\delta, d_0>0$ and $\lambda = A\Lambda$. This partitions $\cC_\Gamma$ as
$$ \cC_\Gamma = \mathcal{B} \cup \bigcup_i \mathcal{S}_i$$
with the properties of Proposition \ref{prop:str}. To prove Proposition  \ref{prop:epstangent}, it suffices to show that
\begin{equation}\label{eq:epstangenttext}
\sum_{\substack{Q\in\cC_\Gamma\\\beta^\tau_\Gamma(\ball(Q))\geq \epsilon}} \diam(B(Q)) \lesssim \ell(\Gamma).
\end{equation}

If $Q\in \cC_\Gamma$, then $\lambda B(Q) = \Lambda \ball(Q)$. We write
$$ \tau_\Gamma(\Lambda \ball(Q))$$
to denote a line that achieves the infimum in $\theta_\Gamma(\Lambda\ball(Q))$ up to a factor of $2$.

Given a stopping time region $\cS_i$ in our decomposition of $\cC_\Gamma$, let
\[
 \mathcal{T}_i = \{ Q\in \cS_i: \angle(\tau(Q), \tau_\Gamma(\Lambda\ball(Q(\cS_i)))) > \epsilon_1\}.
 \]

If we can prove, for each stopping time region $\cS_i$, that
$$ \sum_{Q\in\mathcal{T}_i} \diam(\ball(Q)) \lesssim \ell(\Gamma),$$
then the proof of Proposition \ref{prop:epstangent} will follow in a way similar to the conclusion of Section \ref{sec:d=2}. Thus, for now we fix a single stopping time region $\cS_i$. We assume for convenience that $Q(\cS_i) \in \hDelta_0$, and given $Q\in\cS_i$ we write $n(Q)$ for the choice of $n$ for which $Q\in\hDelta_n$. We also fix
$$ \tau_\Gamma = \tau_\Gamma(\Lambda\ball(Q(\cS_i))).$$

Our first important observation is that $\tau_\Gamma$ is good for $Q$ and all its ancestors in $\cS_i$:
\begin{claim}\label{claim:gammagood}
If $Q\in\cS_i$, then $N(\tau_\Gamma, Q)\geq n(Q)+1$.
\end{claim}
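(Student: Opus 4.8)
The plan is to produce, for a fixed $Q\in\cS_i$, a single affine line $V$ parallel to $\tau_\Gamma$ that is good for every ancestor of $Q$ lying in $\cS_i$; since $N(\tau_\Gamma,Q)$ is a maximum over all lines parallel to $\tau_\Gamma$, this forces $N(\tau_\Gamma,Q)\ge N(V,Q)\ge n(Q)+1$. First I would note that $\cS_i$ is a stopping time region with top cube $Q(\cS_i)\in\hDelta_0$, so its connectedness implies that every $R:=Q^{\uparrow m}$ with $0\le m\le n(Q)$ satisfies $Q\subseteq R\subseteq Q(\cS_i)$ and hence $R\in\cS_i$; in particular properties (2), (3), (4) of Proposition~\ref{prop:str} hold for all these $R$. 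I would then fix a point $x_Q\in\Gamma\cap B(Q)$ (nonempty because $Q\in\cC_\Gamma$) and take $V$ to be the line through $x_Q$ with direction $\tau_\Gamma$. The first half of the ``good for $R$'' condition, $V\cap 3\ball(R)\neq\emptyset$, is immediate: $Q\subseteq R$ gives $B(Q)\subseteq 2B(R)\subseteq 3\ball(R)$ (using $A\ge 1$), so $x_Q\in V\cap 3\ball(R)$.

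The real work is the second half, $V\cap 5\ball(R)\subseteq N_{\epsilon_2^{(1)}\diam(\ball(R))}(E)$, and the idea is to chain three approximations $V\approx L_R\approx\Gamma\approx E$ while keeping every error a fixed multiple of $\diam(\ball(R))$. By Proposition~\ref{prop:str}(3) there is a line $L_R$ realizing $\theta_\Gamma(\Lambda\ball(R))$ up to a factor $2$, with direction $\tau_\Gamma(R)$; by (4), $\angle(\tau_\Gamma(R),\tau_\Gamma)<\alpha$, and $\Gamma\cap\Lambda\ball(R)$ lies in the $2\delta\diam(\Lambda\ball(R))$-neighborhood of $L_R$ and conversely. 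Since $x_Q\in 2B(R)\subseteq\Lambda\ball(R)$ (as $\Lambda A\ge 2$), we get $\dist(x_Q,L_R)\lesssim\delta\Lambda\diam(\ball(R))$. The elementary bound $\dist(y,L_R)\le\dist(x_Q,L_R)+|y-x_Q|\,\alpha$ for $y\in V$, applied to $y\in V\cap 5\ball(R)$ (where $|y-x_Q|\lesssim\diam(\ball(R))$), yields $\dist(y,L_R)\lesssim\Lambda(\delta+\alpha)\diam(\ball(R))$. Choosing $y'\in L_R$ within this distance of $y$ keeps $y'\in\Lambda\ball(R)$, so there is $z\in\Gamma$ within $2\delta\diam(\Lambda\ball(R))$ of $y'$; then $z\in\Gamma\cap\Lambda\ball(R)$, and Proposition~\ref{prop:str}(2) gives $\dist(z,E)\le d_0\diam(\Lambda\ball(R))=d_0\Lambda\diam(\ball(R))$. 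Adding these up, $\dist(y,E)\lesssim\Lambda(\alpha+\delta+d_0)\diam(\ball(R))\le\epsilon_2^{(1)}\diam(\ball(R))$ by the standing choice $\Lambda(\alpha+\delta+d_0)\ll\epsilon_2^{(1)}$. Hence $V$ is good for $R$ for every $m=0,\dots,n(Q)$, and therefore $N(V,Q)\ge n(Q)+1$, proving the claim.

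I do not expect a serious conceptual obstacle here: the argument is essentially the bookkeeping of the nested dilated balls, with the one point of care being to verify at each step that the intermediate points $y'$ and $z$ still lie inside $\Lambda\ball(R)$, where the relevant part of Proposition~\ref{prop:str} is valid, so that each error term is a fixed multiple of $\Lambda(\alpha+\delta+d_0)\diam(\ball(R))$ and is absorbed by $\epsilon_2^{(1)}$. The mildly delicate choice is to keep $L_R$ with its own direction $\tau_\Gamma(R)$ and absorb the angle $\alpha$ directly into the error (using that the part of $V$ inside $5\ball(R)$ has length $\lesssim\diam(\ball(R))$), rather than tilting $L_R$ to be exactly parallel to $\tau_\Gamma$, which would force controlling flatness on a slightly larger ball.
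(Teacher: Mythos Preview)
Your proof is correct and follows essentially the same approach as the paper. The paper's proof is more terse---it states the key sufficient condition (that a line in direction $\tau_\Gamma$ through $\Gamma\cap B(R)$ is good for $R$, for any $R\in\cS_i$) and then asserts the conclusion by applying this to each ancestor---while you make explicit the choice of a single line $V$ through a fixed $x_Q\in\Gamma\cap B(Q)$ and carefully track that $x_Q$, $y'$, and $z$ all remain inside $\Lambda\ball(R)$; but the underlying chain $V\approx L_R\approx\Gamma\approx E$ and the use of properties (2)--(4) of Proposition~\ref{prop:str} with the absorption $\Lambda(\alpha+\delta+d_0)\ll\epsilon_2^{(1)}$ are identical.
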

\begin{proof}
It suffices to show the following: if $R\in \cS_i$ and $L$ is a line in direction $\tau_\Gamma$ passing through a point of $\Gamma \cap B(R)$, then $L \cap 5\ball(R) \subseteq N_{\epsilon_2\diam(\ball(R))}(E).$ (Then one can apply this to each ancestor of $Q$ in $\cS_i$.)

Fix such an $L$. 
 From Proposition \ref{prop:str}, we have that $d_{\Gamma, E}(\Lambda \ball(R))<d_0$, that $\theta_\Gamma(\Lambda \ball(R))<\delta$, and that $\tau_\Gamma$ has angle at most $\alpha$ from a line minimizing $\theta_\Gamma(\Lambda \ball(R))$. It follows that
$$L\cap 5\ball(R)\subseteq N_{\Lambda(\delta + d_0+\alpha)\diam(\ball(R))}(E).$$
Since $\Lambda(\delta+d_0+\alpha)\ll \epsilon^{(1)}_2$, this suffices. 
\end{proof}

As we did in Section \ref{sec:d=2} for lines, we would like to now discretize the possible choices of $k$-planes $\tau_k(Q)$. This is more complicated than in Section \ref{sec:d=2} for two reasons. For one, we may be in an infinite-dimensional setting and so we cannot simply discretize the collection of all possible $k$-planes and maintain control. For another, we would like to preserve the ``nested'' character $\tau_{k}\subset \tau_{k+1}$ in the discretization. Given a cube $Q\in\cS_i$, we discretize the $k$-planes ``visible'' at the location and scale of $Q$ as follows:

Fix $R\in \cS_i$, $k\in\{1, 2, \dots, d-1\}$, and $\sigma_{k-1}\in\mathcal{L}_{k-1}$.  
Consider the collection $\Sigma_k(R, \sigma_{k-1})$ of all $\sigma_k\in\mathcal{L}_k$ containing $\sigma_{k-1}$ such that some affine $k$-plane making angle at most $\epsilon^{(2)}_k$ from $\sigma_k$ is good for $R$.

Let $\mathcal{L}_k(R, \sigma_{k-1})$ be a maximal subcollection in $\Sigma_k(R,\sigma_{k-1})$ with the property that if $\sigma, \sigma'\in\mathcal{L}_k(R,\sigma_{k-1})$, then $D(\sigma,\sigma')\geq C_d \epsilon^{(2)}_k$. Here $C_d$ is a fixed constant depending only on $d$. If $k=1$, then the only the choice of $\sigma_{k-1}\in\mathcal{L}_{k-1}$ is $\{0\}$, and we may write $\cL_1(R)$ instead of $\cL_1(R,\{0\})$.

\begin{claim}\label{claim:numplanes}
For $C_d$ sufficiently large, the following holds: If $R\in\cS_i$, $k\in\{1, 2, \dots, d-1\}$, and $\sigma_{k-1}\in\mathcal{L}_{k-1}$, then the cardinality of $\mathcal{L}_k(R, \sigma_{k-1})$ is bounded by a constant depending only on $\epsilon$ and the doubling constant of $E$.
\end{claim}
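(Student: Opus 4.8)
Claim \ref{claim:numplanes} is a packing statement: the elements of $\mathcal{L}_k(R,\sigma_{k-1})$ are pairwise $C_d\epsilon^{(2)}_k$-separated in the metric $D(\cdot,\cdot)$ on $\mathcal{L}_k$, and they all lie, roughly speaking, within a $\sim\Lambda$-neighborhood of a piece of $E$ that has bounded ``thickness'' because $E$ badly fits $d$-planes (and is doubling). So the plan is to show that $\bigcup_{\sigma\in\mathcal{L}_k(R,\sigma_{k-1})}\sigma \cap 5\ball(R)$ is contained in an $O(\epsilon^{(2)}_k\diam(\ball(R)))$-neighborhood of $E\cap C\ball(R)$ for a controlled $C$, and then use the doubling property of $E$ to bound how many $C_d\epsilon^{(2)}_k$-separated $k$-planes can simultaneously fit into such a neighborhood.

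Here are the steps in order. First, fix $R\in\cS_i$ and normalize so that $\diam(\ball(R))=1$ and $R$ is centered near the origin. By definition of $\Sigma_k(R,\sigma_{k-1})$, for each $\sigma\in\mathcal{L}_k(R,\sigma_{k-1})$ there is an affine $k$-plane $V_\sigma$ with $D(V_\sigma,\sigma)\le \epsilon^{(2)}_k$ (abusing notation: $\angle$ from the linear part of $V_\sigma$ to $\sigma$) that is good for $R$, i.e. $V_\sigma\cap 3\ball(R)\ne\emptyset$ and $V_\sigma\cap 5\ball(R)\subseteq N_{\epsilon^{(k)}_2}(E)$. Since $\epsilon^{(k)}_2$ and $\epsilon^{(2)}_k$ are both $\ll\epsilon_0$ and much smaller than any fixed power of $\Lambda$, a triangle-inequality/angle estimate shows that each such $V_\sigma$ passes within $O(1)$ of the center of $\ball(R)$, and that the portion $V_\sigma\cap 4\ball(R)$ is $O(\epsilon_0)$-close to $E\cap 5\ball(R)$. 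Second, I claim that the number of $C_d\epsilon^{(2)}_k$-separated subspaces $\sigma$ among these is bounded. To see this, pick an orthonormal basis $v_1,\dots,v_{k-1}$ of $\sigma_{k-1}$; each $\sigma\in\mathcal{L}_k(R,\sigma_{k-1})$ is determined by a unit vector $u_\sigma\perp\sigma_{k-1}$ with $\sigma=\langle\sigma_{k-1},u_\sigma\rangle$, and $D(\sigma,\sigma')\gtrsim \|u_\sigma-u_{\sigma'}\|$ (up to sign) when the $u$'s are chosen compatibly. The point $p_\sigma = c\, u_\sigma$ (for a fixed small $c\sim 1$, chosen so $p_\sigma\in 3\ball(R)$ once we translate appropriately) then lies on (a translate of) $V_\sigma$, hence within $O(\epsilon_0)$ of $E$; and the points $p_\sigma$ are pairwise $\gtrsim c\,C_d\epsilon^{(2)}_k$-separated. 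Thus we get a $\gtrsim C_d\epsilon^{(2)}_k$-separated set of points lying in $N_{O(\epsilon_0)}(E)\cap O(1)\ball(R)$.

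Third, here is where the ``badly fits $d$-planes'' hypothesis and doubling combine to finish. If we had too many such separated points, we would be finding a large, well-spread subset of $E$ inside a bounded ball; by doubling, the number of $C_d\epsilon^{(2)}_k$-separated points in a ball of radius $O(1)$ is at most $N_0(C_d\epsilon^{(2)}_k)^{-\log_2 N_0}$ where $N_0$ is the doubling constant of $E$ — a bound depending only on $\epsilon^{(2)}_k$ (equivalently on $\epsilon$, via the chain of parameter choices) and the doubling constant. This is precisely the asserted bound. The one subtlety is whether these points genuinely land near $E$ rather than merely near some $k$-plane: this is exactly what ``good for $R$'' guarantees, and it is why the nesting condition $\sigma\supseteq\sigma_{k-1}$ and the smallness $\epsilon^{(k)}_2\ll\Lambda^{-2}\epsilon^{(k+1)}_2$ were built into the construction — so that at stage $k$ the relevant neighborhood scale $\epsilon^{(k)}_2$ dominates the angular discretization error and keeps everything genuinely close to $E$. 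The choice of $C_d$ enters only to absorb the dimensional constant relating $D(\sigma,\sigma')$ to $\|u_\sigma - u_{\sigma'}\|$ and to the spacing of the $p_\sigma$.

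The main obstacle I anticipate is bookkeeping the constants: one must check that the angular slack $\epsilon^{(2)}_k$ used to define $\Sigma_k$, the neighborhood width $\epsilon^{(k)}_2$ in the definition of ``good'', and the separation threshold $C_d\epsilon^{(2)}_k$ all fit together so that the separated family of points $\{p_\sigma\}$ both (a) lies in a controlled neighborhood of $E$ and (b) is separated at a scale comparable to $\epsilon^{(2)}_k$ — not at some smaller scale that would weaken the doubling bound into something $\epsilon$-dependent in a bad (blowing-up) way. Since the statement only asks for a bound depending on $\epsilon$ and the doubling constant, and $\epsilon^{(2)}_k$ is a fixed function of $\epsilon$ and $d$, this is acceptable; one just has to be careful that no hidden dependence on $R$, on $\cS_i$, or on $\Gamma$ sneaks in, which it does not because everything is phrased in the rescaled ball $\ball(R)$ and uses only the intrinsic doubling and badly-fitting constants of $E$.
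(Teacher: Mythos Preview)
Your overall strategy---separate the planes in $\mathcal{L}_k(R,\sigma_{k-1})$ by associating to each a point near $E$, then invoke doubling---is the right shape, but the ``one point per plane'' implementation has a genuine gap. You write $\sigma=\langle\sigma_{k-1},u_\sigma\rangle$ and set $p_\sigma=c\,u_\sigma$, claiming this lies on (a translate of) $V_\sigma$ and hence near $E$. But ``good for $R$'' says only that the \emph{affine} plane $V_\sigma$ intersects $3\ball(R)$ and stays close to $E$ inside $5\ball(R)$; it says nothing about the linear subspace $\sigma$ through the origin. The translation component of $V_\sigma$ is completely uncontrolled (beyond hitting $3\ball(R)$), so $c\,u_\sigma$ need not lie anywhere near $V_\sigma$, and hence need not lie near $E$. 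If instead you place $p_\sigma$ genuinely on $V_\sigma$---say $p_\sigma=z_\sigma+c\,u_\sigma$ for some $z_\sigma\in V_\sigma\cap 3\ball(R)$---then the separation argument breaks: the base points $z_\sigma$ depend on $\sigma$, and two planes $\sigma,\sigma'$ with $D(\sigma,\sigma')\approx C_d\epsilon^{(k)}_2$ can easily have $z_\sigma+c\,u_\sigma$ and $z_{\sigma'}+c\,u_{\sigma'}$ arbitrarily close. A single point simply does not pin down an affine $k$-plane.

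The paper fixes exactly this by using $k+1$ points rather than one. For each $\sigma$ it selects an ordered, well-spread $(k+1)$-tuple $\{x_0,\dots,x_k\}\subset V_\sigma\cap 5\ball(R)$ (separated at scale $\diam(\ball(R))$ with $\{x_i-x_0\}$ orthogonal), snaps each $x_i$ to the nearest point of a fixed $\epsilon^{(k)}_2\diam(\ball(R))$-net $N\subset E\cap 20\ball(R)$, and records the resulting ordered tuple $S_\sigma\in N^{k+1}$. Because a $(k+1)$-tuple in general position determines a $k$-plane up to angle $\lesssim_d\epsilon^{(k)}_2$, the map $\sigma\mapsto S_\sigma$ is injective once $C_d$ is large enough, giving $|\mathcal{L}_k(R,\sigma_{k-1})|\le|N|^{k+1}$. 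The doubling of $E$ bounds $|N|$. Note, incidentally, that the ``badly fits $d$-planes'' hypothesis is not used here at all; only doubling is needed for this claim.
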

\begin{proof}

Let $N$ be an  $\epsilon^{(k)}_2\diam(\ball(R))$-net in $E\cap 20\ball(R)$. Note that the cardinality $|N|$ of $N$ is controlled by constants depending ultimately only on $\epsilon$ and the doubling constant of $E$. We will control the cardinality of $\mathcal{L}_k(R, \sigma_{k-1})$ using $|N|$.

Suppose $\sigma\in\mathcal{L}_k(R, \sigma_{k-1})$. Then there is an affine $k$-plane $V_{\sigma}$, with angle at most $\epsilon^{(k)}_2$ from $\sigma$ such that $V\cap 3\ball(R) \neq\emptyset$ and $V\cap 5\ball(R)$ is in the $\epsilon^{(k)}_2\diam(\ball(R))$-neighborhood of $E$.

Choose an ordered list of $k+1$ points $\{x_0, \dots, x_k\}\subset V\cap 5\ball(R')$ that are mutually separated by distance $\geq \diam(\ball(R))$ and for which $\{x_i-x_0\}_{i=1}^k$ are orthogonal. Moving these points to their closest counterparts $\{y_i\}$ in $N$, we can therefore find an ordered list $S_{\sigma}=\{y_i\}$ of $k+1$ points in $N\cap 20\ball(R)$ that are $(1 - 4\epsilon^{(k)}_2)\diam(\ball(R))$-separated and within pairwise distances $\epsilon^{(k)}_2 \diam(\ball(R))$ of $\{x_i\}$. 

Let us now observe that if $\sigma, \sigma'\in\mathcal{L}_k(R,\sigma_{k-1})$ and $S_\sigma = S_{\sigma'}$, then $\sigma=\sigma'$. Indeed, in this case the planes $V_\sigma$ and $V_{\sigma'}$ each admit orthonormal bases with pairwise mutual angles from each other $\lesssim \epsilon^{(k)}_2$, which implies that every vector in $V_{\sigma}$ is within angle $\lesssim_d \epsilon^{(k)}_2$ from a vector in $V_{\sigma'}$, and vice versa. The same therefore holds for $\sigma$ and $\sigma'$, and the defining property of $\mathcal{L}_k(R,\sigma_{k-1})$ then forces $\sigma=\sigma'$.

It now follows that  $|\cL_k(R,\sigma_{k-1})| \leq |N|^{k+1}$, which as noted above is controlled as desired.

\end{proof}

Recall that each cube $Q\in\mathcal{T}_i$ was assigned an increasing list of subspaces $\tau_1(Q)\subset\dots\subset \tau_{d-1}(Q)$ in subsection \ref{subsec:construction}. We now discretize this list using our construction above.

\begin{claim}\label{claim:nk}
For each $Q\in \mathcal{T}_i$, there are integers
$$ n(Q)+1=n_0(Q)\geq n_1(Q) \geq n_2(Q) \geq \dots \geq n_{d-2}(Q) \geq n_{d-1}(Q)=0$$
and $k$-dimensional subspaces $\tau^\epsilon_k(Q)$ with
$$ \tau^\epsilon_0(Q) \subseteq \tau^\epsilon_1(Q) \subseteq \tau^\epsilon_2(Q)  \subseteq \dots \subseteq \tau^\epsilon_{d-1}(Q).  $$
We construct these objects to have the following properties for each $Q\in\mathcal{T}_i$ and $k\in\{1, \dots, d-1\}$:
\begin{enumerate}[(i)]
\item $n_0(Q) = \min\left\{ n(Q)+1, N(Q,\tau_\Gamma)\right\}$.
\item If $k>0$, $n_k(Q) = \min\left\{ n_{k-1}(Q), N(Q, \langle \tau^\epsilon_k(Q), \tau_\Gamma \rangle) \right\}$.
\item If $n_{k-1}(Q)>0$, then $\tau^\epsilon_k(Q)\in \mathcal{L}_k(Q^{\uparrow (n_{k-1}(Q)-1)}, \tau^\epsilon_{k-1}(Q))$. 
\item If $n_{k-1}(Q)>0$, then $D(\tau^\epsilon_k(Q), \tau_k(Q))\lesssim_{d} \epsilon^{(k)}_2$.
\end{enumerate}
\end{claim}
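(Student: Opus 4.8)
The plan is to construct the integers $n_k(Q)$ and subspaces $\tau^\epsilon_k(Q)$ by a finite downward induction on $k$, running from $k=0$ up to $k=d-1$, mimicking the way the original field $\tau_k(Q)$ was built in subsection \ref{subsec:construction} but with each plane $\tau_k(Q)$ replaced by a nearby element of the discrete family $\mathcal{L}_k(\cdot,\cdot)$ furnished by the construction before Claim \ref{claim:numplanes}. First I would set $\tau^\epsilon_0(Q)=\{0\}$ and $n_0(Q) = \min\{n(Q)+1, N(\tau_\Gamma,Q)\}$; property (i) is then a definition, and note $n_0(Q)\geq 0$ since $N(\tau_\Gamma,Q)\geq 0$ always, while in fact $n_0(Q) = n(Q)+1$ by Claim \ref{claim:gammagood} (so the min is attained at the first term, but keeping it as a min is harmless). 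Property (iv) for $k=0$ is vacuous, and (iii) for $k=0$ is vacuous as well since $\tau^\epsilon_0=\{0\}$ needs no discretization.

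For the inductive step, suppose $\tau^\epsilon_{k-1}(Q)$ and $n_{k-1}(Q)$ have been constructed with properties (i)--(iv) through index $k-1$. If $n_{k-1}(Q)=0$, then we must have $n_k(Q)=n_{k+1}(Q)=\dots=n_{d-1}(Q)=0$ (the integers are non-increasing and bounded below by $n_{d-1}(Q)=0$), so we simply pad out the sequence with zeros and may take $\tau^\epsilon_k(Q) = \langle \tau^\epsilon_{k-1}(Q), \tau_\Gamma\rangle$ or any fixed $k$-plane containing $\tau^\epsilon_{k-1}(Q)$; properties (iii) and (iv) are then vacuous (their hypothesis $n_{k-1}(Q)>0$ fails) and (ii) holds because $n_k(Q)=0 = \min\{0, \text{anything}\geq 0\}$. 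If instead $n_{k-1}(Q)>0$, write $R = Q^{\uparrow(n_{k-1}(Q)-1)}$, which lies in $\cS_i$. By construction of $\tau_k(Q)$ in subsection \ref{subsec:construction}, $\tau_k(Q)$ maximizes $N(\cdot,Q)$ among $k$-planes containing $\tau_{k-1}(Q)$; in particular $N(\tau_k(Q),Q)\geq N(\langle\tau_{k-1}(Q),\tau_\Gamma\rangle, Q)\geq$ a quantity $\geq n_{k-1}(Q)$ by the inductive hypothesis and Claim \ref{claim:gammagood}, so some affine $k$-plane parallel to $\tau_k(Q)$ is good for $R = Q^{\uparrow(n_{k-1}(Q)-1)}$. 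I would like to say $\tau_k(Q)\in \Sigma_k(R,\tau^\epsilon_{k-1}(Q))$, but the membership requires the plane to contain $\tau^\epsilon_{k-1}(Q)$ rather than $\tau_{k-1}(Q)$; the fix is to take $\tau^\epsilon_k(Q)$ to be the element of $\mathcal{L}_k(R,\tau^\epsilon_{k-1}(Q))$ within angle $C_d\epsilon^{(k)}_2$ of the plane $\langle \tau^\epsilon_{k-1}(Q), (\text{an appropriate correction of } \tau_k(Q))\rangle$, using that $D(\tau_{k-1}(Q),\tau^\epsilon_{k-1}(Q))\lesssim_d \epsilon^{(k-1)}_2 \ll \epsilon^{(k)}_2$ to produce a genuine $k$-plane containing $\tau^\epsilon_{k-1}(Q)$ and within $\lesssim_d\epsilon^{(k)}_2$ of $\tau_k(Q)$, which a good affine $k$-plane can then be rotated to lie within $\epsilon^{(k)}_2$ of; this places it in $\Sigma_k(R,\tau^\epsilon_{k-1}(Q))$ and then maximality of $\mathcal{L}_k(R,\tau^\epsilon_{k-1}(Q))$ gives an element within $C_d\epsilon^{(k)}_2$, which is our $\tau^\epsilon_k(Q)$. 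This delivers (iii) and (iv) by construction (the constant in (iv) absorbing $C_d$ plus the perturbation from $\tau^\epsilon_{k-1}$). Finally, set $n_k(Q) = \min\{n_{k-1}(Q), N(\langle \tau^\epsilon_k(Q), \tau_\Gamma\rangle, Q)\}$, which is property (ii) and which, being a min of $n_{k-1}(Q)$ with a non-negative integer, keeps the sequence non-increasing and non-negative; when $k=d-1$ we observe $n_{d-1}(Q) = 0$ must be imposed, which is consistent since by that stage $\tau^\epsilon_{d-1}(Q)$ together with $\tau_\Gamma$ spans a $d$-plane and such planes are never good for any cube once $\epsilon^{(d)}_2\ll\epsilon_0$ (as $E$ badly fits $d$-planes), forcing $N(\langle\tau^\epsilon_{d-1}(Q),\tau_\Gamma\rangle,Q)=0$ and hence $n_{d-1}(Q)=0$ automatically.

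The main obstacle I anticipate is the bookkeeping around the ``nested discretization'': ensuring that replacing $\tau_{k-1}(Q)$ by the already-discretized $\tau^\epsilon_{k-1}(Q)$ does not destroy the goodness estimates one scale at a time, i.e., that the accumulated angular error $\sum_{j<k} D(\tau_j(Q),\tau^\epsilon_j(Q))\lesssim_d \sum_{j<k}\epsilon^{(j)}_2$ remains negligible compared to $\epsilon^{(k)}_2$. This is exactly why the parameters were chosen with the lacunary gaps $\epsilon^{(1)}_2 \ll \Lambda^{-2}\epsilon^{(2)}_2 \ll \dots$ in subsection \ref{subsec:construction}, and verifying that these gaps are large enough to swallow both the constant $C_d$ from Claim \ref{claim:numplanes} and the geometric considerations relating angles of subspaces to angles of affine planes intersecting $5\ball(R)$ will be the technical heart of the argument. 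Everything else is a routine finite induction, and Claim \ref{claim:numplanes} guarantees that at each stage there are only boundedly many candidates for $\tau^\epsilon_k(Q)$, which is what makes the discretization useful downstream.
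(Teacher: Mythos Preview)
Your proposal is correct and follows essentially the same approach as the paper's own proof: a finite induction on $k$ starting from $\tau^\epsilon_0(Q)=\{0\}$ and $n_0(Q)=n(Q)+1$, padding with zeros once $n_{k-1}(Q)=0$, and otherwise rotating $\tau_k(Q)$ by $\lesssim_d \epsilon^{(k-1)}_2$ so that it contains $\tau^\epsilon_{k-1}(Q)$, then selecting $\tau^\epsilon_k(Q)$ from the discrete net $\mathcal{L}_k(R,\tau^\epsilon_{k-1}(Q))$, with $n_{d-1}(Q)=0$ forced automatically because $E$ badly fits $d$-planes. Your identification of the ``nested discretization'' bookkeeping and the role of the lacunary gaps $\epsilon^{(k-1)}_2\ll\epsilon^{(k)}_2$ is exactly the point the paper relies on.
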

\begin{proof}

Given $Q\in\mathcal{T}_i$, we will define $n_k(Q)$ and $\tau^\epsilon_k(Q)$ inductively. First, we set $n_0(Q)=n(Q)+1$ and $\tau^\epsilon_0(Q)=\{0\}$. In this case, $N(Q,\tau_\Gamma)\geq n(Q)+1$ by Claim \ref{claim:gammagood}.

Items (ii) through (iv) are vacuous in the case $k=0$.

Suppose now that $n_0(Q)$ through $n_{k-1}(Q)$ as well as $\tau^\epsilon_0(Q)$ through $\tau^\epsilon_{k-1}(Q)$ have been defined and satisfy properties (i) through (iv). We next define $\tau^\epsilon_k(Q)$ and $n_k(Q)$. 

If $n_{k-1}(Q)=0$, we define $\tau^\epsilon_k(Q)$ to be an arbitrary $k$-plane containing $\tau^\epsilon_{k-1}(Q)$, and we set $n_k(Q)=0$. All four conclusions of the claim are verified easily in this case.  

Now suppose that $n_{k-1}(Q) >0$. Let $R=Q^{\uparrow (n_{k-1}(Q)-1)}$. Observe that 
$$ N(Q,\tau_k(Q)) \geq N(Q, \langle \tau^\epsilon_{k-1}, \tau_\Gamma \rangle) \geq n_{k-1}(Q) > 0.$$
Therefore $\tau_k(Q)$ is good for $R$. We know by induction that $D(\tau^\epsilon_{k-1}(Q), \tau_{k-1}(Q))\lesssim_{d} \epsilon^{(2)}_{k-1}$ and $\tau_{k-1}(Q) \subseteq \tau_k(Q)$. Rotate $\tau_k(Q)$ by angle $\lesssim_{d} \epsilon^{(2)}_{k-1} \leq \epsilon^{(k)}_2$ to obtain a $k$-plane $\sigma_k\in\Sigma_k(R,\tau^\epsilon_{k-1}(Q))$ that contains $\tau^\epsilon_{k-1}(Q)$. Choose $\tau^\epsilon_k(Q)\in\mathcal{L}_k(R,\tau^\epsilon_{k-1}(Q))$ to have angle $\lesssim_d \epsilon^{(k)}_2$ from $\sigma_k$, which is possible by the definition of $\mathcal{L}_k(R,\tau^\epsilon_{k-1}(Q))$. It follows that
$$ D(\tau^\epsilon_k(Q), \tau_k(Q))\lesssim_{d} \epsilon^{(k)}_2.$$ 
Thus, items (iii) and (iv) hold.

Once $\tau^\epsilon_k(Q)$ has been defined, we set
$$ n_k(Q) = \min\left\{ n_{k-1}(Q), N(Q, \langle \tau^\epsilon_k(Q), \tau_\Gamma \rangle) \right\},$$
and the inductive construction can continue through $k=d-1$. Conclusions (i) and (ii) immediately hold.

It remains only to verify that $n_{d-1}(Q)=0$, as claimed. Since $E$ badly fits $d$-planes (with parameter $\epsilon_0)$ and $\epsilon^{(d)}_{2}\ll \epsilon_0$, the $d$-plane $\langle \tau^\epsilon_{d-1}(Q), \tau_\Gamma \rangle$ cannot be good for $Q$. Indeed, if it were, then this plane would contain an isometric copy of a $d$-ball that lies very close to $E$ in $5\ball(Q)$. Hence $n_{d-1}(Q)=0$.

\end{proof}

Our next goal is to associate to each $R\in\cS_i$, $k\in\{1, \dots, d-1\}$, and $k$-plane $\tau_k$ a collection of cubes $\mathcal{H}_k(R,\tau_k)\subseteq \mathcal{C}_\Gamma$ lying in or near $R$. The claim is rather technical, but the rough idea is as follows. Given $R$, we consider cubes $Q$ for which $\tau^\epsilon_k(Q)=\tau_k$, $Q^{\uparrow n_k(Q)}\subset R$, and $R=Q^{n_{k-1}(Q)-1}$. For such cubes, the $(k+1)$-plane $\langle \tau_k, \tau_\Gamma \rangle$ stops being good at $Q^{\uparrow n_{k}(Q)}$, while the $k$-plane $\langle \tau^\epsilon_{k-1}(Q),\tau_\Gamma \rangle$ inside it continues to be good until $R$. We argue that this forces a large piece of $\Gamma$ near $Q^{\uparrow n_k(Q)}$ to be ``blocked'' in direction $\tau_k$: if $x$ is in this piece, then $x+\tau_k$ gets far from $E$ near $Q^{\uparrow n_k(Q)}$. These pieces of $\Gamma$, which are essentially cubes of $\cC_\Gamma$, are collected in $\mathcal{H}_k(R,\tau_k)$, and then some are removed so that they do not overlap too much.

The following more precise language will be helpful below.

\begin{definition}
Suppose $\sigma\in\mathcal{L}_k$, $\eta>0$, $B$ is a ball in $\bX$, and $x\in \mathbb{X}$. We will say that \emph{$x$ is $(\eta,\sigma)$-blocked in $B$} if $x\in 5B$ and $5B \cap (x+\sigma)$ contains a point at distance at least $\eta\diam(B)$ from $E$.
\end{definition}

\begin{claim}\label{claim:holes>2}
Let $R\in\cS_i$, $k\in\{1, \dots, d-1\}$, and $\tau_k\in \cL_k$. There is a collection of cubes $\mathcal{H}_k(R,\tau_k) \subseteq \mathcal{C}_\Gamma$ with the following properties:
\begin{enumerate}[(i)]
\item If $H\in\mathcal{H}_k(R,\tau_k)$, then $\ball(H)\subseteq \frac{\Lambda}{2} \ball(R')$ for some $R'\in\cS_i$ with $R'\subseteq R^{\uparrow}$, $\Lambda \ball(R') \subseteq 5\ball(R)$ and $\diam(\ball(R'))\lesssim_{d,\epsilon^{(k+1)}_2} \diam(\ball(H))$. 
\item If $H,H'\in\mathcal{H}_k(R,\tau_k)$ then $7\ball(H)\cap 7\ball(H')= \emptyset$.
\item If $H\in\mathcal{H}_k(R,\tau_k)$, then there exists a $Q\in\mathcal{T}_{i}$ with $R=Q^{\uparrow (n_{k-1}(Q)-1)}$, $\Lambda \ball(Q^{\uparrow n_k(Q)}) \subseteq 5\ball(R)$, and $\tau^\epsilon_k(Q)=\tau_k$ such that each $x\in 10\Lambda \ball(H)$ is $\left(\frac{1}{2\Lambda}\epsilon^{(k+1)}_2,\tau_k\right)$-blocked  
in $\frac{\Lambda}{10}\ball(Q^{\uparrow n_k(Q)})$.
\item If $Q\in\mathcal{T}_{i}$, 
 $n_k(Q)<n_{k-1}(Q)$, $\Lambda \ball(Q^{\uparrow n_k(Q)}) \subseteq 5\ball(R)$, $R=Q^{\uparrow (n_{k-1}(Q)-1)}$, and $\tau^\epsilon_k(Q)=\tau_k$, then there is an $H_k(Q,\tau_k)\in\mathcal{H}_k(R,\tau_k)$ such that
$$ 7\Lambda \ball(Q^{\uparrow n_k(Q)}) \cap 7\ball(H_k(Q,\tau_k)) \neq \emptyset.$$
and
$$ \diam(\ball(H_k(Q,\tau_k))) \gtrsim_{\epsilon^{(k)}_2, d} \diam \ball(Q^{\uparrow n_k(Q)}).$$
\end{enumerate}

\end{claim}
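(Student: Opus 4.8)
The plan is to mirror the construction of the "shadow" cubes $H$ from the planar sketch in Section \ref{sec:d=2}, carried out one dimension at a time. Fix $R\in\cS_i$, $k\in\{1,\dots,d-1\}$, and $\tau_k\in\cL_k$. First I would identify the relevant pool of cubes: let $\mathcal{Q}_k(R,\tau_k)$ be the set of $Q\in\mathcal{T}_i$ with $R=Q^{\uparrow(n_{k-1}(Q)-1)}$, $\Lambda\ball(Q^{\uparrow n_k(Q)})\subseteq 5\ball(R)$, $n_k(Q)<n_{k-1}(Q)$, and $\tau^\epsilon_k(Q)=\tau_k$. For such a $Q$, write $P=Q^{\uparrow n_k(Q)}$. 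The definition of $n_k(Q)$ (Claim \ref{claim:nk}(ii)) means that $\langle\tau_k,\tau_\Gamma\rangle$ is \emph{not} good for $P$, while by (i)--(ii) the $k$-plane $\langle\tau^\epsilon_{k-1}(Q),\tau_\Gamma\rangle\subseteq\langle\tau_k,\tau_\Gamma\rangle$ \emph{is} still good for $P$ (indeed for $R=P^{\uparrow}$ and beyond). The first key step is to extract from this a genuine "hole": since $\langle\tau_k,\tau_\Gamma\rangle$ is not good for $P$ but $\langle\tau^\epsilon_{k-1}(Q),\tau_\Gamma\rangle$ is, there is a point $p\in\langle\tau^\epsilon_{k-1}(Q),\tau_\Gamma\rangle\cap 5\ball(P)$ near $E$, and an affine translate of $\tau_k$ through $p$ must leave the $\epsilon^{(k+1)}_2\diam(\ball(P))$-neighborhood of $E$ within $5\ball(P)$ — otherwise $\langle\tau_k,\tau_\Gamma\rangle$ would be good. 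This produces a point $w\in 5\ball(P)$ with $\dist(w,E)\geq\epsilon^{(k+1)}_2\diam(\ball(P))$ and $w-p\in\tau_k$; by the doubling/porosity-type argument (using that $E$ badly fits $d$-planes and the separation of the $\epsilon_2^{(\cdot)}$) one upgrades this to a ball $B_w\subseteq\frac{\Lambda}{10}\ball(P)$ of radius $\gtrsim\epsilon^{(k+1)}_2\diam(\ball(P))$ with, say, $5\Lambda B_w\cap E=\emptyset$, whose center lies on $p+\tau_k$.

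Next I would produce the cube $H_k(Q,\tau_k)$ as a cube of $\cC_\Gamma$ sitting inside the "shadow" of $B_w$ in direction $\tau_k$, exactly as in Section \ref{sec:d=2}: because $\Gamma$ in $\Lambda\ball(R)$ is $\delta$-flat along a line in direction $\tau_\Gamma$ making angle $\geq\epsilon_1$ with $\tau_k$ (this uses $Q\in\mathcal{T}_i$ and $D(\tau^\epsilon_k(Q),\tau_k(Q))\lesssim\epsilon^{(k)}_2$ together with $\tau_k(Q)\supseteq\tau_k^\epsilon(Q)$... more precisely one uses that $\angle(\tau_k,\tau_\Gamma)\gtrsim\epsilon_1$, which follows since $\tau_k\subseteq\tau(Q)$ up to small error and $Q\in\mathcal T_i$), the portion of $\Gamma$ that projects into $B_w$ along $\tau_k$ is a piece of $\Gamma$ of diameter $\gtrsim_{\epsilon^{(k+1)}_2}\diam(\ball(P))$ lying within $O(\diam(\ball(P)))$ of $B_w$. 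Pick $H_k(Q,\tau_k)\in\cC_\Gamma$ a cube contained well inside this shadow with $\diam(\ball(H_k(Q,\tau_k)))\gtrsim_{\epsilon^{(k)}_2,d}\diam(\ball(P))$ and $7\Lambda\ball(P)\cap 7\ball(H_k(Q,\tau_k))\neq\emptyset$; this gives (iv). The blocking property (iii) then holds because $10\Lambda\ball(H)$ still projects, in direction $\tau_k$, into $5\Lambda B_w$, which is $\epsilon^{(k+1)}_2$-far from $E$ after the obvious normalization by $\diam(\frac{\Lambda}{10}\ball(P))$; here the choice $\Lambda=100\epsilon_1^{-1}A$ is what guarantees the shadow is wide enough to contain such an $H$ and that $7\ball(H)\subseteq 5\ball(R)$, and property (i) follows by taking $R'=P$.

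Finally, to get (ii) I would apply a $5r$-covering (Vitali-type) argument to the collection $\{7\ball(H_k(Q,\tau_k)): Q\in\mathcal{Q}_k(R,\tau_k)\}$: select a maximal disjointed subfamily, let $\mathcal{H}_k(R,\tau_k)$ be the corresponding set of $H$'s, and for a general $Q\in\mathcal{Q}_k(R,\tau_k)$ replace $H_k(Q,\tau_k)$ by the selected $H\in\mathcal{H}_k(R,\tau_k)$ with $7\ball(H_k(Q,\tau_k))$ meeting $5\cdot 7\ball(H)$. Since the diameters of the $\ball(H)$ associated to cubes $Q$ with a common $P=Q^{\uparrow n_k(Q)}$ are all comparable to $\diam(\ball(P))$, this reselection only changes the relevant balls by bounded factors and preserves (i), (iii), (iv) after harmlessly enlarging the implicit constants. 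The main obstacle I anticipate is bookkeeping in the first step — cleanly extracting the hole $B_w$ of the right size and with center on the correct affine translate of $\tau_k$, so that the normalization $\frac{1}{2\Lambda}\epsilon^{(k+1)}_2$ in (iii) comes out exactly — which is where the careful hierarchy $\epsilon^{(1)}_2\ll\Lambda^{-2}\epsilon^{(2)}_2\ll\cdots$ and the badly-fitting-$d$-planes hypothesis both get used; everything after that is the same covering-and-geometry argument as in the planar sketch.
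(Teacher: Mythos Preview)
Your overall architecture matches the paper's: form the pool of relevant $Q$'s, for each one locate a ``hole'' far from $E$ near $Q^{\uparrow n_k(Q)}$, find a cube $H\in\cC_\Gamma$ in the $\tau_k$-shadow of that hole, and then run a greedy/Vitali selection to obtain the disjointness (ii) while retaining (iv). The covering step you describe (select maximal $7\ball$-disjoint, then reassign) is essentially what the paper does (it selects greedily by largest diameter, so the reassigned $H'$ automatically has $\diam(\ball(H'))\geq\diam(\ball(H))$ without a factor-$5$ inflation, but your version works too with adjusted constants).

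The genuine problems are in your first step, where you extract the hole. First, you start from a point $p$ in an affine translate of $\langle\tau^\epsilon_{k-1}(Q),\tau_\Gamma\rangle$ that is ``near $E$''; this is a detour. The paper instead starts from $z\in\Gamma\cap 3B(Q)$, which is what later lets you land back on $\Gamma$. Second, your claim ``$w-p\in\tau_k$'' is not what non-goodness gives: the failure of $\langle\tau_k,\tau_\Gamma\rangle$ to be good for $P=Q^{\uparrow n_k(Q)}$ only produces a point $y\in (z+\langle\tau_k,\tau_\Gamma\rangle)\cap 5\ball(P)$ with $\dist(y,E)\geq\epsilon^{(k+1)}_2\diam(\ball(P))$; the displacement $y-z$ lies in $\sigma=\langle\tau_k,\tau_\Gamma\rangle$, not in $\tau_k$. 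Third, your ``upgrade'' of $w$ to a ball $B_w$ with $5\Lambda B_w\cap E=\emptyset$ ``using that $E$ badly fits $d$-planes'' is spurious: the badly-fitting hypothesis plays no role at this step, and no upgrade is needed---the single point $y$ with $\dist(y,E)\geq\epsilon^{(k+1)}_2\diam(\ball(P))$ already suffices, since any point within $\frac12\epsilon^{(k+1)}_2\diam(\ball(P))$ of $y$ is still $\frac12\epsilon^{(k+1)}_2\diam(\ball(P))$-far from $E$.

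What actually closes the argument (and what the paper does) is this: inside the affine $(k+1)$-plane $z+\sigma$, intersect the affine $k$-plane $y+\tau_k$ with the affine line $z+\tau_\Gamma$ at a point $p\in\frac{\Lambda}{20}\ball(P)$ (here one uses the angle between $\tau_\Gamma$ and $\tau_k$ together with $\Lambda\approx\epsilon_1^{-1}$ to control $|p-z|$); then use $\theta_\Gamma(\Lambda\ball(P))<\delta$ to find $q\in\Gamma\cap\frac{\Lambda}{10}\ball(P)$ within $\frac14\epsilon^{(k+1)}_2\diam(\ball(P))$ of $p$; and finally take $H\in\cC_\Gamma$ containing $q$ with $\diam(\ball(H))\approx\epsilon^{(k+1)}_2\diam(\ball(P))$. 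Because $q+\tau_k$ passes through $B(y,\tfrac12\epsilon^{(k+1)}_2\diam(\ball(P)))$, so does $(x+\tau_k)$ for every $x\in 10\Lambda\ball(H)$, which is exactly (iii). Note that the hierarchy $\epsilon_2^{(k)}\ll\Lambda^{-2}\epsilon_2^{(k+1)}$ is not used in this claim at all---it only enters later, in Claim~\ref{claim:nocontain}.
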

\begin{proof}
Consider the collection
\begin{equation}\label{eq:holesprelim}
\{Q^{\uparrow n_k(Q)} : Q\in\mathcal{T}_{i}, R=Q^{\uparrow (n_{k-1}(Q)-1)},  n_k(Q)<n_{k-1}(Q), \Lambda \ball(Q^{\uparrow n_k(Q)}) \subseteq 5\ball(R), \tau^\epsilon_k(Q)=\tau_k\},
\end{equation}
and note that all these cubes are in $R^{\uparrow}$. (This collection may be empty, in which case $\mathcal{H}^k(R,\tau_k)$ is empty as well, and the claim holds vacuously.)

Fix a $Q^{\uparrow n_k(Q)}$ as in \eqref{eq:holesprelim} and let $\sigma = \langle \tau_k, \tau_\Gamma \rangle$. By Claim \ref{claim:nk}, $n_k(Q)<n_{k-1}(Q)$ means that $n_k(Q)=N(Q,\langle \tau_k, \tau_\Gamma\rangle)$. Thus, each point in $3\ball(Q)$ is $(\epsilon^{(k+1)}_2,\sigma)$-blocked in $\ball(Q^{\uparrow n_k(Q)})$. We will now argue that there is a cube $H$ in $\mathcal{C}_\Gamma$ inside  $\frac{\Lambda}{10} \ball(Q^{\uparrow n_k(Q)})$ such that
\begin{itemize}
\item $\diam(\ball(H)) \gtrsim \diam(Q^{\uparrow n_k(Q)})$, and
\item each point $x\in 10 \Lambda \ball(H)$ is $\left(\frac{1}{2\Lambda}\epsilon^{(k+1)}_2,\tau_k\right)$-blocked in $\frac{\Lambda}{10}Q^{\uparrow n_k(Q)}$.
\end{itemize}
Let $n_k=n_k(Q)$ and fix a point $z\in 3Q\cap \Gamma$. By definition of $n_k$, the point $z$ must be $(\epsilon^{(k+1)}_2, \sigma)$-blocked in $Q^{\uparrow n_k}$. Therefore, there is a point $y\in (z+\sigma) \cap 5\ball(Q^{\uparrow n_k})$ with $\dist(y,E)\geq \epsilon^{(k+1)}_2\diam(\ball(Q^{\uparrow n_k}))$.

Within the affine $(k+1)$-plane $z+\sigma$, the affine $k$-plane $y+\tau_k$ and affine line $z+\tau_\Gamma$ have mutual angle at least $\epsilon_1$. They therefore intersect at a point $p\in (y+\tau_k) \cap (z+\tau_\Gamma) \subset (z+\sigma)$, and this point $p$ is in $\frac{\Lambda}{20} \ball(Q^{\uparrow n_k})$, since $\Lambda$ was chosen sufficiently large depending on $\epsilon_1$. 

Since $\theta_\Gamma(\Lambda \ball(Q^{\uparrow n_k}))<\delta$ and $\delta$ is small, there must be a point $q\in \Gamma \cap \frac{\Lambda}{10} \ball(Q^{\uparrow n_k})$ such that $p\in B(y,\frac14\epsilon^{(k+1)}_2\diam(\ball(Q^{\uparrow n_k})))$. Then, $q+\tau_k$ intersects $\ball(y,\frac14\epsilon^{(k+1)}_2\diam(\ball(Q^{\uparrow n_k})))$. We can therefore choose a cube $H\in\mathcal{C}_\Gamma$ containing $q$ and with $\diam(\ball(H))\gtrsim \diam(\ball(Q^{\uparrow n_k}))$ such that if $x\in 10\Lambda \tilde{B}(H)$, then $(x+\tau_k)\cap \frac{\Lambda}{10}\ball(Q^{\uparrow n_k})$ contains a point of $B_y:=\ball(y,\frac12\epsilon^{(k+1)}_2\diam(\ball(Q^{\uparrow n_k})))$, a point at least $\frac12\epsilon^{(k+1)}_2\diam(\ball(Q^{\uparrow n_k}))$ from $E$. Such a point is therefore at distance at least
$$ \frac{1}{2\Lambda}\epsilon^{(k+1)}_2\diam(\Lambda \ball(Q^{\uparrow n_k})) $$
from $E$. Thus, each point of $10\Lambda \tilde{B}(H)$ is 
$\left(\frac{1}{2\Lambda}\epsilon^{(k+1)}_2,\tau_k\right)$-blocked in $\frac{\Lambda}{10}\ball(Q^{\uparrow n_k(Q)})$.

Let $\mathcal{H}^0_k(R,\tau_k)$ collect all these cubes $H$, for every choice of $Q^{\uparrow n_k(Q)}$ as in \eqref{eq:holesprelim}. We will define $\mathcal{H}_k(R,\tau_k)$ as a subset of $\mathcal{H}^0_k(R,\tau_k)$. It follows from the discussion above that items (i) (with $R'=Q^{\uparrow n_k(Q)}$ in the notation above) and (iii) will hold for every $H\in\mathcal{H}_k(R,\tau_k)$ .

We now build $\mathcal{H}_k(R,\tau_k)$ inductively one element at a time as follows: In the first step, add the largest cube of $\mathcal{H}^0_k(R,\tau_k)$ to $\mathcal{H}_k(R,\tau_k)$. In each following stage, we add to $\mathcal{H}^k(R)$ the cube $H\in \mathcal{H}^0_k(R,\tau_k)$ with largest $\diam(B(H))$ satisfying the property that $7\ball(H)\cap 7\ball(H')=\emptyset$ for all cubes $H'$ already in $\mathcal{H}_k(R,\tau_k)$. 

It follows immediately that (ii) holds for $\mathcal{H}_k(R,\tau_k)$. Finally, for item (iv), suppose $Q\in\mathcal{T}_{i,\bm \tau}\cap \Delta_n$, $n_k(Q)<n_{k-1}(Q)$, $\Lambda \ball(Q^{\uparrow n_k(Q)}) \subseteq 5\ball(R)$, and $R=Q^{\uparrow (n_{k-1}(Q)-1)}$. Then $Q^{\uparrow n_k(Q)}$ is in the collection \eqref{eq:holesprelim}. Let $H$ be the element of $\mathcal{H}^0_k(R,\tau_k)$ associated to it above.

If $H$ was selected for $\mathcal{H}_k(R, \tau_k)$, then $H$ itself can serve as $H_k(Q,\tau_k)$. If not, then $7\ball(H)\subseteq 7\Lambda Q^{\uparrow n_k(Q)}$ must intersect $7\ball(H')$ for some $H'\in\mathcal{H}_k(R,\tau_k)$ with $\diam(\ball(H'))\geq\diam(\ball(H))$, and this $H'$ can serve as $H_k(Q,\tau_k)$.
 
\end{proof}

A consequence of this construction is the following: 

\begin{claim}\label{claim:Hsum}
For each $R\in\cS_i$, $k\in\{1, \dots, d-1\}$ and $\tau_k\in\mathcal{L}_k$, we have
$$ \sum_{H\in\mathcal{H}_k(R,\tau_k)} \diam(B(H)) \lesssim \diam(B(R)).$$
\end{claim}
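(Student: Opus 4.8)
The plan is to bound $\sum_H\diam(B(H))$ by the length of the near-isometric curve attached to the stopping time region $\cS_i$, using the disjointness in Claim~\ref{claim:holes>2}(ii). First I would record the elementary geometry. By Claim~\ref{claim:holes>2}(ii) the balls $\{7\ball(H):H\in\mathcal{H}_k(R,\tau_k)\}$ are pairwise disjoint, and by Claim~\ref{claim:holes>2}(i) each $\ball(H)$ lies inside $\tfrac{\Lambda}{2}\ball(R'(H))$ for some $R'(H)\in\cS_i$ with $\Lambda\ball(R'(H))\subseteq 5\ball(R)$; hence $7\ball(H)\subseteq 18\ball(R)$, and $\diam(\ball(H))$ and $\diam(\ball(R'(H)))$ are comparable with constants depending only on $\Lambda,d,\epsilon^{(k+1)}_2$. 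The boundedly many $H$ with $\diam(\ball(H))\gtrsim\diam(\ball(R))$ contribute $\lesssim\diam(\ball(R))$ by the doubling property of $E$, so it suffices to treat the $H$ with $\diam(\ball(H))\leq c\diam(\ball(R))$ for a small constant $c$.

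Next I would introduce the bi-Lipschitz curve. Since $R\in\cS_i$, the region $\cS_i$ was built (in the proof of Proposition~\ref{prop:str}, through Lemma~\ref{l:first-stop-construction}) from a curve $\Gamma'$ with bi-Lipschitz constant $(1,1+O(\epsilon_J^2))$ that is Hausdorff-close to $\Gamma$, within a tiny fraction of $\diam(B(Q))$, at every cube $Q$ of $\cS_i$. Two features of $\Gamma'$ are used: (a) $\ell(\Gamma'\cap B)\lesssim\diam(B)$ for every ball $B$, because the preimage of a ball of radius $\rho$ under a $(1,\cdot)$-bi-Lipschitz parametrization has diameter at most $2\rho$; and (b) $\Gamma'$ passes within $\ll\diam(\ball(H))$ of each $\ball(H)$. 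For (b): $\Gamma$ meets $B(H)\subseteq\tfrac{\Lambda}{2}\ball(R'(H))$ with $R'(H)\in\cS_i$; Lemma~\ref{l:first-stop-construction}(ii), applied at a cube of the associated $\Delta_\Gamma$-stopping region near this point at scale $\approx\diam(\ball(H))$ (the flatness of $\Gamma$ in $\Lambda\ball(R'(H))$ guarantees such a cube), puts $\Gamma'$ within $\lesssim\alpha\,\diam(\ball(R'(H)))\lesssim\alpha\,\diam(\ball(H))$ of that point, which is $\ll\diam(\ball(H))$ once $\alpha$ is chosen small. Hence $\Gamma'\cap 2\ball(H)\neq\emptyset$, and since $\diam(\Gamma')\gtrsim\diam(\ball(Q(\cS_i)))\geq\diam(\ball(R))\gg\diam(\ball(H))$ for the $H$ under consideration, connectedness of $\Gamma'$ gives $\ell(\Gamma'\cap 3\ball(H))\gtrsim\diam(\ball(H))$.

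Finally I would assemble the estimate. The sets $\Gamma'\cap 3\ball(H)$ are pairwise disjoint (the $3\ball(H)\subseteq 7\ball(H)$ are) and all contained in $18\ball(R)$, so
\[
\sum_{H\in\mathcal{H}_k(R,\tau_k)}\diam(B(H))\ \lesssim\ \sum_H\diam(\ball(H))\ \lesssim\ \sum_H\ell(\Gamma'\cap 3\ball(H))\ \leq\ \ell(\Gamma'\cap 18\ball(R))\ \lesssim\ \diam(\ball(R))\ \lesssim\ \diam(B(R)),
\]
which is the claim (the contribution of the large $H$, already bounded by $\lesssim\diam(\ball(R))$, is added in).

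The main obstacle is step (b): one must correctly invoke the internal structure of the proof of Proposition~\ref{prop:str} — the near-isometric approximating curve $\Gamma'$ for $\cS_i$ and its scale-wise Hausdorff closeness to $\Gamma$ — which goes beyond the six listed properties of Proposition~\ref{prop:str}, and one must be careful that $\Gamma'$ simultaneously has linearly-bounded length in balls \emph{and} stays close to $\Gamma$ (hence to each $\ball(H)$) at the relevant scales. It is essential to work with $\Gamma'$ rather than $\Gamma$ here, since $\Gamma$ itself may have long excursions inside $c\ball(R)$ at scales finer than those resolved by $\cS_i$, so that $\ell(\Gamma\cap c\ball(R))$ need not be $\lesssim\diam(\ball(R))$.
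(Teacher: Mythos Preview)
Your approach is correct in spirit but takes a genuinely different route from the paper. The paper does not invoke the bi-Lipschitz approximating curve $\Gamma'$ at all. Instead, it projects orthogonally onto the line $\tau_\Gamma$: using only the flatness properties (3) and (4) of Proposition~\ref{prop:str}, it argues that the sets $P(5\ball(H)\cap\Gamma)$ are pairwise disjoint subsets of $P(5\ball(R))$, and that each contains an interval of length $\gtrsim\diam(\ball(H))$. The disjointness comes from the $7\ball(H)$-separation combined with $\theta_\Gamma(\Lambda\ball(Q))<\delta$ at a suitable intermediate cube $Q\in\cS_i$; the length lower bound comes from the same flatness applied at the cube $R'(H)\in\cS_i$. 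This is cleaner because it uses only the \emph{stated} conclusions of Proposition~\ref{prop:str}, not internal objects from its proof.

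Your argument, by contrast, relies on the curve $\Gamma'=\Gamma^1_{i,j}$ from Lemma~\ref{l:first-stop-construction}, which is attached to a $\Delta_\Gamma$-stopping region $\cS^1_{i,j}$, not to the $\Delta_E$-region $\cS_i$. To make your argument rigorous you must argue that all of $\cS_i$ is ``shadowed'' by a single $\cS^1_{i,j}$ (so that one curve $\Gamma'$ serves throughout). This is true---if two $\Delta_\Gamma$-cubes near cubes of $\cS_i$ lay in different $\cS^1_{i,j}$'s, then a top cube $Q(\cS_{i,j,k})$ would sit between them at some intermediate scale, forcing a $\Delta_E$-cube of $\cS_i$ into $\mathcal{B}$---but it is an extra step that the paper's projection argument bypasses entirely. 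The paper's method also avoids your separate treatment of ``large'' $H$'s. What your approach buys is conceptual: it makes explicit that the estimate is really a length bound along a near-geodesic, which is perhaps more portable to non-Hilbert settings where orthogonal projection is unavailable.
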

\begin{proof}
Let $P\colon \bX \rightarrow \tau_\Gamma$ denote the 
orthogonal projection onto $\tau_\Gamma$.

We first argue that the sets $\{P(5\ball(H)\cap\Gamma):H\in\mathcal{H}_k(R,\tau_k)\}$ are all disjoint. Suppose $P(5\ball(H)\cap\Gamma)\cap P(5\ball(H')\cap\Gamma) \neq \emptyset$ for some $H,H'\in\mathcal{H}_k(R,\tau_k)$. Choose $x\in 5\ball(H)\cap \Gamma$ and $y\in 5\ball(H')\cap \Gamma$ such that $P(x)=P(y)$.

By Claim \ref{claim:holes>2}(i), there are cubes $\hat{H}$ and $\hat{H'}$ contained in $R^{\uparrow}$ (playing the role of $R'$ from that claim) and both in $\cS_i$ such that $\ball(H)\subseteq \frac{\Lambda}{2}\ball(\hat{H})$, $\ball(H')\subseteq \frac{\Lambda}{2}\ball(\hat{H'})$, $\diam(\ball(\hat{H}))\lesssim \diam(\ball(H))$, and $\diam(\ball(\hat{H'}))\lesssim \diam(\ball(H'))$. From Claim \ref{claim:holes>2}(i), we also have $\Lambda \ball(\hat{H}) \subseteq 5\ball(R)$ and similarly for $\hat{H'}$.

Let $Q$ be a cube of $\cS_i$ with minimal $\diam(\ball(Q))$ such that $\Lambda \ball(Q)$ contains both $\frac{\Lambda}{2}\ball(\hat{H})$ and $\frac{\Lambda}{2}\ball(\hat{H'})$. (Note that $\ball(\hat{H}),\ball(\hat{H'})\subseteq \Lambda \ball(R)$,  so $R$ is a competitor for $Q$.) Since $7\ball(H)\cap 7\ball(H')=\emptyset$, we have
$$ \diam(\ball(Q)) \lesssim \dist(\ball(\hat{H}),\ball(\hat{H'})) + \diam(\ball(\hat{H})) + \diam(\ball(\hat{H'})) \lesssim_{\epsilon^{(k)}_2, \Lambda} |x-y|.$$

On the other hand, since $\theta_\Gamma(\Lambda \ball(Q))<\delta$ and $P(x)=P(y)$, it must be that
$$|x-y|\lesssim_{\epsilon_1, d, A, \Lambda} \delta\diam(\ball(Q)).$$
By our choice of $\delta$ small, this yields a contradiction. Therefore, the sets $\{P(5\ball(H)\cap\Gamma):H\in\mathcal{H}_k(R)\}$ are all disjoint.

Now, each $H\in\mathcal{H}_k(R,\tau_k)$ lies in $\mathcal{C}_\Gamma$, and so each $5\ball(H)\cap \Gamma$ contains an arc $\Gamma_H\subseteq\Gamma$ of diameter $\gtrsim \diam(\ball(H))$. Moreover, $\diam(P(\Gamma_H))\gtrsim \diam(H)$ by a very similar argument to that in the previous paragraph: Suppose $x,y\in\Gamma_H$ have $|x-y|=\diam(\Gamma_H)\gtrsim_d\diam(H)$ but $|P(x)-P(y)|\ll\diam(H)$. The cube $H$ is contained in a cube $\hat{H}$ of $\cS_i$ of size comparable to $H$, and so we reach a contradiction to the flatness condition $\theta_\Gamma(\Lambda \ball(\hat{H}))<\delta$. 

Therefore,
$$ \sum_{H\in\mathcal{H}_k(R,\tau_k)} \diam(B(H)) \lesssim \sum_{H\in\mathcal{H}_k(R,\tau_k)} \ell(P(\Gamma_H)) \lesssim \ell(P(5 \ball(R)))\lesssim \diam(B(R)).$$

\end{proof}

Given $Q\in \mathcal{T}_{i}$, let
\begin{equation}\label{eq:khat}
 \hat{k}(Q) = \min\{ k: n_k(Q) = 0 \} \in \{1, \dots, d-1\}.
\end{equation}
We assign to each cube $Q\in\mathcal{T}_{i}$ and $k\in\{0, \dots, \hat{k}(Q)\}$ a cube $H_k(Q)$ as follows:
First, set $H_0(Q)=Q(\cS_i)$ for all $Q\in\mathcal{T}_{i}$. Next, note that since $k\leq \hat{k}(Q)$, we have $n_{k-1}(Q)>0$.

If $n_k(Q)<n_{k-1}(Q)$ and $\Lambda Q^{\uparrow n_k(Q)} \subseteq 5Q^{\uparrow (n_{k-1}(Q)-1)}$, then we define $
H_k(Q)$ to be the cube $H_k(Q^{\uparrow (n_{k-1}(Q)-1)}, \tau^\epsilon_k(Q))\in\mathcal{H}_{k}(Q^{\uparrow (n_{k-1}(Q)-1)}, \tau^\epsilon_{k}(Q))$ assigned in Claim \ref{claim:holes>2}(iv).

If $n_k(Q)=n_{k-1}(Q)$ or $\Lambda Q^{\uparrow n_k(Q)} \not\subseteq 5Q^{\uparrow (n_{k-1}(Q)-1)}$, then we set $H_k(Q)=Q^{\uparrow n_{k-1}(Q)}$.

In the next step, we argue that (up to some caveats and scaling parameters), it should not be possible for a cube $Q\in\mathcal{T}_i$ to be contained within its associated $H_k(Q)$. Roughly, this is because points of $H_k(Q)$ are blocked ``too early'' in direction $\tau_k$, by Claim \ref{claim:holes>2}(iii).

\begin{claim}\label{claim:nocontain}
Suppose that $Q\in\mathcal{T}_{i}$,  $k\in\{1, \dots, \hat{k}(Q)\}$, $n_k(Q)<n_{k-1}(Q)$, and
$$ \Lambda \ball(Q^{\uparrow n_k(Q)}) \subseteq 5 \ball(Q^{\uparrow n_{k-1}(Q) - 1}).$$
Then we have 
$$ 5\ball(Q^{\uparrow n_k(Q)}) \not \subset 10\Lambda \ball(H_k(Q)).$$
\end{claim}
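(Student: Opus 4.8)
I would argue by contradiction. Abbreviate $P := Q^{\uparrow n_k(Q)}$, $R' := Q^{\uparrow(n_{k-1}(Q)-1)}$, and $\tau_k := \tau^\epsilon_k(Q)$, and suppose that $5\ball(P)\subseteq 10\Lambda\ball(H_k(Q))$. By construction $H_k(Q)\in\mathcal{H}_k(R',\tau_k)$, so Claim~\ref{claim:holes>2}(iii) applies to $H := H_k(Q)$: it produces a cube $\tilde Q\in\mathcal{T}_i$ with $R' = \tilde Q^{\uparrow(n_{k-1}(\tilde Q)-1)}$, with $\tau^\epsilon_k(\tilde Q) = \tau_k$, with $\Lambda\ball(\tilde P)\subseteq 5\ball(R')$ (where $\tilde P := \tilde Q^{\uparrow n_k(\tilde Q)}$), and such that every $x\in 10\Lambda\ball(H)$ is $(\tfrac{1}{2\Lambda}\epsilon^{(k+1)}_2,\tau_k)$-blocked in $\tfrac{\Lambda}{10}\ball(\tilde P)$. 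Since $5\ball(P)\subseteq 10\Lambda\ball(H)$, every point of $5\ball(P)$ inherits this. Unwinding the definition of ``blocked'', the condition $x\in 5\cdot\tfrac{\Lambda}{10}\ball(\tilde P)$ forces $10\Lambda\ball(H)\subseteq\tfrac{\Lambda}{2}\ball(\tilde P)$, hence $5\ball(P)\subseteq\tfrac{\Lambda}{2}\ball(\tilde P)$; moreover, for every $x\in 5\ball(P)$ the plane $x+\tau_k$ meets $\tfrac{\Lambda}{2}\ball(\tilde P)$ in a point at distance $\geq\tfrac{1}{20}\epsilon^{(k+1)}_2\diam(\ball(\tilde P))$ from $E$. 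Combining $\ball(H)\subseteq\tfrac{\Lambda}{2}\ball(\tilde P)$ from Claim~\ref{claim:holes>2}(i) (with $\tilde P$ in the role of $R'$ there) and the lower bound $\diam(\ball(H))\gtrsim_{\epsilon^{(k)}_2,d}\diam(\ball(P))$ from Claim~\ref{claim:holes>2}(iv), we get $\diam(\ball(P))\lesssim\diam(\ball(\tilde P))$ with an implied constant depending only on $\Lambda,d,\epsilon^{(k)}_2,\epsilon^{(k+1)}_2$ — i.e. $\tilde P$ is, up to a bounded factor, at least as large as $P$, and $P$ sits well inside $\tfrac{\Lambda}{2}\ball(\tilde P)$.

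\textbf{Deriving the contradiction.} Since $n_k(Q) < n_{k-1}(Q)$, Claim~\ref{claim:nk}(ii) gives the identity $n_k(Q) = N(Q,\langle\tau_k,\tau_\Gamma\rangle)$. By the definition of $N$, there is an affine $(k+1)$-plane $V\parallel\langle\tau_k,\tau_\Gamma\rangle$ that is good for $Q^{\uparrow 0},\dots,Q^{\uparrow(n_k(Q)-1)}$ but not for $P = Q^{\uparrow n_k(Q)}$. Being good for the child $Q^{\uparrow(n_k(Q)-1)}$ of $P$ forces $V\cap 3\ball(P)\neq\emptyset$, so the failure at $P$ means there is a point $z\in V\cap 5\ball(P)$ with $\dist(z,E)\geq\epsilon^{(k+1)}_2\diam(\ball(P))$. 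The plan is to show that no such $z$ can exist. The point $z$ lies on $V$, which is spanned off a $\tau_\Gamma$-line and the $\tau_k$-direction; I would control the $\tau_\Gamma$-contribution using Claim~\ref{claim:gammagood} together with Proposition~\ref{prop:str}(2)--(4) (flatness of $\Gamma$ and its $d_0\diam$-closeness to $E$ throughout $\cS_i$, which pins the line $\tau_\Gamma$ through a point of $\Gamma\cap 3\ball(P)$ to within $\epsilon^{(1)}_2\diam(\ball(P))$ of $E$ inside $5\ball(P)$), and the $\tau_k$-contribution using the blocked-ness extracted above. The decisive observation is that the blocked-ness places the only ``hole'' of $E$ visible from $5\ball(P)$ in the $\tau_k$-direction inside $\tfrac{\Lambda}{2}\ball(\tilde P)$, at distance $\gtrsim\epsilon^{(k+1)}_2\diam(\ball(\tilde P))\gtrsim\epsilon^{(k+1)}_2\diam(\ball(P))$ from $E$; since a piece of $\Gamma$ (hence of $E$) is $\epsilon^{(1)}_2\diam(\ball(P))$-dense in $5\ball(P)$ and $\epsilon^{(1)}_2\ll\Lambda^{-2}\epsilon^{(k+1)}_2$, this hole cannot lie inside $5\ball(P)$ at all. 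Hence every point of $V\cap 5\ball(P)$ is within $\epsilon^{(k+1)}_2\diam(\ball(P))$ of $E$, contradicting the existence of $z$ and therefore the identity $n_k(Q) = N(Q,\langle\tau_k,\tau_\Gamma\rangle)$.

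\textbf{Main obstacle.} The technical heart is the scale bookkeeping in the last step. Because $\tilde P$ need not be comparable to $P$ from above (only bounded below relative to $P$), one must track the hole's distance to $E$ \emph{against} $\diam(\ball(\tilde P))$ rather than $\diam(\ball(P))$ while still placing it near $P$; this is exactly what the hierarchy $\epsilon^{(1)}_2\ll\Lambda^{-2}\epsilon^{(2)}_2\ll\cdots\ll\Lambda^{-2}\epsilon^{(d)}_2\ll\epsilon_0$ among the auxiliary constants is designed to absorb, used in conjunction with the containments $\Lambda\ball(P)\subseteq 5\ball(R')$ and $\Lambda\ball(\tilde P)\subseteq 5\ball(R')$ and the doubling of $E$. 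Two smaller points also need care: first, that $V$ really meets $3\ball(Q^{\uparrow j})$ for all $j\leq n_k(Q)$ (the ``$V\cap 3\ball(\cdot)\neq\emptyset$'' half of goodness), which follows from $V$ being good for $Q^{\uparrow(n_k(Q)-1)}$ and the nesting of the $Q^{\uparrow j}$; and second, that $\langle\tau_k,\tau_\Gamma\rangle$ genuinely has dimension $k+1$ and that the blocking direction $\tau_k$ is transverse to the direction $\tau_\Gamma$ along which $\Gamma$ travels, which I would obtain from $Q\in\mathcal{T}_i$ (so $\angle(\tau(Q),\tau_\Gamma)>\epsilon_1$) together with the closeness $D(\tau^\epsilon_k(Q),\tau_k(Q))\lesssim_d\epsilon^{(k)}_2$ of Claim~\ref{claim:nk}(iv).
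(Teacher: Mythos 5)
Your setup is the same as the paper's: argue by contradiction and feed the assumed containment $5\ball(P)\subseteq 10\Lambda\ball(H_k(Q))$ into Claim \ref{claim:holes>2}(iii) to conclude that every point of $5\ball(P)$ is $\bigl(\tfrac{1}{2\Lambda}\epsilon^{(k+1)}_2,\tau_k\bigr)$-blocked in $\tfrac{\Lambda}{10}\ball(\tilde P)$, with $5\ball(P)\subseteq\tfrac12\Lambda\ball(\tilde P)$. The endgame, however, has a genuine gap. You aim to show that the witnessing affine $(k+1)$-plane $V\parallel\langle\tau_k,\tau_\Gamma\rangle$ is in fact good for $P$, i.e.\ that the far point $z\in V\cap5\ball(P)$ cannot exist. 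But blockedness is an \emph{existential} statement (there is at least one point on $x+\tau_k$ far from $E$ somewhere in $\tfrac{\Lambda}{2}\ball(\tilde P)$); it carries no information asserting that other points of $V\cap 5\ball(P)$ are \emph{close} to $E$. Your phrase ``the only hole of $E$ visible from $5\ball(P)$ in the $\tau_k$-direction'' has no justification --- nothing gives uniqueness of holes --- and the assertion that a piece of $\Gamma$ (hence of $E$) is $\epsilon^{(1)}_2\diam(\ball(P))$-dense in $5\ball(P)$ is false: $\Gamma$ hugs a line and $E$ badly fits $d$-planes, so neither is dense in a ball. More structurally, the point $z$ exists by the definitions of $N$ and $n_k(Q)$ alone, independently of the contradiction hypothesis, so an argument that ``no such $z$ exists'' cannot validly be extracted from the blockedness; you are aiming the contradiction at the wrong target. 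Notably, your argument never uses the maximality of $\tau_k(Q)$ in the construction of the field, which is the only real leverage available.

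The paper's route is different and does go through. Let $m$ be minimal with $5\ball(Q^{\uparrow m})\supseteq\Lambda\ball(\tilde P)$; then $Q^{\uparrow m}\subseteq R=Q^{\uparrow(n_{k-1}(Q)-1)}$, so $m\leq n_{k-1}(Q)-1$, and $\diam(\ball(Q^{\uparrow m}))\lesssim\Lambda\diam(\ball(\tilde P))$. Rescaling the blocking parameter to the scale of $Q^{\uparrow m}$ and using $D(\tau^\epsilon_k(Q),\tau_k(Q))\lesssim_d\epsilon^{(k)}_2$ together with $C\Lambda^{-2}\epsilon^{(k+1)}_2\gg_d\epsilon^{(k)}_2$, one finds that every $x\in5\ball(P)\supseteq3\ball(Q)$ is $(\epsilon^{(k)}_2,\tau_k(Q))$-blocked in $Q^{\uparrow m}$. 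Hence no affine $k$-plane parallel to $\tau_k(Q)$ meeting $3\ball(Q)$ can be good for $Q^{\uparrow m}$, giving $N(Q,\tau_k(Q))\leq m\leq n_{k-1}(Q)-1$. This contradicts
$$N(Q,\tau_k(Q))\;\geq\;N(Q,\langle\tau^\epsilon_{k-1}(Q),\tau_\Gamma\rangle)\;\geq\;n_{k-1}(Q),$$
which is exactly the maximality built into the choice of $\tau_k(Q)$ combined with Claim \ref{claim:nk}(ii). In short: the contradiction lives one level down, against the optimality of $\tau_k(Q)$, not against the non-goodness of $V$ at level $k$.
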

\begin{proof}
Suppose that $Q$ is as above but that nonetheless
$$ 5\ball(Q^{\uparrow n_k(Q)}) \subset 10\Lambda \ball(H),$$
where $H=H_k(Q)$.

Throughout, we will write $\tau^\epsilon_k = \tau^\epsilon_k(Q)$. Let $R = Q^{\uparrow n_{k-1}(Q) - 1}$, so $H\in\mathcal{H}_k(R, \tau^\epsilon_{k})$. By Claim \ref{claim:holes>2}(iii), there is also a cube $\hat{Q}\in\mathcal{T}_{i}$ with $R=\hat{Q}^{\uparrow (n_{k-1}(\hat{Q})-1)}$ and $\Lambda \ball(Q^{\uparrow n_k(Q)}) \subseteq 5\ball(R)$ such that each $x\in 10\Lambda \ball(H)$ is  
$\left(\frac{1}{2\Lambda}\epsilon^{(k+1)}_2,\tau_k\right)$-blocked in $\frac{\Lambda}{10}\ball(\hat{Q}^{\uparrow n_k(\hat{Q})})$.

Note that our assumptions imply that
\begin{equation}\label{eq:cubesizes}
5\ball(Q^{\uparrow n_k(Q)}) \subseteq 10\Lambda \ball(H) \subseteq \frac12 \Lambda \ball(\hat{Q}^{\uparrow n_k(\hat{Q})}).
\end{equation}

Let $m$ be the minimal non-negative integer such that $5\ball(Q^{\uparrow m})\supseteq \Lambda\ball(\hat{Q}^{\uparrow n_k(\hat{Q})})$. Note that $Q^{\uparrow m}\subseteq R$ and so $m\leq n_{k-1}(Q)-1$. Also, \eqref{eq:cubesizes} implies that $\diam(\ball(Q^{\uparrow m}))\lesssim \Lambda \diam (\ball(\hat{Q}^{\uparrow n_k(\hat{Q})}))$.

Thus, each $x\in 5Q^{\uparrow n_k}\subseteq 10\Lambda \ball(H)$ is  
$\left(C\Lambda^{-2}\epsilon^{(k+1)}_2,\tau^\epsilon_k\right)$-blocked in $Q^{\uparrow m}$, for some constant $C=C(A)$. Since $D(\tau^\epsilon_k, \tau^\epsilon_k(Q))\lesssim_d \epsilon^{(k)}_2$ (Claim \ref{claim:nk}), and
$$ C\Lambda^{-2}\epsilon^{(k+1)}_2 \gg_d \epsilon^{(k)}_2,$$
we have that each $x\in 5\ball(Q^{\uparrow n_k})$ is $(\epsilon^{(k)}_{2},\tau_k(Q))$-blocked in $Q^{\uparrow m}$.

It follows that
$$ N(Q, \langle \tau^\epsilon_{k-1}(Q), \tau_\Gamma \rangle) \leq N(Q, \tau_k(Q)) \leq m \leq n_{k-1}(Q)-1 < N(Q, \langle \tau^\epsilon_{k-1}(Q), \tau_\Gamma \rangle),$$
but this is a contradiction.

\end{proof}

Next, we show that a given cube $H$ controls the diameter sum of all cubes $Q^{\uparrow n_k(Q)}$ with $H_k(Q)=H$.

\begin{claim}\label{claim:cubes<hole}
Fix $k\in\{1, \dots, d-1\}$ and a cube $H\in\mathcal{C}_\Gamma$. Let 
$$\mathcal{C} = \{Q^{\uparrow n_k(Q)} : Q\in\mathcal{T}_{i}, H_k(Q) = H\}.$$

Then
$$ \sum_{R\in\mathcal{C}} \diam(B(R)) \lesssim \diam(B(H)).$$
\end{claim}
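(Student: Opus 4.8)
\textit{Proof plan for Claim \ref{claim:cubes<hole}.} Fix $k\in\{1,\dots,d-1\}$ and $H\in\mathcal{C}_\Gamma$, and for each $Q\in\mathcal{T}_{i}$ with $H_k(Q)=H$ write $R_Q:=Q^{\uparrow n_k(Q)}$, so that $\mathcal{C}=\{R_Q:Q\in\mathcal{T}_{i},\ H_k(Q)=H\}$. The plan is to prove a single structural estimate: every cube $R_Q$ appearing in $\mathcal{C}$ satisfies $\diam(B(R_Q))\approx\diam(B(H))$ and $\dist(z(R_Q),z(H))\lesssim\diam(B(H))$, with all implied constants depending only on $A,d,\epsilon$ (equivalently, on $\Lambda$, the $\epsilon^{(k)}_2$, and $d$) and on the doubling constant of $E$. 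Granting this, one finishes immediately: the distinct cubes of $\mathcal{C}$ then lie at boundedly many dyadic scales (by the comparability of diameters and Proposition \ref{prop:christ}(iii)), and at each fixed scale $n$ they are pairwise disjoint with $s^n$-separated centers lying in a ball of radius $\lesssim\diam(B(H))\approx s^n$ about $z(H)$; the doubling property of $E$ bounds their number, and hence $\sum_{R\in\mathcal{C}}\diam(B(R))\lesssim\diam(B(H))$.

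To prove the structural estimate I split according to which branch of the definition of $H_k(Q)$ is active. Suppose first that $H_k(Q)$ was defined by the ``default'' branch, i.e. either $n_k(Q)=n_{k-1}(Q)$ or $\Lambda B(R_Q)\not\subseteq 5B(Q^{\uparrow(n_{k-1}(Q)-1)})$; then $H=Q^{\uparrow n_{k-1}(Q)}$. Since $n_{k-1}(Q)\ge n_k(Q)$, the cube $R_Q$ is a descendant of $H$, so $R_Q\subseteq H$, which gives $\diam(B(R_Q))\le\diam(B(H))$ and $\dist(z(R_Q),z(H))\lesssim\diam(B(H))$. For the lower bound: if $n_k(Q)=n_{k-1}(Q)$ then $R_Q=H$ and there is nothing to prove; otherwise $R_Q\subseteq Q^{\uparrow(n_{k-1}(Q)-1)}\subseteq B(Q^{\uparrow(n_{k-1}(Q)-1)})$, and were $\diam(B(R_Q))$ much smaller than $\diam(B(Q^{\uparrow(n_{k-1}(Q)-1)}))/\Lambda$ we would conclude $\Lambda B(R_Q)\subseteq 5B(Q^{\uparrow(n_{k-1}(Q)-1)})$, contradicting the hypothesis of this branch; since a cube and its parent have comparable diameter, this forces $\diam(B(R_Q))\gtrsim_{\Lambda}\diam(B(H))$.

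Now suppose instead that $n_k(Q)<n_{k-1}(Q)$ and $\Lambda B(R_Q)\subseteq 5B(Q^{\uparrow(n_{k-1}(Q)-1)})$, so that $H=H_k(Q^{\uparrow(n_{k-1}(Q)-1)},\tau^\epsilon_k(Q))$ is the cube furnished by Claim \ref{claim:holes>2}(iv) (with $R=Q^{\uparrow(n_{k-1}(Q)-1)}$ and $\tau_k=\tau^\epsilon_k(Q)$). That item directly gives $7\Lambda B(R_Q)\cap 7B(H)\neq\emptyset$ and $\diam(B(H))\gtrsim_{\epsilon^{(k)}_2,d}\diam(B(R_Q))$; the second inequality is the upper bound, and combined with the first it yields $\dist(z(R_Q),z(H))\le 7\Lambda\,\mathrm{rad}(B(R_Q))+7\,\mathrm{rad}(B(H))\lesssim_{\Lambda,\epsilon^{(k)}_2,d}\diam(B(H))$. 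For the matching lower bound I invoke Claim \ref{claim:nocontain}: its hypotheses ($Q\in\mathcal{T}_i$, $k\le\hat k(Q)$, $n_k(Q)<n_{k-1}(Q)$, $\Lambda B(R_Q)\subseteq 5B(Q^{\uparrow(n_{k-1}(Q)-1)})$) are exactly those of this branch, so $5B(R_Q)\not\subset 10\Lambda B(H)$. On the other hand, if $\diam(B(R_Q))\le\tfrac12\diam(B(H))$ then, combining $7\Lambda B(R_Q)\cap 7B(H)\neq\emptyset$ with this size bound and using that $\Lambda$ is large, one checks that $5B(R_Q)$ is contained in a bounded dilate of $B(H)$ that sits inside $10\Lambda B(H)$ — a contradiction. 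Hence $\diam(B(R_Q))\gtrsim\diam(B(H))$, completing the structural estimate.

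The only genuinely delicate point is the last lower bound: we must rule out arbitrarily small ``holes'' $R_Q$ accumulating near $H$, and this is precisely the role of Claim \ref{claim:nocontain}. The one thing requiring care there is bookkeeping of the dilation constants $7$, $7\Lambda$, $10\Lambda$, so that the containment that contradicts Claim \ref{claim:nocontain} is actually forced once $\diam(B(R_Q))$ drops below a fixed fraction of $\diam(B(H))$; everything else is the routine doubling/packing count described in the first paragraph.
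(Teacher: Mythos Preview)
Your proposal is correct and follows essentially the same approach as the paper's own proof. Both split according to which branch of the definition of $H_k(Q)$ is active (the paper labels these cases $\mathcal{C}_{=}$, $\mathcal{C}_{<,\subseteq}$, $\mathcal{C}_{<,\not\subseteq}$), and in the main case both use Claim~\ref{claim:holes>2}(iv) for the upper bound and Claim~\ref{claim:nocontain} for the lower bound, concluding that each $R_Q$ has diameter comparable to and lies near $H$, so that doubling finishes the count.
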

To be clear, if $R = Q^{\uparrow n_k(Q)} = (Q')^{\uparrow n_k(Q')}$ for two separate cubes $Q$ as above, then $R$ is summed only once in this claim.
\begin{proof}
Separate $\mathcal{C} = \mathcal{C}_{=} \cup \mathcal{C}_{<}$, where
$$\mathcal{C}_{=} = \{Q^{\uparrow n_k(Q)} : Q\in\mathcal{T}_{i}, H_k(Q) = H, n_k(Q)=n_{k-1}(Q)\}$$
and
$$\mathcal{C}_{<} = \{Q^{\uparrow n_k(Q)} : Q\in\mathcal{T}_{i}, H_k(Q) = H, n_k(Q)<n_{k-1}(Q)\}.$$

First consider $\mathcal{C}_{=}$. By definition, if $Q^{\uparrow n_k(Q)}\in \mathcal{C}_{=}$, then
$$ H = H_k(Q) = Q^{\uparrow n_{k-1}(Q)} = Q^{\uparrow n_k(Q)}.$$
Thus, if $\mathcal{C}_{=}$ is non-empty, then it consists only of the single cube $H$. So if $\mathcal{C}_{=}$ is non-empty, we must have
$$ \sum_{R\in\mathcal{C}_{=}} \diam(B(R)) \leq \diam(H)$$
in any case.

We now separate $\mathcal{C}_{<}$ further into
$$ \mathcal{C}_{<, \subseteq} = \{Q\in \mathcal{C}_{<}: \Lambda \ball(Q^{\uparrow n_k(Q)}) \subseteq 5\ball(Q^{\uparrow (n_{k-1}(Q)-1)})\}$$
and
$$ \mathcal{C}_{<, \not\subseteq} = \{Q\in \mathcal{C}_{<}: \Lambda \ball(Q^{\uparrow n_k(Q)}) \not\subseteq 5\ball(Q^{\uparrow (n_{k-1}(Q)-1)})\}$$

The cubes in $\mathcal{C}_{<, \subseteq}$ each have $n_k(Q)<n_{k-1}(Q)$ and $H_k(Q)=H$. Therefore, by Claims \ref{claim:holes>2} and \ref{claim:nocontain}, if $Q\in\mathcal{C}_{<, \subseteq}$, then
$$ 7\Lambda \ball(Q^{\uparrow n_k(Q)}) \cap 7\ball(H) \neq \emptyset$$
and
$$ 5\ball(Q^{\uparrow n_k(Q)}) \not\subseteq 10\Lambda \ball(H).$$

It follows that if $Q\in\mathcal{C}_{<, \subseteq}$, then
$$ \diam(B(Q^{\uparrow n_k(Q)}))\approx \diam(\ball(Q^{\uparrow n_k(Q)})) \approx \diam(\ball(H)) \approx \diam(B(H)).$$
It then follows from the doubling property of $E$ that the number of distinct $Q^{\uparrow n_k(Q)}$ for $Q\in \mathcal{C}_{<, \subseteq}$ is controlled, and hence
$$ \sum_{Q^{\uparrow n_k(Q)}\in\mathcal{C}_{<, \subseteq}} \diam(B(Q^{\uparrow n_k})) \lesssim \diam(B(H)).$$

Lastly, if $Q\in \mathcal{C}_{<, \not\subseteq}$, then $H=H_k(Q) = Q^{\uparrow n_{k-1}(Q)}$. In this case, $Q^{\uparrow n_k(Q)}\subseteq Q^{\uparrow n_{k-1}(Q)} = H$, but $\Lambda \ball(Q^{\uparrow n_k(Q)}) \not\subseteq 5\ball(H)$. Therefore, $B(Q^{\uparrow n_k(Q)})$ must again be comparable in size to $B(H)$. Thus, there can again be only a controlled number of distinct $Q^{\uparrow n_k(Q)}$ for $Q\in \mathcal{C}_{<, \not\subseteq}$, and hence
$$ \sum_{Q^{\uparrow n_k}\in\mathcal{C}_{<, \not\subseteq}} \diam(B(Q^{\uparrow n_k})) \lesssim \diam(B(H)).$$
This completes the proof of the claim.
\end{proof}

Next, we prove a similar claim in the opposite direction: a given cube $R\in\cS_i$ controls the diameter sum of all the cubes $H_k(Q)$ for which $R=Q^{\uparrow n_{k-1}(Q)}$. 

\begin{claim}\label{claim:holes<cube}
Fix $k\in\{1, \dots, d-1\}$ and $R\in\cS_i$. Let
$$ \mathcal{H} = \{H\in \mathcal{C}_\Gamma : H = H_k(Q) \text{ where } Q\in \mathcal{T}_{i} \text{ and } Q^{\uparrow n_{k-1}(Q)}=R\}.$$
Then
$$ \sum_{H\in\mathcal{H}} \diam(B(H)) \lesssim \diam(B(R)).$$
\end{claim}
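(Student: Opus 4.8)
The plan is to dualize the mechanism already used to prove Claim \ref{claim:cubes<hole}: there we fixed a hole $H$ and controlled the sum of diameters of the ``frontier'' cubes $Q^{\uparrow n_k(Q)}$ with $H_k(Q)=H$; here we fix the frontier cube $R = Q^{\uparrow n_{k-1}(Q)}$ (equivalently, $R^\uparrow = Q^{\uparrow(n_{k-1}(Q)-1)}$ up to reindexing) and must control the sum of diameters of the holes $H$ that arise. First I would observe that there are really only three ways a cube $H$ can enter $\mathcal{H}$, according to how $H_k(Q)$ was defined for the corresponding $Q$: (a) $n_k(Q)<n_{k-1}(Q)$ and $\Lambda\ball(Q^{\uparrow n_k(Q)})\subseteq 5\ball(Q^{\uparrow(n_{k-1}(Q)-1)})$, in which case $H = H_k(Q^{\uparrow(n_{k-1}(Q)-1)},\tau^\epsilon_k(Q))\in\mathcal{H}_k(Q^{\uparrow(n_{k-1}(Q)-1)},\tau^\epsilon_k(Q))$; (b) $n_k(Q)=n_{k-1}(Q)$, in which case $H = Q^{\uparrow n_{k-1}(Q)}=R$ exactly; or (c) $\Lambda\ball(Q^{\uparrow n_k(Q)})\not\subseteq 5\ball(Q^{\uparrow(n_{k-1}(Q)-1)})$, in which case again $H = Q^{\uparrow n_{k-1}(Q)}=R$. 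So I would split $\mathcal{H} = \mathcal{H}_{(a)}\cup\mathcal{H}_{(b,c)}$, and immediately dispatch $\mathcal{H}_{(b,c)}$: every $H$ there literally equals the single cube $R$, so $\sum_{H\in\mathcal{H}_{(b,c)}}\diam(B(H))\leq\diam(B(R))$ trivially.

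The substantive case is $\mathcal{H}_{(a)}$. Here every $H\in\mathcal{H}_{(a)}$ lies in $\mathcal{H}_k(R^\uparrow,\tau^\epsilon_k)$ for one of a \emph{controlled} number of choices of $\tau^\epsilon_k$: indeed $\tau^\epsilon_k(Q)\in\mathcal{L}_k(\,\cdot\,,\tau^\epsilon_{k-1}(Q))$ by Claim \ref{claim:nk}(iii), and the cardinality of each collection $\mathcal{L}_k(\cdot,\cdot)$ is bounded by Claim \ref{claim:numplanes}; moreover $\tau^\epsilon_{k-1}(Q)$ is itself drawn from a controlled list in the same inductive way, so iterating Claims \ref{claim:numplanes} and \ref{claim:nk} down to $k=0$ shows there are only $O_{\epsilon,d,\text{doubling}}(1)$ possible values of the tuple $(\tau^\epsilon_1(Q),\dots,\tau^\epsilon_k(Q))$. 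Hence $\mathcal{H}_{(a)} \subseteq \bigcup_{\tau_k}\mathcal{H}_k(R^\uparrow,\tau_k)$ where the union is over that bounded set of $\tau_k$'s. Wait — one must be slightly careful: the top cube appearing in $\mathcal{H}_k(\cdot,\cdot)$ in Claim \ref{claim:holes>2} is $Q^{\uparrow(n_{k-1}(Q)-1)}$, i.e. the \emph{parent} of $R$; but $R^\uparrow$ is a single fixed cube once $R$ is fixed, and $R^\uparrow\in\cS_i$ since $\cS_i$ is a stopping time region, so this causes no trouble. Now apply Claim \ref{claim:Hsum} to each $\mathcal{H}_k(R^\uparrow,\tau_k)$: it gives $\sum_{H\in\mathcal{H}_k(R^\uparrow,\tau_k)}\diam(B(H))\lesssim\diam(B(R^\uparrow))\approx\diam(B(R))$ (parent and child have comparable diameters, with ratio $s$). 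Summing over the bounded number of relevant $\tau_k$ yields $\sum_{H\in\mathcal{H}_{(a)}}\diam(B(H))\lesssim\diam(B(R))$, and combining with the trivial bound for $\mathcal{H}_{(b,c)}$ finishes the claim.

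The main obstacle — and the only real content beyond bookkeeping — is establishing that the number of distinct coarse subspaces $\tau^\epsilon_k(Q)$ that can appear among cubes $Q$ with $Q^{\uparrow n_{k-1}(Q)}=R$ is bounded independently of $\Gamma$. This is where I would need to be careful that the inductive definition in Claim \ref{claim:nk} really does force $\tau^\epsilon_j(Q)$ into the finite collection $\mathcal{L}_j(Q^{\uparrow(n_{j-1}(Q)-1)},\tau^\epsilon_{j-1}(Q))$ for every $j\leq k$, and that the relevant ``base'' cubes $Q^{\uparrow(n_{j-1}(Q)-1)}$ all lie within a bounded number of generations and locations near $R$ (so Claim \ref{claim:numplanes} applies with uniform constants) — this follows from $n_{j-1}(Q)\geq n_{k-1}(Q)$ together with the containments in Claim \ref{claim:holes>2}(i) and the doubling property of $E$. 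Everything else is a routine combination of Claims \ref{claim:numplanes}, \ref{claim:nk}, \ref{claim:Hsum}, the geometric relation $\diam(B(R^\uparrow))\approx\diam(B(R))$, and the case analysis of the definition of $H_k(Q)$.

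\begin{proof}[Proof of Claim \ref{claim:holes<cube}]
We split $\mathcal{H}$ according to the definition of $H_k(Q)$ for the cube $Q\in\mathcal{T}_i$ witnessing $H=H_k(Q)$, $Q^{\uparrow n_{k-1}(Q)}=R$.

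\textbf{Case (b,c): $n_k(Q)=n_{k-1}(Q)$, or $\Lambda\ball(Q^{\uparrow n_k(Q)})\not\subseteq 5\ball(Q^{\uparrow(n_{k-1}(Q)-1)})$.} In either situation the definition of $H_k(Q)$ gives $H = Q^{\uparrow n_{k-1}(Q)} = R$. Thus every $H$ arising this way is the single cube $R$, and the contribution of these $H$ to the sum is at most $\diam(B(R))$.

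\textbf{Case (a): $n_k(Q)<n_{k-1}(Q)$ and $\Lambda\ball(Q^{\uparrow n_k(Q)})\subseteq 5\ball(Q^{\uparrow(n_{k-1}(Q)-1)})$.} Here $H = H_k(Q^{\uparrow(n_{k-1}(Q)-1)},\tau^\epsilon_k(Q))\in\mathcal{H}_k(R^{\uparrow},\tau^\epsilon_k(Q))$, where we used $Q^{\uparrow(n_{k-1}(Q)-1)}=(Q^{\uparrow n_{k-1}(Q)})^{\uparrow}=R^{\uparrow}$. Since $\cS_i$ is a stopping time region and $R\in\cS_i$, also $R^{\uparrow}\in\cS_i$, so $\mathcal{H}_k(R^{\uparrow},\cdot)$ is defined via Claim \ref{claim:holes>2}.

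We claim that the set $\mathcal{P}$ of all tuples $(\tau^\epsilon_1(Q),\dots,\tau^\epsilon_k(Q))$ that can occur for such $Q$ has cardinality bounded by a constant depending only on $\epsilon$, $d$, and the doubling constant of $E$. Indeed, by Claim \ref{claim:nk}(iii), for each $j\in\{1,\dots,k\}$ (note $n_{j-1}(Q)\geq n_{k-1}(Q)>0$) we have $\tau^\epsilon_j(Q)\in\mathcal{L}_j(Q^{\uparrow(n_{j-1}(Q)-1)},\tau^\epsilon_{j-1}(Q))$. By Claim \ref{claim:holes>2}(i), the ball $\Lambda\ball(Q^{\uparrow n_{k}(Q)})$, and hence each $\Lambda\ball(Q^{\uparrow m})$ with $n_k(Q)\le m\le n_{k-1}(Q)-1$, is contained in $5\ball(R^{\uparrow})$ with diameter comparable (depending on $d,\epsilon$) to a definite fraction of $\diam(\ball(R^{\uparrow}))$; so each of the base cubes $Q^{\uparrow(n_{j-1}(Q)-1)}$ for $j\le k$ is one of a bounded number of cubes near $R^{\uparrow}$, by the doubling property of $E$. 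For each such base cube and each already-chosen $\tau^\epsilon_{j-1}(Q)$, Claim \ref{claim:numplanes} bounds $|\mathcal{L}_j(\cdot,\tau^\epsilon_{j-1}(Q))|$ by a constant depending only on $\epsilon$ and the doubling constant of $E$. Starting from $\tau^\epsilon_0(Q)=\{0\}$ and iterating this $k\le d-1$ times, we conclude $|\mathcal{P}|\lesssim_{\epsilon,d,\text{doubling}} 1$.

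Therefore the $H$ arising in Case (a) satisfy
$$ \{H\}\subseteq \bigcup_{(\sigma_1,\dots,\sigma_k)\in\mathcal{P}} \mathcal{H}_k(R^{\uparrow},\sigma_k).$$
Applying Claim \ref{claim:Hsum} to each $\mathcal{H}_k(R^{\uparrow},\sigma_k)$ and using that $\diam(B(R^{\uparrow}))\approx\diam(B(R))$, we get
$$ \sum_{H \text{ in Case (a)}} \diam(B(H)) \leq \sum_{(\sigma_1,\dots,\sigma_k)\in\mathcal{P}} \sum_{H\in\mathcal{H}_k(R^{\uparrow},\sigma_k)} \diam(B(H)) \lesssim |\mathcal{P}|\cdot\diam(B(R^{\uparrow})) \lesssim \diam(B(R)).$$

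Combining the two cases yields $\sum_{H\in\mathcal{H}}\diam(B(H))\lesssim\diam(B(R))$, as claimed.
\end{proof}
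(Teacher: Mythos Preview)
Your overall strategy---split $\mathcal{H}$ according to the three branches in the definition of $H_k(Q)$, dispatch the trivial branches where $H=R$, and in the remaining branch invoke Claim~\ref{claim:Hsum} summed over a controlled family of $\tau_k$'s via Claim~\ref{claim:numplanes}---is exactly the paper's. But Case~(a) contains an index error that breaks the argument. You write $Q^{\uparrow(n_{k-1}(Q)-1)}=(Q^{\uparrow n_{k-1}(Q)})^{\uparrow}=R^{\uparrow}$; this is backwards. Since $Q^{\uparrow m}$ denotes the $m$-th ancestor of $Q$, the cube $Q^{\uparrow(n_{k-1}(Q)-1)}$ sits one generation \emph{below} $R=Q^{\uparrow n_{k-1}(Q)}$: it is the child $S$ of $R$ that contains $Q$, not the parent $R^{\uparrow}$. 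The paper accordingly sums over the (boundedly many, by doubling) children $S$ of $R$ and applies Claim~\ref{claim:Hsum} to each $\mathcal{H}_k(S,\sigma_k)$. This matters, in particular, because $R^{\uparrow}$ need not lie in $\cS_i$ at all when $R=Q(\cS_i)$.

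There is a second gap in your count of admissible $\tau^\epsilon_k(Q)$. You claim that for $j\le k$ the base cubes $Q^{\uparrow(n_{j-1}(Q)-1)}$ form ``a bounded number of cubes near $R^{\uparrow}$,'' citing Claim~\ref{claim:holes>2}(i). But since $n_{j-1}(Q)\ge n_{k-1}(Q)$ for $j\le k$, these cubes are \emph{ancestors} of $S$ and can range all the way up to $Q(\cS_i)$; they are certainly not confined to a bounded number of cubes near $R$. (Claim~\ref{claim:holes>2}(i) bounds the location of holes $H$, not of these ancestor cubes.) The paper handles this differently: it simply writes the upper bound as the nested sum
$\sum_{\sigma_1\in\mathcal{L}_1(S)}\sum_{\sigma_2\in\mathcal{L}_2(S,\sigma_1)}\cdots\sum_{\sigma_k\in\mathcal{L}_k(S,\sigma_{k-1})}\sum_{H\in\mathcal{H}_k(S,\sigma_k)}\diam(B(H))$,
all levels based at the fixed child $S$, and then applies Claim~\ref{claim:numplanes} to each $\mathcal{L}_j(S,\cdot)$ and Claim~\ref{claim:Hsum} to the innermost sum. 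Once you correct the child/parent identification and rewrite the count in this nested form, your argument becomes the paper's.
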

\begin{proof}
We begin similarly to the previous claim. Write
$$ \mathcal{H} = \mathcal{H}_{=} \cup \mathcal{H}_{<},$$
where
$$ \mathcal{H}_{=} = \{H : H = H_k(Q), Q\in \mathcal{T}_{i},  Q^{\uparrow n_{k-1}(Q)}=R, \text{ and } n_k(Q)=n_{k-1}(Q)\}$$
and
$$ \mathcal{H}_{<} = \{H : H = H_k(Q), Q\in \mathcal{T}_{i},  Q^{\uparrow n_{k-1}(Q)}=R, \text{ and } n_k(Q)<n_{k-1}(Q)\}$$

Consider $\mathcal{H}_{=}$ first. If $H=H_k(Q)\in\mathcal{H}_{=}$, then by definition of $H_k$ we must have 
$$ H = H_k(Q) = Q^{\uparrow n_{k-1}(Q)} = R.$$
Thus, if $\mathcal{H}_{=}$ is non-empty, then it consists of only the single cube $R$. Therefore, 
$$ \sum_{H\in\mathcal{H}_{=}}\diam(B(H)) \leq \diam(B(R))$$
in any case.

We write $H\in \mathcal{H}_{<}$ as the union of two sub-collections:
$$ \mathcal{H}_{<,\subseteq} =  \{H : H = H_k(Q), Q\in \mathcal{T}_{i},  Q^{\uparrow n_{k-1}(Q)}=R, n_k(Q)<n_{k-1}(Q), \Lambda \ball(Q^{\uparrow n_k}) \subseteq 5\ball(Q^{\uparrow (n_{k-1}(Q)-1)})\},$$
$$ \mathcal{H}_{<,\not\subseteq} =  \{H : H = H_k(Q), Q\in \mathcal{T}_{i},  Q^{\uparrow n_{k-1}(Q)}=R, n_k(Q)<n_{k-1}(Q), \Lambda \ball(Q^{\uparrow n_k}) \not\subseteq 5\ball(Q^{\uparrow (n_{k-1}(Q)-1)})\}$$

If $H\in \mathcal{H}_{<,\not\subseteq}$, then 
$$ H = H_k(Q) = Q^{\uparrow n_{k-1}(Q)} = R,$$
and so if $\mathcal{H}_{<,\not\subseteq}$ is non-empty then it consists of only $R$ and we have
$$ \sum_{H\in\mathcal{H}_{<,\not\subseteq}}\diam(B(H)) \leq \diam(B(R)).$$

Finally, suppose that $H\in\mathcal{H}_{<,\subseteq}$. Then there is a child $S$ of $R$ with $S = Q^{\uparrow (n_{k-1}(Q) -1)}$ so that
$$ H = H_k(Q) = H_k(S, \tau^\epsilon_k(Q)) \in \mathcal{H}_k(S,\tau^\epsilon_k(Q)).$$
For each child $S$ of $R$, we have
\begin{align*}
&\sum_{\tau^\epsilon_k(Q),H: H\in \mathcal{H}_k(S,\tau^\epsilon_k(Q))} \diam(B(H))\\
&\leq
\sum_{\sigma_1 \in\mathcal{L}_1(S)} \sum_{\sigma_2 \in\mathcal{L}_2(S,\sigma_1)} \dots \sum_{\sigma_k\in\mathcal{L}_k(S,\sigma_{k-1})} \sum_{H\in\mathcal{H}_k(S,\sigma_k)} \diam(B(H))\\
&\lesssim \diam(B(S))\\
&\lesssim \diam(B(R))
\end{align*}
using Claims  \ref{claim:Hsum} and \ref{claim:numplanes}.

The doubling property of $E$ implies that $R$ has a controlled number of children $S$, and the claim now follows.

\end{proof}

Next, we use the previous two claims to show that the collection $\mathcal{T}_i$ of all ``bad'' cubes in one of our stopping time regions $\cS_i$ is controlled by the diameter of the top cube of $\cS_i$. 

\begin{claim}\label{claim:badd>2}
For each $i$, we have
$\sum_{Q\in\mathcal{T}_{i}} \diam(B(Q)) \lesssim \diam(B(Q(\cS_i))).$
\end{claim}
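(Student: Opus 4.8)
\textbf{Proof plan for Claim \ref{claim:badd>2}.}

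The plan is to telescope the two ``diameter transfer'' estimates (Claims \ref{claim:cubes<hole} and \ref{claim:holes<cube}) along the chain of indices $k=0,1,\dots,d-1$, using that for every $Q\in\mathcal{T}_i$ the sequence $n_0(Q)\geq n_1(Q)\geq\dots\geq n_{d-1}(Q)=0$ strictly reaches $0$ at $k=\hat k(Q)$. The key point is that for each fixed $k$, the assignment $Q\mapsto (H_k(Q), Q^{\uparrow n_{k-1}(Q)})$ and the two summation claims let us pass from control by cubes ``at level $k-1$'' to control by cubes ``at level $k$.''

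More precisely, first I would show by induction on $k$ that
\[
\sum_{Q\in\mathcal{T}_i,\ \hat k(Q)\leq k} \diam(B(Q^{\uparrow n_k(Q)})) \;+\; \sum_{Q\in\mathcal{T}_i,\ \hat k(Q)> k} \diam(B(H_k(Q))) \;\lesssim\; \diam(B(Q(\cS_i))),
\]
or some suitable bookkeeping variant of this; the cleanest route may be to prove directly that for each $k\in\{0,\dots,d-1\}$,
\[
\sum_{\substack{Q\in\mathcal{T}_i\\ k\le \hat k(Q)}}\diam\bigl(B(H_k(Q))\bigr)\ \lesssim\ \diam\bigl(B(Q(\cS_i))\bigr).
\]
The base case $k=0$ is immediate since $H_0(Q)=Q(\cS_i)$ for all $Q\in\mathcal{T}_i$ (here one uses that the map $Q\mapsto H_0(Q)$ is constant, so the ``sum'' is really just $\diam(B(Q(\cS_i)))$ after recognizing that what is really being summed are distinct values of $H_k(Q)$, weighted by their diameters — I will need to be careful that the outer sums in Claims \ref{claim:cubes<hole} and \ref{claim:holes<cube} are over distinct cubes, not over all $Q$, and phrase the induction accordingly). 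For the inductive step from $k-1$ to $k$: given the bound on $\sum \diam(B(H_{k-1}(Q)))$ over the relevant $Q$, I group the cubes $Q$ according to the value $R=Q^{\uparrow n_{k-1}(Q)}$; by construction $R\in\cS_i$ and $R\subseteq H_{k-1}(Q)$ or $R=H_{k-1}(Q)$ (I would check from the definition of $H_{k-1}$ which inclusion holds and that in all cases $\diam(B(R))\lesssim\diam(B(H_{k-1}(Q)))$, possibly after passing to a controlled number of cubes using doubling). Then Claim \ref{claim:holes<cube} gives $\sum_{H=H_k(Q),\, Q^{\uparrow n_{k-1}(Q)}=R}\diam(B(H))\lesssim\diam(B(R))$, and summing over the distinct values of $R$ — whose diameters are in turn controlled by $\sum\diam(B(H_{k-1}(Q)))$ via the inductive hypothesis — closes the induction. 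Finally, to recover $\sum_{Q\in\mathcal{T}_i}\diam(B(Q))$ itself: for each $Q\in\mathcal{T}_i$ we have $\hat k(Q)\leq d-1$ and $n_{\hat k(Q)}(Q)=0$, so $Q^{\uparrow n_{\hat k(Q)-1}(Q)-1}$ and $Q^{\uparrow n_{\hat k(Q)}(Q)}$ are among the controlled cubes, and Claim \ref{claim:cubes<hole} (applied with $k=\hat k(Q)$) bounds $\sum\diam(B(Q^{\uparrow n_k(Q)}))$ by $\sum\diam(B(H_k(Q)))$; since $\diam(B(Q))\le\diam(B(Q^{\uparrow n_{\hat k(Q)}(Q)}))$ and, running over the finitely many values $\hat k(Q)\in\{1,\dots,d-1\}$ of the ``stopping level,'' only finitely many applications are needed, we sum up.

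The main obstacle I anticipate is the bookkeeping around \emph{distinct} cubes: Claims \ref{claim:cubes<hole} and \ref{claim:holes<cube} are phrased so that a cube $R=Q^{\uparrow n_k(Q)}$ (resp. $H=H_k(Q)$) arising from several different $Q$ is counted only once, so in the telescoping I must make sure I am always summing over the \emph{images} of the maps $Q\mapsto H_k(Q)$ and $Q\mapsto Q^{\uparrow n_k(Q)}$ and not double-counting, and that when I pass from $H_{k-1}(Q)$ to $R=Q^{\uparrow n_{k-1}(Q)}$ I invoke the doubling property of $E$ to absorb the bounded multiplicity. A secondary technical point is handling the ``degenerate'' cases baked into the definition of $H_k(Q)$ (when $n_k(Q)=n_{k-1}(Q)$, or when $\Lambda B(Q^{\uparrow n_k(Q)})\not\subseteq 5B(Q^{\uparrow(n_{k-1}(Q)-1)})$, in which $H_k(Q)$ is set to be an ancestor cube rather than a ``hole''); in those cases $H_k(Q)$ is comparable in size to $Q^{\uparrow n_{k-1}(Q)}$ and the transfer is trivial, but it must be separated out cleanly, exactly as in the case analyses already carried out in Claims \ref{claim:cubes<hole} and \ref{claim:holes<cube}, so that the induction remains uniform across all $k$.
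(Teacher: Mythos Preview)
Your plan is essentially the same as the paper's: alternate Claims \ref{claim:cubes<hole} and \ref{claim:holes<cube} to telescope from level $k$ back to level $0$. The paper's execution, however, sidesteps exactly the bookkeeping issue you flag. Rather than an induction on $k$ tracking ``distinct values of $H_k(Q)$,'' the paper first \emph{fixes} a value of $\hat k$, then rewrites $\sum_{Q\in\mathcal{T}_i,\,\hat k(Q)=\hat k}\diam(B(Q))$ as a nested sum $\sum_{Q_1}\sum_{Q_2}\cdots\sum_{Q_{\hat k}}\diam(B(Q_{\hat k}))$, where $Q_j$ ranges over the possible values of $Q^{\uparrow n_j(Q)}$ compatible with $(Q_0,\dots,Q_{j-1})$. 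Since $n_{\hat k}(Q)=0$ forces $Q_{\hat k}=Q$, this nested sum is literally equal to the original sum, and then one strips off the innermost sum using Claim \ref{claim:cubes<hole} (to replace $Q_{\hat k}$ by $H_{\hat k}(Q)$), then Claim \ref{claim:holes<cube} (to collapse to $Q_{\hat k-1}$), and so on down to $Q_0=Q(\cS_i)$. The ``distinct cubes'' issue is automatically handled because the nested-sum structure already indexes by the full tuple.

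One concrete correction to your inductive step: you write that for $R=Q^{\uparrow n_{k-1}(Q)}$ one has ``$R\subseteq H_{k-1}(Q)$ or $R=H_{k-1}(Q)$'' and hence $\diam(B(R))\lesssim\diam(B(H_{k-1}(Q)))$. The containment is false in the non-degenerate case, where $H_{k-1}(Q)$ is a ``hole'' cube in $\mathcal{C}_\Gamma$ with no inclusion relation to $R$. What you actually need here is precisely Claim \ref{claim:cubes<hole} applied at index $k-1$: it gives $\sum_{R\text{ distinct}}\diam(B(R))\lesssim\sum_{H\text{ distinct}}\diam(B(H_{k-1}(Q)))$ directly, with no pointwise comparison or inclusion needed. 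Once you replace that step, your induction goes through and is equivalent to the paper's argument.
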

\begin{proof}
First of all, it suffices to fix $\hat{k}\in\{0,\dots, d-1\}$ and bound the sum over cubes $Q\in\mathcal{T}_{i}$ with $\hat{k}(Q)=\hat{k}$.

Suppose that $0\leq k < \hat{k}$ and we have a nested list of cubes in $\cS_i$ as follows:
\begin{equation}\label{eq:list}
 Q(\cS_i) = Q_0 \supseteq Q_1 \supseteq Q_2 \supseteq \dots \supseteq Q_k.
\end{equation}
We define two related families of cubes using this list:
\begin{align*}
 \mathcal{T}(Q_0, Q_1, \dots, Q_k) = &\{ Q\in\mathcal{T}_{i} :\hat{k}(Q)=\hat{k} \text{ and }\\
&Q^{\uparrow n_0(Q)} = Q_0, Q^{\uparrow n_1(Q)} = Q_1, \dots, Q^{\uparrow n_k(Q)} = Q_k\}
\end{align*}
and
\begin{equation*}
 \mathcal{S}(Q_0, Q_1, \dots, Q_k) = \{ R\in\cS_i : \exists Q\in \mathcal{T}(Q_0, Q_1, \dots, Q_k)\text{ with } R = Q^{\uparrow n_{k+1}(Q)}\}.
\end{equation*}

If we set $Q_0=Q(\cS_i)$, we can then write
\begin{equation*}
\sum_{Q\in\mathcal{T}_{i}, \hat{k}(Q)=k} \diam(B(Q)) = \sum_{Q_1\in\mathcal{S}(Q_0)} \sum_{Q_2\in\mathcal{S}(Q_0,Q_1)} \dots \sum_{ Q_{\hat{k}} \in \mathcal{S}(Q_0, \dots, Q_{\hat{k}-1})} \diam(B(Q_{\hat{k}})).
\end{equation*}
To replace $Q$ by $Q_{\hat{k}}$, we use the fact that if $\hat{k}(Q)=\hat{k}$ then $n_{\hat{k}}(Q)=0$.

We now alternate using Claims \ref{claim:cubes<hole} and \ref{claim:holes<cube} as follows:
\begin{align*}
&\sum_{Q_1\in\mathcal{S}(Q_0)} \sum_{Q_2\in\mathcal{S}(Q_0,Q_1)} \dots \sum_{ Q_{\hat{k}} \in \mathcal{S}(Q_0, \dots, Q_{\hat{k}-1})} \diam(B(Q_{\hat{k}}))\\
&\lesssim \sum_{Q_1\in\mathcal{S}(Q_0)} \sum_{Q_2\in\mathcal{S}(Q_0,Q_1)} \dots \sum_{Q_{\hat{k}-1}\in\mathcal{S}(Q_0,Q_1, \dots, Q_{\hat{k}-2}) } \sum_{ \substack{H=H_{\hat{k}}(Q)\\ Q \in \mathcal{T}(Q_0, \dots, Q_{\hat{k}-1}) } } \diam(B(H))\\
&\lesssim \sum_{Q_1\in\mathcal{S}(Q_0)} \sum_{Q_2\in\mathcal{S}(Q_0,Q_1)} \dots \sum_{Q_{\hat{k}-1}\in\mathcal{S}(Q_0,Q_1, \dots, Q_{\hat{k}-2}) } \diam(B(Q_{\hat{k}-1}))\\
&\lesssim \sum_{Q_1\in\mathcal{S}(Q_0)} \sum_{Q_2\in\mathcal{S}(Q_0,Q_1)} \dots \sum_{Q_{\hat{k}-2}\in\mathcal{S}(Q_0,Q_1, \dots, Q_{\hat{k}-3}) } \sum_{ \substack{H=H_{\hat{k}-1}(Q)\\ Q \in \mathcal{T}(Q_0, \dots, Q_{\hat{k}-2}) } } \diam(B(H))\\
&\lesssim \sum_{Q_1\in\mathcal{S}(Q_0)} \sum_{Q_2\in\mathcal{S}(Q_0,Q_1)} \dots \sum_{Q_{\hat{k}-2}\in\mathcal{S}(Q_0,Q_1, \dots, Q_{\hat{k}-3}) } \diam(B(Q_{\hat{k}-2}))
\end{align*}
We continue to alternate in this way, stripping off one sum at a time. The final steps conclude as follows:
\begin{align*}
\dots \lesssim \sum_{Q_1\in\mathcal{S}(Q_0)} \diam(B(Q_1)) &\lesssim \sum_{\substack{H=H_1(Q)\\Q\in\mathcal{T}(Q_0)}} \diam(B(H))\\
&\lesssim \diam(B(Q_0))\\
&= \diam(B(Q(\cS_i))).
\end{align*}
This completes the proof of the claim.
\end{proof}

Finally, the proof of Proposition \ref{prop:epstangent} now concludes as follows. Suppose that
$$ \beta^\tau_\Gamma( \ball(Q)) \geq \epsilon$$
for some $Q\in \hDelta$. If $\diam(B(Q))> \diam(\Gamma)$, such cubes $Q$ are handled in \eqref{eq:bigcubes}. 

If $\diam(B(Q))\leq\diam(\Gamma)$, then $Q\in\mathcal{C}_\Gamma = \mathcal{B} \cup \bigcup_i \mathcal{S}_i$. The sum of diameters of cubes in $\mathcal{B}$ is immediately handled by Proposition \ref{prop:str}(6). If $Q\in \mathcal{S}_i$ for some $i$, then Proposition \ref{prop:str}(4) tells us that
$$ \angle(\tau_\Gamma(\Lambda \ball(Q)), \tau_\Gamma(\Lambda \ball(Q(\cS_i))) < \alpha.$$
It follows that $\Gamma\cap \ball(Q)$ lies in the $\Lambda(\delta +\alpha)$-neighborhood of a line in direction $\tau_\Gamma(\Lambda \ball(Q(\cS_i)))$, and $\Lambda(\delta +\alpha)\ll \epsilon$. Since $\beta^\tau_\Gamma( \ball(Q)) \geq \epsilon$, it must be that
$$ \angle(\tau(Q), \tau_\Gamma(\Lambda \ball(Q(\cS_i))) \geq \epsilon_1,$$
i.e., $Q\in\mathcal{T}_i$.

Therefore, in total we have
\begin{align*}
\sum_{Q\in\hDelta_E, \beta_\Gamma^\tau(\ball(Q))\geq \epsilon} \diam(B(Q)) &\lesssim \ell(\Gamma) + \sum_{Q\in\mathcal{C}_\Gamma, \beta_\Gamma^\tau(\ball(Q))\geq \epsilon} \diam(B(Q)) \\
&\lesssim \ell(\Gamma) + \sum_{i} \sum_{Q\in\mathcal{T}_i}\diam(B(Q))\\ 
&\lesssim \ell(\Gamma) + \sum_{i} \diam(B(Q(\cS_i)))\\
&\lesssim \ell(\Gamma),
\end{align*}
using Proposition \ref{prop:str}(5) in the last step.
\end{proof}

\section{Proofs of the ``coarse'' Theorems \ref{thm:main}, \ref{thm:onedim}, \ref{thm:converse}}\label{sec:finalproofs}
In this section, we complete the proofs of Theorems \ref{thm:main}, \ref{thm:onedim}, and \ref{thm:converse} on the existence of coarse tangent fields.

\begin{proof}[Proof of Theorem \ref{thm:main}]
Let $E$ be a doubling subset of a Hilbert space $\bX$ that badly fits $d$-planes, with parameter $\epsilon_0>0$. Let $\cF$ be a multiresolution family of balls for $E$ and $\Delta=\Delta_E$ be a system of dyadic cubes for $E$ as in Proposition \ref{prop:christ}. Fix $\epsilon>0$, $A\geq 1$.

Fix $\epsilon'<<_{A} \epsilon$. Proposition \ref{prop:epstangent} provides a map
$$ \tau_{\epsilon'} \colon \{AB(Q):Q\in\hDelta\} \rightarrow \mathcal{L}_k$$
with the following property: If $\Gamma$ is a continuum in $\bX$, then
\begin{equation}\label{eq:epstangentproof}
 \sum_{\substack{Q\in\hDelta \\ \Gamma \cap B(Q)\neq \emptyset\\ \beta^{\tau_{\epsilon'}}_{\Gamma}(AB(Q))\geq \epsilon'}} \diam(B(Q)) \lesssim \ell(\Gamma)
\end{equation}

To each $B\in\cF$, we may associate a cube $Q_B\in \Delta$ with $AB\subseteq AB(Q_B)$ and $\diam(B(Q_B))\approx \diam(B)$. This assignment is bounded-to-one by the doubling property of $E$. We now simply define
$$ \tau \colon \cF_A \rightarrow \mathcal{L}_k$$
by $\tau(AB) = \tau(AB(Q_B))$.

We claim that $\tau$ is a coarse $\epsilon$-tangent field for $\cF$ with inflation $A$. Consider any continuum $\Gamma$ in $\bX$.  Let
$$ \mathcal{C} = \{ B\in\cF : \Gamma \cap B \neq \emptyset, \diam(B)\leq \diam(\Gamma)\}.$$
We then write
$$ \mathcal{C} = \mathcal{C}_1 \cup \mathcal{C}_2$$
where
$$ \mathcal{C}_1 = \{B \in\mathcal{C} : \theta_\Gamma(AB) \geq \epsilon'\} \ \ \text{ and }\ \ \mathcal{C}_2 = \{B \in\mathcal{C} : \theta_\Gamma(AB) < \epsilon'\}.$$

The balls in $\mathcal{C}_1$ have total diameter sum controlled by $\ell(\Gamma)$, by Proposition \ref{p:TST-betas-bounded}.

For $\mathcal{C}_2$, we split further as
$$ \mathcal{C}_{2,1} = \{B\in\mathcal{C}_2 : \beta^\tau_\Gamma(AB) \geq \epsilon' \}$$
and 
$$ \mathcal{C}_{2,2} = \{B\in\mathcal{C}_2 : \beta^\tau_\Gamma(AB) < \epsilon' \}.$$

Using Proposition \ref{prop:epstangent}, we see that
$$ \sum_{B\in \mathcal{C}_{2,1}} \diam(B) \lesssim \sum_{B\in \mathcal{C}_{2,1}} \diam(B(Q_B)) \lesssim \ell(\Gamma).$$

It remains to consider balls in $\mathcal{C}_{2,2}$. However, we claim that none of these balls actually appear in the sum \eqref{eq:epstangentdef}. Indeed, if $B\in \mathcal{C}_{2,2}$ then $\theta_\Gamma(AB)<\epsilon'$ and $\beta^\tau_\Gamma(AB) < \epsilon'$. This means that there are affine lines $L_1$ and $L_2$, with $L_2 \parallel \tau(B)$ such that
$$ \frac{1}{\diam(AB)} \left( \sup_{x\in \Gamma \cap AB} \dist(x,L_1) + \sup_{y\in L_1 \cap AB} \dist(x,\Gamma)\right) \leq \epsilon'$$
and
$$\frac{1}{\diam(AB)} \sup_{x\in \Gamma \cap AB} \dist(x,L_2) \leq \epsilon'.$$
From these two, it easily follows that
$$ \frac{1}{\diam(AB)} \left( \sup_{x\in \Gamma \cap AB} \dist(x,L_2) + \sup_{y\in L_2 \cap AB} \dist(x,\Gamma)\right) \lesssim_A \epsilon' < \epsilon/2.$$
Hence,
$$ \theta^\tau_\Gamma(AB) < \epsilon$$
for each $B\in \mathcal{C}_{2,2}$. This completes the proof that $\tau$ is a coarse $\epsilon$-tangent field on $\cF$ with inflation $A$, and hence proves Theorem \ref{thm:main}.
\end{proof}

To prove Theorem \ref{thm:onedim}, it now suffices to show the following.

\begin{theorem}\label{thm:coarse}
Let $\bX$ be a Hilbert space, $E$ a subset of $\bX$ with multiresolution family $\cF$, and $A\geq 1$. Suppose that for each $\epsilon>0$, $\cF$ admits a $1$-dimensional coarse $\epsilon$-tangent field with inflation $A$.

Then $\cF$ admits a $1$-dimensional coarse tangent field with inflation $A$.
\end{theorem}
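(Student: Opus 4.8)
The plan is to obtain $\tau$ as a pointwise limit of a \emph{modified} version of the given sequence of fields. Write $\epsilon_j=2^{-j}$ and let $\tau_j\colon\cF_A\to\cL_1$ be a $1$-dimensional coarse $\epsilon_j$-tangent field for $\cF$ with inflation $A$, with constant $C_j$ in \eqref{eq:epstangentdef}. Two elementary Hilbert-space facts will be used throughout. First, for any two line fields $\sigma,\sigma'$ on $\cF_A$ and any $B\in\cF$ meeting $\Gamma$, one has $|\theta^\sigma_\Gamma(AB)-\theta^{\sigma'}_\Gamma(AB)|\lesssim_A\angle(\sigma(B),\sigma'(B))$, proved by rotating a near-optimal line for $\sigma$ about a point of $\Gamma\cap AB$ into the direction $\sigma'(B)$. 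Second --- and this is where $\dim = 1$ is decisive --- if $B$ meets $\Gamma$, $\diam B\leq\diam\Gamma$, and $\theta_\Gamma(AB)\lesssim\delta$, then $\Gamma\cap 2AB$ contains a set of diameter $\approx\diam(AB)$, and any two \emph{lines} that approximate this set to within $\delta\diam(AB)$ in Hausdorff distance have mutual angle $\lesssim_A\delta$; the analogous statement for $(d-1)$-planes fails when $d>2$ (two such planes can both nearly contain the tangent line of $\Gamma$ while differing in their remaining directions), which is consistent with Question~\ref{q:independence} being open.

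First I would construct, by induction on $j$, modified fields $\tilde\tau_j\colon\cF_A\to\cL_1$ such that: (a) $\tilde\tau_j$ is a $1$-dimensional coarse $\epsilon_j$-tangent field for $\cF$ with inflation $A$ (with some constant $\tilde C_j$); and (b) $\angle(\tilde\tau_{j+1}(B),\tilde\tau_j(B))\leq C_A\,2^{-j}$ for every $B\in\cF$, where $C_A$ is a fixed constant. Set $\tilde\tau_1=\tau_1$. Given $\tilde\tau_j$, let $\mathcal{R}_j=\{B\in\cF:\angle(\tilde\tau_j(B),\tau_{j+1}(B))\geq C_A 2^{-j}\}$ and define $\tilde\tau_{j+1}(B)=\tilde\tau_j(B)$ if $B\in\mathcal{R}_j$ and $\tilde\tau_{j+1}(B)=\tau_{j+1}(B)$ otherwise; property (b) is then immediate. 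For (a) the point is that $\mathcal{R}_j$ is ``curve-Carleson'': for every rectifiable curve $\Gamma$,
$$\sum_{\substack{B\in\mathcal{R}_j,\ \Gamma\cap B\neq\emptyset\\ \diam B\leq\diam\Gamma}}\diam B\ \lesssim\ \bigl(\epsilon_j^{-2}+\tilde C_j+C_{j+1}\bigr)\,\ell(\Gamma).$$
Indeed, such a $B$ either has $\theta_\Gamma(AB)\gtrsim\epsilon_j$, in which case Proposition~\ref{p:TST-betas-bounded} (transported from Christ cube-balls to $\cF$ by the doubling property of $E$) together with Chebyshev bounds the total by $\lesssim\epsilon_j^{-2}\ell(\Gamma)$; or $\theta_\Gamma(AB)\lesssim\epsilon_j$, and then the two elementary facts force that at least one of $\theta^{\tilde\tau_j}_\Gamma(AB)\geq\epsilon_j$ or $\theta^{\tau_{j+1}}_\Gamma(AB)\geq\epsilon_{j+1}$ holds --- for if both failed, the best-fitting direction and $\tilde\tau_j(B)$ and $\tau_{j+1}(B)$ would all lie within $\lesssim_A\epsilon_j$ of each other, contradicting $B\in\mathcal{R}_j$ once $C_A$ is chosen large enough. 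Thus these $B$ lie in the exceptional set of $\tilde\tau_j$ at level $\epsilon_j$ or of $\tau_{j+1}$ at level $\epsilon_{j+1}$. Feeding this back: the exceptional set of $\tilde\tau_{j+1}$ at level $\epsilon_{j+1}$ for a curve $\Gamma$ splits according to whether $B\in\mathcal{R}_j$; on $\mathcal{R}_j$ we use the curve-Carleson bound, and off $\mathcal{R}_j$ we have $\tilde\tau_{j+1}=\tau_{j+1}$ and invoke that $\tau_{j+1}$ is a coarse $\epsilon_{j+1}$-tangent field. This proves (a) with $\tilde C_{j+1}$ depending only on $\tilde C_j$, $C_{j+1}$, $j$, and $A$.

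By (b), the sequence $(\tilde\tau_j(B))_j$ is Cauchy in $(\cL_1,\angle)$ for every $B\in\cF$, so $\tau(B):=\lim_j\tilde\tau_j(B)$ exists and $\angle(\tau(B),\tilde\tau_j(B))\leq C_A' 2^{-j}$ for all $j$. I claim this $\tau$ is a coarse tangent field for $\cF$ with inflation $A$. Given $\epsilon>0$, choose $j=j(\epsilon)$ so large that $\epsilon_j<\epsilon/2$ and the first elementary fact gives $|\theta^\tau_\Gamma(AB)-\theta^{\tilde\tau_j}_\Gamma(AB)|<\epsilon/2$ for all relevant $B$ (possible since this difference is $\lesssim_A C_A' 2^{-j}$). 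Then for any rectifiable curve $\Gamma$ and any $B$ appearing in the sum \eqref{eq:epstangentdef} for $\tau$ at level $\epsilon$, we get $\theta^{\tilde\tau_j}_\Gamma(AB)\geq\epsilon-\epsilon/2=\epsilon/2\geq\epsilon_j$, so $B$ appears in the corresponding sum for $\tilde\tau_j$ at level $\epsilon_j$; hence
$$\sum_{\substack{B\in\cF,\ \Gamma\cap B\neq\emptyset,\ \diam B\leq\diam\Gamma\\ \theta^\tau_\Gamma(AB)\geq\epsilon}}\diam B\ \leq\ \tilde C_{j(\epsilon)}\,\ell(\Gamma),$$
which is \eqref{eq:epstangentdef} with $C(\epsilon)=\tilde C_{j(\epsilon)}<\infty$. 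This completes the proof of Theorem~\ref{thm:coarse}, and hence of Theorem~\ref{thm:onedim}.

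The step I expect to be the main obstacle is establishing the curve-Carleson bound for the correction sets $\mathcal{R}_j$: it hinges precisely on the uniqueness-of-best-fitting-line phenomenon in dimension one, i.e.\ that two line fields which both provide good linear approximants to $\Gamma$ at a flat ball must nearly coincide there, so that the corrections needed to keep $\tilde\tau_{j+1}$ within $2^{-j}$ of $\tilde\tau_j$ are absorbed into already-controlled exceptional sets (and, crucially, the uncontrolled growth of the constants $C_j$ never accumulates, because for a fixed target $\epsilon$ only a single field $\tilde\tau_{j(\epsilon)}$ is ultimately used). Secondary, routine technical points are the two elementary geometric estimates above and the transfer of Proposition~\ref{p:TST-betas-bounded} and of the ``flatness implies $\Gamma\cap 2AB$ has full diameter'' observation from Christ cube-balls to an arbitrary multiresolution family, both handled via the doubling of $E$.
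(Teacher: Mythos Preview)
Your proof is correct and takes a genuinely different route from the paper's. The paper avoids induction entirely: for each ball $B$ it sets $K(B)$ to be the largest $K$ with $\bigcap_{k=1}^{K}{\rm Cone}(\tau^{2^{-k}}(B),2^{-k})\neq\emptyset$ and picks $\tau(B)$ inside that intersection. The verification is then a one-step argument: for a given $\epsilon$, set $k_0\approx\log_2(1/\epsilon)$; any flat ball $B$ for which the best-fitting direction $\tau_\Gamma(B)$ lies in ${\rm Cone}(\tau^{2^{-k}}(B),2^{-k})$ for all $k\leq k_0$ (a condition failing only on a controlled set, by exactly the uniqueness-of-approximating-line fact you isolate) has $K(B)\geq k_0$, so $\tau(B)$ lies in the same nested intersection and $\angle(\tau(B),\tau_\Gamma(B))\leq 2^{1-k_0}<\epsilon/10$.

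Your construction is more hands-on: you force the sequence $(\tilde\tau_j)$ to be Cauchy by overwriting $\tau_{j+1}$ with $\tilde\tau_j$ on the disagreement set $\mathcal{R}_j$, and you show $\mathcal{R}_j$ is curve-Carleson via the same one-dimensional rigidity. This is a bit more work (the constants $\tilde C_j$ accumulate through the recursion, though, as you correctly note, only $\tilde C_{j(\epsilon)}$ is ever used for a fixed $\epsilon$, so this is harmless), but the mechanism is more transparent and in the spirit of standard stabilization arguments. The paper's cone-intersection trick buys brevity: maximality of $K(B)$ replaces your entire inductive modification, since the nonempty cone intersection at level $K(B)$ automatically pins down $\tau(B)$ to within $2^{1-k}$ of $\tau^{2^{-k}}(B)$ for every $k\leq K(B)$ simultaneously. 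Both proofs reduce to the same crux --- two good approximating \emph{lines} to a flat piece of $\Gamma$ must nearly coincide --- and both would break at the same point for $(d-1)$-planes with $d>2$, consistent with Question~\ref{q:independence}. A small remark: your invocation of Proposition~\ref{p:TST-betas-bounded} via the doubling of $E$ matches what the paper does implicitly; strictly speaking Theorem~\ref{thm:coarse} as stated does not assume doubling, but it is only ever applied to doubling $E$.
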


The proof involves taking a sequence $\tau_{2^{-k}}$ of $1$-dimensional coarse $2^{-k}$-tangent fields and aggregating this information into a single coarse tangent field. This aggregation is a bit subtle, as the following simple example indicates.

\begin{example} Fix $A\geq 1$, $E=S^1\subset \bX = \mathbb{R}^{2}$, and $\cF$ a multiresolution family for $E$. Let us construct a sequence of coarse $2^{-k}$-tangent line fields for $\cF$ (with inflation $A$). If $B\in \cF$, then there is a value $k(B)$ so that for $k\leq k(B)$ the line $\tau_{2^{-k}}(AB)$ can be chosen as the tangent line to the circle $S^1$ at some point $p\in B\cap S^{1}$. However, if $k>k(B)$ then $\tau_{2^{-k}}(AB)$ can be chosen to be $\{0\}$ (or an arbitrary line), since for such $k$ the circle $S^1$ is not close enough to a line in $AB$ to ``merit'' a non-trivial tangent line. (The parameter $k(B)$ depends on the curvature of the circle in $B$ relative to the scale of $B$.)

We see that $\tau(AB)$ can therefore not be defined na\"ively as the limit of $\tau_{2^{-k}}(AB)$ as $k\rightarrow\infty$. Instead, we ought to have a type of stopping condition; in this case, we can set $\tau(B)=\tau_{2^{-k(B)}}(AB)$. In general, the set $E$ may not be smooth, and the choice is more involved. However, this case still can be helpful to consider while reading the following argument.
\end{example}

We also remark that the aggregation we now do to prove Theorem \ref{thm:coarse} is possible in the case of $1$-dimensional tangent fields, but it is not clear to us how to do it for higher-dimensional tangent fields.

\begin{proof}[Proof of Theorem \ref{thm:coarse}]
If $\tau$ is a line in $\bX$, we write 
${\rm Cone}(\tau,\epsilon)=\{v\in \bX : \|v\|=1, \angle(\langle v \rangle, \tau) \leq \epsilon\}.$

Let $\tau^{2^{-k}}$ be $1$-dimensional coarse $2^{-k}$-tangent fields for $\cF$ with inflation $A$. To simplify the notation, we will write $\tau^{2^{-k}}(B)$ for $\tau^{2^{-k}}(AB)$. Let $B\in \cF$ be arbitrary. 
 Let $K$ be the maximal integer such that 
$$\bigcap_{k=1}^{K} {\rm Cone}(\tau^{2^{-k}}(B),2^{-k})\neq \emptyset,$$
and choose a subspace $\tau(B)$ spanned by a non-zero vector in $\bigcap_{k=1}^{K} {\rm Cone}(\tau^{2^{-k}}(B),2^{-k})$. We will show that $B\mapsto \tau(B)$ is a coarse $\epsilon$-tangent field for $\cF$ with inflation $A$, for every $\epsilon>0$.

Fix $\epsilon>0$ and let $\Gamma$ be a curve in $\bX$. Write
$$ \mathcal{C}_\Gamma = \{B\in \cF:  \Gamma \cap B \neq \emptyset, \diam(B)\leq \diam(\Gamma). $$

 Our goal is to show that
\begin{equation}\label{eq:betatausum}
\sum_{\substack{B\in \mathcal{C}_\Gamma\\ \theta^\tau_\Gamma(AB)\geq \epsilon}} \diam(B) \lesssim_{\epsilon} \ell(\Gamma).
\end{equation}

We have 
\begin{equation}
\sum_{\substack{B\in \mathcal{C}_\Gamma\\ \theta_\Gamma(AB)\geq \epsilon/10}} \diam(B) \lesssim_{\epsilon} \ell(\Gamma)
\end{equation}
by Proposition \ref{p:TST-betas-bounded}.

Let $\mathcal{C}_1$ be the collection of $B\in \cC_\Gamma$ for which
$\theta_\Gamma(AB)< \epsilon/10$. 
For each $B\in\cF$, let $\tau_\Gamma(B)$ be a line attaining the infimum in the definition of $\theta_\Gamma(AB)$, up to a factor of $2$. Observe that if $B\in\cC_1$ and $\angle(\tau_\Gamma(B), \tau(B))<\epsilon/10$, then  $\theta^\tau_\Gamma(AB)<\epsilon$. We thus have
\begin{equation}
\sum_{\substack{B\in \mathcal{C}_\Gamma\\ \theta^\tau_\Gamma(AB)\geq \epsilon}} \diam(B) \lesssim_{\epsilon} \ell(\Gamma) + \sum_{\substack{B\in \cC_1\\ \angle(\tau_\Gamma(B), \tau(B))\geq \epsilon/10}} \diam(B).
\end{equation}

Set $k_0=\lceil -\log_2(\epsilon)\rceil + 10$. Let $\cC_2$ be the set of $B\in \cC_1$ for which $\angle(\tau_\Gamma(B), \tau^{2^{-k}}(B))\leq 2^{-k}$ for all $k=1,\dots, k_0$. Since $\tau^{2^{-k}}$ are each coarse $2^{-k}$-tangent fields and $k_0$ depends only on $\epsilon$, we get
\begin{equation}\label{eq:lastsum}
\sum_{\substack{B\in \cC_\Gamma\\ \theta^\tau_\Gamma(AB)\geq \epsilon}} \diam(B) \lesssim_{\epsilon} \ell(\Gamma) + \sum_{\substack{B\in \cC_2\\ \angle(\tau_\Gamma(B), \tau(B))\geq \epsilon/10}} \diam(B).
\end{equation}

We now argue that the last sum in \eqref{eq:lastsum} is empty, and thus the bound \eqref{eq:betatausum} follows. If $B\in \cC_2$, then $\angle(\tau_\Gamma(B), \tau^{2^{-k}}(B))\leq 2^{-k}$ for each $k=1,\dots, k_0$. Therefore, in that case, $\tau_\Gamma(B)\in \bigcap_{k=1}^{k_0} {\rm Cone}(\tau^{2^{-k}}(B),2^{-k})$. By construction, it follows that $\tau(B)$ is also $\bigcap_{k=1}^{k_0} {\rm Cone}(\tau^{2^{-k}}(B),2^{-k})$. Thus $\angle(\tau_\Gamma(B), \tau(B))\leq 2^{1-k_0}<\epsilon/10$, which implies that the final sum in \eqref{eq:lastsum} is empty. This proves \eqref{eq:betatausum} and hence the result.

\end{proof}

Finally, we show that the property of badly fitting $d$-planes is not only sufficient but also necessary to admit coarse $\epsilon$-tangent fields for small $\epsilon$.
\begin{proof}[Proof of Theorem \ref{thm:converse}]
With the notation of the theorem, suppose to the contrary that the set $E$ does \emph{not} badly fit $d$-planes. It follows immediately that for each $\delta>0$, we can find a set $D\subseteq \bX$, isometric to a $d$-ball of radius $r$, that is contained in the $\delta r$-neighborhood of $E$. By scaling, we may assume that $r=1$ and $\diam(E)\geq 1$.

Consider the collection $\mathcal{C}$ of all lines in the $d$-plane spanned by $D$ that pass through $D_0\subset D$, a $d$-ball concentric with $D$ of half the radius. We put a measure $m$ on this collection by pushing forward the natural product measure on $\mathbb{S}^{d-1}\times [-\frac14,\frac14]^{d-1}$, i.e., we describe a line by a direction in $\mathbb{S}^{d-1}$ and an orthogonal translation. Note that each line $L$ in this collection $\cC$ has $\frac14 \leq \ell(L\cap D) \lesssim_d 1$, and that $m(\cC)\approx_d 1$. 

Fix $\epsilon>0$ small depending on $d,A$. Observe that if $B\in \cF$ and $\frac14 B \cap D_0 \neq \emptyset$, then
$$ m(\{\Gamma\in\cC : \theta^\tau_\Gamma(AB) \geq \epsilon\}) \gtrsim \diam(B)^{d-1}.$$
Indeed, $\tau(AB)$ is a $(d-1)$-plane, so all lines $\Gamma$ in $\cC$ that are nearly orthogonal to $\tau(AB)$ and pass through $\frac12 B \cap D_0$ will qualify; this yields the bound above.

Because the $d$-ball $D_0$ lies in the $\delta$-neighborhood of $E$, we have
$$ \sum_{\substack{B\in \cF\\ \frac14 B \cap D_0 \neq \emptyset \\ \diam(B) \leq \frac14}} \diam(B)^d \gtrsim \log(1/\delta).$$
(The $d$-ball $D_0$ is covered by this collection of balls at each scale between $\frac14$ and $100\delta$.)

On the other hand, our bound above yields
\begin{align*}
\sum_{\substack{B\in \cF\\ \frac14 B \cap D_0 \neq \emptyset\\ \diam(B) \leq \frac14}} \diam(B)^d &\lesssim  \sum_{\substack{B\in \cF\\ \frac14 B \cap D_0 \neq \emptyset\\ \diam(B) \leq \frac14}} \diam(B)\cdot m(\{\Gamma\in\cC : \theta^\tau_\Gamma(AB) \geq \epsilon\})\\
&\lesssim \int_{\cC} \sum_{\substack{B\in\cF\\\theta^\tau_\Gamma(AB) \geq \epsilon\\ \diam(B) \leq \frac14}} \diam(B) \,dm.
\end{align*}

It follows that there is a line segment $\Gamma \in \cC$ (necessarily with diameter at least $\frac14$) with 
$$ \sum_{\substack{B\in\cF\\\theta^\tau_\Gamma(AB) \geq \epsilon\\ \diam(B) \leq \frac14}} \diam(B) \gtrsim \log(1/\delta).$$

Since $\delta$ could have been chosen arbitrarily small and $\ell(\Gamma)\lesssim 1$, this contradicts the assumption that $\tau$ was a coarse $\epsilon$-tangent field for $\cF$ with inflation $A$.
\end{proof}
\section{Counterexample: proof of Theorem \ref{thm:example}}\label{sec:example}
In this section, we prove Theorem \ref{thm:example}. Recall that this concerns the question of whether $L^2$-estimates on our coarse tangent field hold, as in the Traveling Salesman Theorem (Theorem \ref{thm:tst}), in comparison to the weak-type estimates given in Definition \ref{def:coarsetangent}. Here we give an example showing that $L^2$-estimates do \emph{not} hold in general.

Our counterexample is given by a construction in the plane reminiscent of a von Koch snowflake, as well as examples of Laakso \cite{La02} and Lang--Plaut \cite{LangPlaut}. Let $I=[x_0,x_1]$ be a segment in $\RR^2$, and let $x_t=tx_1+(1-t)x_0$. We will iteratively replace such intervals by diamond-shaped sets as follows. Let $J:\RR^2\to \RR^2$ be an orthogonal rotation of $90$ degrees. For $a\in(0,1)$ let 
\begin{align}
T_{a}(I)=&[x_0,x_{1/4}] \cup [x_{3/4},x_{1}] \cup [x_{1/4},x_{1/2}+aJ(y-x)] \cup [x_{1/4},x_{1/2}-aJ(y-x)] \cup \nonumber \\
&[x_{1/2}+aJ(y-x),x_{3/4}] \cup [x_{1/2}-aJ(y-x),x_{3/4}] \label{eq:Ta}
\end{align}
This operation constructs a polygonal ``diamond'' shape out of $I$; see Figure \ref{fig:counterexample}. By a polygonal set $P$ we mean a union of finitely many line segments. If $P$ is a polygonal set, let $S_N(P)$ be the subdivision of each segment of $P$ by a factor of $N$, and let $T_{a}(P)$ be the polygonal set obtained by applying the operation $T_a$ to each of the segments in $P$. 
\begin{figure}

\begin{tikzpicture}[scale=3]
    \coordinate (X) at (0,0);
    \coordinate (Y) at (4,0);
    
    \coordinate (M1) at ($(X)!1/4!(Y)$);
    \coordinate (M2) at ($(Y)!1/4!(X)$);
    
    \def\h{0.2}    
    
    \coordinate (Jh) at (0,\h);
    
    \coordinate (T1) at ($0.5*(X) + 0.5*(Y) + (Jh)$);
    \coordinate (T2) at ($0.5*(X) + 0.5*(Y) - (Jh)$);
    
    \draw[thick] (X) -- (M1);
    \draw[thick] (M2) -- (Y);
    
    \draw[thick] (M1) -- (T1) -- (M2);
    \draw[thick] (M1) -- (T2) -- (M2);

\end{tikzpicture}
\caption{One step of the operator $T_{a}$ applied to a horizontal line segment.}
\label{fig:counterexample}
\end{figure}
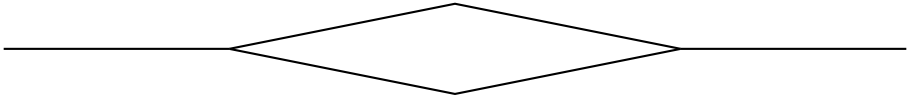

Let $a_n \in (0,10^{-3})$ be a decreasing sequence for which  we have $\sum_{n=1}^\infty a_n^2=1<\infty$ and $\sum_{n=1}^\infty a_n^2 \log(a_n^{-1})=\infty$.  Let $N_n$ be a sequence of integers converging rapidly to infinity, in a way to be described below, to ensure that the construction below produces a porous subset.

Let $P_1=[(0,0),(1,0)]$, and recursively define sets $P_{n+1}=T_{a_n}(S_{N_n}(P_n))$ by first subdividing the edges and then applying the diamond construction. We also fix an arbitrary multiresolution family $\mathcal{F}$ for $P$ and an inflation factor $A\geq 3$.

The following lemma immediately implies Theorem \ref{thm:example}.

\begin{lemma}\label{lem:counterexample} There exists a sequence $N_n\to \infty$ so that the following hold.
\begin{enumerate}[(i)]
\item The sequence $P_n$ converges in the Hausdorff metric to a compact, porous set $P\subset \mathbb{R}^2$.

\item There exists a probability measure $\eta$, supported on the collection of rectifiable curves contained in $P$, such that for any one-dimensional coarse tangent field $\tau\colon \cF_A \rightarrow \mathcal{L}_1$, we have
\begin{equation}\label{eq:notstrongtype}
\mathbb{E}_\eta\left(\sum_{\substack{B\in \cF\\ \Gamma \cap B \neq \emptyset}}\beta_\Gamma^\tau(AB)^2 \diam(B)\right)=\infty.
\end{equation}
\end{enumerate}
\end{lemma}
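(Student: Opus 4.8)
The plan is to build the probability measure $\eta$ by a random walk through the construction, so that a $\eta$-typical curve $\Gamma\subseteq P$ passes through one randomly chosen ``diamond'' at each generation, and to show that at \emph{every} generation $n$ the curve picks up a definite amount of $\beta_\Gamma^\tau$-energy that cannot be avoided by any fixed one-dimensional coarse tangent field.

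\medskip

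\textbf{Construction of $\eta$.} First I would choose the integers $N_n$ inductively so large that, at each generation, the diamonds produced by $T_{a_n}$ (on the already-subdivided polygon $S_{N_n}(P_n)$) are at definite scales and locations that remain essentially undisturbed by all later generations; this both makes $P$ porous (each diamond of $T_{a_n}$ creates a ``hole'' of size comparable to the current edge length, disjoint from all of $P_{n+1}, P_{n+2},\dots$ once $N_{n+1}, N_{n+2},\dots$ are large enough) and gives a multiresolution family for $P$ in which one can identify, for each diamond $D$ appearing in the construction, a ball $B_D\in\cF$ with $AB_D$ containing $D$ and $\diam(B_D)\approx \side(D)$. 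The curves in $P$ are parametrized by choosing, in each diamond, which of the two detour sides (the ``$+aJ$'' arc or the ``$-aJ$'' arc) to follow; I would let $\eta$ be the law of the curve obtained by making these binary choices independently and fairly at every scale. Then $\eta$ is a Borel probability measure supported on rectifiable curves $\Gamma\subseteq P$, each of finite length $\ell(\Gamma)\le \prod(1+\text{small})<\infty$ since $\sum a_n^2<\infty$.

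\medskip

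\textbf{The lower bound on the energy.} Fix any one-dimensional coarse line field $\tau$. The key geometric point: consider an edge $e$ of $S_{N_n}(P_n)$ and the diamond $D=T_{a_n}(e)$; let $B=B_e\in\cF$ be an associated ball with $AB\supseteq D$, $\diam(B)\approx|e|$. Whatever line $\tau(AB)$ is assigned, at least one of the two detour arcs of $D$ makes angle $\gtrsim a_n$ with $\tau(AB)$ (since the two arcs make angle $\approx 2a_n$ with each other, and with the base edge $\approx a_n$ each on opposite sides). Hence for the half of the curves $\Gamma$ (w.r.t.\ the local coin flip) that take that arc, $\beta_\Gamma^\tau(AB)\gtrsim a_n$, so
\[
\mathbb{E}_\eta\big(\beta_\Gamma^\tau(AB)^2\,\mathbf{1}_{\Gamma\cap B\ne\emptyset}\big)\ \gtrsim\ a_n^2\,\mathbb{P}_\eta(\Gamma\cap B\ne\emptyset).
\]
Summing over the $\asymp N_1\cdots N_{n-1}\cdot (\text{edges per }P_{n-1}\text{ piece})$ base edges $e$ at generation $n$, and using that a $\eta$-typical $\Gamma$ meets a proportion $\asymp 1$ of the total length at each generation so that $\sum_{e\text{ at gen }n}\diam(B_e)\,\mathbb{P}_\eta(\Gamma\cap B_e\ne\emptyset)\gtrsim 1$ (this is where the careful choice of $N_n$ and the structure of the random walk enter: the expected total diameter of generation-$n$ balls the curve meets is comparable to $\ell(P)\approx 1$, with a constant independent of $n$ because each diamond contributes its two detours to the next generation), Fubini gives
\[
\mathbb{E}_\eta\!\left(\sum_{B\in\cF,\ \Gamma\cap B\ne\emptyset}\beta_\Gamma^\tau(AB)^2\diam(B)\right)\ \gtrsim\ \sum_{n=1}^\infty a_n^2 \sum_{e\text{ at gen }n}\diam(B_e)\,\mathbb{P}_\eta(\Gamma\cap B_e\ne\emptyset).
\]
The refinement needed to get the divergence is the extra logarithmic factor: in fact each diamond of generation $n$, together with its ``hole'', forces $\beta_\Gamma^\tau\gtrsim a_n$ not just at the one scale $|e|$ but at all $\asymp\log N_n$ scales between $|e|$ and $|e|/N_n$ where the curve (a near-line of slope $\approx a_n$ off the base direction inside $D$) is tracked by balls of $\cF$ lying inside the diamond but outside any smaller diamond — here one uses porosity exactly as in the proof of Theorem~\ref{thm:main}/Theorem~\ref{thm:converse} to see that lines parallel to $\tau(AB)$ in those balls hit the hole and hence are far from $P\supseteq\Gamma$. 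Choosing $N_n\to\infty$ fast enough that $\log N_n\gtrsim \log(a_n^{-1})$ (say $N_n=\lceil a_n^{-1}\rceil$, adjusted upward for porosity), the contribution of generation $n$ to the expected sum is $\gtrsim a_n^2\log(a_n^{-1})$, and $\sum_n a_n^2\log(a_n^{-1})=\infty$ by hypothesis, giving \eqref{eq:notstrongtype}.

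\medskip

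\textbf{Main obstacle.} The routine part is the angle/porosity estimate showing each diamond forces $\beta_\Gamma^\tau\gtrsim a_n$ for half the curves. The delicate part — and the one I would spend the most care on — is organizing the bookkeeping so that (a) the $N_n$ are chosen in the right order (each $N_n$ depending on $a_1,\dots,a_n$ and on all previously fixed $N_1,\dots,N_{n-1}$) to simultaneously guarantee porosity of the limit $P$ and the logarithmic multiplicity of scales, and (b) the expected total diameter of generation-$n$ balls meeting $\Gamma$ is bounded below \emph{uniformly in $n$}, which requires checking that the random walk spreads its mass over the generation-$n$ edges in proportion to their (nearly equal) lengths and that the multiresolution family genuinely contains balls of comparable size at each of those locations (using Remark~\ref{rmk:unnested} to pass between $\cF$ and a convenient net). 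Once these are in place, Fubini and the divergence of $\sum a_n^2\log(a_n^{-1})$ finish the proof, and Theorem~\ref{thm:example} follows by taking $P$, $\cF$, $A$ as constructed and picking, for the given $\tau$, a curve $\Gamma$ on which the (non-negative) summand in \eqref{eq:notstrongtype} is infinite.
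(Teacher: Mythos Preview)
Your overall strategy---random curves, Fubini, divergence of $\sum a_n^2\log(a_n^{-1})$---matches the paper's, and your one-scale estimate (for a ball $B$ with $AB$ containing a full diamond, one of the two coin-flip outcomes gives $\beta_\Gamma^\tau(AB)\gtrsim a_n$) is correct. The genuine gap is in your mechanism for the logarithmic factor.

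You argue that at the $\asymp\log N_n$ scales between $|e|$ and $|e|/N_n$, porosity forces $\beta_\Gamma^\tau\gtrsim a_n$ because ``lines parallel to $\tau(AB)$ in those balls hit the hole and hence are far from $P\supseteq\Gamma$.'' This does not control $\beta_\Gamma^\tau$. Recall that $\beta_\Gamma^\tau(AB)$ is an infimum over \emph{all} translates of $\tau(AB)$, and $\tau$ is adversarial. Once $AB$ is small enough to see only one arc of the diamond---which happens as soon as $\diam(AB)$ drops below the diamond height $\approx a_n|e|$---the adversary simply sets $\tau(AB)$ parallel to that arc and obtains $\beta_\Gamma^\tau(AB)\approx 0$ for every curve through that arc. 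Whether some \emph{other} translate of $\tau(AB)$ hits a hole in $P$ is irrelevant to $\beta_\Gamma^\tau$; that style of argument belongs to $\theta$-numbers or to the converse direction (Theorem~\ref{thm:converse}), not here. So your $\log N_n$ scales collapse to the $\asymp\log(a_n^{-1})$ scales at which both arcs are visible, and tying $N_n$ to $a_n^{-1}$ does not help.

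The paper extracts $\log(a_n^{-1})$ directly from the diamond geometry rather than from $N_n$: for each edge $I$ it sums over balls $B\in\cF$ that meet the middle half of $I$ and have $\diam(B)\in[a_n|I|,|I|]$, which is exactly the range of scales at which $AB$ still contains points of \emph{both} arcs, so the coin-flip dichotomy applies at every one of those $\asymp\log(a_n^{-1})$ scales. The largeness of $N_n$ is used only to make these scale ranges disjoint across generations (so no ball is counted twice), not to manufacture the logarithm. A secondary difference: the paper uses one coin flip per generation (so $\eta$ lives on $\{-1,1\}^{\mathbb N}$) and exploits the measure-preserving involution $\mathbf{b}\mapsto\mathbf{b}^{-,n}$ flipping only the $n$th coordinate; your per-diamond flips would also work once you restrict to the correct scale range, but the bookkeeping is cleaner the paper's way.
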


Recall that a set in the plane is \emph{porous} if there exists a constant $\delta>0$ so that for every $x\in \mathbb{R}^2$ and $r>0$ there exists a $y\in B(x,r)$ for which $B(y,\delta r) \cap P = \emptyset$.

\begin{proof}[Proof of Lemma \ref{lem:counterexample}]

We will prove the lemma through a sequence of simpler claims.
\begin{claim}\label{claim:claim1}
The edges in the definition of $P_n$ are disjoint.
\end{claim}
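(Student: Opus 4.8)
The plan is to prove Claim~\ref{claim:claim1} by induction on $n$, carrying along a stronger hypothesis that also controls the angles between incident edges. Precisely, I would prove: $P_n$ is a finite union of nondegenerate closed segments (its \emph{edges}), any two distinct edges are either disjoint or meet in a single point that is an endpoint of both, at every such shared endpoint the two incident edges make an angle in $[7a_{n-1},\pi]$, and whenever this angle is less than $\pi/2$ (a \emph{sharp} vertex) the two incident edges have equal length. For $n=1$ there is nothing to prove, and for $n=2$ the statement is checked directly from \eqref{eq:Ta}: the operator $T_{a_1}$ creates sharp vertices only at $x_{1/4},x_{3/4}$, where the two diamond edges are interchanged by the reflection fixing the parent segment, hence have equal length and make the angle $2\arctan(4a_1)\ge 7a_1$, while all other new angles are $\pi-\arctan(4a_1)$, $\pi-2\arctan(4a_1)$, or $\pi$, and these lie in $[7a_1,\pi]$ since $a_1<10^{-3}$.

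For the inductive step, write $P_{n+1}=T_{a_n}(S_{N_n}(P_n))$. First I would record the elementary geometry of $T_a$ for $a<\tfrac14$: $T_a(I)$ is a union of six edges meeting only at prescribed vertices, it lies in the $a|I|$-neighborhood of $I$, and its four ``diamond'' edges lie at distance $\ge|I|/4$ from both endpoints of $I$. Letting $\ell_n$ be the largest edge length of $P_n$ and $d_n>0$ the least distance between two non-adjacent edges, I would choose $N_n$ (in addition to whatever is needed elsewhere in Lemma~\ref{lem:counterexample}) so large that $2a_n\ell_n/N_n<d_n$; then $S_{N_n}(P_n)$ inherits the inductive structure with all segments of length $\le\ell_n/N_n$. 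Now one compares edges of $P_{n+1}$ according to which segments $I,I'$ of $S_{N_n}(P_n)$ they come from. Edges from a common $I$ are handled by the first fact; edges from non-adjacent $I,I'$ are separated because $T_{a_n}(I)$ and $T_{a_n}(I')$ lie in disjoint neighborhoods by the choice of $N_n$; edges from two collinear segments (consecutive subdivision pieces of one edge of $P_n$) are separated because the diamonds sit in disjoint perpendicular slabs while near the shared subdivision point both images reduce to straight tails on the common line. The remaining, and only delicate, case is when $I,I'$ are adjacent pieces lying on two \emph{distinct} edges $e,e'$ of $P_n$ sharing the endpoint $v$.

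In this case only the portions of $T_{a_n}(I),T_{a_n}(I')$ within $O(\ell_n/N_n)$ of $v$ can meet, namely the straight tails on $e,e'$ together with the two diamonds; since $e\cap e'=\{v\}$ the tails meet only at $v$, so it remains to separate the diamond of $I$ from $T_{a_n}(I')$ (and symmetrically). I expect the \emph{sharp} sub-case to be the main obstacle, and the key point is that there the inductive hypothesis forces $|I|=|I'|$, so no edge-length ratios intervene: the diamond of $I$ sits at distance $\ge\tfrac14|I|\sin\theta$ from the \emph{line} through $e'$, while $T_{a_n}(I')$ lies within $a_n|I'|=a_n|I|$ of that line, whence the two are at distance $\ge|I|\big(\tfrac14\sin\theta-a_n\big)$, which is positive because $\theta\ge 7a_{n-1}$ gives $\tfrac14\sin\theta\ge\tfrac14\sin(7a_{n-1})>a_{n-1}\ge a_n$, using $a_{n-1}<10^{-3}$ and that $(a_n)$ is decreasing. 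In the \emph{obtuse} sub-case (angle $\ge\pi/2$) one uses instead that the distance from any point of $e$ at distance $r$ from $v$ to the whole ray $e'$ equals $r$; this bounds the small diamond of $I$ away from $e'$, and one observes separately that the diamond of $I'$ — which may be much larger — sits at distance $\ge|I'|/4$ from $v$ and hence far from the small diamond of $I$, so only the tail of $I'$, which lies on the line of $e'$, needs to be excluded, and that follows from the previous bound. Finally I would verify that $P_{n+1}$ again satisfies the strengthened hypothesis: the vertices created by $T_{a_n}$ have angles $\pi$, $\pi-\arctan(4a_n)$, $\pi-2\arctan(4a_n)$, or the sharp $2\arctan(4a_n)\ge 7a_n$ occurring only at the symmetric points $x_{1/4},x_{3/4}$ with equal-length incident edges, while every surviving vertex of $P_n$ keeps its angle and its equal-length property, its two incident edges in $P_{n+1}$ being the near-$v$ tails, which are collinear with and of the same length as the corresponding edges of $P_n$. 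This closes the induction and proves the claim.
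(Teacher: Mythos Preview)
Your approach is essentially correct but takes a genuinely different route from the paper's proof. You carry an inductive angle-tracking hypothesis and do a case analysis on how adjacent subdivided segments meet, which forces you to impose an extra largeness condition on $N_n$ (to separate images of non-adjacent edges) and to verify somewhat delicate inequalities at sharp vertices. The paper instead introduces the rhombus $D_b(I)$ (the parallelogram with vertices at the endpoints of $I$ and at $x_{1/2}\pm bJ(x_1-x_0)$) and records three elementary facts: $T_a(I)\subset D_a(I)$ touching $\partial D_b(I)$ only at the endpoints for $b>a$; the rhombi $D_a(I_1),D_a(I_2)$ for distinct edges $I_1,I_2$ of $T_a(I)$ are interior-disjoint; and for any subsegment $J$ of an edge of $T_a(I)$ and any $b<a$, $D_b(J)\subset D_a(I)$. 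From these, the decreasing property $a_n\le a_{n-1}$ alone gives a nested system of interior-disjoint rhombi that automatically separates \emph{all} pairs of edges of $P_{n+1}$, with no vertex-by-vertex case analysis and no constraint on $N_n$. Two small points to tighten in your writeup: first, some vertices (the points $x_{1/4},x_{3/4}$ of $T_a$) have \emph{three} incident edges, so your hypothesis should be phrased for each pair of incident edges rather than ``the two incident edges''; second, your claim that the diamond of $I$ sits at distance $\ge\tfrac14|I|\sin\theta$ from the line through $e'$ is true but not for the stated reason---the diamond extends $a_n|I|$ off of $e$, and one needs the (easily checked) observation $\tan\theta>4a_n$ to see that the minimum is nonetheless attained at $x_{1/4}$.
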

\begin{proof}
 Fix $a,b\in (0,1/4).$  If $I=[x_0,x_1]$ is an edge, let  $D_b(I)$ be the parallelogram with corners at $x_0,x_{1/2}+bJ(x_1-x_0), x_1, x_{1/2}-bJ(x_1-x_0).$
The following facts are simple to check.
\begin{enumerate}
\item For any interval $I$, we have that $T_{a}(I)\subset D_{a}(I)$, and if $b>a$, then $T_{a}(I)\cap \partial D_b(I)=\{x_0,x_1\}$. 
\item If $I_1,I_2$ are distinct edges in $T_a(I)$, then $D_a(I_1), D_a(I_2)$ are pairwise disjoint except at their boundaries. 
\item If $b<a$ and $J$ is any subsegment of an edge in $T_a(I)$, then $D_b(J)\subset D_{a}(I)$. 
\end{enumerate}
From these, it is direct to see by induction that the edges in $P_n$ are pairwise disjoint. 
\end{proof}

\begin{claim}\label{claim:claim2}
$d_H(P_n,P_{n+k})\leq 10^{-3}s_n/N_n \leq 2^{-n}$, where $s_n$ is the length of the longest edge in $P_n$.
\end{claim}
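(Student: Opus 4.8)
The plan is to prove Claim \ref{claim:claim2} by a direct geometric estimate on how far the operator $T_{a_n} \circ S_{N_n}$ moves points, together with a telescoping (triangle inequality) argument to pass from the one-step bound to the $k$-step bound. First I would observe that subdivision $S_{N_n}$ does not change the polygonal set at all as a subset of $\mathbb{R}^2$; it only relabels edges. So the Hausdorff distance between $P_n$ and $P_{n+1} = T_{a_n}(S_{N_n}(P_n))$ is governed entirely by the diamond operation $T_{a_n}$ applied to the subdivided edges. If $I$ is an edge of $S_{N_n}(P_n)$, it has length $s_n/N_n$, and inspecting \eqref{eq:Ta}, the set $T_{a_n}(I)$ is contained in the parallelogram $D_{a_n}(I)$, whose distance to $I$ is at most $a_n \cdot |I| = a_n s_n / N_n$; conversely every point of $T_{a_n}(I)$ lies within $a_n s_n/N_n$ of $I$ and vice versa (both $I$ and $T_{a_n}(I)$ share the endpoints of $I$ and stay inside the thin parallelogram). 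Since $a_n < 10^{-3}$, this gives $d_H(I, T_{a_n}(I)) \leq 10^{-3} s_n/N_n$, and taking the union over all edges (Hausdorff distance of a finite union is bounded by the max of the pieces, as the pieces share endpoints and the decomposition is compatible) yields $d_H(P_n, P_{n+1}) \leq 10^{-3} s_n/N_n$.

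Next I would telescope. We have $d_H(P_n, P_{n+k}) \leq \sum_{j=0}^{k-1} d_H(P_{n+j}, P_{n+j+1}) \leq \sum_{j=0}^{k-1} 10^{-3} s_{n+j}/N_{n+j}$. The longest edge length satisfies $s_{n+1} \leq s_n/N_n$ (in fact the diamond construction produces edges of length at most $s_n/N_n$, since each of the six segments in \eqref{eq:Ta} spans at most a quarter of $I$ plus the small vertical excursion, all bounded by $|I|$ up to the factor involving $a_n<10^{-3}$; one checks the longest is $\le \frac12|I|\sqrt{1+4a_n^2} \le |I|$, and in fact $\le \frac{3}{4}|I|$ suffices, but even the crude bound $s_{n+1}\le s_n$ combined with a rapidly growing $N_n$ is enough). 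Consequently the series $\sum_{j\geq 0} s_{n+j}/N_{n+j}$ is dominated by its first term times a convergent geometric-type tail, so for $N_n$ chosen growing sufficiently fast we get $d_H(P_n, P_{n+k}) \leq 10^{-3} s_n/N_n$. The second inequality $10^{-3}s_n/N_n \leq 2^{-n}$ is then arranged by the choice of $N_n$: having fixed $a_n$, we simply require $N_n \geq 10^{-3} s_n \cdot 2^n$, which is a constraint of the form "$N_n$ large enough" that is compatible with (and absorbed into) the requirement that $N_n \to \infty$ rapidly. This is exactly the kind of freedom the statement of Lemma \ref{lem:counterexample} reserves ("a sequence $N_n$ converging rapidly to infinity, in a way to be described below").

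The step I expect to be the only mildly delicate one is the claim $d_H(A_1 \cup \dots \cup A_m, B_1 \cup \dots \cup B_m) \leq \max_i d_H(A_i, B_i)$ when $A_i = I_i$ is an edge and $B_i = T_{a_n}(I_i)$ is its diamond: this is true here because the edges $I_i$ and their images meet only at shared endpoints, so every point of $\bigcup B_i$ lies on some $B_i$ hence within $d_H(A_i,B_i)$ of $A_i \subseteq \bigcup A_j$, and symmetrically. This is elementary but worth stating carefully since Hausdorff distance of unions is not in general controlled by the pieces; the point is that here we have a matched decomposition of both sets. Everything else is routine bookkeeping with the explicit parallelogram $D_{a_n}(I)$ from Claim \ref{claim:claim1}, whose geometry we have already analyzed. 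No genuine obstacle is anticipated; the proof is essentially a one-step estimate plus a geometric series, with $N_n$ chosen at the end to make both displayed inequalities hold.
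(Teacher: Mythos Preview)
Your approach is correct but takes a longer route than the paper. You obtain a one-step bound $d_H(P_n,P_{n+1})\le a_n s_n/N_n$ and then telescope, summing a geometric-type series and invoking the freedom in $N_n$ to absorb the tail. The paper instead uses the nesting property already established in Claim~\ref{claim:claim1} (item~3 there): since $a_n$ is decreasing, one has $D_{a_{n+j}}(J)\subset D_{a_n}(I)$ whenever $J$ is an edge arising inside $I\in S_{N_n}(P_n)$ at any later stage, and hence $P_{n+k}\subset \bigcup_{I\in S_{N_n}(P_n)} D_{a_n}(I)$ directly for every $k\ge 1$. This yields the $k$-step Hausdorff bound in a single stroke, with no telescoping and no worry about summing the tail or tracking constants across the series.

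Your argument is self-contained and would work even without the monotonicity of $a_n$, at the cost of the extra bookkeeping you describe (and a mild care with the constant, since the telescoped sum is a priori $\lesssim 2\cdot 10^{-3}s_n/N_n$ rather than $10^{-3}s_n/N_n$; this is harmless for the application but would need a sentence). The paper's argument exploits the specific structure already set up in Claim~\ref{claim:claim1}, trading generality for brevity.
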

\begin{proof}
This is immediate since by the proof of the first claim we have that $P_{n+k} \subset \bigcup_{I\in S_{N_n}(P_n)} D_{a_n}(I)$ and $d_H(D_{a_n}(I),I)\leq a_n\diam(I)$. 
\end{proof}

It follows from Claim \ref{claim:claim2} that the sequence $P_n$ is Cauchy in the Hausdorff metric. We define $P$ be the Hausdorff limit of the sets $P_n$. 

\begin{claim}\label{claim:claim3}
There exist sequences $N_n\nearrow \infty$ and $r_n \searrow 0$ such that if $r\leq r_{n+1}$ then $B(x,2r)$ can intersect at most three edges of $P_n$. Moreover, these sequences can be chosen so that $s_{n}/N_n < r_{n+1}<s_n < r_n$ for each $n\in\mathbb{N}$.
\end{claim}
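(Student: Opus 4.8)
The plan is to build the sequences $N_n$ and $r_n$ recursively, choosing $N_n$ so large that after the subdivision $S_{N_n}(P_n)$ the edges become so short (and so ``spread out'' along the polygonal curve) that no small ball can see more than three of them, and then show this property persists under the diamond replacement $T_{a_n}$. First I would record the elementary separation structure of $P_n$: by Claim~\ref{claim:claim1} the edges of $P_n$ are pairwise disjoint, and by the proof of Claim~\ref{claim:claim1} each edge lies in its own parallelogram $D_{a_{n-1}}(I)$, with these parallelograms essentially disjoint. Since $P_n$ has only finitely many edges, there is a positive minimum gap between any two \emph{non-adjacent} edges of $P_n$; call it $g_n>0$. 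Adjacent edges of $P_n$ meet only at a common endpoint, and at such a vertex the two edges emanate in directions that (by construction of $T_a$ with $a<10^{-3}$) make a definite angle bounded away from $0$ and $\pi$. Hence a ball of radius $r$ centered anywhere meets at most a controlled number of edges of $P_n$ once $r$ is small compared to $g_n$ and to the edge lengths — in fact, because the curve is a simple polygonal arc whose consecutive turning angles are uniformly bounded, for $r$ sufficiently small any ball $B(x,2r)$ meets at most $2$ or $3$ consecutive edges. This gives us a threshold $\rho_n>0$ (depending on $P_n$, hence on $N_1,\dots,N_{n-1}$) such that $r\le \rho_n$ forces $B(x,2r)$ to hit at most three edges of $P_n$.

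Next I would do the recursion. Suppose $N_1,\dots,N_{n-1}$ have been chosen, so $P_n$ is determined, and suppose $r_n$ has been chosen. Let $\rho_{n}$ be the threshold from the previous paragraph for $P_n$. Now choose $N_n$ so large that the longest edge $s_{n}/N_{n}$ of $S_{N_n}(P_n)$ satisfies $s_n/N_n < \min(\rho_n, 2^{-n}, r_n)$ — this is possible since $s_n$ is fixed once $P_n$ is fixed. Then set $r_{n+1}$ to be any value with $s_n/N_n < r_{n+1} < \min(s_n, \rho_n)$; shrinking $r_{n+1}$ further if needed, we also arrange $r_{n+1}<r_n$, giving the chain of inequalities $s_n/N_n < r_{n+1} < s_n < r_n$ demanded in the statement. (Here $s_n = s_1\prod_{j<n}(\text{contraction of one }T_{a_j}\text{-step})\cdot N_j^{-1}$, a fixed positive number.) To verify the claim for $P_n$ itself at scale $r\le r_{n+1}\le \rho_n$: this is exactly the content of the threshold $\rho_n$, so $B(x,2r)$ meets at most three edges of $P_n$. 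This is what is asserted; note the claim is phrased in terms of edges of $P_n$, not of $P_{n+1}$ or $P$.

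The subtle point — and the step I expect to be the main obstacle — is making the ``at most three edges'' bound genuinely uniform and correct, rather than merely ``at most $C$ edges'' for some constant depending on $n$. The right way to get the constant $3$ (and not just a finite constant) is to exploit that $S_{N_n}(P_n)$ is a simple arc parametrized by arclength whose consecutive edges all have the \emph{same} length $s_n/N_n$ and whose turning angle at each interior vertex is either $0$ (at the artificial subdivision points) or one of the finitely many fixed angles introduced by the $T_{a_j}$'s, all bounded away from $\pm\pi$. A ball of radius $2r < 2 s_n/N_n$ therefore contains points of at most a bounded number of consecutive edges along the arc, and by taking $r_{n+1}$ small enough relative to $s_n/N_n$ (e.g. $r_{n+1} < \tfrac{1}{10}\, s_n/N_n$) one forces this number down to three; one must separately rule out that a small ball meets two \emph{far-apart} arcs of the curve, which is precisely where the positive gap $g_n$ (equivalently, $\rho_n$) between non-adjacent edges enters. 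I would present this as: choose $r_{n+1}$ below \emph{both} $\tfrac1{10}$ of the common edge length of $S_{N_n}(P_n)$ \emph{and} $\tfrac1{10}g_n$, and then a short Euclidean-geometry argument (consecutive equal-length segments with bounded turning cannot all lie within a ball of radius $\tfrac15$ of their common length unless there are at most three of them, while non-consecutive segments are excluded by the gap) finishes it. The monotonicity $r_n\searrow 0$ and $N_n\nearrow\infty$ and the sandwich $s_n/N_n<r_{n+1}<s_n<r_n$ are then immediate bookkeeping.
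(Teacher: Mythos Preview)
Your overall recursive scheme is fine and matches the paper's (very short) proof: fix $P_n$, observe there is a threshold below which any small ball meets only edges sharing a common vertex, then choose $r_{n+1}$ below this threshold and $N_n$ large. The gap is in your justification for the constant $3$. You repeatedly describe $P_n$ (and $S_{N_n}(P_n)$) as a ``simple polygonal arc'' with ``consecutive edges'' and ``bounded turning''. This is false for $n\ge 2$: the diamond replacement $T_a$ creates genuine branch points at $x_{1/4}$ and $x_{3/4}$, each of degree three, so $P_n$ is a branched graph, not a path. Your entire ``subtle point'' paragraph, which argues via equal-length consecutive segments along an arc, therefore does not apply, and your argument as written only yields ``at most $C$ edges'' for some $C$, which you explicitly flag as insufficient.

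The fix is much simpler than what you attempt: every vertex of $P_n$ has degree at most three (the only vertices of degree greater than two are the diamond branch points, which have degree exactly three, and this is preserved under subdivision and further diamond replacement). Together with Claim~\ref{claim:claim1} (edges disjoint except at endpoints) this immediately gives a threshold $\rho_n$ such that any ball of radius $\le \rho_n$ meets at most the three edges incident to the nearest vertex. This is exactly the one-line observation the paper makes. Separately, your final paragraph contains an inconsistency: you ask for $r_{n+1}$ below $\tfrac{1}{10}$ of the edge length of $S_{N_n}(P_n)$, i.e.\ $r_{n+1}<\tfrac{1}{10}s_n/N_n$, which contradicts the required $s_n/N_n<r_{n+1}$. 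This extra smallness is unnecessary once you use the max-degree argument on $P_n$ directly.
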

\begin{proof}
This follows easily from the fact that every point in $P_n$ is adjacent to at most three edges. Thus, we may first set $r_1=1$ and then choose $r_n$ and $N_n$ recursively so that $s_{n}/N_n < r_{n+1}$.
\end{proof}

\begin{claim}\label{claim:claim4}
The set $P$ is porous.
\end{claim}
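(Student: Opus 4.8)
Looking at the construction, the goal is to show that $P = \lim_n P_n$ is porous, meaning there is a uniform $\delta > 0$ so that every ball $B(x,r)$ contains a ball $B(y,\delta r)$ disjoint from $P$.

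\medskip

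\noindent\textbf{Plan for the proof of Claim \ref{claim:claim4}.} The strategy is to exploit the scales $r_n$ from Claim \ref{claim:claim3}, which were chosen so that for $r \leq r_{n+1}$ a ball $B(x,2r)$ meets at most three edges of $P_n$, together with the control $s_n/N_n < r_{n+1} < s_n < r_n$. Fix $x \in \mathbb{R}^2$ and $r > 0$. Since $r_n \searrow 0$ and $r_1 = 1$ (and $P \subseteq$ a bounded region), we may assume $r$ is small and choose $n$ so that $r_{n+1} < r \leq r_n$; a comparable-size hole at ``large'' scales exists trivially from the initial segment structure, so the content is at small scales. First I would observe that, since $s_{n-1}/N_{n-1} < r_n$ (applying Claim \ref{claim:claim3} at index $n-1$), in fact $B(x, 2r)$ with $r \leq r_n$ meets at most three edges of $P_{n-1}$; each such edge has length $s_{n-1}/N_{n-1}$ or less after subdivision, so locally near $B(x,2r)$ the set $P_{n-1}$ (hence $P_n$, hence $P$, up to the Hausdorff error $\leq 2^{-n} \leq s_{n-1}/N_{n-1}$ which is $\ll r$) looks like at most three line segments meeting at a common vertex.

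\medskip

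\noindent The key geometric step: a union of at most three line segments through a point, intersected with a ball $B(x,2r)$, cannot be $\delta$-dense in that ball for a suitable absolute $\delta > 0$ (say $\delta = 1/100$). Indeed, three lines through the plane divide a disk into at most six sectors, and since the disk has fixed radius $2r$, at least one of these sectors contains a ball of radius $\geq c \cdot 2r$ for an absolute constant $c$ — concretely, pick the widest of the (at most six) angular sectors, which subtends angle $\geq 2\pi/6 = \pi/3$, and inside it at distance $\sim r$ from $x$ one fits a ball of radius $\sim r \sin(\pi/6)$ avoiding all three lines. Here I use that the three edges near $B(x,2r)$ are (sub-segments of) straight segments, so they lie on at most three lines; this is where Claim \ref{claim:claim3} and the disjointness from Claim \ref{claim:claim1} are essential, since they guarantee no further structure of $P_n$ intrudes into $B(x,2r)$ beyond these three edges. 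One then has to account for the difference between $P_n$ and $P$: any point of $P$ in $B(x,2r)$ is within $d_H(P_n, P) \leq \sum_{k \geq n} 2^{-k} \leq 2^{1-n} \leq 2 s_{n-1}/N_{n-1}$, which by choosing the sequence $N_n$ to grow fast enough (as Claim \ref{claim:claim3} already permits) is $\ll r$; so the chosen ball of radius $c' r$ in the sector, shrunk slightly to radius $(c'/2) r$, is disjoint from $P$ as well.

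\medskip

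\noindent\textbf{Main obstacle.} The subtle point is the bookkeeping between scales: one must ensure that when $r \in (r_{n+1}, r_n]$, the ball $B(x,2r)$ genuinely sees only finitely many (at most three) edges of $P_n$ \emph{and} that the passage to the limit $P$ does not reintroduce new nearby pieces. The Hausdorff-limit control from Claim \ref{claim:claim2} combined with the ratio condition $s_n/N_n < r_{n+1}$ in Claim \ref{claim:claim3} handles this, but one has to be careful that the edges present in $P_n$ near $B(x,2r)$, after further subdivisions and diamond replacements in $P_{n+1}, P_{n+2}, \dots$, stay inside the parallelograms $D_{a_n}(I)$ (fact (1) in the proof of Claim \ref{claim:claim1}), which are thin tubes around those three edges; thus all of $P \cap B(x,2r)$ lies in a union of three thin tubes around three segments, and a thin neighborhood of three lines still fails to be $\delta$-dense, so the sector argument goes through with $\delta$ halved. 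I would also double check the degenerate cases (the ball centered near a vertex, or near an endpoint of the whole construction) — these only make it easier to find a large hole. Assembling these observations yields a uniform porosity constant $\delta$ depending only on absolute geometry (the angle $\pi/3$ bound and the bound $a_n < 10^{-3}$ on the diamond heights), completing the proof.
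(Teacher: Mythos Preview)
There is a genuine gap in the scale bookkeeping. You approximate $P\cap B(x,2r)$ by the (at most three) edges of $P_{n-1}$ that meet $B(x,2r)$, and then assert that the discrepancy between $P_{n-1}$ and $P$ --- equivalently, the width of the tubes $D_{a_{n-1}}(J)$ around the subdivided edges --- is $\ll r$. But that width is of order $a_{n-1}\,s_{n-1}/N_{n-1}$, and Claim~\ref{claim:claim3} only gives $s_{n-1}/N_{n-1}<r_n$, not $<r$. In fact the chain $r_{n+1}<s_n\lesssim s_{n-1}/N_{n-1}$ (since every edge of $P_n$ has length $\leq s_{n-1}/(4N_{n-1})$) forces $s_{n-1}/N_{n-1}>r$ whenever $r$ is near $r_{n+1}$, so the tubes can be \emph{much wider} than $r$ and the sector argument breaks down. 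Choosing $N_n$ large does not help: once $N_{n-1}$ is fixed, the inequality $r_{n+1}<s_{n-1}/N_{n-1}$ is structural.

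The paper's proof handles exactly this by descending one more level. It uses the three $P_{m-1}$-edges only to bound the number of $P_m$-edges intersecting $B(x,r)$ (it counts at most $54$ such edges of length $>r$, together with the three original tubes), and then observes that the further iterates of the construction stay within tubes of width $\lesssim a_m\,s_m/N_m<10^{-3}r_{m+1}<10^{-3}r$ around those $P_m$-edges. Now the tubes really are thin relative to $r$, and a simple area argument locates a ball $B(y,\delta r)$ disjoint from $P$. Your argument can be repaired along the same lines: keep the three-edge bound at level $n-1$ to control combinatorics, but compute the tube width at level $n$.
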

\begin{proof}
Since $P$ is compact, it suffices to check porosity for radii $r\in (0,1)$. If $x\in \mathbb{R}^2$ and $r\in (0,1)$, then there exists an $m\geq 1$ so that $r\in (r_{m+1},r_m]$. If $m=1$, then $r>r_2$, and $P$ is within a $10^{-3} r_2 \leq 10^{-3} r$ neighborhood of six line segments, and thus by a simple volume argument we see that with $\delta<20^{-1}$ there exists a $y\in B(x,r)$ with $B(y,\delta r)\cap P=\emptyset$.

Next, let $m>1$. The ball $B(x,2r_m)$ can intersect at most three edges of $P_{m-1}$, and $P$ is contained within an $r_m$ neighborhood of $P_{m-1}$. This, together with the construction, implies that there are at most $54$ edges of $P_m$ with length greater than $r$ that intersect $B(x,r)$. Thus, $P\cap B(x,3r/2)$ is contained in the union of at most $54$ tubes of width at most $10^{-3}r_{m+1}\leq 10^{-3} r$ and three tubes of width at most $10^{-3}r$. Thus, again by a simple volume argument we see that with $\delta<10^{-3}$ there exists a $y\in B(x,r)$ with $B(y,\delta r)\cap P=\emptyset$.
\end{proof}

Next, we will construct the desired probability measure $\eta$ on curves. We write $\mathbf{b}=(b_n)_{n=1}^\infty$ to denote an element of $\{-1,1\}^\infty$ and let $\mathbb{P}$ be the product measure on $\{-1,1\}^\infty$ corresponding to independent and identically distributed coin flips. 

Observe that the vertices of $P_n$ are always vertices of $P_{n+1}$ for all $n\in \mathbb{N}$. Thus, we can define a path in $P$ by describing, for each $n$, an ordered list of vertices of $P_n$ in such a way that these orders are compatible for different $n$. Recall also that each $P_{n+1}$ is formed from $P_n$ by sudivision and then doubling the middle portion of each edge. We recursively associate to each edge of $P_n$ an orientation from left-to-right, in such a way that if $[x_0,x_1]$ was positively oriented, then all the edges in  \eqref{eq:Ta} are positively oriented. For subdivisions, there is similarly a natural orientation.  We call a path in some $P_n$ \emph{monotone} if every edge it traverses is traveled in the positive orientation. A curve in $P$ is called \emph{monotone}, if the vertex path in $P_n$ that it defines is monotone for each $n$. 

Now, for every $\mathbf{b}\in \{-1,1\}^\mathbb{N}$, choose a monotone path $\gamma_\mathbf{b}$ from $(0,0)$ to $(1,0)$ as follows: At every vertex of $P_n$, where a ``decision'' needs to be made, the path chooses the edge cyclically oriented to the left if $b_n=+1$ and the right edge if $b_n=-1$. To be precise, this defines a path in $P_m$ for every finite $m$, and the desired path in $P$ is obtained as their limit. Informally, these curves are constructed by choosing the top or bottom half of each diamond in the construction of $P_n$, depending on the $n$th digit of $b_n$. 

A direct calculation using the Pythagorean theorem gives 
\[
\ell(\gamma_{\mathbf{b}})\lesssim \prod_{n=1}^\infty \sqrt{1+a_n^2} \leq \sqrt{e^{\sum a_n^2}} <\infty
\]
for each $\mathbf{b}$.

We now suppose that $\tau\colon \cF_A \rightarrow \mathcal{L}_1$ is a coarse line field on $\cF$ with inflation $A$, and argue that \eqref{eq:notstrongtype} holds. 
For each $n\geq 2$ and each edge $I$ in $S_{N_{n-1}}(P_{n-1})$, let $\cF^n(I)$ denote the collection of balls in $\cF$ that intersect the middle half of $I$ and have diameter in $[a_n|I|,|I|]$; such balls $B$ will then have $AB$ intersect both sides of a diamond in $P_n$. Note that if we choose $N_n \gg 1/a_n$, then a fixed ball $B\in \cF$ can lie in $\cF^n(I)$ for at most one value of $n$ and at most a bounded number of different $I$. Because these balls span roughly $\log(a_n^{-1})$ scales, we have for each such $n$, $I$, that
\begin{equation}\label{eq:diameters}
\sum_{B\in \cF^n(I)} \diam(B) \gtrsim \log(a_n^{-1}) |I|.
\end{equation}

Let $\mathbf{b}^{-,n}$ be obtained from $\mathbf{b}$ by flipping the sign of $b_n$. 
Suppose that $B\in \cF^n(I)$ (for some $n$, $I$) and $\gamma_\mathbf{b}$ passes through an end point of $I$. Then, no matter what line $\tau(B)$ has chosen, we have 
\[
\beta^\tau_{\gamma_{\mathbf{b}}}(AB)+\beta^\tau_{\gamma_{\mathbf{b}^{-,n}}}(AB) \gtrsim a_n,
\]
since the two paths travel on opposite sides of the diamond and make an angle of order $a_n$ with each other. Let $\mathbf{I}_\mathbf{b}$ be the collection of edges $I$ of $P_n$ such that $\gamma_\mathbf{b}$ passes through an endpoint of $I$. With \eqref{eq:diameters} and the previous estimate, we get
\begin{equation}\label{eq:diameters2}
\sum_{I \in \mathbf{I}_\mathbf{b}} \sum_{B\in \cF^n(I)} (\beta^\tau_{\gamma_{\mathbf{b}}}(AB)^2+\beta^\tau_{\gamma_{\mathbf{b}^{-,n}}}(AB)^2)\diam(B) \gtrsim a_n^2 \log(a_n^{-1}).
\end{equation}

Note that $\mathbf{I}_\mathbf{b}=\mathbf{I}_{\mathbf{b}^{-,n}}$, and that $\mathbf{b}\to \mathbf{b}^{-,n}$ preserves the probability measure. By applying this, integrating over $\mathbf{b}$ and summing over $n$ we obtain the desired claim:
\begin{align*}
\mathbb{E}_\mathbf{b}\left(\sum_{B\in \cF, B \cap \gamma \neq \emptyset} \beta^\tau_{\gamma_{\mathbf{b}}}(AB)^2\diam(B)\right)&\gtrsim\mathbb{E}_\mathbf{b}\left(\sum_{n=1}^\infty\sum_{I \in \mathbf{I}_\mathbf{b}} \sum_{B\in \cF^n(I)} \beta^\tau_{\gamma_{\mathbf{b}}}(AB)^2\diam(B)\right)\\
&=\sum_{n=1}^\infty \mathbb{E}_\mathbf{b}\left(\sum_{I \in \mathbf{I}_\mathbf{b}} \sum_{B\in \cF^n(I)} \beta^\tau_{\gamma_{\mathbf{b}}}(AB)^2\diam(B)\right)\\
&\gtrsim \sum_{n=1}^\infty a_n^2\log(a_n^{-1})=\infty.
\end{align*}

\end{proof}

\bibliographystyle{plain}
\bibliography{coarsetangentbib}

\end{document}